\documentclass[11pt]{amsart}
\usepackage{amsmath}
\usepackage{amsfonts}
\usepackage{amssymb}
\usepackage[all,cmtip]{xy}           %xypic macro for latex2.09

\usepackage{dsfont}
\usepackage{bbm}
\usepackage{bbding}
\usepackage{txfonts}
\usepackage[shortlabels]{enumitem}
\usepackage{ifpdf}
\ifpdf
\usepackage[colorlinks,linkcolor=black,final,backref=page,hyperindex]{hyperref}
\else
\usepackage[colorlinks,linkcolor=black,final,backref=page,hyperindex,hypertex]{hyperref}
\fi
\usepackage{amscd}
\usepackage{tikz-cd}
\usepackage[active]{srcltx}
\usepackage{mathrsfs}
\usepackage{ulem}

\makeatletter
\newtheorem{theorem}{Theorem}[section]
\newtheorem{proposition}[theorem]{Proposition}
\newtheorem{lemma}[theorem]{Lemma}
\newtheorem{corollary}[theorem]{Corollary}
\theoremstyle{definition} 
\newtheorem{example}[theorem]{Example}
\newtheorem{definition}[theorem]{Definition}
\newtheorem{remark}[theorem]{Remark}

\newcommand{\vv}{\mathsf{v}}
\newcommand{\ww}{\mathsf{w}}

\newcommand{\qa}{kQ/I}

\newcommand{\soc}{\mathrm{soc}}

\newcommand{\red}{\mathrm{red}}

\begin{document}
	
	\title[Two-term tilting complexes of biserial fractional Brauer graph algebras]{
	Two-term tilting complexes of biserial fractional Brauer graph algebras }

	\author{Bohan Xing} 
	\address{(Bohan Xing)
		School of Mathematical Sciences,
		Laboratory of Mathematics and Complex Systems,
		Beijing Normal University,
		Beijing 100875,
		P.R.China}
	% \email{wd0843@163.com}
	% \email{ymliu@bnu.edu.cn}
	\email{bhxing@mail.bnu.edu.cn}

	\date{\today}

	\begin{abstract}
	Brauer graph algebras form a classical class of symmetric algebras with well-structured combinatorial properties and geometric models. Recently, they have been generalized to biserial fractional Brauer graph algebras, which can be regarded as a self-injective version of the classical Brauer graph algebras. In this paper, we show that the skew group algebras of biserial fractional Brauer graph algebras induced by the Nakayama automorphism are in fact skew-Brauer graph algebras. We then study two-term tilting complexes and Kauer moves for biserial fractional Brauer graph algebras. Moreover, we prove that a biserial fractional Brauer graph algebra is tilting-discrete if and only if its reduced form (which is a Brauer graph algebra) is tilting-discrete. Finally, we show that tilting-discrete biserial fractional Brauer graph algebras are closed under derived equivalence.
	\end{abstract}

	\renewcommand{\thefootnote}{\alph{footnote}}
\setcounter{footnote}{-1} \footnote{\it{Mathematics Subject
		Classification(2020)}: 16G20, 16G10, 16D50.}
\renewcommand{\thefootnote}{\alph{footnote}}
\setcounter{footnote}{-1} \footnote{\it{Keywords}: Biserial fractional Brauer graph algebra; Skew-Brauer graph algebra; Two-term tilting complex; Kauer move; Tilting-discrete algebra.}
	\maketitle
	
	%\tableofcontents

	\allowdisplaybreaks
	
	\section{Introduction}

	Brauer graph algebras (abbr. BGAs) originate from the modular representation theory of finite groups. A subclass, Brauer tree algebras, first appear in the work of Janusz \cite{J} and Dade \cite{D}. The general case, BGAs, is subsequently defined by Donovan and Freislich in \cite{DF}. In general, BGAs form a class of symmetric algebras equipped with a combinatorial structure given by a pair $(\Gamma, m)$, where $\Gamma$ is a ribbon graph (i.e., a graph embedded in an orientable surface) and $m \colon V(\Gamma) \to \mathbb{Z}_{>0}$ is a multiplicity function. They are precisely the symmetric special biserial algebras \cite{Sch}, and the representation-finite ones coincide with Brauer tree algebras, which are derived equivalent to symmetric Nakayama algebras (see for example \cite{SS}).

	In order to study self-injective algebras via a similar combinatorial approach, Li and Liu \cite{LL} generalized BGAs to fractional BGAs. In general, fractional BGAs are neither special biserial nor of tame representation type \cite{X}. Consequently, in \cite{LL,LL3}, Li and Liu studied fractional BGAs of type MS, which are self-injective and special biserial. Here ``MS'' stands for “multiserial and self-injective” \cite{GS1}. In this article, we simply refer to such MS-type algebras as {\it biserial fractional BGAs} (abbr. biserial FBGAs), since the biserial part of fractional BGAs is special biserial and coincides with fractional BGAs of type MS. Same as BGAs, these algebras are defined from combinatorial data --- a ribbon graph equipped with a \textit{degree function} --- called a biserial fractional Brauer graph (abbr. biserial FBG), satisfying a certain condition (see Definition \ref{fms-BG}). We illustrate these concepts with the following example.

	\begin{example}\label{exa:preproj-A3}
		Consider the following ribbon graph $\Gamma$, consisting of two vertices $\vv$ and $\ww$ and three edges $\bar{1},\bar{2},\bar{3}$, embedded in a torus with two punctures. Without loss of generality, we assume that the cyclic ordering around each vertex is clockwise. We use $i,i'$ to denote the two distinct half-edges that correspond to the edge $\bar{i}$.

		\begin{center}   
\begin{tikzpicture}[x=0.75pt,y=0.75pt,yscale=-1,xscale=1]
%uncomment if require: \path (0,235); %set diagram left start at 0, and has height of 235

%Shape: Circle [id:dp8262589623190342] 
\draw  [fill={rgb, 255:red, 0; green, 0; blue, 0 }  ,fill opacity=1 ] (141,157) .. controls (141,154.24) and (143.24,152) .. (146,152) .. controls (148.76,152) and (151,154.24) .. (151,157) .. controls (151,159.76) and (148.76,162) .. (146,162) .. controls (143.24,162) and (141,159.76) .. (141,157) -- cycle ;
%Shape: Circle [id:dp01868024686061953] 
\draw  [line width=1.5]  (146,157) .. controls (146,129.39) and (168.39,107) .. (196,107) .. controls (223.61,107) and (246,129.39) .. (246,157) .. controls (246,184.61) and (223.61,207) .. (196,207) .. controls (168.39,207) and (146,184.61) .. (146,157) -- cycle ;
%Shape: Circle [id:dp9090944636447427] 
\draw  [fill={rgb, 255:red, 0; green, 0; blue, 0 }  ,fill opacity=1 ] (241,157) .. controls (241,154.24) and (243.24,152) .. (246,152) .. controls (248.76,152) and (251,154.24) .. (251,157) .. controls (251,159.76) and (248.76,162) .. (246,162) .. controls (243.24,162) and (241,159.76) .. (241,157) -- cycle ;
%Shape: Circle [id:dp6722146453306068] 
\draw  [line width=1.5]  (96,107) .. controls (96,79.39) and (118.39,57) .. (146,57) .. controls (173.61,57) and (196,79.39) .. (196,107) .. controls (196,134.61) and (173.61,157) .. (146,157) .. controls (118.39,157) and (96,134.61) .. (96,107) -- cycle ;
%Shape: Arc [id:dp5101488299886188] 
\draw  [draw opacity=0] (126.75,151.54) .. controls (129.13,143.15) and (136.85,137) .. (146,137) .. controls (146.42,137) and (146.84,137.01) .. (147.25,137.04) -- (146,157) -- cycle ; 
\draw [red]  (126.75,151.54) .. controls (129.13,143.15) and (136.85,137) .. (146,137) .. controls (146.42,137) and (146.84,137.01) .. (147.25,137.04) ;  
\draw [red]  (145.31,134.88) -- (148.24,137.13) -- (144.98,138.86) ;

%Shape: Arc [id:dp5181964745321894] 
\draw  [draw opacity=0] (147.85,176.92) .. controls (147.24,176.97) and (146.62,177) .. (146,177) .. controls (134.95,177) and (126,168.05) .. (126,157) .. controls (126,156.14) and (126.05,155.3) .. (126.16,154.47) -- (146,157) -- cycle ; 
\draw [red]  (147.85,176.92) .. controls (147.24,176.97) and (146.62,177) .. (146,177) .. controls (134.95,177) and (126,168.05) .. (126,157) .. controls (126,156.14) and (126.05,155.3) .. (126.16,154.47) ;  
\draw [red]  (123.93,157.41) -- (126.33,154.61) -- (127.9,157.94) ;

%Shape: Arc [id:dp39416535042659895] 
\draw  [draw opacity=0] (151.68,137.82) .. controls (157.98,139.68) and (162.98,144.56) .. (165.02,150.78) -- (146,157) -- cycle ; 
\draw [red]  (151.68,137.82) .. controls (157.98,139.68) and (162.98,144.56) .. (165.02,150.78) ;  
\draw [red]  (165.72,147.41) -- (164.96,151.02) -- (161.99,148.84) ;

%Shape: Arc [id:dp016647716993849704] 
\draw  [draw opacity=0] (165.85,154.56) .. controls (165.95,155.36) and (166,156.17) .. (166,157) .. controls (166,165.85) and (160.25,173.36) .. (152.28,175.99) -- (146,157) -- cycle ; 
\draw [red]  (165.85,154.56) .. controls (165.95,155.36) and (166,156.17) .. (166,157) .. controls (166,165.85) and (160.25,173.36) .. (152.28,175.99) ;  
\draw [red]  (155.59,176.86) -- (151.91,176.61) -- (153.65,173.36) ;

%Shape: Arc [id:dp8241553012004375] 
\draw  [draw opacity=0] (239.57,175.94) .. controls (231.68,173.27) and (226,165.8) .. (226,157) .. controls (226,148.18) and (231.71,140.69) .. (239.64,138.03) -- (246,157) -- cycle ; 
\draw [blue]  (239.57,175.94) .. controls (231.68,173.27) and (226,165.8) .. (226,157) .. controls (226,148.18) and (231.71,140.69) .. (239.64,138.03) ;  
\draw [blue]  (236.15,137.29) -- (239.81,137.76) -- (237.87,140.9) ;

%Shape: Arc [id:dp14343189840461412] 
\draw  [draw opacity=0] (243.94,137.1) .. controls (244.62,137.04) and (245.31,137) .. (246,137) .. controls (257.05,137) and (266,145.95) .. (266,157) .. controls (266,168.05) and (257.05,177) .. (246,177) .. controls (245.37,177) and (244.75,176.97) .. (244.13,176.91) -- (246,157) -- cycle ; 
\draw [blue]  (243.94,137.1) .. controls (244.62,137.04) and (245.31,137) .. (246,137) .. controls (257.05,137) and (266,145.95) .. (266,157) .. controls (266,168.05) and (257.05,177) .. (246,177) .. controls (245.37,177) and (244.75,176.97) .. (244.13,176.91) ;  
\draw [blue]  (247.03,179.23) -- (244.31,176.74) -- (247.69,175.28) ;

% Text Node
\draw (148,110) node [anchor=north west][inner sep=0.75pt]   [align=left] {$1$};
% Text Node
\draw (234,110) node [anchor=north west][inner sep=0.75pt]   [align=left] {$1'$};
% Text Node
\draw (103,147) node [anchor=north west][inner sep=0.75pt]   [align=left] {$2$};
% Text Node
\draw (181,147) node [anchor=north west][inner sep=0.75pt]   [align=left] {$2'$};
% Text Node
\draw (134,142) node [anchor=north west][inner sep=0.75pt]   [align=left] {$\vv$};
% Text Node
\draw (248,142) node [anchor=north west][inner sep=0.75pt]   [align=left] {$\ww$};
% Text Node
\draw (148,190) node [anchor=north west][inner sep=0.75pt]   [align=left] {$3$};
% Text Node
\draw (234,190) node [anchor=north west][inner sep=0.75pt]   [align=left] {$3'$};
% Text Node
\draw (161,132) node [anchor=north west][inner sep=0.75pt]   [align=left] {\textcolor{red}{$\alpha_1$}};
% Text Node
\draw (161,170) node [anchor=north west][inner sep=0.75pt]   [align=left] {\textcolor{red}{$\alpha_2$}};
% Text Node
\draw (115,170) node [anchor=north west][inner sep=0.75pt]   [align=left] {\textcolor{red}{$\alpha_3$}};
% Text Node
\draw (115,132) node [anchor=north west][inner sep=0.75pt]   [align=left] {\textcolor{red}{$\alpha_4$}};
% Text Node
\draw (270,152) node [anchor=north west][inner sep=0.75pt]   [align=left] {\textcolor{blue}{$\beta_1$}};
% Text Node
\draw (206,152) node [anchor=north west][inner sep=0.75pt]   [align=left] {\textcolor{blue}{$\beta_2$}};
\end{tikzpicture}
		\end{center}
	As in the case of BGAs, the above ribbon graph $\Gamma$ naturally induces a quiver $Q_\Gamma$ as follows, whose vertices correspond to the edges of $\Gamma$, and whose arrows are determined by the cyclic ordering around the vertices of $\Gamma$.
	$$\begin{tikzcd}
\bar{1} 
\arrow[r, shift left, draw=red, "\textcolor{red}{\alpha_1}"] 
\arrow[rr, bend right, shift right=3, draw=blue, "\textcolor{blue}{\beta_1}"'] 
& \bar{2} 
\arrow[r, shift left, draw=red, "\textcolor{red}{\alpha_2}"] 
\arrow[l, shift left, draw=red, "\textcolor{red}{\alpha_4}"] 
& \bar{3} 
\arrow[l, shift left, draw=red, "\textcolor{red}{\alpha_3}"] 
\arrow[ll, bend right, shift right=3, draw=blue, "\textcolor{blue}{\beta_2}"']
\end{tikzcd}$$
Here $\alpha$ denotes the arrows induced by the cyclic ordering around the vertex $\vv$, while $\beta$ denotes the arrows induced by the cyclic ordering around the vertex $\ww$.

In contrast to the multiplicity function in the setting of BGAs, which specifies how many times a path may wind around a given vertex, the degree function in biserial FBGAs records the maximal number of steps around a vertex, that is, the maximal length of such a path. Therefore, for a BGA, the degree $d(\vv)$ of each vertex $\vv$ in the ribbon graph is given by the product $d(\vv)=m(\vv)\,val(\vv)$, where $val(\vv)$ denotes the valency of $\vv$. Thus, one may regard $(\Gamma,d)$ as defining a BGA provided that the degree function $d$ is divisible by the valency function $val$.

{\it Thus, to avoid confusion with the multiplicity function $m$, throughout this paper we consider a pair $(\Gamma,d)$, where $d$ is understood to be the degree function unless otherwise specified.}

For instance, if we consider the degree function $d$ on $\Gamma$ defined by $$d(\vv)=2,\quad d(\ww)=1,$$ then $(\Gamma,d)$ forms a biserial FBG, and the associated algebra admits the following commutative relations:
$$
\textcolor{red}{\alpha_4}\textcolor{red}{\alpha_1}
=
\textcolor{red}{\alpha_2}\textcolor{red}{\alpha_3},
\quad
\textcolor{blue}{\beta_1}
=
\textcolor{red}{\alpha_1}\textcolor{red}{\alpha_2},
\quad
\textcolor{blue}{\beta_2}
=
\textcolor{red}{\alpha_3}\textcolor{red}{\alpha_4}.
$$
As in the case of BGAs, we further impose ``gentle relations'' of the form $\textcolor{red}{\alpha}\textcolor{blue}{\beta}
=
\textcolor{blue}{\beta}\textcolor{red}{\alpha}
=
0$. When the degree function $d$ satisfies certain additional conditions (ensuring that the paths appearing in the above relations have the same source and terminus), the resulting algebra $A$ associated with $(\Gamma,d)$ is a finite-dimensional self-injective special biserial algebra. Its Gabriel quiver is obtained from $Q_\Gamma$ by deleting the arrows $\beta_1$ and $\beta_2$. Moreover, the indecomposable projective $A$-modules are given by
			$$P_1=\begin{array}{*{1}{l}}
			1\\2\\3
		\end{array},\quad P_2=\begin{array}{*{1}{lll}}
			&2&\\1&&3\\&2&
		\end{array},\quad P_3=\begin{array}{*{1}{l}}
			3\\2\\1
		\end{array}.$$
Note that $A$ is the preprojective algebra of type $A_3$. The Nakayama automorphism $\nu_A$ of $A$, given by
$$
e_{\bar{1}}\longleftrightarrow e_{\bar{1}},\quad
e_{\bar{2}}\longleftrightarrow e_{\bar{3}},\quad
\alpha_1\longleftrightarrow \alpha_3,\quad
\alpha_2\longleftrightarrow \alpha_4,
$$
is naturally corresponding to the following automorphism of $\Gamma$:
$$
1\longleftrightarrow 1',\quad
2\longleftrightarrow 3,\quad
2'\longleftrightarrow 3',
$$
which is called the {\it Nakayama automorphism} $\nu$ of $(\Gamma,d)$ in \cite{LL}. 
	\end{example}

Consider the graph of orbits $\Gamma/\langle\nu\rangle$ (see Subsection~\ref{subsec:reduced-form}). It may contain orbifold edges, and its surface model is an orbifold surface in the sense of \cite{AP2,BSW,So2}, see also Definition \ref{def:orbifold}. If $\Gamma/\langle\nu\rangle$ has no orbifold edges, then the biserial FBG $(\Gamma,d)$ is called {\it admissible}. Otherwise, $(\Gamma,d)$ is called {\it non-admissible}. This terminology reflects the admissibility of the action of $\langle\nu\rangle$ on the set of half-edges; see~\cite{BG,LL3}. For a non-admissible pair $(\Gamma,d)$, there exists a ribbon graph $\widehat{\Gamma/\langle \nu \rangle}$ equipped with a natural orientation-preserving involution $\phi$ (which induces a $\mathbb{Z}/2\mathbb{Z}$-action on the BGA associated with $\widehat{\Gamma/\langle \nu \rangle}$), and is a double cover of $\Gamma/\langle\nu\rangle$. The {\it reduced form} of $(\Gamma,d)$ is defined by
$$
\Gamma_{\red} =
\begin{cases}
\Gamma/\langle\nu\rangle, & \text{if } (\Gamma,d) \text{ is admissible}, \\[2mm]
\widehat{\Gamma/\langle\nu\rangle}, & \text{if } (\Gamma,d) \text{ is non-admissible}.
\end{cases}
$$
Moreover, the degree function $d$ naturally induces a degree function on $\Gamma_{\red}$, such that $(\Gamma_{\red}, d)$ is a classical Brauer graph. It is shown in \cite[Theorem~2.29]{LL3} that a biserial FBGA and its reduced form have the same representation type.

\begin{figure}[ht]
		\centering
		\begin{center}

\tikzset{every picture/.style={line width=0.75pt}} %set default line width to 0.75pt        

\begin{tikzpicture}[x=0.75pt,y=0.75pt,yscale=-1,xscale=1]
%uncomment if require: \path (0,235); %set diagram left start at 0, and has height of 235

%Shape: Circle [id:dp6781095359559848] 
\draw  [fill={rgb, 255:red, 0; green, 0; blue, 0 }  ,fill opacity=1 ] (130,105) .. controls (130,102.24) and (132.24,100) .. (135,100) .. controls (137.76,100) and (140,102.24) .. (140,105) .. controls (140,107.76) and (137.76,110) .. (135,110) .. controls (132.24,110) and (130,107.76) .. (130,105) -- cycle ;
%Shape: Circle [id:dp5884929003271584] 
\draw  [fill={rgb, 255:red, 0; green, 0; blue, 0 }  ,fill opacity=1 ] (200,105) .. controls (200,102.24) and (202.24,100) .. (205,100) .. controls (207.76,100) and (210,102.24) .. (210,105) .. controls (210,107.76) and (207.76,110) .. (205,110) .. controls (202.24,110) and (200,107.76) .. (200,105) -- cycle ;
%Straight Lines [id:da7001582911036106] 
\draw [line width=1.5]    (135,105) -- (205,105) ;
%Straight Lines [id:da3805853912603774] 
\draw [line width=1.5]    (135,105) -- (95,105) ;
%Shape: Circle [id:dp15098600447861554] 
\draw  [fill={rgb, 255:red, 0; green, 0; blue, 0 }  ,fill opacity=1 ] (437,105) .. controls (437,102.24) and (439.24,100) .. (442,100) .. controls (444.76,100) and (447,102.24) .. (447,105) .. controls (447,107.76) and (444.76,110) .. (442,110) .. controls (439.24,110) and (437,107.76) .. (437,105) -- cycle ;
%Shape: Circle [id:dp427621613162551] 
\draw  [fill={rgb, 255:red, 0; green, 0; blue, 0 }  ,fill opacity=1 ] (507,105) .. controls (507,102.24) and (509.24,100) .. (512,100) .. controls (514.76,100) and (517,102.24) .. (517,105) .. controls (517,107.76) and (514.76,110) .. (512,110) .. controls (509.24,110) and (507,107.76) .. (507,105) -- cycle ;
%Straight Lines [id:da8423306478603518] 
\draw [line width=1.5]    (442,105) -- (512,105) ;
%Shape: Circle [id:dp33986886963768237] 
\draw  [fill={rgb, 255:red, 0; green, 0; blue, 0 }  ,fill opacity=1 ] (367,105) .. controls (367,102.24) and (369.24,100) .. (372,100) .. controls (374.76,100) and (377,102.24) .. (377,105) .. controls (377,107.76) and (374.76,110) .. (372,110) .. controls (369.24,110) and (367,107.76) .. (367,105) -- cycle ;
%Shape: Circle [id:dp7885688341917547] 
\draw  [fill={rgb, 255:red, 0; green, 0; blue, 0 }  ,fill opacity=1 ] (577,105) .. controls (577,102.24) and (579.24,100) .. (582,100) .. controls (584.76,100) and (587,102.24) .. (587,105) .. controls (587,107.76) and (584.76,110) .. (582,110) .. controls (579.24,110) and (577,107.76) .. (577,105) -- cycle ;
%Straight Lines [id:da48512865516690185] 
\draw [line width=1.5]    (512,105) -- (582,105) ;
%Straight Lines [id:da3967878794273161] 
\draw [line width=1.5]    (372,105) -- (442,105) ;

% Text Node
\draw (86,98) node [anchor=north west][inner sep=0.75pt]  [font=\Large] [align=left] {$\times$};
\end{tikzpicture}
		\end{center}
					\caption{Left: the graph of orbits $\Gamma/\langle \nu \rangle$ of the biserial FBG in Example~\ref{exa:preproj-A3}, where the left edge is an orbifold edge; Right: the corresponding reduced form $\Gamma_{\red}=\widehat{\Gamma/\langle \nu \rangle}$.}
		\label{fig:red-form-preproj-A3}	
	\end{figure}

In particular, \cite{LL3} shows that the representation-finite biserial FBGAs coincide with the representation-finite self-injective algebras of type~A (for a more explicit characterization of these algebras, see \cite{Asa1,Asa2}). More precisely, representation-finite admissible biserial FBGAs coincide with algebras that are derived equivalent to self-injective Nakayama algebras, whereas representation-finite non-admissible biserial FBGAs coincide with algebras that are derived equivalent to M\"{o}bius\footnote{Here, the term ``M\"obius'' refers to the fact that the stable  Auslander--Reiten quiver of these algebras is of M\"obius type, following Riedtmann's notation \cite{Ried}. This terminology is not related to the surface models of biserial FBGAs.
} algebras.

In fact, we observe that the $(\Gamma/\langle \nu \rangle,d)$ arising from the definition of the reduced form in~\cite{LL3} is precisely the skew-Brauer graph (abbr. skew-BG) defined in~\cite{EGV,So2}, which can be used to define the corresponding skew-Brauer graph algebra (abbr. skew-BGA). Using the characterization of skew group algebras in~\cite{RR}, we show the following.

\begin{proposition}[see Proposition~\ref{prop:adm-skew=BGA} and \ref{prop:nonadm-skew=skew-BGA}]
Let $A$ be the biserial FBGA associated with a biserial FBG $(\Gamma,d)$, and let $\nu_A$ be the Nakayama automorphism of $A$ induced by the Nakayama automorphism $\nu$ of $(\Gamma,d)$. 
Let $B$ be the (skew-)BGA corresponding to the (skew-)BG $(\Gamma/\langle \nu \rangle,d)$.

\begin{enumerate}
\item If $(\Gamma,d)$ is admissible, then $B$ is a BGA, and the skew-group algebra $A\langle \nu_A\rangle$ is Morita equivalent to $B$.

\item If $(\Gamma,d)$ is non-admissible, then $B$ is a skew-BGA. Moreover, if $\operatorname{char} k \neq 2$, then the skew-group algebra $A\langle \nu_A\rangle$ is Morita equivalent to $B$.
\end{enumerate}
\end{proposition}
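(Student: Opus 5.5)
The plan is to use the Reiten--Riedtmann description of skew group algebras \cite{RR}. If a finite cyclic group $G=\langle g\rangle$ of order $n$ acts on $A=kQ/I$ by permuting vertices and arrows and preserving $I$, then $A\langle G\rangle$ is Morita equivalent to $kQ_G/I_G$. The quiver $Q_G$ has one vertex for each pair $(\text{$G$-orbit of a vertex } x,\ \chi)$, where $\chi$ runs over the irreducible characters of the stabilizer $G_x$. Its arrows come from orbits of arrows, split according to the characters of the stabilizers at both endpoints, and $I_G$ is the image of the $G$-invariant part of $I$. This uses that $\operatorname{char}k\nmid |G_x|$ and that $k$ contains enough roots of unity. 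I will first check that the Nakayama automorphism $\nu_A$ of $A$ acts in this way. It sends $e_{\bar i}\mapsto e_{\nu(\bar i)}$ and arrows to arrows, because $\nu$ is an automorphism of the ribbon graph preserving the cyclic orders and $d$. It also preserves the commutativity and gentle relations, since these are defined purely from $(\Gamma,d)$. Up to rescaling arrows (a standard step for a cyclic group acting on a basic algebra), I may take $\nu_A$ to be a pure permutation of arrows. The group $\langle\nu_A\rangle$ is finite, of order equal to the order of $\nu$.

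\emph{Admissible case.} Here $\langle\nu\rangle$ acts freely on half-edges, so it acts freely on the vertices of $Q_\Gamma$ (the edges of $\Gamma$) and on the arrows. Then \cite{RR} reduces to the orbit construction: $A\langle\nu_A\rangle$ is Morita equivalent to $kQ_\Gamma/\langle\nu\rangle$ modulo the orbit ideal. No characteristic hypothesis is needed, since all stabilizers are trivial. It remains to identify this with the BGA of $(\Gamma/\langle\nu\rangle,d)$. The vertices of the orbit quiver are the edges of $\Gamma/\langle\nu\rangle$. The arrows at a vertex $\vv$ of $\Gamma$ descend to the cyclic order at the image vertex, whose valency is $val(\vv)/|\mathrm{Stab}(\vv)|$. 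The relations ``path of length $d(\vv)$ around $\vv$ equals path of length $d(\ww)$ around $\ww$'' become the usual BGA commutativity relations, and the degree $d$ becomes a multiple of the new valency. The key point is that the biserial FBG condition (Definition~\ref{fms-BG}) together with the definition of $\nu$ in \cite{LL} gives: the stabilizer of $\vv$ is exactly the rotation by $d(\vv)$ steps. Hence the fractional relations become genuine full-cycle relations downstairs, the multiplicity is $m=d\cdot|\mathrm{Stab}|/val$, and the result is symmetric.

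\emph{Non-admissible case.} Now some edge $\bar i$ is fixed by $\nu^{j}$, which swaps its two half-edges; these edges give the orbifold edges. For such a vertex the stabilizer has order $2$ in the orbit of $\nu$. Assuming $\operatorname{char}k\neq 2$, \cite{RR} splits the orbit vertex into two vertices $(\bar i,\pm)$. Arrows into and out of that orbit double accordingly, and the relations involving $e_{\bar i}$ are replaced by their $\pm$ eigencomponents. I will compare this local picture directly with the definition of the skew-BGA in \cite{EGV,So2}. There an orbifold edge contributes two vertices $\bar i^{\pm}$, with relations that say the two paths through $\bar i^+$ and $\bar i^-$ differ by a sign or coincide in a prescribed way. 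The verification is a local computation at each orbifold edge, with the non-orbifold part handled exactly as in the admissible case.

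The main obstacle is this local computation. I must pick the correct idempotents $\tfrac12(e_{\bar i}\pm \nu^{j}_A\cdot)$ in $A\langle\nu_A\rangle$ and show that, in the resulting basis, the induced relations match the skew-BG relations exactly, including signs and the gentle zero relations. I would first do the computation for Example~\ref{exa:preproj-A3}, where $\bar 1$ is the orbifold edge and the expected answer is given by the reduced form of Figure~\ref{fig:red-form-preproj-A3}, and then generalize.
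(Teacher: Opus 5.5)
Your proposal is correct in outline and follows the paper's own proof. It applies Reiten--Riedtmann to the cyclic group $\langle\nu_A\rangle$. In the admissible case the action is free, so no characteristic restriction is needed and the orbit algebra is the BGA of $(\Gamma/\langle\nu\rangle,d)$. In the non-admissible case each orbifold loop is split by the idempotents $e\otimes\tfrac{1\pm g}{2}$, where $g=\nu^{m}$ is the involution in $\langle\nu\rangle$, and the result is compared with the skew-BGA relations. The local verification you single out as the main obstacle is exactly the step the paper also leaves unwritten, deferring it to the analogous computation in \cite[Proposition~3.9]{So2}.
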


	Meanwhile, tilting theory (and more generally, silting theory) is a fundamental tool in the study of derived categories of finite-dimensional algebras \cite{AI,AIR}. Through such equivalences, one can compare homological properties, classify algebras up to derived equivalence, and analyze how their representation theory behave under these equivalences. This framework is especially interesting and often effective for classes of algebras admitting combinatorial models, where two-term silting complexes often have explicit descriptions; see, for example, \cite{AAC,AY,BC, CS}.
	
	We establish a bijective correspondence between two-term tilting complexes of biserial FBGAs and a certain subset of the two-term tilting complexes of their reduced forms.

\begin{theorem}[see Theorem~\ref{thm:adm-case} and \ref{thm:non-adm-case}]
Let $A$ be a biserial FBGA with associated biserial FBG $(\Gamma,d)$, and let $A_{\red}$ be the BGA associated with the reduced form $(\Gamma_{\red},d)$. 
\begin{enumerate}
	\item If $(\Gamma,d)$ is admissible, then there is a poset isomorphism between
\begin{enumerate}
  \item the set $2\text{-}\mathrm{tilt}\,A$ of isomorphism classes of basic two-term tilting complexes of $A$, and
  \item the set $2\text{-}\mathrm{tilt}\,A_{\red}$ of isomorphism classes of basic two-term tilting complexes of $A_{\red}$.
\end{enumerate}
If $(\Gamma,d)$ is non-admissible, then there is a poset isomorphism between
\begin{enumerate}
  \item the set $2\text{-}\mathrm{tilt}\,A$ of isomorphism classes of basic two-term tilting complexes of $A$, and
  \item the set $2\text{-}\mathrm{tilt}^{\phi}\,A_{\red}$ of isomorphism classes of basic $\phi$-stable  two-term tilting complexes of $A_{\red}$, where $\phi$ denotes the natural involution of $A_{\red}$.
\end{enumerate}

\item The following statements are equivalent:
\begin{enumerate}
  \item $A$ is tilting-discrete;
  \item $A_{\red}$ is tilting-discrete;
  \item $\Gamma_{\red}$ contains at most one odd cycle and no even cycles.
\end{enumerate}
\end{enumerate}
\end{theorem}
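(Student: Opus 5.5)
The plan is to pass through the skew group algebra $A\langle\nu_A\rangle$ and use the Morita equivalences of the Proposition above. Write $G=\langle\nu_A\rangle$, a finite cyclic group. The induction functor $-\otimes_A A\langle\nu_A\rangle$ and the restriction functor send two-term presilting complexes to two-term presilting complexes. A complex $T$ over $A$ is $\nu_A$-stable, i.e.\ ${}^{\nu_A}T\cong T$, exactly when its induction is $G$-stable for the dual group action.

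The first observation is that every basic two-term tilting complex $T$ of $A$ is automatically $\nu_A$-stable. Indeed, $A$ is self-injective and tilting means $\nu$-stable silting, so ${}^{\nu_A}T\cong \nu T\cong T$ in $K^b(\mathrm{proj}\,A)$. Following the standard argument for skew group algebras (Reiten--Riedtmann, and its silting version), induction then yields an injective, order-preserving map from $2\text{-}\mathrm{tilt}\,A$ into the $\hat G$-stable two-term silting complexes of $A\langle\nu_A\rangle$, and restriction gives the inverse up to taking basic parts. The $\hat G$-action is trivial on objects up to isomorphism in the relevant sense, because $\hat G$ permutes the summands of $P\otimes A\langle\nu_A\rangle$ that lie over a single $\nu$-orbit.

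In the admissible case $A\langle\nu_A\rangle$ is Morita equivalent to $B=A_{\red}$, which is symmetric, so silting equals tilting. The task is then to show that every two-term tilting complex of $A_{\red}$ is stable under the dual action. I would verify this combinatorially: a two-term tilting complex of a BGA corresponds to a complete set of non-crossing signed walks, or $g$-vectors, on $\Gamma_{\red}$, and the dual action only rescales arrows, so it fixes every string complex. This gives (1)(a). In the non-admissible case I would first use the double cover $\widehat{\Gamma/\langle\nu\rangle}$ with its involution $\phi$ and $\mathrm{char}\,k\ne 2$. The skew-BGA $B$ satisfies $B\langle\mathbb{Z}/2\rangle\simeq_{\mathrm{Morita}} A_{\red}$, and one applies the skew group correspondence again, now with a non-trivial action $\phi$. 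Here stability is a genuine condition, giving $2\text{-}\mathrm{tilt}^{\phi}A_{\red}$. I expect $\mathrm{char}\,k=2$ to be handled by a direct combinatorial comparison of $g$-vectors (string complexes on $\Gamma$ versus $\phi$-symmetric ones on $\Gamma_{\red}$). If that is not available, I would avoid skew groups entirely and build the bijection on $g$-vector fans directly, using that the $g$-vector fan of $A$ is the $\nu$-fixed section.

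For (2), I would use the fact that a self-injective algebra is tilting-discrete iff $2\text{-}\mathrm{tilt}$ of every tilting-mutation is finite. By the Aihara--Mizuno reduction, it suffices that $2\text{-}\mathrm{tilt}\,C$ is finite for every $C$ derived equivalent via iterated irreducible mutation. The equivalence (b)$\Leftrightarrow$(c) is the known classification of tilting-discrete BGAs (Adachi--Aihara--Chan: at most one odd cycle and no even cycle). For (a)$\Leftrightarrow$(b), the poset isomorphisms of part (1) commute with irreducible tilting mutation, which corresponds to Kauer moves on $\Gamma$ and on $\Gamma_{\red}$ respectively. They therefore propagate along mutation, and mutations of $A$ are again biserial FBGAs whose reduced forms are the corresponding mutations of $A_{\red}$. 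Finiteness of $2\text{-}\mathrm{tilt}$ then transfers in one direction trivially. For the $\phi$-stable case, (b)$\Rightarrow$(a) is immediate. For (a)$\Rightarrow$(b) I would argue that if $\Gamma_{\red}$ has an even cycle or two odd cycles, these can be chosen $\phi$-symmetrically, or doubled, producing infinitely many $\phi$-stable tilting complexes. I expect this last step, together with matching mutation with Kauer moves through the reduction, to be the main obstacle.
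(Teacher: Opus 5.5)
Your argument for part (1) runs entirely through skew group algebras. It uses the dual group $\widehat G$ of $G=\langle\nu_A\rangle$, the duality between $A$ and $A\langle\nu_A\rangle$, and the induction--restriction correspondence of \cite{KKKMM}. All of this needs $|G|$ to be invertible in $k$. In the non-admissible case it also needs $\operatorname{char}k\neq 2$, for the Morita equivalences with the skew-BGA and with $A_{\red}\langle\phi\rangle$. The statement, however, is over an arbitrary algebraically closed field, and the paper stresses that it is characteristic-free.

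Already in the admissible case $\operatorname{char}k$ can divide $|G|$. For the biserial FBG of Example~\ref{exa:silt-dis-but-have-2-cycles} (two truncated vertices joined by three edges), $\nu$ acts admissibly with order $3$. So in characteristic $3$ neither $\widehat G$ nor \cite{KKKMM} is available, yet the theorem applies. You flag only $\operatorname{char}k=2$, and only in the non-admissible case. Your fallback (``the $g$-vector fan of $A$ is the $\nu$-fixed section'') is not an argument: identifying that fixed section with the fan of $A_{\red}$, resp.\ with its $\phi$-fixed part, is exactly what has to be proved.

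There are two smaller slips. First, induced complexes are always $\widehat G$-stable, so $\widehat G$-stability of $T\otimes_A A\langle\nu_A\rangle$ does not characterise $\nu_A$-stability of $T$. Second, in the non-admissible case $\widehat G$ is cyclic of order $2m$. You would still have to match its stable objects on $A\langle\nu_A\rangle$ (Morita equivalent to $B$) with those of the order-$2$ action whose skew group algebra is $A_{\red}$.

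The paper avoids all of this with a direct, characteristic-free correspondence of signed walks.
\begin{enumerate}
\item Using the string-complex description of \cite{AAC} for the special biserial algebra $A$, the $\nu_A$-stable two-term presilting complexes of $A$ are identified with $\nu$-stable admissible sets of signed walks on $(\Gamma,d)$. Here the consecutive half-edges $\iota(h_i),h_{i+1}$ at each vertex must lie in a common $\nu$-fan.
\item Replacing each half-edge by its $\langle\nu\rangle$-orbit sends such sets bijectively to admissible sets of signed walks on $\Gamma_{\red}$ (resp.\ $\phi$-stable ones). This works because the sign and non-crossing conditions are local, and every $\nu$-fan looks exactly like the corresponding fan of $\Gamma_{\red}$. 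By \cite{AAC} these sets are $2\text{-}\mathrm{presilt}\,A_{\red}$ (resp.\ its $\phi$-stable part).
\item The bijection matches indecomposable summands, so it restricts to complete sets, i.e.\ to two-term tilting complexes. It respects the order because $T\ge T'$ is tested by $\mathrm{Hom}$-vanishing between indecomposable summands.
\end{enumerate}
The skew-group route appears in the paper only as an alternative under characteristic hypotheses.

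Your plan for part (2) is the paper's: the Aihara--Mizuno reduction, Kauer moves keeping iterated mutations of $A$ among biserial FBGAs whose reduced graphs have the same cycle structure, \cite[Theorem~6.7]{AAC}, and an infinite family of $\phi$-stable tilting complexes when (c) fails. In the non-admissible case the paper first shows that (c) holds iff $\Gamma_{\red}$ is a tree, using that $\Gamma_{\red}$ is a double cover of $\Gamma/\langle\nu\rangle$. So you only need the infinite $\phi$-stable family whenever $\Gamma_{\red}$ contains a cycle.
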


\noindent We note that the above result does not depend on the characteristic of the base field, since by \cite{AAC,AY}, two-term presilting complexes correspond to certain walks on the associated ribbon graph. Indeed, we show that there is a correspondence between signed walks on the two ribbon graphs $\Gamma$ and $\Gamma_{\red}$, which induces a correspondence between two-term tilting complexes.
If we further assume that $\operatorname{char} k \neq 2$, then the correspondence between the above two-term tilting complexes can be extended to a correspondence between $n$-term tilting complexes via the skew group algebra approach in~\cite{KKKMM}; see the proof of Theorem~\ref{thm:adm-case} and Remark~\ref{rmk:non-adm}.

	As noted in \cite{AD,AK}, it is interesting to find algebras that are tilting-discrete but not silting-discrete. In light of the above results, we can easily produce many self-injective algebras that are tilting-discrete but not silting-discrete; see Example~\ref{exa:fms-til-dis-not-sil-dis}

	Moreover, we describe irreducible tilting mutations of a biserial FBGA $A$ associated with $(\Gamma,d)$ in terms of Kauer moves (Figure~\ref{fig:gen-Kauer-move}) \cite{K,So1,So2}, using local moves both on its original ribbon graph $\Gamma$ (Figure~\ref{fig:1-KM-fms}) and on its graph of orbits $\Gamma / \langle \nu \rangle$ (Subsection \ref{subsec:move-on-red}).
More precisely, this follows the same rule as in the case of (skew-)BGAs: these mutations correspond to mutations of the (orbifold) ribbon graph (see Figures~\ref{fig:Kauer-moves} and~\ref{fig:Kauer-moves-orbifold}).

\begin{figure}[ht]
		\centering
		\begin{center}       
\tikzset{every picture/.style={line width=0.75pt}} %set default line width to 0.75pt        

\begin{tikzpicture}[x=0.75pt,y=0.75pt,yscale=-1,xscale=1]
%uncomment if require: \path (0,235); %set diagram left start at 0, and has height of 235

%Straight Lines [id:da26120890939224806] 
\draw [line width=1.5]    (50,70) -- (160,70) ;
%Straight Lines [id:da3109930443019404] 
\draw [line width=1.5]    (50,70) -- (29.5,119.5) ;
%Straight Lines [id:da608494482658684] 
\draw [color={rgb, 255:red, 0; green, 0; blue, 255 }  ,draw opacity=1 ][line width=1.5]    (50,70) -- (52,119.83) ;
%Straight Lines [id:da8630010536852626] 
\draw [color={rgb, 255:red, 0; green, 0; blue, 255 }  ,draw opacity=1 ][line width=1.5]    (50,70) -- (92.67,101.17) ;
%Straight Lines [id:da5735624814000166] 
\draw [line width=1.5]    (160,70) -- (179.5,117.5) ;
%Shape: Circle [id:dp944514451712889] 
\draw  [fill={rgb, 255:red, 0; green, 0; blue, 0 }  ,fill opacity=1 ] (45,70) .. controls (45,67.24) and (47.24,65) .. (50,65) .. controls (52.76,65) and (55,67.24) .. (55,70) .. controls (55,72.76) and (52.76,75) .. (50,75) .. controls (47.24,75) and (45,72.76) .. (45,70) -- cycle ;
%Shape: Circle [id:dp892927410071584] 
\draw  [fill={rgb, 255:red, 0; green, 0; blue, 0 }  ,fill opacity=1 ] (155,70) .. controls (155,67.24) and (157.24,65) .. (160,65) .. controls (162.76,65) and (165,67.24) .. (165,70) .. controls (165,72.76) and (162.76,75) .. (160,75) .. controls (157.24,75) and (155,72.76) .. (155,70) -- cycle ;
%Straight Lines [id:da23034143496433357] 
\draw [line width=1.5]    (381,70) -- (491,70) ;
%Straight Lines [id:da5260724095071576] 
\draw [line width=1.5]    (381,70) -- (364.5,119.5) ;
%Straight Lines [id:da6326141495525821] 
\draw [color={rgb, 255:red, 0; green, 0; blue, 255 }  ,draw opacity=1 ][line width=1.5]    (491,70) -- (491.5,120.5) ;
%Straight Lines [id:da8413689387795369] 
\draw [color={rgb, 255:red, 0; green, 0; blue, 255 }  ,draw opacity=1 ][line width=1.5]    (491,70) -- (447.5,101.5) ;
%Straight Lines [id:da42451166764865444] 
\draw [line width=1.5]    (491,70) -- (512.5,119.5) ;
%Shape: Circle [id:dp6280299551034454] 
\draw  [fill={rgb, 255:red, 0; green, 0; blue, 0 }  ,fill opacity=1 ] (376,70) .. controls (376,67.24) and (378.24,65) .. (381,65) .. controls (383.76,65) and (386,67.24) .. (386,70) .. controls (386,72.76) and (383.76,75) .. (381,75) .. controls (378.24,75) and (376,72.76) .. (376,70) -- cycle ;
%Shape: Circle [id:dp8326764224338483] 
\draw  [fill={rgb, 255:red, 0; green, 0; blue, 0 }  ,fill opacity=1 ] (486,70) .. controls (486,67.24) and (488.24,65) .. (491,65) .. controls (493.76,65) and (496,67.24) .. (496,70) .. controls (496,72.76) and (493.76,75) .. (491,75) .. controls (488.24,75) and (486,72.76) .. (486,70) -- cycle ;
%Shape: Arc [id:dp25834406834941737] 
\draw  [draw opacity=0][dash pattern={on 0.84pt off 2.51pt}] (41.49,82.35) .. controls (37.57,79.65) and (35,75.12) .. (35,70) .. controls (35,61.72) and (41.72,55) .. (50,55) .. controls (57.34,55) and (63.45,60.27) .. (64.75,67.24) -- (50,70) -- cycle ; \draw  [dash pattern={on 0.84pt off 2.51pt}] (41.49,82.35) .. controls (37.57,79.65) and (35,75.12) .. (35,70) .. controls (35,61.72) and (41.72,55) .. (50,55) .. controls (57.34,55) and (63.45,60.27) .. (64.75,67.24) ;  
%Shape: Arc [id:dp9050851315370134] 
\draw  [draw opacity=0][dash pattern={on 0.84pt off 2.51pt}] (145.05,68.76) .. controls (145.68,61.05) and (152.13,55) .. (160,55) .. controls (168.28,55) and (175,61.72) .. (175,70) .. controls (175,75.25) and (172.31,79.86) .. (168.23,82.55) -- (160,70) -- cycle ; \draw  [dash pattern={on 0.84pt off 2.51pt}] (145.05,68.76) .. controls (145.68,61.05) and (152.13,55) .. (160,55) .. controls (168.28,55) and (175,61.72) .. (175,70) .. controls (175,75.25) and (172.31,79.86) .. (168.23,82.55) ;  
%Shape: Arc [id:dp127017280311293] 
\draw  [draw opacity=0] (163.36,84.62) .. controls (162.28,84.87) and (161.15,85) .. (160,85) .. controls (152.6,85) and (146.45,79.64) .. (145.22,72.58) -- (160,70) -- cycle ; \draw   (163.36,84.62) .. controls (162.28,84.87) and (161.15,85) .. (160,85) .. controls (152.6,85) and (146.45,79.64) .. (145.22,72.58) ;  
\draw   (143.99,75.78) -- (145.3,72.33) -- (147.9,74.95) ;
%Shape: Arc [id:dp15340836121418344] 
\draw  [draw opacity=0][dash pattern={on 0.84pt off 2.51pt}] (375.19,83.83) .. controls (369.79,81.56) and (366,76.23) .. (366,70) .. controls (366,61.72) and (372.72,55) .. (381,55) .. controls (388.95,55) and (395.45,61.18) .. (395.97,68.99) -- (381,70) -- cycle ; \draw  [dash pattern={on 0.84pt off 2.51pt}] (375.19,83.83) .. controls (369.79,81.56) and (366,76.23) .. (366,70) .. controls (366,61.72) and (372.72,55) .. (381,55) .. controls (388.95,55) and (395.45,61.18) .. (395.97,68.99) ;  
%Shape: Arc [id:dp17858050004005088] 
\draw  [draw opacity=0][dash pattern={on 0.84pt off 2.51pt}] (60.99,80.21) .. controls (58.73,82.64) and (55.67,84.32) .. (52.23,84.83) -- (50,70) -- cycle ; \draw  [color={rgb, 255:red, 0; green, 0; blue, 255 }  ,draw opacity=1 ][dash pattern={on 0.84pt off 2.51pt}] (60.99,80.21) .. controls (58.73,82.64) and (55.67,84.32) .. (52.23,84.83) ;  
%Shape: Arc [id:dp7307018410450798] 
\draw  [draw opacity=0] (64.87,72) .. controls (64.6,73.97) and (63.96,75.81) .. (63.02,77.46) -- (50,70) -- cycle ; \draw   (64.87,72) .. controls (64.6,73.97) and (63.96,75.81) .. (63.02,77.46) ;  
\draw   (65.94,75.58) -- (62.86,77.62) -- (62.27,73.98) ;
%Shape: Arc [id:dp3863727068943652] 
\draw  [draw opacity=0] (48.9,84.96) .. controls (47.92,84.89) and (46.96,84.72) .. (46.04,84.47) -- (50,70) -- cycle ; \draw   (48.9,84.96) .. controls (47.92,84.89) and (46.96,84.72) .. (46.04,84.47) ;  
\draw   (47.91,86.71) -- (45.38,84.03) -- (48.87,82.83) ;
%Shape: Arc [id:dp677423576252286] 
\draw  [draw opacity=0] (395.88,71.87) .. controls (394.96,79.27) and (388.65,85) .. (381,85) .. controls (380.03,85) and (379.08,84.91) .. (378.16,84.73) -- (381,70) -- cycle ; \draw   (395.88,71.87) .. controls (394.96,79.27) and (388.65,85) .. (381,85) .. controls (380.03,85) and (379.08,84.91) .. (378.16,84.73) ;  
\draw   (380.99,87.18) -- (378.18,84.8) -- (381.51,83.22) ;
%Shape: Arc [id:dp2868395774633654] 
\draw  [draw opacity=0][dash pattern={on 0.84pt off 2.51pt}] (476.15,67.84) .. controls (477.2,60.58) and (483.45,55) .. (491,55) .. controls (499.28,55) and (506,61.72) .. (506,70) .. controls (506,75.72) and (502.79,80.7) .. (498.08,83.23) -- (491,70) -- cycle ; \draw  [dash pattern={on 0.84pt off 2.51pt}] (476.15,67.84) .. controls (477.2,60.58) and (483.45,55) .. (491,55) .. controls (499.28,55) and (506,61.72) .. (506,70) .. controls (506,75.72) and (502.79,80.7) .. (498.08,83.23) ;  
%Shape: Arc [id:dp650702956885336] 
\draw  [draw opacity=0][dash pattern={on 0.84pt off 2.51pt}] (489.41,84.92) .. controls (485.17,84.47) and (481.45,82.25) .. (479.01,79.02) -- (491,70) -- cycle ; \draw  [color={rgb, 255:red, 0; green, 0; blue, 255 }  ,draw opacity=1 ][dash pattern={on 0.84pt off 2.51pt}] (489.41,84.92) .. controls (485.17,84.47) and (481.45,82.25) .. (479.01,79.02) ;  
%Shape: Arc [id:dp9936274571620323] 
\draw  [draw opacity=0] (478.23,77.87) .. controls (477.02,75.91) and (476.25,73.66) .. (476.05,71.23) -- (491,70) -- cycle ; \draw   (478.23,77.87) .. controls (477.02,75.91) and (476.25,73.66) .. (476.05,71.23) ;  
\draw   (474.86,74.7) -- (476.13,71.24) -- (478.76,73.82) ;
%Shape: Arc [id:dp5730115688555413] 
\draw  [draw opacity=0] (496.15,84.09) .. controls (495.01,84.51) and (493.8,84.79) .. (492.54,84.92) -- (491,70) -- cycle ; \draw   (496.15,84.09) .. controls (495.01,84.51) and (493.8,84.79) .. (492.54,84.92) ;  
\draw   (496.11,86.17) -- (492.69,84.79) -- (495.37,82.25) ;

% Text Node
\draw (254,58) node [anchor=north west][inner sep=0.75pt]  [font=\LARGE] [align=left] {$ \mathrel{\substack{
\overset{\mu_P^-}{\longrightarrow}\\[-0.3ex]
\underset{\mu_{P}^+}{\longleftarrow}
}} $};
\end{tikzpicture}
		\end{center}
		\caption{The Kauer move of a cyclically consecutive set of half-edges (in blue).}
		\label{fig:gen-Kauer-move}	
	\end{figure}

	Since BGAs are closed under derived equivalence \cite{AZ}, it is natural to have the following conjecture.
	\medskip
	
	\noindent\textbf{Conjecture 1.3.}\; If a basic algebra $A$ is derived equivalent to a biserial FBGA, then $A$ is also a biserial FBGA.
		\medskip

	\noindent We show that this conjecture hold for tilting-discrete cases in Corollary \ref{cor:der-closed-tilt-dis}.

	\medskip
	\textbf{Outline.}\; In Section~\ref{sec:BGA}, we review fundamental concepts and key results on biserial FBGAs. 
In Section~\ref{sec:skew-BGA-and-bfbga}, we recall the definitions of skew group algebras and skew-BGAs, and show that the skew group algebras of biserial FBGAs induced by the Nakayama automorphism are in fact skew-BGAs. 
In Section~\ref{sec:kauer-move}, we review silting theory and Kauer moves for BGAs. We also study two-term silting complexes of biserial FBGAs in several special cases. Furthermore, we describe irreducible tilting mutations of a biserial FBGA $A$ in terms of Kauer moves. 
In Section~\ref{sec:2-term-tilt}, we construct a bijection between two-term tilting complexes of a biserial FBGA $A$ and $\phi$-stable two-term tilting complexes of its reduced form $A_{\mathrm{red}}$. We also give necessary and sufficient conditions for a biserial FBGA to be tilting-discrete. 
In the appendix, we provide step-by-step verifications of the tilting mutations, which are straightforward but lengthy. 

	\section*{Acknowledgments}
	The author sincerely thanks Aaron Chan for his supervision at Nagoya University, many fruitful discussions, and valuable suggestions and comments on this work. The author is very grateful to Dong Yang for sharing his lecture notes and for patiently answering his questions on silting theory. The author also thanks Nengqun Li, Yuming Liu and Zhengfang Wang for helpful discussions. This work is supported by the China Scholarship Council (No. 202506040127).

	\section*{Notation and Conventions}
	
	Throughout we assume that $k$ is an algebraically closed field and all algebras considered are finite-dimensional $k$-algebras. Unless stated otherwise, all modules will be finitely generated right modules. Furthermore, we say that $A$ is a bound quiver algebra, if $A\cong \qa$, where $Q$ is a finite quiver and $I$ is an admissible ideal in the path algebra $kQ$. We denote by $s(p)$ the source vertex of a path $p$ and by $t(p)$ its terminus vertex. We will write paths from left to right, for example, $p=\alpha_{1}\alpha_{2}\cdots\alpha_{n}$ is a path with starting arrow $\alpha_{1}$ and ending arrow $\alpha_{n}$. A path is called a cycle if $s(p)=t(p)$.
	By abuse of notation we sometimes view an element in $kQ$ as an element in the quotient $\qa$ if no confusion can arise.

	%In this paper, we study the indecomposable $A$-modules $M$ via their Loewy structure, which is represented by a diagram where the $i$-th row corresponds to the simple summands of the completely reducible module $\rad^{i-1}(A)M/\rad^{i}(A)M$ with $\rad(A)$ the Jacobson radical of $A$. Each number in the diagram denotes a distinct simple module in $A$. For further details, see for example in \cite[Page 174]{Ben}.

	Recall that the {\it derived category} $\mathcal{D}(A)$ of an algebra $A$ is obtained from the homotopy category $\mathcal{K}(A)$ by inverting quasi-isomorphisms. Two algebras are said to be {\it derived equivalent} if their derived categories are equivalent as triangulated categories. By Rickard's theorem~\cite{Ric}, this is equivalent to an equivalence between their bounded homotopy categories $\mathcal{K}^b(\mathrm{proj}\,A)$ of finitely generated projective modules.
	
	A $k$-algebra $A$ is called {\it self-injective} if $A_A$ is an injective $A$-module; and {\it symmetric} if $_AA_A\cong DA$ as $A$-$A$-bimodules. Indeed, symmetric algebras are naturally self-injective (see for example in \cite{Z}).
	
	A $k$-algebra $A$ is {\it special biserial} if it is isomorphic to an algebra of the form $\qa$ where $kQ$ is a path algebra and $I$ is an admissible ideal such that the following properties hold.
	
	\begin{enumerate}[(1)]
		\item At every vertex $i$ in $Q$, there are at most two arrows starting at $i$ and there are at most two arrows ending at $i$.
		
		\item For every arrow $\alpha$ in $Q$, there exists at most one arrow $\beta$ such that $\beta\alpha\notin I$ and there exists at most one arrow $\gamma$ such that $\alpha\gamma\notin I$.
	\end{enumerate}

	\section{Biserial fractional Brauer graph algebras}\label{sec:BGA}

	\subsection{Ribbon graphs}
	\
	
	Ribbon graphs combinatorially encode the structure of orientable surfaces (see for example in \cite[Section 1.1]{OZ}). A key feature of ribbon graphs is the cyclic ordering of (half-)edges at each vertex, which captures the orientation data of the underlying surface. We begin this section by recalling their formal definition.
	
	\begin{definition}\label{def:ribbon-graph}
		A {\it ribbon graph} is a tuple $\Gamma=(V,H,s,\iota,\rho)$, where
		\begin{enumerate}[(1)]
			\item $V$ (also denoted by $V(\Gamma)$) is a finite set whose elements are called vertices;
			
			\item $H$ (also denoted by $H(\Gamma)$) is a finite set whose elements are called half-edges;
			
			\item $s: H\rightarrow V$ is a functions;
			
			\item $\iota: H\rightarrow H$ is an involution without fixed points;
			
			\item $\rho: H\rightarrow H$ is a permutation whose cycles correspond to the sets $H_\vv:=s^{-1}(\vv)$, $\vv\in V$.
			
		\end{enumerate}
		The $\iota$-orbits are called the edges of $\Gamma$. In particular, if we set
\[
E(\Gamma) := H/\langle \iota \rangle,
\]
then $(V(\Gamma), E(\Gamma))$ is the underlying combinatorial graph of $\Gamma$.
	\end{definition}
	
	For a ribbon graph $\Gamma=(V,H,s,\iota,\rho)$, we introduce the following notation. 
For each half-edge $h \in H$, we write
\[
h^+ := \rho(h) \quad \text{and} \quad h^- := \rho^{-1}(h)
\]
for the successor and predecessor of $h$, respectively. We denote by
\[
\bar{h} := \{h, \iota(h)\}
\]
the edge associated with $h$ in the underlying combinatorial graph of $\Gamma$.
For each vertex $\vv \in V$, the valency of $\vv$ is defined by
\[
val(\vv) := \bigl|\{h \in H \mid s(h) = \vv\}\bigr|.
\]
In particular, a loop (that is, an edge $\{h, \iota(h)\}$ with $s(h) = s(\iota(h))$) contributes twice to $val(\vv)$.

	Unless stated otherwise, we will assume that $\Gamma$ is connected, i.e. its underlying graph is connected.

	\subsection{Biserial fractional Brauer graph algebras}\label{subsec:def-fms-BGA}
	\
	
	In this section, we review some basic knowledge about biserial fractional Brauer graph algebras, introduced in \cite{LL}. 
	
	\begin{definition}\textnormal{(cf. \cite[Definition 2.1]{LL3})}\label{fms-BG}
		A \textit{biserial fractional Brauer graph} (abbr. biserial FBG) is a pair $(\Gamma,d)$ consisting of a ribbon graph $\Gamma$ together with a {\it degree function} $d : V(\Gamma) \to \mathbb{Z}_{>0}$ (whose values are referred to as the {\it degrees}), such that for each half-edge $h \in H(\Gamma)$,
		\begin{equation}
\iota\!\left(\rho^{d(s(h))}(h)\right) \;=\; \rho^{d(s(\iota(h)))}(\iota(h)).
\tag{SI}
\end{equation}
	\end{definition}

We often omit $d$ from the notation and simply refer to $\Gamma$ as a biserial FBG when no confusion arises. For a vertex $\vv$, we define the {\it multiplicity} of $\vv$ (also called the fractional degree in \cite{LL}) to be the rational number
\[
m(\vv) := \frac{d(\vv)}{val(\vv)}.
\]
It follows immediately from the definition that a biserial FBG is a (classical) Brauer graph (abbr. BG) (for an explicit definition, see for example in \cite{SS,OZ}) if and only if the multiplicity of every vertex is an integer. A vertex $\vv$ is called {\it truncated} if $d(\vv)=1$.
	Denote by $\nu$ the map
\begin{equation*}
\nu: H \longrightarrow H, \qquad
h \longmapsto \rho^{d(s(h))}h,
\end{equation*}
which is called the {\it Nakayama automorphism} of $(\Gamma,d)$.
	
	To each biserial FBG $(\Gamma,d)$, one can associate a ($2$-regular) quiver $Q=Q_\Gamma$ and an ideal of relations $I=I_{\Gamma,d}$ in the path algebra $kQ$ as follows.
	
	\begin{enumerate}[(1)]
		\item The vertices of $Q$ correspond to the edges of $\Gamma$ and for every $h\in H$, there is an arrow $\alpha_h:\bar{h}\rightarrow\overline{h^+}$. The assignment $\alpha_h\mapsto \alpha_{h^+}$ defines a permutation $\rho=\rho_\Gamma$ of the arrows of $Q$ whose orbits are in bijection with vertices in $\Gamma$. If $\alpha=\alpha_h$, set $d(\alpha):=d(s(h))$.
		
		\item The ideal $I$ is generated by the following set of relations:
		\begin{enumerate}[(i)]
			\item $$\alpha\rho(\alpha)\cdots\rho^{d(\alpha)-1}(\alpha)=\beta\rho(\beta)\cdots\rho^{d(\beta)-1}(\beta),$$
			where $\alpha,\beta\in Q_1$ and $s(\alpha)=s(\beta)$, that is $\alpha$ and $\beta$ start at the same edge of $\Gamma$.
			
			\item $$\alpha\beta=0,$$
			where $\alpha,\beta\in Q_1$ are composable and $\rho(\alpha)\neq \beta$;
		\end{enumerate}
	\end{enumerate}
	
	\begin{definition}\textnormal{(cf. \cite[Definition 2.2 and Proposition 5.5]{LL3})}\label{fms-BGA}
		A $k$-algebra $A$ is called a {\it biserial fractional Brauer graph algebra} (abbr. biserial FBGA) if there exists a biserial FBG $(\Gamma,d)$ such that $A\cong kQ_\Gamma/I_{\Gamma,d}$ as $k$-algebras.
	\end{definition}

	\begin{remark}\label{remark:truncated}
\begin{enumerate}[(1)]
\item
For each edge connected with some truncated vertex in $\Gamma$, the first type of relation becomes non-admissible and has the form $\alpha \rho(\alpha)\cdots \rho^{d(\alpha)-1}(\alpha)=\beta$.
We usually omit this relation by deleting the arrow $\beta$ from the quiver $Q_\Gamma$.

\item
By the preceding remark, and as in the case of BGAs (see, for example, \cite{SS}), a biserial FBGA $A = kQ_{\mathrm{adm}}/I_{\mathrm{adm}}$ can equivalently be defined in terms of its Gabriel quiver $Q_{\mathrm{adm}}$ and an admissible ideal $I_{\mathrm{adm}}$. The quiver $Q_{\mathrm{adm}}$ is obtained from $Q_\Gamma$ by deleting all arrows around truncated vertices. The ideal $I_{\mathrm{adm}}$ is generated by the following three types of relations:
\begin{enumerate}
\item[\textnormal{(I)}]
$$
\alpha\rho(\alpha)\cdots\rho^{d(\alpha)-1}(\alpha)
=
\beta\rho(\beta)\cdots\rho^{d(\beta)-1}(\beta),
$$
whenever $s(\alpha)=s(\beta)=\bar h$ and both vertices incident to the edge $\bar h$ are non-truncated;

\item[\textnormal{(II)}]
$$
\alpha\rho(\alpha)\cdots\rho^{d(\alpha)}(\alpha)=0,
$$
for any arrow $\alpha$ in $Q_{\mathrm{adm}}$;

\item[\textnormal{(III)}]
$$
\alpha\beta=0
$$
for all paths $\alpha\beta$ with $\beta\neq \rho(\alpha)$.
\end{enumerate}
\end{enumerate}
\end{remark}	

	Note that $A$ is indecomposable as a ring if and only if $\Gamma$ is connected, and $A$ is a (classical) Brauer graph algebra (abbr. BGA) if and only if $m(\vv)$ is an integer for every vertex $\vv \in V(\Gamma)$.
	Moreover, biserial FBGAs satisfy the following elementary properties; see~\cite{LL} if one needs details.

	\begin{proposition}\label{prop:bfBGA-basic}
Let $A$ be a biserial FBGA with associated biserial FBG $(\Gamma,d)$, and let $\nu$ be the Nakayama automorphism of $(\Gamma,d)$. Then $A$ is self-injective and special biserial.
Moreover, the Nakayama automorphism of $A$ is given by the map
\begin{equation*}
\nu_A \colon A \to A,\qquad 
\nu_A(\bar{h})=\overline{\rho^{-d(s(h))}(h)}, \quad 
\nu_A(\alpha_h)=\rho^{-d(s(h))}(\alpha_h),
\end{equation*}
which is induced by the Nakayama automorphism of $(\Gamma,d)$. 
Here $\bar{h}$ is identified with the primitive idempotent $e_{\bar{h}}$.
\end{proposition}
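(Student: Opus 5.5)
To prove Proposition~\ref{prop:bfBGA-basic}, I would argue directly from the presentation $A=kQ_{\mathrm{adm}}/I_{\mathrm{adm}}$ of Remark~\ref{remark:truncated}, using condition (SI) throughout.

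\textbf{Special biserial.} The quiver $Q_\Gamma$ is $2$-regular: at the vertex $\bar h$ exactly the arrows $\alpha_h,\alpha_{\iota(h)}$ start, and exactly $\alpha_{h^-},\alpha_{\iota(h)^-}$ end. Deleting arrows around truncated vertices preserves the bound ``at most two''. Relations of type (III) kill every composition $\alpha\beta$ with $\beta\neq\rho(\alpha)$. Hence for each arrow $\alpha$ the only possible nonzero successor is $\rho(\alpha)$, and dually the only possible nonzero predecessor is $\rho^{-1}(\alpha)$. This proves that $A$ is special biserial.

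\textbf{Structure of $P_{\bar h}=e_{\bar h}A$.} Paths starting at $\bar h$ are, modulo $I$, of the form $\alpha_h\rho(\alpha_h)\cdots$ or $\alpha_{\iota h}\rho(\alpha_{\iota h})\cdots$. By (II) these have length at most $d(s(h))$, respectively $d(s(\iota h))$. By (I) the two maximal paths $p_h$ and $p_{\iota h}$ coincide. Condition (SI) is exactly what makes this relation well-typed: the terminus of $p_h$ is $\overline{\rho^{d(s(h))}h}$, the terminus of $p_{\iota h}$ is $\overline{\rho^{d(s(\iota h))}\iota h}$, and (SI) says these two edges are equal.

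I would then show the following.
\begin{itemize}
\item A basis of $e_{\bar h}A$ is given by $e_{\bar h}$, the proper subpaths of the two uniserial branches, and the common socle element $p_h=p_{\iota h}$.
\item Every nonzero element of $e_{\bar h}A$ can be multiplied on the right into $kp_h$. Hence $\soc(e_{\bar h}A)$ is simple, isomorphic to the simple module at $\overline{\nu(h)}=\overline{\rho^{d(s(h))}h}$.
\end{itemize}
The main point to check is that $p_h$ is nonzero and that no further relations collapse the branches. I would do this by building the module explicitly as a biserial representation; the truncated case, where one branch is a single arrow that has been deleted, is handled separately.

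\textbf{Self-injectivity.} Each indecomposable projective has simple socle. The assignment $\bar h\mapsto\overline{\nu h}$ is a bijection on edges, since $\nu$ is a bijection of $H$ commuting with $\iota$ up to the edge identification given by (SI). So the socles of the indecomposable projectives are pairwise non-isomorphic, and $A$ is self-injective, i.e.\ a QF algebra.

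\textbf{Nakayama automorphism.} I would define a symmetrizing-type linear form $\lambda$ sending each maximal path $p_h$ to $1$ and all other basis paths to $0$. Its kernel contains no nonzero one-sided ideal, by the simple-socle argument above, so $\lambda$ is a Frobenius form. Next, I would verify $\lambda(ab)=\lambda(b\,\nu_A(a))$ on arrows. For an arrow $\alpha=\alpha_h$, the only paths $q$ with $\alpha q$ maximal are $q=\rho(\alpha)\cdots\rho^{d-1}(\alpha)$, where $d=d(\alpha)$. The cyclic rotation $q\,\rho^{-d}(\alpha)$ is then the maximal path at $\overline{\rho^{1}h}$: it is the $d$-step path around the same vertex shifted by one. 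This gives $\nu_A(\alpha_h)=\rho^{-d(s(h))}(\alpha_h)$, and correspondingly $\nu_A(e_{\bar h})=e_{\overline{\rho^{-d}h}}$. Finally, I would check that this map respects relations (I)--(III), using (SI) again for (I), so that it is an algebra automorphism.

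I expect the main obstacle to be the bookkeeping that $\lambda$ is twisted by exactly this rotation and not by a scalar-modified version. Since all relations have coefficient $1$, no scalars arise, and $\nu_A$ is induced by $\nu^{-1}$ on half-edges.
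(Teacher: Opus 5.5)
\textbf{Overall.} The paper gives no proof of this proposition and simply defers to \cite{LL}, so your self-contained route stands on its own. Most of it is sound:
\begin{itemize}
\item special biserial follows from the type (III) relations;
\item you get an explicit basis of $e_{\bar h}A$ by writing down the biserial representation and checking the relations on it;
\item the socle of $e_{\bar h}A$ is simple and equal to $S_{\overline{\nu h}}$;
\item $\bar h\mapsto\overline{\nu h}$ is a bijection because (SI) says exactly $\iota\nu=\nu\iota$;
\item $\lambda$ is a Frobenius form.
\end{itemize}
For the self-injectivity step you should say why simple, pairwise non-isomorphic socles suffice for a basic algebra. The reason is that $e_{\bar h}A$ embeds in the injective envelope of $S_{\overline{\nu h}}$, and summing dimensions over the bijection forces equality. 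The Frobenius form also gives self-injectivity directly.

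\textbf{The gap: the direction of $\nu_A$.} You fix the convention $\lambda(ab)=\lambda(b\,\nu_A(a))$ and take $a=\alpha=\alpha_h$, $q=\rho(\alpha)\cdots\rho^{d-1}(\alpha)$.
\begin{itemize}
\item The path $q$ ends at $\overline{\rho^{d}h}=\overline{\nu h}$, whereas $\rho^{-d}(\alpha)=\alpha_{\rho^{-d}h}$ starts at $\overline{\rho^{-d}h}$. So $q\,\rho^{-d}(\alpha)$ is in general not even a path.
\item The maximal path at $\overline{\rho h}$ beginning with $\rho(\alpha)$ is $q\,\rho^{d}(\alpha)=p_{\rho h}$.
\item Hence your convention forces $\nu_A(\alpha_h)=\rho^{d}(\alpha_h)$ and $\nu_A(e_{\bar h})=e_{\overline{\nu h}}$. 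The latter must hold anyway, since $\lambda(e_{\bar h}p_h)=1$ and $p_h$ ends at $\overline{\nu h}$.
\end{itemize}
This is the inverse of the map in the statement. The discrepancy is not absorbed by inner automorphisms: $\nu$ and $\nu^{-1}$ induce different permutations of the simples whenever $\nu^2$ moves an edge, for instance for a self-injective Nakayama biserial FBGA whose Nakayama permutation has order $3$.

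\textbf{The fix.} To obtain the stated formula, use the opposite convention $\lambda(ab)=\lambda(\nu_A(b)\,a)$.
\begin{itemize}
\item For $b=\alpha_h$, the unique path $a$ with $a\,\alpha_h$ maximal is $q'=\rho^{-(d-1)}(\alpha)\cdots\rho^{-1}(\alpha)$.
\item The rotation $\rho^{-d}(\alpha)\,q'=p_{\rho^{-d}h}$ is again maximal.
\item This gives $\nu_A(\alpha_h)=\rho^{-d}(\alpha_h)$ and $\nu_A(e_{\bar h})=e_{\overline{\rho^{-d}h}}$.
\end{itemize}
That this assignment is an algebra automorphism follows because $\nu^{-1}$ is an automorphism of $(\Gamma,d)$ fixing vertices. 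It commutes with $\rho$, and with $\iota$ by (SI), so it preserves $I_{\Gamma,d}$. Checking $\lambda$ against the generators $e_{\bar h}$ and $\alpha_h$ then suffices.

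\textbf{A smaller point.} Carry out the explicit-module construction over $kQ_\Gamma/I_{\Gamma,d}$ rather than literally over $kQ_{\mathrm{adm}}/I_{\mathrm{adm}}$. When both ends of an edge are truncated, relation (i) reads $\alpha_h=\alpha_{\iota h}$, and one of these arrows must be kept. Deleting all arrows around truncated vertices, as in Remark~\ref{remark:truncated}(2), would lose arrows, for example in Example~\ref{exa:silt-dis-but-have-2-cycles}.
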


	\subsection{Some numerical invariants associated to vertices}\label{subsec:numbers-of-bFBGA}
	\
	
	We begin by defining some numerical invariants associated to the vertices of biserial FBGs.

\begin{definition}
Let $\Gamma = (\Gamma, d)$ be a biserial FBG with $\nu$ the Nakayama automorphism of $\Gamma$.
\begin{enumerate}
    \item For each vertex $\vv \in V$, denote by $o(\vv)$ the number of $\langle \nu \rangle$-orbits of half-edges incident with the vertex $\vv$.
    \item Let $G_\vv$ be any $\langle \nu \rangle$-orbit of half-edges around $\vv$. We set $n(\vv)=|G_\vv|$, which is independent of the choice of $G_\vv$.
\end{enumerate}
\end{definition}

By definition, for a biserial FBG $(\Gamma,d)$, both the Nakayama permutation $\nu$ and the function $o$ are determined by the degree function $d$. Moreover, we have $$val(\vv) = o(\vv)\,n(\vv).$$ We also note that $n(\vv)=1$ for every $\vv \in V(\Gamma)$ if $\Gamma$ is a BG.

\begin{lemma}\label{lem:n(v)-property}
	If there is an edge connecting $\vv_1$ and $\vv_2$, then $n(\vv_1) = n(\vv_2)$. In fact, for a connected biserial FBG $(\Gamma, d)$, $n(\vv)$ coincides with the order of the cyclic group $\langle \nu \rangle$ generated by the Nakayama automorphism.
\end{lemma}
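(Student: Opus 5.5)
The plan is to show that $\nu$ commutes with $\iota$ and with $\rho$, and then to read off $n(\vv)$ directly as the order of $\nu$.

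First, the condition (SI) says exactly that $\iota(\nu(h)) = \nu(\iota(h))$ for every half-edge $h$. Since $\nu$ is the permutation $h\mapsto \rho^{d(s(h))}h$, it also commutes with $\rho$. Indeed, $s(\rho h)=s(h)$, so $\nu(\rho h)=\rho^{d(s(h))+1}h=\rho(\nu h)$. Consequently $\nu^k$ commutes with both $\iota$ and $\rho$ for every $k$.

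Next, fix a vertex $\vv$. On $H_\vv$, the map $\nu$ acts as $\rho^{d(\vv)}$, and $\rho$ acts as a single cycle of length $val(\vv)$. Every $\langle\nu\rangle$-orbit in $H_\vv$ is therefore an orbit of $\rho^{d(\vv)}$ acting on $\mathbb{Z}/val(\vv)$. All such orbits have the same size, namely
\[
n(\vv)=\frac{val(\vv)}{\gcd(d(\vv),val(\vv))}.
\]
This equals the order of $\nu|_{H_\vv}$ as a permutation of $H_\vv$, and it shows that $n(\vv)$ is well defined. So $\nu^k$ fixes some $h\in H_\vv$ if and only if $\nu^k$ fixes all of $H_\vv$, which happens if and only if $n(\vv)\mid k$.

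For the adjacency claim, let $h\in H_{\vv_1}$ with $\iota(h)\in H_{\vv_2}$. Then
\[
\nu^k(h)=h \iff \iota\nu^k(h)=\iota(h) \iff \nu^k(\iota h)=\iota h,
\]
using the commutation from the first step. By the previous step, the left-hand condition holds exactly when $n(\vv_1)\mid k$, and the right-hand condition holds exactly when $n(\vv_2)\mid k$. The two numbers therefore have the same set of multiples, so $n(\vv_1)=n(\vv_2)$.

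Finally, suppose $\Gamma$ is connected. Since $n$ agrees across every edge, $n$ is constant, say equal to $n$. The order of $\nu$ on $H=\bigsqcup_\vv H_\vv$ is the least common multiple of the orders of its restrictions $\nu|_{H_\vv}$. Each of these orders is $n$, so $|\langle\nu\rangle|=n$.

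There is no real obstacle in this argument; the only point requiring care is to observe that (SI) is precisely the statement $\iota\nu=\nu\iota$.
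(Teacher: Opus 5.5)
Your proof is correct and follows the same route as the paper: both read (SI) as $\iota\nu=\nu\iota$, so that $\iota$ carries $\langle\nu\rangle$-orbits at $\vv_1$ onto $\langle\nu\rangle$-orbits at $\vv_2$ (you phrase this through stabilizers), and both use connectedness to make $n$ constant. Your version is slightly more complete, since it checks that $n(\vv)=val(\vv)/\gcd(d(\vv),val(\vv))$ is well defined and that the order of $\nu$ is exactly $n$ (an lcm of equal orders), whereas the paper only notes $\nu^{n}=\mathrm{id}$.
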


\begin{proof}
	Let $\{h_1, \ldots, h_n\}$ be a $\langle \nu \rangle$-orbit of half-edges around $\vv_1$, such that for each $h_i$, we have $s(\iota(h_i)) = \vv_2$. By the definition of a biserial FBG, the set $\{\iota(h_1), \ldots, \iota(h_n)\}$ forms a $\langle \nu \rangle$-orbit of half-edges around $\vv_2$. Hence $n(\vv_1) = n(\vv_2) = n$. By the definition of the Nakayama automorphism of $(\Gamma,d)$, $\nu$ acts as a cyclic permutation of length $n(\vv)$ on $G_\vv$. Hence $\nu^{n(\vv)}=\mathrm{id}$. Therefore, $n(\vv)$ coincides with the order of the cyclic group $\langle \nu \rangle$ generated by the Nakayama automorphism.
\end{proof}

Suppose $s(h_1)=\vv=s(h_2)$. Then we call the sequence
\[
h_1 = h^0 \xrightarrow{\rho} h^1 \xrightarrow{\rho} \cdots \xrightarrow{\rho} h^{k-1} \xrightarrow{\rho} h^k = h_2,
\]
with $k \le val(\vv)$, of consecutive half-edges a {\it fan}.
Moreover, if $h_1$ and $h_2$ lie in the same $\langle \nu \rangle$-orbit and the half-edges $h^0,\ldots,h^{k-1}$ lie in distinct $\langle \nu \rangle$-orbits, then we call such a fan $F$ a {\it $\nu$-fan}. In this case, we have $k=o(\vv)$.

\begin{lemma}
	For all $\vv \in V(\Gamma)$, $d(\vv)$ is divisible by $o(\vv)$, and $F(\vv) := \dfrac{d(\vv)}{o(\vv)} \in \mathbb{Z}$ equals the number of $\nu$-fans encountered under the action of the Nakayama automorphism $\nu$, when moving from a half-edge $h$ to $\nu(h)$.
\end{lemma}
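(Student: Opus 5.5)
The plan is to analyse how $\nu=\rho^{d(\vv)}$ acts on the set $H_\vv$ of half-edges at a fixed vertex $\vv$. Since $\rho$ restricted to $H_\vv$ is a single cycle of length $val(\vv)$, I identify $H_\vv$ with $\mathbb{Z}/val(\vv)$ by choosing $h_0\in H_\vv$ and sending $\rho^i(h_0)\mapsto i$. Under this identification $\rho$ becomes $i\mapsto i+1$ and $\nu$ becomes translation by $d(\vv)$. The key point is that $\langle\nu\rangle$ acts on $H_\vv$ through the cyclic group generated by this translation: Lemma~\ref{lem:n(v)-property} gives that the global order of $\nu$ equals $n(\vv)$, and $\nu$ preserves $H_\vv$. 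Hence the $\langle\nu\rangle$-orbits on $H_\vv$ are exactly the cosets of the subgroup $\langle d(\vv)\rangle=\langle g\rangle\subseteq\mathbb{Z}/val(\vv)$, where $g=\gcd(d(\vv),val(\vv))$.

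From this, the orbit count and divisibility follow. The number of orbits is $o(\vv)=g$, and each orbit has size $n(\vv)=val(\vv)/g$. Since $g\mid d(\vv)$, we get $o(\vv)\mid d(\vv)$, and $F(\vv)=d(\vv)/g\in\mathbb{Z}$.

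Next I interpret $\nu$-fans in these coordinates. A fan from $i$ to $i+k$ is a $\nu$-fan precisely when two conditions hold. First, $i+k\equiv i \pmod g$, i.e.\ the endpoint lies in the same coset as the start. Second, the residues $i,\dots,i+k-1$ are pairwise distinct mod $g$. Together these force $k=g=o(\vv)$, consistent with the remark preceding the statement. So each $\nu$-fan advances exactly $o(\vv)$ steps of $\rho$. Moving from $h$ to $\nu(h)=\rho^{d(\vv)}(h)$ traverses $d(\vv)$ consecutive $\rho$-steps. Cutting this path at the successive indices $i, i+g, i+2g,\dots,i+d(\vv)$ decomposes it into $d(\vv)/g$ consecutive $\nu$-fans, each starting where the previous one ends. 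Hence the count is $F(\vv)$.

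The only delicate point is making precise what ``the number of $\nu$-fans encountered'' means: the walk may wrap around $\vv$ several times, since $d(\vv)$ can exceed $val(\vv)$. I would handle this by counting the fans as steps of the walk rather than as distinct subsets of $H_\vv$, and that convention yields exactly $F(\vv)$.
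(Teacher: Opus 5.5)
Your proof is correct and follows the paper's idea: the $d(\vv)$ steps of $\rho$ from $h$ to $\nu(h)$ are cut into consecutive $\nu$-fans of length $o(\vv)$. It also supplies the step the paper leaves implicit, since in the model $H_\vv\cong\mathbb{Z}/val(\vv)$ the $\langle\nu\rangle$-orbits are the cosets of $\gcd(d(\vv),val(\vv))$, so $o(\vv)=\gcd(d(\vv),val(\vv))$ divides $d(\vv)$; you do not need Lemma~\ref{lem:n(v)-property} for this, because $\nu$ restricted to $H_\vv$ is just $\rho^{d(\vv)}$.
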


\begin{proof}
	Consider a half-edge $h \in H$ such that $s(h) = \vv$. Since $h$ and $\rho^{d(\vv)}(h)$ lie in the same $\langle \nu \rangle$-orbit, the angle from $h$ to $\rho^{d(\vv)}(h)$ is always an integer multiple of the $\nu$-fan around $\vv$. This integer, which equals $\dfrac{d(\vv)}{o(\vv)}$, does not depend on the choice of $h$, because both $d(\vv)$ and $o(\vv)$ are fixed for $\vv$.
\end{proof}
 
The above definitions yield an equivalent description of the multiplicity function as follows.

\begin{proposition}\label{prop:multiplicity-eqv-def}
	For each vertex $\vv \in V$, we have
	\[
	m(\vv) = \dfrac{F(\vv)}{n(\vv)}.
	\]
\end{proposition}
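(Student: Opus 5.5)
This is a direct computation from the definitions together with the identity $val(\vv)=o(\vv)\,n(\vv)$ and the preceding lemma. By definition,
\[
m(\vv)=\frac{d(\vv)}{val(\vv)} .
\]
Substituting $val(\vv)=o(\vv)\,n(\vv)$, which was recorded just after the definition of $o$ and $n$, gives
\[
m(\vv)=\frac{d(\vv)}{o(\vv)\,n(\vv)} .
\]
By the preceding lemma, $o(\vv)$ divides $d(\vv)$ and $F(\vv)=d(\vv)/o(\vv)$, so the right-hand side equals $F(\vv)/n(\vv)$, as claimed.

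The only point needing justification is the identity $val(\vv)=o(\vv)\,n(\vv)$. I would verify it as follows. The group $\langle\nu\rangle$ preserves the set $H_\vv$ of half-edges at $\vv$, since $\nu(h)=\rho^{d(s(h))}(h)$ and $\rho$ preserves $H_\vv$. On $H_\vv$, $\nu$ acts as the power $\rho^{d(\vv)}$ of the single $val(\vv)$-cycle $\rho|_{H_\vv}$. Hence all its orbits on $H_\vv$ have the same size,
\[
n(\vv)=\frac{val(\vv)}{\gcd(val(\vv),d(\vv))},
\]
and there are $o(\vv)=\gcd(val(\vv),d(\vv))$ of them. Their product is $val(\vv)$.

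This also confirms the preceding lemma independently: $o(\vv)=\gcd(val(\vv),d(\vv))$ divides $d(\vv)$. No step here is a real obstacle; the only care needed is to confirm that $n(\vv)$ is well defined (all orbits at $\vv$ have equal size), which the cyclic-power argument gives.
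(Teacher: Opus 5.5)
Your proof is correct and is essentially the paper's proof: the same one-line substitution $m(\vv)=\frac{d(\vv)}{val(\vv)}=\frac{o(\vv)F(\vv)}{o(\vv)n(\vv)}$. Your cyclic-power argument goes slightly beyond the paper. Since $\nu$ restricted to the half-edges at $\vv$ is $\rho^{d(\vv)}$, you obtain $o(\vv)=\gcd(val(\vv),d(\vv))$ and $n(\vv)=val(\vv)/o(\vv)$. This justifies the identity $val(\vv)=o(\vv)\,n(\vv)$, the well-definedness of $n(\vv)$, and the divisibility $o(\vv)\mid d(\vv)$, all of which the paper only asserts.
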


\begin{proof}
	By the definition of the multiplicity,
	\[
	m(\vv) = \dfrac{d(\vv)}{val(\vv)} 
	= \dfrac{o(\vv)\,F(\vv)}{o(\vv)\,n(\vv)} 
	= \dfrac{F(\vv)}{n(\vv)}.
	\]
\end{proof}

\subsection{Reduced forms of biserial FBGAs}\label{subsec:reduced-form}
\

Roughly speaking, the reduced form of a biserial FBG is essentially given by the graph of orbits $\Gamma / \langle \nu \rangle$. However, since $\Gamma / \langle \nu \rangle$ is in general not a ribbon graph, but an orbifold ribbon graph.

\begin{definition}\label{def:orbifold}
	An {\it orbifold ribbon graph} is a tuple $\Gamma=(V,H,s,\iota,\rho)$ satisfying conditions {\normalfont(1)}--{\normalfont(5)} of a ribbon graph in Definition \ref{def:ribbon-graph}, except that the involution $\iota$ is allowed to have fixed points. We denote by $H_\times \subseteq H$ the set of fixed points of $\iota$. The elements of $H_\times$ are called {\it orbifold edges}. Each orbifold edge is drawn as an edge connecting a vertex in $V$ to a cross symbol~$\times$, where the symbol~$\times$ is not regarded as a vertex of $V$.
\end{definition}

Just as a ribbon graph naturally fattens to an orientable ribbon surface, an orbifold ribbon graph naturally fattens to an orbifold ribbon surface \cite{AP2,BSW}. We will show that an orbifold ribbon graph can be obtained as a quotient of a biserial FBG by its Nakayama automorphism. 

For each biserial FBG $(\Gamma = (V, H, s, \iota, \rho), d)$, define 
\[
(\Gamma / \langle \nu \rangle,d) := ((V, H', s', \iota', \rho'),d),
\]
where $V$ coincides with the same vertex set of $\Gamma$ and $d$ is the same function defined on $V$, and 
\begin{enumerate}
    \item $H' = H / \langle \nu \rangle = \{ [h] \mid h \in H \}$ is the set of $\langle \nu \rangle$-orbits in $H$;
    \item $s'([h]) = s(h)$;
    \item $\iota'([h]) = [\iota(h)]$;
    \item $\rho'([h]) = [\rho(h)]$.
\end{enumerate}
In fact, $\Gamma / \langle \nu \rangle$ is an orbifold ribbon graph. 
If $\Gamma/\langle\nu\rangle$ has no orbifold edges, then the biserial FBG $(\Gamma,d)$, equivalently the associated biserial FBGA $A$, is called {\it admissible}. Otherwise, $(\Gamma,d)$ and $A$ are called {\it non-admissible}. This terminology follows from the notion of admissibility of the action of $\langle\nu\rangle$ on the set of half-edges; see~\cite{LL3}.
By definition, each edge or orbifold edge in $\Gamma / \langle \nu \rangle$ corresponds to a $\langle \nu_A \rangle$-orbit of simple modules in $A$.

Now we construct a ribbon graph from an orbifold ribbon graph.
Let $\Gamma=(V,H,s,\iota,\rho)$ be an orbifold ribbon graph.
Define $\widehat{\Gamma}
:= (\widehat{V},\widehat{H},\widehat{s},\widehat{\iota},\widehat{\rho})$ as follows.

\begin{enumerate}
    \item If $\iota$ has no fixed point, then let 
    \[
        \widehat{\Gamma} = \Gamma.
    \]
    
    \item If $\iota$ has some fixed points, then:
        \begin{enumerate}
            \item $\widehat{V} = V_1 \sqcup V_2$, where $V_1$ and $V_2$ are two copies of $V$. For $\vv \in V$, we denote by $\vv_i$ the element of $V_i$ corresponding to $\vv$.

            \item $\widehat{H} = H_1 \sqcup H_2$, where $H_1$ and $H_2$ are two copies of $H$. For $h \in H$, we denote by $h_i$ the element of $H_i$ corresponding to $h$.
            
            \item $\widehat{s}(h_i) = s(h)$.
            
            \item 
            \[
            \widehat{\iota}(h_i) =
            \begin{cases}
                \iota(h)_i, & \text{if } \iota(h) \neq h, \\[2mm]
                h_{3-i}, & \text{if } \iota(h) = h.
            \end{cases}
            \]
            
            \item $\widehat{\rho}(h_i) = \rho(h)_i$.
        \end{enumerate}
\end{enumerate}
In the case where $\iota$ has fixed points, there exists a natural involution $\phi$ on $\widehat{\Gamma}$ which is given by $\vv_1\mapsto \vv_2$ and $h_1\mapsto h_2$.

Now, for each biserial FBG $(\Gamma,d)$, we define its {\it reduced form} to be
$$
\Gamma_{\mathrm{red}} := (\widehat{\Gamma / \langle \nu \rangle}, d'),
$$
where $d'$ is the function on the vertex set of $\widehat{\Gamma / \langle \nu \rangle}$ naturally induced by $d$, that is,
$d'(\vv_i) = d(\vv)$ for every $\vv_i$ corresponding to the vertex $\vv$ of $\Gamma$.
By construction, for each vertex $\vv$ in $\Gamma / \langle \nu \rangle$ we have $n(\vv) = 1$, that is each $\langle\nu\rangle$-orbit has exactly one element, the multiplicity of each vertex $\vv$ is given by
\[
m(\vv) = \frac{F(\vv)}{n(\vv)} = F(\vv) \in \mathbb{Z}_+.
\]
Therefore, this is a BG.

For the following statement, we recall that a Brauer graph is called a {\it Brauer tree} if its associated ribbon graph $\Gamma$ is a tree and the multiplicity function $m$ assigns the value $1$ to all vertices, except possibly for a single vertex $\vv$, called the {\it exceptional vertex}, for which $m(\vv)$ is referred to as the {\it exceptional multiplicity}. The BGA associated with a Brauer tree is called a {\it Brauer tree algebra}. In fact, a BGA is representation-finite if and only if it is a Brauer tree algebra (see for example \cite{SS}).

Let $A$ denote the biserial FBGA associated with $(\Gamma,d)$. We define $A_{\mathrm{red}}$ to be the BGA associated with $(\Gamma_{\mathrm{red}},d)$, which we also call the {\it reduced form} of $A$. It is shown in~\cite[Theorem~2.29]{LL3} that a biserial FBGA is representation-finite (resp.\ domestic) if and only if its reduced form is representation-finite (resp.\ domestic). 

Moreover, in the representation-finite case, although we do not make use of the following theorem in this paper, we include it to help the reader better understand which classical algebras biserial FBGAs correspond to. For a more explicit description of representation-finite self-injective algebras of type~A, we refer the reader to \cite{Ried,Asa1,Asa2}, where the necessary terminology and background can be found.

\begin{theorem}\textnormal{(cf. \cite[Theorem 4.7 and Theorem 4.8]{LL3})}\label{thm:rep-fin-fms}
Let $A$ be a biserial FBGA with associated biserial FBG $(\Gamma,d)$. Then the following are equivalent:
\begin{enumerate}[(1)]
    \item $A$ is representation-finite.

    \item $A$ is a representation-finite self-injective algebra of type~A.

    \item The reduced form $A_{\mathrm{red}}$ is a Brauer tree algebra with $n$ edges and exceptional multiplicity $m$.
\end{enumerate}
When these equivalent conditions hold, one of the following cases occurs:
\begin{enumerate}[(1)]
    \item $\Gamma / \langle \nu \rangle$ is a Brauer tree. In this case, the stable Auslander--Reiten quiver of $A$ is isomorphic to $     \mathbb{Z}A_{mn}/\langle \tau^{nr}\rangle$
    for some positive integer $r$, where $\tau$ denotes the translation of $\mathbb{Z}A_{mn}$. Moreover, $A$ is stably equivalent to a self-injective Nakayama algebra.

    \item $\Gamma / \langle \nu \rangle$ is a tree with a unique orbifold edge, and every vertex has multiplicity~$1$. In this case, the stable Auslander--Reiten quiver of $A$ is isomorphic to
    $
        \mathbb{Z}A_n/\langle \tau^{nr}\phi\rangle
    $
    for some positive integer $r$, where $\phi$ denotes the involution of $\mathbb{Z}A_n$. Moreover, $A$ is stably equivalent to a Möbius algebra.
\end{enumerate}
\end{theorem}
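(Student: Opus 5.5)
The statement is cited from \cite[Theorems 4.7 and 4.8]{LL3}, and the paper does not use it later. So the plan is to reconstruct the argument from the reduction results already available, rather than redo the Auslander--Reiten theory from scratch.

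\textbf{The equivalence (1)$\Leftrightarrow$(3).} By \cite[Theorem~2.29]{LL3}, $A$ and $A_{\red}$ have the same representation type. A BGA is representation-finite exactly when it is a Brauer tree algebra. So $A$ is representation-finite if and only if $(\Gamma_{\red},d)$ is a Brauer tree.

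\textbf{The implications involving (2).} (2)$\Rightarrow$(1) is trivial. For (1)$\Rightarrow$(2), I would argue as follows.
\begin{itemize}
\item By Proposition~\ref{prop:bfBGA-basic}, $A$ is self-injective and special biserial.
\item A representation-finite special biserial self-injective algebra has stable Auslander--Reiten quiver of tree class $A_m$, since all non-projective indecomposables are string modules.
\item By Riedtmann's classification, its stable AR quiver is therefore $\mathbb{Z}A_N/\Pi$ for an admissible group $\Pi$, generated by $\tau^{s}$ or by $\tau^{s}\phi$.
\end{itemize}

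\textbf{The case distinction.} I would pass through the skew group algebra $A\langle\nu_A\rangle$ (Section~\ref{sec:skew-BGA-and-bfbga}).
\begin{itemize}
\item \emph{Admissible case.} $\Gamma_{\red}=\Gamma/\langle\nu\rangle$ is a Brauer tree with $n$ edges and exceptional multiplicity $m$. The quiver $Q_\Gamma$ is then a Galois covering of $Q_{\Gamma/\langle\nu\rangle}$ with group $\mathbb{Z}/r$, where $r=|\langle\nu\rangle|=n(\vv)$ by Lemma~\ref{lem:n(v)-property}. The Brauer tree algebra has stable AR quiver $\mathbb{Z}A_{mn}/\langle\tau^{mn}\rangle$. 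The covering multiplies the period, so $A$ has $\mathbb{Z}A_{mn}/\langle\tau^{nr}\rangle$, where the period is computed by counting simple modules $nr$ against the length of the $\Omega$-orbit. Riedtmann's classification then gives stable equivalence with a self-injective Nakayama algebra.
\item \emph{Non-admissible case.} $\Gamma_{\red}$ is the double $\widehat{\Gamma/\langle\nu\rangle}$. For it to be a tree, $\Gamma/\langle\nu\rangle$ must be a tree with exactly one orbifold edge, since two orbifold edges would produce a cycle in the double. Also $m\equiv1$: the involution $\phi$ swaps the two copies of each vertex, which forces the exceptional vertex to be $\phi$-fixed, and no vertex of $\widehat{\Gamma/\langle\nu\rangle}$ is $\phi$-fixed. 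The Möbius twist $\phi$ on $\mathbb{Z}A_n$ then comes from the orbifold edge, where $\nu$ acts on its half-edges by swapping $\iota$-partners. This gives $\mathbb{Z}A_n/\langle\tau^{nr}\phi\rangle$.
\end{itemize}

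\textbf{Main obstacle.} The hardest step is the precise identification of the admissible group $\Pi$, namely the exponent $nr$ and the presence of $\phi$. My first attempt would be to compute $\Omega$-orbits of simple modules via Green walks on $\Gamma$. This expresses $\tau=\Omega^2\nu_A$ combinatorially and lets one count orbits against $|Q_0|=nr$.
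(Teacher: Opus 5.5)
The paper gives no proof of this theorem. It quotes \cite[Theorems~4.7 and 4.8]{LL3} and never uses it, so your sketch has to stand on its own.

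\textbf{What is fine.} The equivalence (1)$\Leftrightarrow$(3) via \cite[Theorem~2.29]{LL3} is correct. So is (1)$\Rightarrow$(2), although the operative fact for tree class $A$ is that Auslander--Reiten sequences over a special biserial algebra have at most two non-projective middle terms, not merely that the modules are string modules. The graph-theoretic half of the non-admissible case is also correct: exactly one orbifold edge, ordinary part a tree, and $m\equiv 1$ because $\vv_1,\vv_2$ carry equal multiplicity.

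\textbf{Admissible case.} The real content of the second half is the identification of the stable AR quiver, and here your computation is wrong as written. A Brauer tree algebra with $n$ edges and exceptional multiplicity $m$ has stable AR quiver $\mathbb{Z}A_{mn}/\langle\tau^{n}\rangle$, not $\mathbb{Z}A_{mn}/\langle\tau^{mn}\rangle$; it is stably equivalent to the symmetric Nakayama algebra with $n$ simples and Loewy length $mn+1$. Multiplying your stated period by $r$ would give $\tau^{mnr}$, not $\tau^{nr}$. With the correct period the case can be closed, but ``the covering multiplies the period'' must be replaced by an argument:
\begin{enumerate}
\item $\langle\nu_A\rangle\cong\mathbb{Z}/r$ acts freely on the vertices of $Q_\Gamma$, with orbit algebra $A_{\red}$.
\item It also acts freely on indecomposables, since a fixed-point-free automorphism can send a string neither to itself nor to its inverse.
\item By Gabriel's covering theory, push-down therefore induces a Galois covering $\Gamma^s_A\to\Gamma^s_{A_{\red}}$ with group $\mathbb{Z}/r$.
\item Hence $\Pi$ is an index-$r$ subgroup of the infinite cyclic group $\langle\tau^n\rangle$. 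This forces $\Pi=\langle\tau^{nr}\rangle$ and rules out any twist.
\end{enumerate}

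\textbf{Non-admissible case.} Here there is no argument at all. The automorphism $\nu_A^{|\langle\nu\rangle|/2}$ fixes the vertices of $Q_\Gamma$ given by the loops lying over the orbifold edge. So $A\to A/\langle\nu_A\rangle$ is not a Galois covering, and the reasoning above is unavailable. Nor is $A$ a Galois cover of $A_{\red}$ in general: in Example~\ref{exa:preproj-A3}, $A$ and $A_{\red}$ both have three simples, yet $A$ is not symmetric, so they are not isomorphic.

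Your sentence about $\nu$ swapping $\iota$-partners is a heuristic for why $\phi$ should appear. It is not a proof that $\Pi$ contains a glide reflection, and deciding that is exactly the point of case (2). You need a mechanism that detects $\phi$, for instance either of the following:
\begin{enumerate}
\item a common Galois cover of $A$ and $A_{\red}$, with stable AR quiver $\mathbb{Z}A_n$, on which the deck group of $A$ is generated by an element acting as $\tau^{nr}\phi$;
\item your Green-walk computation of $\tau=\Omega^2\nu_A$, actually carried out, showing that the $\tau$-orbits of non-projective indecomposables have two different lengths. In $\mathbb{Z}A_n/\langle\tau^{e}\rangle$ they all have length $e$; for $\langle\tau^{e}\phi\rangle$ only the $\phi$-fixed middle row does, the others having length $2e$.
\end{enumerate}

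\textbf{The value of $r$.} In the non-admissible case $r$ is not $|\langle\nu\rangle|$. The algebra $A$ has $|E(\Gamma)|=n\,|\langle\nu\rangle|/2$ simples, while $\mathbb{Z}A_n/\langle\tau^{nr}\phi\rangle$ carries $nr$ projective vertices, so $r=|\langle\nu\rangle|/2$. For example, Example~\ref{exa:preproj-A3} has $|\langle\nu\rangle|=2$ and stable AR quiver $\mathbb{Z}A_3/\langle\tau^{3}\phi\rangle$.
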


\section{Skew group algebras and biserial FBGAs}\label{sec:skew-BGA-and-bfbga}

By Proposition~\ref{prop:bfBGA-basic}, for any biserial FBGA $A$, the Nakayama automorphism $\nu_A$ is naturally induced by a quiver morphism of its quiver. Hence the action of the cyclic group $G=\langle\nu_A\rangle$ on $A$ gives rise naturally to the skew group algebra $AG$. In this section, we investigate the relationship among $A$, $A_{\mathrm{red}}$, and $AG$.

\subsection{Skew group algebras and skew-BGAs}
\

We first recall the definition of skew group algebras.

\begin{definition}\label{def:skew group-alg}
	Let $A$ be a $k$-algebra and let a finite group $G$ act on $A$ by algebra automorphisms. The {\it skew group algebra} $A G$ is defined as follows.

As a $k$-vector space,
\[
AG = A\otimes_k kG,
\]
where $kG$ is the group algebra of $G$. We write elements as $ag$ for $a\in A$ and $g\in G$. The multiplication is given by
\[
(a\otimes g)(b\otimes h)=ag(b)\otimes gh
\]
for all $a,b\in A$ and $g,h\in G$.
\end{definition}

By applying skew group algebras to specific algebras, analogous to the generalization from gentle algebras to skew-gentle algebras, a $\mathbb{Z}/2\mathbb{Z}$ action on BGA is used in \cite{EGV,So2} to define skew-BGAs, which are algebras defined via orbifold ribbon graphs together with an (integer-valued) multiplicity function. We now recall their definitions.

\begin{definition}\textnormal{(cf. \cite[Definition 4.1]{EGV} and \cite[Definition 3.1]{So2})}\label{skew-BG}
		A \textit{skew-Brauer graph} (abbr. skew-BG) is a pair $(\Gamma,d)$ consisting of an orbifold ribbon graph $\Gamma$ (see Definition \ref{def:orbifold}) together with a {\it degree function} $d : V(\Gamma) \to \mathbb{Z}_{>0}$ (whose values are referred to as the {\it degrees}), such that for each half-edge $v \in V(\Gamma)$, $val(v)\mid d(v)$.
\end{definition}

\begin{remark}
	We note that our definition of skew-BGs differs slightly from that in~\cite{EGV}. In~\cite[Definition 4.1]{EGV}, an orbifold edge is regarded as an edge connecting an ordinary vertex and a {\it distinguished vertex} (corresponding to our symbol \(\times\)). These distinguished vertices are viewed as vertices of multiplicity~one. In contrast, we regard an orbifold edge simply as a half-edge, and define multiplicities only on the ordinary vertices.
\end{remark}

To each skew-BG $(\Gamma,d)$, one can also associate a quiver $Q=Q_\Gamma$ and an ideal of relations $I=I_{\Gamma,d}$ in the path algebra $kQ$ as follows.
	
	\begin{enumerate}[(1)]
		\item For the edge set $E(\Gamma)=H/\langle \iota\rangle$, there is a natural disjoint decomposition
\[
E_\circ \sqcup E_\times,
\]
where $E_\circ$ consists of the $\iota$-orbits containing two distinct half-edges, while $E_\times$ consists of half-edges fixed by $\iota$. This induces, in a natural way, a partition of the half-edge set
\[
H = H_\circ \sqcup H_\times,
\]
where $H_\circ$ consists of half-edges belonging to edges in $E_\circ$, and $H_\times$ consists of half-edges fixed by $\iota$. The vertex set $Q_0$ of the quiver $Q$ is given by
\[
Q_0:=E_\circ \sqcup E_\times^+ \sqcup E_\times^-,
\]
where $E_\times^+$ and $E_\times^-$ are two copies of $E_\times$. For each $e\in E_\times$, we denote by $e^+\in E_\times^+$ and $e^-\in E_\times^-$ the corresponding elements.

\item The set of arrows of the quiver $Q$ is determined by the following rule. Consider two half-edges $h_1,h_2\in H(\Gamma)$ such that $\rho(h_1)=h_2$, and let $e_1=\overline{h_1}$ and $e_2=\overline{h_2}$ be the corresponding edges in $\Gamma$.

\begin{enumerate}
\item If both $e_1$ and $e_2$ do not lie in $E_\times$, then there is a unique arrow in $Q$,
\[
\alpha_{h_1}: e_1 \to e_2.
\]

\item If $e_1\in E_\times$ and $e_2\in E_\circ$, then there are two arrows in $Q$,
\[
{}^{+}\alpha_{h_1}: e_1^+ \to e_2
\quad \text{and} \quad
{}^{-}\alpha_{h_1}: e_1^- \to e_2.
\]

\item If $e_1\in E_\circ$ and $e_2\in E_\times$, then there are two arrows in $Q$,
\[
\alpha_{h_1}^{+}: e_1 \to e_2^+
\quad \text{and} \quad
\alpha_{h_1}^{-}: e_1 \to e_2^-.
\]

\item If both $e_1$ and $e_2$ lie in $E_\times$, then there are four corresponding arrows in $Q$, denoted by
\[
{}^{i}\alpha_{h_1}^{j}: e_1^i \to e_2^j,
\qquad i,j\in\{+,-\}.
\]
\end{enumerate}

\item With above notations, every arrow in $Q$ can be written in the form
\[
{}^i\alpha_h^j,
\]
where $h$ is a half-edge of $\Gamma$ and $i,j\in \{\varnothing,+,-\}$. Moreover, the arrows
\[
{}^i\alpha_h^j \quad \text{and} \quad {}^{i'}\alpha_{\rho(h)}^{j'}
\]
are composable in $Q$ whenever $j=i'$.

Thus, for each vertex $e\in Q_0$ corresponding to an $\iota$-orbit $[h]$ in $H(\Gamma)$, we define the following cycle in the quiver $Q$:
\[
C_{(e,h)}
=
{}^{i_1}\alpha_h^{i_2}\,
{}^{i_2}\alpha_{\rho(h)}^{i_3}
\cdots
{}^{i_{\mathrm{val}(s(h))}}
\alpha_{\rho^{\mathrm{val}(s(h))-1}(h)}^{i_1}.
\]
Note that the cycle $C_{(e,h)}$ defined in this way is not necessarily unique in $Q$. However, by the relations in the ideal $I$ defined below, the cycle associated with a given pair $(e,h)$ becomes unique in the quotient algebra $kQ/I$.

\item The ideal $I$ is generated by the following relations:
\begin{enumerate}[(i)]
\item
\[
{}^i\alpha_h^+\,{}^{+}\alpha_{\rho(h)}^{j}
=
{}^i\alpha_h^-\,{}^{-}\alpha_{\rho(h)}^{j},
\]
for each half-edge $h$ such that $\rho(h)\in H_\times$, and for all $i,j\in\{\varnothing,+,-\}$. These relations ensure that the cycle $C_{(e,h)}$ defined above is well-defined in $kQ/I$ for every pair $(e,h)$.

\item
\[
C_{(e,h_1)}^{m(s(h_1))}
=
C_{(e,h_2)}^{m(s(h_2))},
\]
where $\{h_1,h_2\}$ is the edge in $E_\circ$ corresponding to the vertex $e$ of $\Gamma$, and $m=\frac{d}{\mathrm{val}}$
is the associated integer-valued multiplicity function of $(\Gamma,d)$.
			
			\item $${}^i\alpha_{h}\alpha_{\iota\rho(h)}^j=0,$$
			for each half-edge $h$ such that $\rho(h)\in H_\circ$, and for all $i,j\in\{\varnothing,+,-\}$. 

	\item $${}^{i_1'}\alpha_h^{i_2}{}^{i_2}\alpha_{\rho(h)}^{i_3}
\cdots
{}^{i_{\mathrm{val}(s(h))}}
\alpha_{\rho^{\mathrm{val}(s(h))-1}(h)}^{i_1}\;C_{(e,h)}^{m(s(h))-1}=0$$
	for all $C_{(e,h)}
=
{}^{i_1}\alpha_h^{i_2}\,
{}^{i_2}\alpha_{\rho(h)}^{i_3}
\cdots
{}^{i_{\mathrm{val}(s(h))}}
\alpha_{\rho^{\mathrm{val}(s(h))-1}(h)}^{i_1}$ with $h\in H_\times$ and $i_1'\neq i_1$.

\item $$C_{(e,h)}^{m(s(h))}\;{}^{i_1}\alpha_h^{i_2}=0$$
	for all $C_{(e,h)}
=
{}^{i_1}\alpha_h^{i_2}\,
{}^{i_2}\alpha_{\rho(h)}^{i_3}
\cdots
{}^{i_{\mathrm{val}(s(h))}}
\alpha_{\rho^{\mathrm{val}(s(h))-1}(h)}^{i_1}$.
		\end{enumerate}
	\end{enumerate}
	
	\begin{definition}\textnormal{(cf. \cite[Definition 4.3]{EGV} and \cite[Definition 3.5]{So2})}\label{skew-BGA}
		A $k$-algebra $A$ is called a {\it skew-Brauer graph algebra} (abbr. skew-BGA) if there exists a skew-BG $(\Gamma,d)$ such that $A\cong kQ_\Gamma/I_{\Gamma,d}$ as $k$-algebras.
	\end{definition}

	\begin{remark}
		We note that our notation follows~\cite{EGV} and differs slightly from that of~\cite{So2}. In particular, the commutativity relations in~\cite{So2} (corresponding to relation~(ii) in our ideal \(I\)) involve coefficients given by certain powers of \(2\), but these coefficients can be eliminated via an algebra isomorphism under the $\operatorname{char} k \neq 2$ assumption.
	\end{remark}

	We illustrate the above construction with the following example.

	\begin{example} (Example \ref{exa:preproj-A3} revisited) Consider the skew-BG $(\Gamma/\langle \nu\rangle,\mathrm{val})$ induced from the biserial FBG $(\Gamma,d)$ in Example~1, where the orbifold ribbon graph $\Gamma/\langle \nu\rangle$ is given as follows.
	\begin{center}

\tikzset{every picture/.style={line width=0.75pt}} %set default line width to 0.75pt        

\begin{tikzpicture}[x=0.75pt,y=0.75pt,yscale=-1,xscale=1]
%uncomment if require: \path (0,235); %set diagram left start at 0, and has height of 235

%Shape: Circle [id:dp8934929695145333] 
\draw  [fill={rgb, 255:red, 0; green, 0; blue, 0 }  ,fill opacity=1 ] (184.5,115.5) .. controls (184.5,112.74) and (186.74,110.5) .. (189.5,110.5) .. controls (192.26,110.5) and (194.5,112.74) .. (194.5,115.5) .. controls (194.5,118.26) and (192.26,120.5) .. (189.5,120.5) .. controls (186.74,120.5) and (184.5,118.26) .. (184.5,115.5) -- cycle ;
%Straight Lines [id:da09464826976213714] 
\draw [line width=1.5]    (189.5,115.5) -- (289.5,115.5) ;
%Shape: Circle [id:dp36278708481485955] 
\draw  [fill={rgb, 255:red, 0; green, 0; blue, 0 }  ,fill opacity=1 ] (284.5,115.5) .. controls (284.5,112.74) and (286.74,110.5) .. (289.5,110.5) .. controls (292.26,110.5) and (294.5,112.74) .. (294.5,115.5) .. controls (294.5,118.26) and (292.26,120.5) .. (289.5,120.5) .. controls (286.74,120.5) and (284.5,118.26) .. (284.5,115.5) -- cycle ;
%Straight Lines [id:da034514994500294494] 
\draw [line width=1.5]    (129.5,115.5) -- (189.5,115.5) ;
%Shape: Arc [id:dp7077787550393215] 
\draw  [draw opacity=0] (174.63,113.48) .. controls (175.62,106.15) and (181.9,100.5) .. (189.5,100.5) .. controls (196.46,100.5) and (202.31,105.24) .. (204,111.66) -- (189.5,115.5) -- cycle ; \draw  [color={rgb, 255:red, 255; green, 0; blue, 0 }  ,draw opacity=1 ] (174.63,113.48) .. controls (175.62,106.15) and (181.9,100.5) .. (189.5,100.5) .. controls (196.46,100.5) and (202.31,105.24) .. (204,111.66) ;  
%Shape: Arc [id:dp06291935051305986] 
\draw  [draw opacity=0] (144.69,111.35) .. controls (146.78,88.44) and (166.05,70.5) .. (189.5,70.5) .. controls (212.67,70.5) and (231.75,88.01) .. (234.23,110.51) -- (189.5,115.5) -- cycle ; \draw  [color={rgb, 255:red, 0; green, 0; blue, 255 }  ,draw opacity=1 ] (144.69,111.35) .. controls (146.78,88.44) and (166.05,70.5) .. (189.5,70.5) .. controls (212.67,70.5) and (231.75,88.01) .. (234.23,110.51) ;  
%Shape: Arc [id:dp07544771630883917] 
\draw  [draw opacity=0] (234.14,121.19) .. controls (231.35,143.36) and (212.43,160.5) .. (189.5,160.5) .. controls (166.42,160.5) and (147.39,143.12) .. (144.8,120.73) -- (189.5,115.5) -- cycle ; \draw  [color={rgb, 255:red, 0; green, 0; blue, 255 }  ,draw opacity=1 ] (234.14,121.19) .. controls (231.35,143.36) and (212.43,160.5) .. (189.5,160.5) .. controls (166.42,160.5) and (147.39,143.12) .. (144.8,120.73) ;  
%Shape: Arc [id:dp5727848764613193] 
\draw  [draw opacity=0] (204.31,117.86) .. controls (203.18,125.03) and (196.98,130.5) .. (189.5,130.5) .. controls (182.85,130.5) and (177.21,126.17) .. (175.24,120.18) -- (189.5,115.5) -- cycle ; \draw  [color={rgb, 255:red, 255; green, 0; blue, 0 }  ,draw opacity=1 ] (204.31,117.86) .. controls (203.18,125.03) and (196.98,130.5) .. (189.5,130.5) .. controls (182.85,130.5) and (177.21,126.17) .. (175.24,120.18) ;  
\draw  [color={rgb, 255:red, 0; green, 0; blue, 255 }  ,draw opacity=1 ] (235.68,107.57) -- (234.37,111.02) -- (231.77,108.4) ;
%Shape: Arc [id:dp47009079000127096] 
\draw  [draw opacity=0] (274.78,112.61) .. controls (276.13,105.71) and (282.2,100.5) .. (289.5,100.5) .. controls (297.78,100.5) and (304.5,107.22) .. (304.5,115.5) .. controls (304.5,123.78) and (297.78,130.5) .. (289.5,130.5) .. controls (282.5,130.5) and (276.62,125.71) .. (274.97,119.22) -- (289.5,115.5) -- cycle ; \draw   (274.78,112.61) .. controls (276.13,105.71) and (282.2,100.5) .. (289.5,100.5) .. controls (297.78,100.5) and (304.5,107.22) .. (304.5,115.5) .. controls (304.5,123.78) and (297.78,130.5) .. (289.5,130.5) .. controls (282.5,130.5) and (276.62,125.71) .. (274.97,119.22) ;  
\draw  [color={rgb, 255:red, 255; green, 0; blue, 0 }  ,draw opacity=1 ] (205.02,107.9) -- (203.71,111.35) -- (201.1,108.73) ;
\draw   (273.48,122.15) -- (274.68,118.66) -- (277.36,121.2) ;
\draw  [color={rgb, 255:red, 255; green, 0; blue, 0 }  ,draw opacity=1 ] (173.89,122.54) -- (174.95,119.01) -- (177.73,121.43) ;
\draw  [color={rgb, 255:red, 0; green, 0; blue, 255 }  ,draw opacity=1 ] (142.99,123.68) -- (144.47,120.3) -- (146.94,123.05) ;

% Text Node
\draw (121,109) node [anchor=north west][inner sep=0.75pt]  [font=\Large] [align=left] {$\times$};
% Text Node
\draw (159,116) node [anchor=north west][inner sep=0.75pt]   [align=left] {$2$};
% Text Node
\draw (207,116) node [anchor=north west][inner sep=0.75pt]   [align=left] {$1$};
% Text Node
\draw (259,116) node [anchor=north west][inner sep=0.75pt]   [align=left] {$1'$};
% Text Node
\draw (181,82.33) node [anchor=north west][inner sep=0.75pt]   [align=left] {$\color{red}{}^-\alpha_2$};
% Text Node
\draw (185,133) node [anchor=north west][inner sep=0.75pt]   [align=left] {$\color{red}\alpha_1^-$};
% Text Node
\draw (181,53) node [anchor=north west][inner sep=0.75pt]   [align=left] {$\color{blue}{}^+\alpha_2$};
% Text Node
\draw (185,163.33) node [anchor=north west][inner sep=0.75pt]   [align=left] {$\color{blue}\alpha_1^+$};
% Text Node
\draw (306,109) node [anchor=north west][inner sep=0.75pt]   [align=left] {$\alpha_{1'}$};

\end{tikzpicture}

	\end{center}
	Thus the skew-BGA $A=kQ/I$ is given by the quiver $Q=Q_{\Gamma/\langle\nu\rangle}$,
$$
\begin{tikzcd}
2^+ \arrow[r, "{}^+\alpha_2", shift left,blue] 
& {\{1,1'\}} \arrow[l, "\alpha_1^+", shift left,blue] 
\arrow["\alpha_{1'}"', loop, distance=2em, in=125, out=55] 
\arrow[r, "\alpha_1^-"', shift right,red] 
& 2^- \arrow[l, "{}^-\alpha_2"', shift right,red]
\end{tikzcd}
$$
with relations in $I$ given by
\begin{enumerate}[(i)]
\item 
${\color{blue}\alpha_1^+}{\color{blue}{}^+\alpha_2}
=
{\color{red}\alpha_1^-}{\color{red}{}^-\alpha_2}$;

\item 
$\alpha_{1'}
=
{\color{blue}\alpha_1^+}{\color{blue}{}^+\alpha_2}$, $\alpha_{1'}
=
{\color{red}\alpha_1^-}{\color{red}{}^-\alpha_2}$;

\item 
$\alpha_{1'}{\color{blue}\alpha_1^+}
=
\alpha_{1'}{\color{red}\alpha_1^-}
=
{\color{blue}{}^+\alpha_2}\alpha_{1'}
=
{\color{red}{}^-\alpha_2}\alpha_{1'}
=0$;

\item 
${\color{red}{}^-\alpha_2}{\color{blue}\alpha_1^+}
=
{\color{blue}{}^+\alpha_2}{\color{red}\alpha_1^-}
=0$;

\item 
${\color{blue}\alpha_1^+}{\color{blue}{}^+\alpha_2}{\color{blue}\alpha_1^+}
=
{\color{blue}{}^+\alpha_2}{\color{blue}\alpha_1^+}{\color{blue}{}^+\alpha_2}
=
{\color{red}\alpha_1^-}{\color{red}{}^-\alpha_2}{\color{red}\alpha_1^-}
=
{\color{red}{}^-\alpha_2}{\color{red}\alpha_1^-}{\color{red}{}^-\alpha_2}
=
\alpha_{1'}^2
=0$.
\end{enumerate}
Consider the BGA $B$ associated with $(\widehat{\Gamma/\langle \nu \rangle},\mathrm{val})$ as shown in Figure~\ref{fig:red-form-preproj-A3}. There is a natural $\mathbb Z/2\mathbb Z$-action on it induced by the involution $\phi$ defined in Subsection~\ref{subsec:reduced-form}. In fact, we have the following Morita equivalence
\[
B\langle\phi\rangle \stackrel{\mathrm{Morita}}{\sim} B \cong A,
\]
following~\cite[Section~2.3]{RR} and also~\cite[Subsection~1.2]{AP}.
\end{example}
	
In fact, the above Morita equivalence holds in general.

\begin{proposition}\textnormal{(\cite[Proposition 3.9]{So2})}\label{prop:red-skew-BGA}
	Let $A$ be the skew-BGA associated with a skew-BG $(\Gamma,d)$, where $\Gamma$ has at least one orbifold edge. Let $B$ be the BGA associated with the BG $(\widehat{\Gamma},d)$ together with the natural involution $\phi$ on $B$. Assume that $\operatorname{char} k \neq 2$. Then $A$ is Morita equivalent to the skew group algebra $B\langle\phi\rangle$.
\end{proposition}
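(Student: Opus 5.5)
The plan is to realise $B\langle\phi\rangle$ as a bound quiver algebra via Reiten--Riedtmann \cite{RR}, and then match it presentation by presentation with $A=kQ_\Gamma/I_{\Gamma,d}$. Since $\operatorname{char}k\neq 2$, the group $G=\langle\phi\rangle\cong\mathbb Z/2\mathbb Z$ has invertible order, so \cite[Section~2.3]{RR} (see also \cite[Subsection~1.2]{AP}) applies. Hence $B\langle\phi\rangle$ is Morita equivalent to a basic algebra $kQ_G/I_G$ built as follows.

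The vertices of $Q_G$ are pairs $(G\text{-orbit of a vertex } x \text{ of } Q_{\widehat\Gamma},\ \text{irreducible character of the stabiliser } G_x)$. A vertex of $Q_{\widehat\Gamma}$ is an edge of $\widehat\Gamma$. Edges come in two kinds. First, there are pairs $\{\bar h_1,\bar h_2\}$ with $h\in H_\circ$, which $\phi$ swaps freely; these give one vertex of $Q_G$, matching $E_\circ$. Second, there are edges $\{h_1,h_2\}$ coming from orbifold half-edges $h\in H_\times$, which $\phi$ fixes; these give two vertices $\pm$ (trivial and sign character), matching $E_\times^+\sqcup E_\times^-$. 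I would first check that $\phi$ acts on $B$ through a quiver automorphism preserving the relations. This holds because $\widehat\rho$ and $\widehat\iota$ commute with $\phi$ and $d'$ is $\phi$-invariant.

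Next I would compute the arrows of $Q_G$ using the RR recipe: free orbit to free orbit gives one arrow; free to fixed (or fixed to free) gives two arrows; fixed to fixed gives four arrows ${}^i\alpha^j$. This is exactly cases (a)--(d) of the construction of $Q_\Gamma$. Concretely, I would use the idempotents $e_x^\pm=\tfrac12(e_x\pm e_x\phi)$ for fixed $x$ and $e_{x}$ for a representative of each free orbit. The arrows are then $e\,\alpha_{h_1}\,e'$, suitably normalised. For fixed-to-fixed arrows, the $\phi$-image of $\alpha_{h_1}$ is $\alpha_{h_2}$, so the four arrows are $\tfrac12(\alpha_{h_1}\pm\alpha_{h_2})$-type combinations composed with sign idempotents.

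Then I would transport the relations of $B$ (commutativity, zero relations $\alpha\beta=0$ for $\beta\ne\rho(\alpha)$, and the maximal-path relations) to the new generators. From these I would derive relations (i)--(v) of $I_{\Gamma,d}$, and argue that $I_G$ is generated by them. The cleanest check is a dimension count: $\dim B\langle\phi\rangle=2\dim B$, while the Morita reduction replaces each fixed vertex by two one-dimensional-multiplicity vertices. Comparing with $\dim A$, computed from the Cartan data of the skew-BGA (a basis of paths around vertices), shows that the surjection $kQ_\Gamma/I_{\Gamma,d}\to kQ_G/I_G$ is an isomorphism.

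The main obstacle is the relations step, especially relation (i) and the commutativity relation (ii). When a path in $\widehat\Gamma$ crosses an orbifold edge, the two lifts $h_1$ and $h_2$ mix, and the computation produces coefficients $\tfrac12$ and powers of $2$ (as in \cite{So2}). I would first compute carefully in a single example with one orbifold edge, such as the preprojective $A_3$ example, to fix normalisations. Then I would rescale arrows through orbifold edges by suitable powers of $2$ (possible since $\operatorname{char}k\neq2$) to remove the coefficients and obtain exactly relation (ii).
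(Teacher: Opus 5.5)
Your proposal is correct. It follows essentially the same route as the cited source \cite[Proposition~3.9]{So2} and the paper's own argument for Proposition~\ref{prop:nonadm-skew=skew-BGA}: Reiten--Riedtmann idempotents $\tfrac12 e(1\pm\phi)$, a case-by-case matching of arrows with (a)--(d), a check of relations (i)--(v) with the powers of $2$ removed by rescaling (possible since $\operatorname{char}k\neq 2$), and a dimension count that upgrades the surjection to an isomorphism.

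One caution on that last step. Compare $\dim A$, bounded above by a spanning set of paths, with the dimension of the basic algebra $f\,B\langle\phi\rangle f$, not with $2\dim B$ directly. Equivalently, compare the weighted Cartan sum of $A$ with $2\dim B$, using weight $2$ on vertices from $E_\circ$ and weight $1$ on the vertices $e^{\pm}$.
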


\subsection{Skew group algebras of biserial FBGAs}
\

Let $A$ be the biserial FBGA associated with the biserial FBG $(\Gamma,d)$. The Nakayama automorphism $\nu_A$ of $A$ is naturally induced by the Nakayama automorphism $\nu$ on $(\Gamma,d)$. We now consider the skew group algebra $AG$, where $G=\langle \nu_A\rangle$ is the cyclic group generated by $\nu_A$.

\subsubsection{Admissible cases}
\

We note that when the action of $G$ is admissible (that is, for any half-edge \(h\), we have \(\nu^{n}(h)\neq \iota(h)\) for all \(n\in\mathbb{Z}\)), the orbit graph $\Gamma/\langle \nu \rangle$ is in fact a ribbon graph, and hence $(\Gamma/\langle \nu \rangle,d)$ is a BG. Therefore, the following proposition holds.

\begin{proposition}\label{prop:adm-skew=BGA}
	Let $A$ be the biserial FBGA associated with a biserial FBG $(\Gamma,d)$, and suppose the Nakayama automorphism $\nu$ of $(\Gamma,d)$ acts admissibly on $\Gamma$. Let $A_{\mathrm{red}}$ be the BGA associated with the BG $(\Gamma/\langle \nu \rangle,d)$. Then $A_{\mathrm{red}}$ is Morita equivalent to the skew group algebra $A\langle \nu_A\rangle$.
\end{proposition}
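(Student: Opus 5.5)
The plan is to use the description of skew group algebras of bound quiver algebras by a free (admissible) action on the quiver, following \cite{RR}. Let $G=\langle\nu_A\rangle$, a cyclic group of order $n=n(\vv)$ (Lemma~\ref{lem:n(v)-property}). Since the action is admissible, no half-edge $h$ satisfies $\nu^j(h)=\iota(h)$. I will first show that $G$ acts freely on the vertices of $Q_\Gamma$, i.e. on edges of $\Gamma$. If $\nu^j$ fixes $\bar h$ with $\nu^j\neq\mathrm{id}$, then either $\nu^j(h)=\iota(h)$, which is excluded, or $\nu^j(h)=h$. In the latter case the $\nu$-orbit of $h$ has size $n$, so $\nu^j=\mathrm{id}$ on that orbit. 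Hence $\nu^j=\mathrm{id}$ as an element of $G$, because by Lemma~\ref{lem:n(v)-property} every orbit has size equal to the order of $\nu$. The action on arrows $\alpha_h$ is then free as well.

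Since the base field is algebraically closed, $|G|$ must be invertible in $k$ for the standard Reiten--Riedtmann argument. I would first try to circumvent this. When $G$ acts freely on the complete set of primitive orthogonal idempotents $\{e_{\bar h}\}$, the skew group algebra $AG$ is Morita equivalent to the orbit algebra $A/G$, with no characteristic assumption. The reason is that $AG\cong \mathrm{End}$ of the $G$-induced module, and freeness gives $AG\cong M_n(A/G)$-type decompositions. Concretely, I take $e=\sum_{[\bar h]} e_{\bar h}\otimes 1$, choosing one idempotent per $G$-orbit. Then $e\cdot AG\cdot e\cong A/G$, the covering-theory orbit algebra (Gabriel's Galois covering $A\to A/G$). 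Moreover $e$ is full, because $e_{g\bar h}\otimes 1=(1\otimes g)(e_{\bar h}\otimes 1)(1\otimes g^{-1})$. So $AG\sim_{\mathrm{Morita}} eAGe$.

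The main step is to identify $eAGe$ with $A_{\mathrm{red}}=kQ_{\Gamma/\langle\nu\rangle}/I_{\Gamma/\langle\nu\rangle,d}$. The quiver of $A/G$ is $Q_\Gamma/G$. Its vertices are the $G$-orbits of edges, which are the edges of $\Gamma/\langle\nu\rangle$. Its arrows are orbits $[\alpha_h]$, and these correspond to arrows $\alpha_{[h]}:[\bar h]\to[\overline{h^+}]$, since $\rho'([h])=[\rho(h)]$. I then push the relations of $I_{\Gamma,d}$ down and match them with the BG relations of $(\Gamma/\langle\nu\rangle,d)$.

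First, the zero relations $\alpha\beta=0$ for $\beta\ne\rho(\alpha)$ map to zero relations. Freeness guarantees that $[\beta]\neq[\rho(\alpha)]$ whenever $\beta\ne\rho(\alpha)$ with $s(\beta)=t(\alpha)$, because two distinct arrows starting at the same vertex lie in distinct orbits.

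Second, the commutativity relation $\alpha\rho(\alpha)\cdots\rho^{d-1}(\alpha)=\beta\cdots$ has length $d(\vv)=F(\vv)o(\vv)$. In the quotient this path becomes $C^{F(\vv)}$, where $C$ is the cycle of length $o(\vv)=val_{\Gamma/\langle\nu\rangle}(\vv)$. By Proposition~\ref{prop:multiplicity-eqv-def} and $n=1$ downstairs, $F(\vv)=m(\vv)$, which is exactly the BGA relation $C_{h_1}^{m}=C_{h_2}^{m}$.

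The delicate point, and the one I expect to be hardest, is checking that the ideal generated by the images is all of the kernel. I would do this by a dimension count, comparing $\dim A=n\cdot\dim A_{\mathrm{red}}$ via the explicit bases of special biserial algebras given by maximal paths. Alternatively, I would invoke Green's/Gabriel's theorem on Galois coverings with free group action. Truncated vertices need a separate check, since arrows are deleted there via Remark~\ref{remark:truncated}; they are deleted compatibly on both sides, because $d$ is unchanged.
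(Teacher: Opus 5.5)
Your argument is correct, but it takes a different route from the paper's. The paper's proof is a one-line appeal to the Reiten--Riedtmann description of skew group algebras of cyclic groups \cite[Section~2.3]{RR}. It only notes that admissibility makes the stabilizers trivial, so the invertibility hypothesis there is met; it neither writes down a Morita idempotent nor checks relations.

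You instead use freeness directly. Every $\langle\nu\rangle$-orbit of half-edges has size $|G|$ and $\nu^j(h)\neq\iota(h)$, so $G$ acts freely on the vertices and arrows of $Q_\Gamma$. Then:
\begin{enumerate}
\item the idempotent $e$ summing one $e_{\bar h}\otimes 1$ per orbit is full;
\item $e\,A\langle\nu_A\rangle\,e$ is the orbit algebra $A/G$ of a Galois covering;
\item $A/G$ is matched with $A_{\mathrm{red}}$ by pushing down the quiver and relations.
\end{enumerate}

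Your route buys three things. It is characteristic-free: you never need to ask which invertibility assumptions of \cite{RR} apply, and $|G|=n(\vv)$ can be divisible by $\operatorname{char}k$ even in the admissible case. It gives the sharper conclusion $A\langle\nu_A\rangle\cong M_{|G|}(A_{\mathrm{red}})$. And it exhibits an explicit covering $A\to A_{\mathrm{red}}$. The paper's route is shorter and uniform with the non-admissible case (Proposition~\ref{prop:nonadm-skew=skew-BGA}). There, stabilizers of order $2$ force the eigen-idempotents $(1\pm g)/2$ and hence $\operatorname{char}k\neq 2$.

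Two small points. First, your sentence suggesting that algebraic closure makes $|G|$ invertible is garbled, though harmless since you then bypass the issue.

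Second, the step you flag as delicate is automatic. For a free action, the $([x],[y])$-component of $A/G$ is
$\bigoplus_{g}e_xAe_{g(y)}=\bigoplus_{g} e_x\,kQ_\Gamma\, e_{g(y)}/e_x I e_{g(y)}$, where $I=I_{\Gamma,d}$. Hence $A/G\cong kQ_{\Gamma/\langle\nu\rangle}/F(I)$, with $F(I)$ the push-down of $I$. Since $I$ is generated by a $G$-stable set $R$, lifting paths shows that $F(I)$ is the ideal generated by $F(R)$. You also need the converse: each Brauer graph relation of $(\Gamma/\langle\nu\rangle,d)$ lifts to a generator of $I_{\Gamma,d}$, which is immediate but should be stated. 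Together these give $F(I)=I_{\Gamma/\langle\nu\rangle,d}$ directly, so no dimension count is needed.
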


\begin{proof}
	This equivalence can also be verified via the description of the quiver of the skew group algebras for cyclic groups by Reiten and Riedtmann \cite[Section~2.3]{RR}. This description requires that the orders of the stabilizers under the action of the group \(G\) are invertible in the field \(k\). In our setting, the admissibility of $G$-asction implies that the required invertibility condition is automatically satisfied, and the description applies directly in our situation.
\end{proof}

\subsubsection{Non-admissible cases}
\

We note that when the action of $G$ is non-admissible, the graph of orbits $\Gamma/\langle \nu \rangle$ is an orbifold ribbon graph, and hence $(\Gamma/\langle \nu \rangle,d)$ is a skew-BG. 

We also note that each orbifold edge in $\Gamma/\langle \nu \rangle$ corresponds to a $\nu$-orbit $\{e_1,\dots,e_m\}$ in $\Gamma$, which consists of $m$ distinct loops in $E(\Gamma)$. We note that $m$ is constant since, by Lemma \ref{lem:n(v)-property}, every $\nu$-orbit in a connected biserial FBG $(\Gamma,d)$ contains the same number of half-edges. Therefore, for each vertex $\vv$ of $\Gamma$, it follows from Lemma \ref{lem:n(v)-property} that $n(\vv)=2m$, that is, the order of the group $G$ is $2m$. We now proceed to prove the main result of this section.

\begin{proposition}\label{prop:nonadm-skew=skew-BGA}
Let $A$ be the biserial FBGA associated with a biserial FBG $(\Gamma,d)$, and suppose that the Nakayama automorphism $\nu$ of $(\Gamma,d)$ acts non-admissibly on $\Gamma$. Let $B$ be the skew-BGA corresponding to the skew-BG $(\Gamma/\langle \nu \rangle,d)$. Assume that $\operatorname{char} k \neq 2$. Then $B$ is Morita equivalent to the skew group algebra $A\langle \nu_A\rangle$.
\end{proposition}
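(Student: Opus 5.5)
The plan is to compute the quiver with relations of $A\langle\nu_A\rangle$ via the Reiten--Riedtmann description \cite[Section~2.3]{RR} for cyclic groups acting by quiver automorphisms. I will then match the result, after passing to a basic algebra, with the quiver $Q_{\Gamma/\langle\nu\rangle}$ and ideal $I_{\Gamma/\langle\nu\rangle,d}$ of the skew-BGA $B$. Since $G=\langle\nu_A\rangle$ has order $2m$ by Lemma~\ref{lem:n(v)-property}, the description requires the orders of stabilizers of vertices and arrows to be invertible in $k$.

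The first step is to identify the stabilizers. A vertex $\bar h$ of $Q_\Gamma$ lies in a $G$-orbit of edges of $\Gamma$. If this orbit is admissible, i.e.\ $\nu^j(h)\neq\iota(h)$ for all $j$, then the stabilizer of the edge is trivial, since the $2m$ half-edges in the orbit of $h$ are distinct. If the orbit is an orbifold orbit $\{e_1,\dots,e_m\}$ of loops, then $\nu^m$ swaps the two half-edges of each $e_i$. So the edge stabilizer is $\{1,\nu^m\}\cong\mathbb Z/2$, while the half-edges, and hence the arrows, still have trivial stabilizers. Thus the only stabilizers are of order $1$ or $2$, and $\operatorname{char}k\neq 2$ is exactly the needed invertibility condition.

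In the Reiten--Riedtmann recipe each $G$-orbit of vertices with stabilizer $H$ contributes $|\widehat H|$ vertices, one per character of $H$. A free orbit therefore gives one vertex, which corresponds to an edge in $E_\circ$. An orbifold orbit gives two vertices indexed by $\pm1$, which correspond to $e^\pm\in E_\times^\pm$. Since arrows have trivial stabilizer, each $G$-orbit of arrows $\alpha_h$, that is, each half-edge $[h]$ of $\Gamma/\langle\nu\rangle$, contributes arrows between all pairs of character-vertices at its ends. This gives one, two, or four arrows according to whether zero, one, or two endpoints are orbifold, which is precisely cases (a)--(d) in the definition of $Q_{\Gamma/\langle\nu\rangle}$. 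I would write the arrows explicitly as $\alpha_h\otimes\varepsilon^\pm$, with $\varepsilon^\pm=\tfrac12(1\pm\nu_A^m)$ the idempotents attached to the orbifold vertices, and then push the relations of Remark~\ref{remark:truncated} through these idempotents.

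The main obstacle is the relation check. Relation (III), $\alpha\beta=0$ for $\beta\ne\rho(\alpha)$, should map to relation (iii). Relation (I), the commutativity relation with the $d$-fold paths, should become (ii): the powers $C^{m(s(h))}$ with $m=d/\mathrm{val}$ in $\Gamma/\langle\nu\rangle$ correspond to paths of length $d$ in $\Gamma$. The relation (II) monomials should become (v). The delicate part is relations (i) and (iv), which involve sign choices at orbifold vertices. For (i), summing over characters through an orbifold vertex gives $\alpha\varepsilon^+\beta+\alpha\varepsilon^-\beta$, and the sign cancellation comes from $\nu_A^m$ acting on the incident arrows. The factor $\tfrac12$ then produces constants, which I expect to be powers of $2$ as in \cite{So2}. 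These constants have to be rescaled away by an explicit change of arrows, again using $\operatorname{char}k\neq2$. Finally, I would compare dimensions, with $\dim A\langle\nu_A\rangle=2m\dim A$ computed from the Morita idempotent, against $\dim B$ computed from its projective Loewy structure. This confirms that no relations are missing and that the surjection $kQ_{\Gamma/\langle\nu\rangle}/I\to$ (basic algebra) is an isomorphism. As a consistency check I would compare with Proposition~\ref{prop:red-skew-BGA}.
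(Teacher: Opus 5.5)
Your proposal is correct and follows essentially the same route as the paper: the Reiten--Riedtmann description of $A\langle\nu_A\rangle$, with stabilizers trivial on ordinary orbits and equal to $\{1,\nu_A^m\}$ on orbifold orbits, the idempotents $\tfrac12(1\pm\nu_A^m)$, and a matching with $Q_{\Gamma/\langle\nu\rangle}$ and its relations. The paper delegates that relation check to \cite[Proposition~3.9]{So2}, while you close it off with the same surjection-plus-dimension argument the paper uses in the appendix. When you write it out, note that the zero relations (III) through an orbifold loop do not become (iii) but give relation (i), since $\alpha\varepsilon^+\beta-\alpha\varepsilon^-\beta=\alpha\,\nu_A^m(\beta)\otimes\nu_A^m$ and $\alpha\,\nu_A^m(\beta)=0$ in $A$; relation (iv) comes from relation (I) at orbifold loops, which makes the socle element commute with $\nu_A^m$.
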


\begin{proof}
	By the preceding discussion and Proposition \ref{prop:bfBGA-basic}, we may assume that $G=\langle \nu_A\rangle$ is a cyclic group of order $2m$. Following \cite{RR}, we first construct the basic algebra of $AG$.

	By the definition of a biserial FBG, there are two types of $\nu$-orbits in $E(\Gamma)$: those consisting of $m$ loops, and those consisting of $2m$ edges joining the same pair of vertices in $\Gamma$. In the corresponding cases, the stabilizers under the action of the group G have orders $2$ and $1$, respectively.
	
	Denote by $(E(\Gamma)/\langle \nu \rangle)_{\times}$ the set of orbits of the first type, and by $(E(\Gamma)/\langle \nu \rangle)_{\circ}$ the set of orbits of the second type. Note that
\[
E(\Gamma)/\langle \nu \rangle \;=\; (E(\Gamma)/\langle \nu \rangle)_{\times} \,\sqcup\, (E(\Gamma)/\langle \nu \rangle)_{\circ}.
\]
 We identify the edges of $\Gamma$ with the primitive idempotents of $A$. Let $g=\nu^m$. Then for any $[e]\in E(\Gamma)/\langle \nu \rangle$, we have the following idempotents in $AG$. Note that the assumption \(\operatorname{char} k \neq 2\) is necessary here. 

\begin{itemize}
\item If $[e]\in (E(\Gamma)/\langle \nu \rangle)_{\times}$, then there are two corresponding idempotents given by
\[
e_{[e]}^{+} := \left( \sum_{e'\in [e]} e' \right)\otimes \frac{1+g}{2}, 
\qquad
e_{[e]}^{-} := \left( \sum_{e'\in [e]} e' \right)\otimes \frac{1-g}{2}.
\]

\item If $[e]\in (E(\Gamma)/\langle \nu \rangle)_{\circ}$, then there is a corresponding idempotent given by
\[
e_{[e]} := \left( \sum_{e'\in [e]} e' \right)\otimes \frac{1+g}{2}.
\]
\end{itemize}

Next, consider an arrow $\alpha$ from $e$ to $e'$ in $Q_\Gamma$. We consider arrows up to the action of $\nu$, that is, we fix a representative for each $\nu$-orbit of arrows in $Q_\Gamma$. For any $[e],[e']\in E(\Gamma)/\langle \nu \rangle$, we define arrows as follows.

\begin{itemize}
\item[(1)] If $[e],[e']\in (E(\Gamma)/\langle \nu \rangle)_{\times}$, then there are four arrows:
\begin{itemize}
\item[(i)] from $e_{[e]}^{+}$ to $e_{[e']}^{+}$,
\[
{}^{+}\alpha_{[e]}^{+} := \left( \sum_{i=0}^{2m-1} \nu^{i}(\alpha) \right)\otimes \frac{1+g}{2};
\]
\item[(ii)] from $e_{[e]}^{+}$ to $e_{[e']}^{-}$,
\[
{}^{+}\alpha_{[e]}^{-} := \left( \sum_{i=0}^{m-1} \bigl(\nu^{i}(\alpha)-\nu^{m+i}(\alpha)\bigr) \right)\otimes \frac{1-g}{2};
\]
\item[(iii)] from $e_{[e]}^{-}$ to $e_{[e']}^{+}$,
\[
{}^{-}\alpha_{[e]}^{+} := \left( \sum_{i=0}^{m-1} \bigl(\nu^{i}(\alpha)-\nu^{m+i}(\alpha)\bigr) \right)\otimes \frac{1+g}{2};
\]
\item[(iv)] from $e_{[e]}^{-}$ to $e_{[e']}^{-}$,
\[
{}^{-}\alpha_{[e]}^{-} := \left( \sum_{i=0}^{2m-1} \nu^{i}(\alpha) \right)\otimes \frac{1-g}{2}.
\]
\end{itemize}

\item[(2)] If $[e],[e']\in (E(\Gamma)/\langle \nu \rangle)_{\circ}$, then there is an arrow from $e_{[e]}$ to $e_{[e']}$,
\[
\alpha_{[e]} := \left( \sum_{i=0}^{2m-1} \nu^{i}(\alpha) \right)\otimes \frac{1+g}{2}.
\]

\item[(3)] If $[e]\in (E(\Gamma)/\langle \nu \rangle)_{\times}$ and $[e']\in (E(\Gamma)/\langle \nu \rangle)_{\circ}$, then there are two arrows:
\begin{itemize}
\item[(i)] from $e_{[e]}^{+}$ to $e_{[e']}$,
\[
{}^{+}\alpha_{[e]} := \left( \sum_{i=0}^{2m-1} \nu^{i}(\alpha) \right)\otimes \frac{1+g}{2};
\]
\item[(ii)] from $e_{[e]}^{-}$ to $e_{[e']}$,
\[
{}^{-}\alpha_{[e]} := \left( \sum_{i=0}^{m-1} \bigl(\nu^{i}(\alpha)-\nu^{m+i}(\alpha)\bigr) \right)\otimes \frac{1+g}{2}.
\]
\end{itemize}

\item[(4)] If $[e]\in (E(\Gamma)/\langle \nu \rangle)_{\circ}$ and $[e']\in (E(\Gamma)/\langle \nu \rangle)_{\times}$, then there are two arrows:
\begin{itemize}
\item[(i)] from $e_{[e]}$ to $e_{[e']}^{+}$,
\[
\alpha_{[e]}^{+} := \left( \sum_{i=0}^{2m-1} \nu^{i}(\alpha) \right)\otimes \frac{1+g}{2};
\]
\item[(ii)] from $e_{[e]}$ to $e_{[e']}^{-}$,
\[
\alpha_{[e]}^{-} := \left( \sum_{i=0}^{m-1} \bigl(\nu^{i}(\alpha)-\nu^{m+i}(\alpha)\bigr) \right)\otimes \frac{1-g}{2}.
\]
\end{itemize}
\end{itemize}
Let
\[
f := \sum_{[e]\in (E(\Gamma)/\langle \nu \rangle)_{\times}} \bigl(e_{[e]}^{+}+e_{[e]}^{-}\bigr)
\;+\;
\sum_{[e]\in (E(\Gamma)/\langle \nu \rangle)_{\circ}} e_{[e]}.
\]
Then the above vertices and arrows, together with the multiplication inherited from $A$, define a basic algebra $fAGf$, which is Morita equivalent to $AG$, following the Reiten and Riedtmann's description in \cite[Section~2.3]{RR}.

Now consider $B$, the skew-BGA corresponding to the skew-BG $(\Gamma/\langle \nu \rangle,d)$. 
Since $E(\Gamma/\langle \nu \rangle)$ is in bijection with $E(\Gamma)/\langle \nu \rangle$, 
we fix such a bijection and denote it by $\theta$. 
Then there is a natural algebra homomorphism from $B$ to $fAGf$, induced by a quiver morphism, given as follows:
\[
B \longrightarrow fAGf\colon\qquad 
e_{e}^{i} \longmapsto e_{\theta(e)}^{i}, \quad 
{}^{j}\alpha_{e}^{k} \longmapsto {}^{j}\alpha_{\theta(e)}^{k},
\]
where $e\in E(\Gamma/\langle \nu \rangle)$ and $i,j,k\in\{+,-,\varnothing\}$. This homomorphism is in fact an isomorphism of algebras, and the verification is the same as in \cite[Proposition~3.9]{So2}. Therefore, in this case, $B$ is Morita equivalent to the skew group algebra $A\langle \nu_A\rangle$.
\end{proof}

	\section{Kauer moves for biserial FBGAs}\label{sec:kauer-move}

	\subsection{Silting theory}\label{subsec:silt-theory}
	\

	We first recall some basic notions from silting theory of finite-dimensional algebras. Let $A$ be a finite-dimensional algebra over a field $k$. We write $\mathcal{K}^b(\mathrm{proj}\, A)$ for the bounded homotopy category of finitely generated projective $A$-modules.

\begin{definition}\textnormal{(cf. \cite{KV,W})}
A complex $P \in \mathcal{K}^b(\mathrm{proj}\, A)$ is called {\it presilting} (resp. {\it pretilting}) if
$$
\mathrm{Hom}_{\mathcal{K}^b(\mathrm{proj}\, A)}(P, P[i]) = 0, 
\qquad \text{for all } i>0\;\;\text{(resp. $i\neq 0$)}.
$$
Moreover, a presilting (resp. pretilting) complex $P$ is called {\it silting} (resp. {\it tilting}) if
$$
\mathrm{thick}(P) = \mathcal{K}^b(\mathrm{proj}\, A),
$$
that is, $P$ generates $\mathcal{K}^b(\mathrm{proj}\, A)$ as a triangulated category.
\end{definition}

There is a natural partial order on the set of basic silting complexes, defined as follows.

\begin{definition} 
For basic silting complexes $T$ and $T'$, we write $T \ge T'$ if
$$
\mathrm{Hom}_{\mathcal{K}^b(\mathrm{proj}\,A)}(T, T'[i]) = 0
\quad \text{for all } i>0.
$$
\end{definition}

By~\cite{AI}, this relation defines a partial order on the set of basic silting complexes. Let $\mathrm{Hasse}(\mathrm{silt}\,A)$ denote the Hasse quiver of this poset. We then have the following definitions (under the equivalent definition given by \cite[Theorem 1.2]{AM}).

\begin{definition}
Let $A$ be a finite-dimensional self-injective algebra.
\begin{enumerate}[(1)]
    \item We say that $A$ is {\it silting-discrete} if, for any basic silting complex
    $T \in \mathcal{K}^b(\mathrm{proj}\,A)$, there are only finitely many basic silting
    complexes $U$ such that $T \ge U \ge T[1].$
   
    \item We say that $A$ is {\it tilting-discrete} if, for any basic tilting complex
    $T \in \mathcal{K}^b(\mathrm{proj}\,A)$, there are only finitely many basic tilting
    complexes $U$ such that $ T \ge U \ge T[1].$
\end{enumerate}
\end{definition}

Recall that a complex $P$ in $\mathcal{K}^b(\mathrm{proj}\, A)$ is called {\it $n$-term} if it is isomorphic to a complex of the form
$$
\cdots\longrightarrow P^{-1} \longrightarrow P^{0}\longrightarrow\cdots,
$$
with $P^{i}=0$ for all $i\neq 0,-1,\cdots,-(n-1)$, that is, $A\geq P\geq A[n-1]$

Now let $\nu$ be a triangle auto-equivalence of $\mathcal{K}^b(\mathrm{proj}\, A)$, then a complex $P\in \mathcal{K}^b(\mathrm{proj}\, A)$ is called {\it $\nu$-stable} if $\nu P\cong P$. In fact, we have the following theorem.

\begin{theorem}\textnormal{(cf. \cite[Theorem 2.1]{AR} and \cite[Theorem A.4]{Ai})}\label{prop:sel-inj-silting}
Let $A$ be a self-injective algebra with Nakayama automorphism $\nu_A$, and let 
$T \in \mathcal{K}^b(\mathrm{proj}\, A)$ be a silting complex. Then $T$ is a tilting complex if and only if $T$ is $\nu_A$-stable.
\end{theorem}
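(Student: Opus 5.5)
The plan is to prove the two implications separately. Throughout, $\nu_A$ also denotes the triangle auto-equivalence $-\otimes_A {}_1A_{\nu_A}$ of $\mathcal{K}^b(\mathrm{proj}\,A)$; the Nakayama functor of $A$ is, up to natural isomorphism, $-\otimes_A DA$, with $DA\cong {}_1A_{\nu_A}$ as bimodules. The key tool is the characterization of tilting complexes over a self-injective algebra via Serre duality in $\mathcal{K}^b(\mathrm{proj}\,A)$. For a self-injective $A$, the homotopy category $\mathcal{K}^b(\mathrm{proj}\,A)$ has Serre functor $\mathbb{S}=\nu_A$, with no shift. This comes from the Auslander–Reiten formula
$$D\mathrm{Hom}(X,Y)\cong \mathrm{Hom}(Y,\nu_A X)$$
for perfect complexes $X,Y$, which holds because $\mathcal{K}^b(\mathrm{proj}\,A)=\mathcal{K}^b(\mathrm{inj}\,A)$ when $A$ is self-injective.

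For the implication ``tilting $\Rightarrow$ $\nu_A$-stable'', let $T$ be tilting. Then $\mathrm{End}(T)=B$ and there is a standard derived equivalence $F\colon \mathcal{K}^b(\mathrm{proj}\,A)\to\mathcal{K}^b(\mathrm{proj}\,B)$ with $F(T)=B$. Serre functors are intrinsic to a $\mathrm{Hom}$-finite triangulated category, so $F$ commutes with them up to isomorphism. Hence $F(\nu_A T)\cong \mathbb{S}_B(B)=\nu_B B = DB$ (Rickard: derived equivalent to self-injective is self-injective). Since $B$ is self-injective, $DB$ is a projective $B$-module, and $\mathrm{add}(DB)=\mathrm{add}(B)$. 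Thus $\mathrm{add}(\nu_A T)=\mathrm{add}(T)$. Both complexes are basic with the same number of summands, as $\nu_A$ is an autoequivalence, so $\nu_A T\cong T$.

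For the converse, let $T$ be silting with $\nu_A T\cong T$. It suffices to show $\mathrm{Hom}(T,T[i])=0$ for $i<0$. By Serre duality,
$$\mathrm{Hom}(T,T[i])\cong D\mathrm{Hom}(T[i],\nu_A T)\cong D\mathrm{Hom}(T,T[-i]),$$
using $\nu_A T\cong T$. For $i<0$ we have $-i>0$, so the right-hand side vanishes because $T$ is presilting. Hence $T$ is pretilting, and since $\mathrm{thick}(T)=\mathcal{K}^b(\mathrm{proj}\,A)$ already holds, $T$ is tilting.

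The main obstacle is justifying the Serre duality formula in $\mathcal{K}^b(\mathrm{proj}\,A)$, with $\nu_A$ realized by twisting by the Nakayama automorphism. I would derive it first for stalk complexes of projectives, where it is the bimodule isomorphism $\mathrm{Hom}_A(P,A)\otimes_A DA\cong D\mathrm{Hom}_A(-,P)$ restricted to projectives. I would then extend it by dévissage along brutal truncations, using the five lemma and naturality.
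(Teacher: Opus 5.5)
Your proof is correct and is the standard argument behind the results of Al-Nofayee--Rickard and Aihara, which the paper cites instead of giving a proof. Serre duality $D\mathrm{Hom}(X,Y)\cong\mathrm{Hom}(Y,\nu_A X)$ on $\mathcal{K}^b(\mathrm{proj}\,A)$ turns $\nu_A$-stability of a silting $T$ into $\mathrm{Hom}(T,T[i])=0$ for $i<0$. For tilting $T$, transporting the Serre functor along the derived equivalence to the self-injective algebra $\mathrm{End}(T)$ gives $\mathrm{add}\,\nu_A T=\mathrm{add}\,T$.

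You should make one hypothesis explicit. Your final step $\nu_A T\cong T$ uses that $T$ is basic, which the statement does not say but genuinely needs. For example, for $A=kQ/\mathrm{rad}^2$ with $Q$ the $2$-cycle, $\nu_A$ swaps $P_1$ and $P_2$, so the tilting complex $P_1\oplus P_1\oplus P_2$ is not $\nu_A$-stable. This is harmless here, since the paper only considers basic complexes; without basicness, the correct conclusion is $\mathrm{add}\,\nu_A T=\mathrm{add}\,T$.
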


We denote by $2\text{-}\mathrm{presilt}\,A$ (resp.\ $2\text{-}\mathrm{silt}\,A$, $2\text{-}\mathrm{tilt}\,A$, $2\text{-}\mathrm{presilt}^{\nu}\,A$, $2\text{-}\mathrm{tilt}^{\nu}\,A$) the set of basic $2$-term presilting (resp.\ silting, tilting, $\nu$-stable presilting, $\nu$-stable tilting) complexes of $A$.

\begin{definition}\label{def:mutation}
Let $P = X \oplus P'$ be a basic two-term silting complex. The {\it left mutation} of $P$ at $X$ is defined as follows.
Take a minimal left $\mathrm{add}(P')$-approximation $f\colon X\to E$ and complete it to a triangle
$$
X \xrightarrow{f} E \rightarrow X' \rightarrow X[1].
$$
Then
$$
\mu_X^{-}(P) := X' \oplus P'
$$
is called the left mutation of $P$ at $X$. The right mutation $\mu_X^{+}(P)$ is defined dually. We say that such a mutation is {\it irreducible} if $X$ is indecomposable, and {\it $\nu$-irreducible} if $X$ is indecomposable as a $\nu$-stable object; that is, $X$ is $\nu$-stable and none of its proper direct summands is $\nu$-stable.
\end{definition}

For a given finite-dimensional algebra $A$, mutations starting from the tilting complex $A$ were originally introduced by Rickard \cite{Ric2} and later generalized by Okuyama \cite{Oku}; the resulting complexes are known as {\it Okuyama--Rickard complexes}. This result was generalized to the silting setting in~\cite{AI}.

Consider the (left) mutation quiver of $A$, whose vertices correspond to basic silting complexes. 
There is an arrow $T \to T'$ if $T'=\mu_X^{-}(T)$ for some indecomposable direct summand $X$ of $T$. 
By \cite[Theorem 2.35]{AI}, this quiver coincides with the Hasse quiver $\mathrm{Hasse}(\mathrm{silt}\,A)$ of $\mathrm{silt}\,A$.

Note that $\mathrm{Hasse}(2\text{-}\mathrm{silt}\,A)$ is the full subquiver of $\mathrm{Hasse}(\mathrm{silt}\,A)$ whose vertices correspond to basic $2$-term silting complexes. $\mathrm{Hasse}(2\text{-}\mathrm{silt}\,A)$ is an $n$-regular graph, where $n$ is the number of indecomposable direct summands of $A$, see~\cite[Corollary~3.8]{AIR}. Then we define the connectivity properties as follows (for the $\nu$-stable case, see, for example in \cite{CKL,AK}).

\begin{definition}
Let $A$ be a self-injective algebra. We say that $A$ is {\it $\nu$-stable-silting-connected} if for any two basic $\nu$-stable-silting complexes $T,T' \in \mathcal{K}^b(\mathrm{proj}\,A)$, there exists a finite sequence of  $\nu$-irreducible silting mutations transforming $T$ into $T'$.
\end{definition}

Now, based on existing results, we can give a preliminary description of $2$-term silting complexes for biserial FBGAs in some special cases.

\begin{proposition}\label{prop:2-silt-fms}
	Let $A$ be a biserial FBGA with associated biserial FBG $(\Gamma,d)$ such that $m(\vv)\geq 1$ for all vertices $\vv$ of $\Gamma$. Let $A'$ be the BGA associated with the same ribbon graph $\Gamma$. Then there is a bijection between
\begin{enumerate}
  		\item the set $2\text{-}\mathrm{silt}\,A$ of isomorphism classes of basic two-term silting complexes of $A$, and
  		\item the set $2\text{-}\mathrm{silt}\,A'=2\text{-}\mathrm{tilt}\,A'$ of isomorphism classes of basic two-term silting, equivalently, tilting complexes of $A'$.
	\end{enumerate}
	Moreover, $A$ is silting-discrete if and only if $\Gamma$ contains at most one odd cycle and no even cycles. In this case, $A$ is either a BGA or a local algebra.
\end{proposition}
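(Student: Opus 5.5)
The plan is to compare the two algebras through the combinatorial description of two-term presilting complexes by signed walks on the ribbon graph \cite{AAC,AY}. For a Brauer graph algebra, and more generally for a special biserial algebra, the indecomposable two-term presilting complexes are string or band complexes. Their Hom-vanishing conditions in degree one are controlled by the local structure of the ribbon graph: which half-edges follow which around each vertex, and whether a walk can continue through a vertex. Both $A$ and $A'$ have quiver $Q_\Gamma$, and the gentle relations $\alpha\beta=0$ for $\beta\ne\rho(\alpha)$ are the same in both; only the length of the maximal paths around each vertex differs.

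The hypothesis $m(\vv)\ge 1$ means $d(\vv)\ge val(\vv)$. Hence every path of length at most $val(\vv)$ around $\vv$ is nonzero in $A$, exactly as in $A'$. The key step is the following claim: the two-term complexes of projectives over $A$ and over $A'$ built from the same signed walk have isomorphic Hom-spaces in degree one, and these are both zero or both nonzero. The maps between projectives appearing in two-term string or band complexes, and in the morphisms and homotopies between them, only use paths that wind around a vertex at most once, up to the socle. Inside that range the two algebras agree, after identifying $P_e$ over $A$ and over $A'$ through their common top part.

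I would make this precise by one of two routes. The first is to argue via $\tau$-tilting theory: both algebras have the same quotient $A/\langle$socle-type relations$\rangle$, namely the quotient by all paths of length $\ge val$ around a vertex, which lies in the centre in the appropriate sense. Then I would apply the reduction theorem of Eisele--Janssens--Raedschelders, $2\text{-}\mathrm{silt}\,A\cong 2\text{-}\mathrm{silt}\,(A/I)$ for $I$ generated by central elements inside the radical. The second route, which I expect to be cleaner, is to note that $A$ and $A'$ share the quotient $\bar A := A/\soc$-paths of length $\ge val$. The ideal killed is generated by elements $c_\vv$, the full cycles around vertices, and their powers, which lie in $\mathrm{rad}(A)$ and are annihilated by every arrow not on that cycle. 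If these elements are central in both algebras, the reduction theorem gives $2\text{-}\mathrm{silt}\,A\cong 2\text{-}\mathrm{silt}\,\bar A\cong 2\text{-}\mathrm{silt}\,A'$ as posets. The equality $2\text{-}\mathrm{silt}\,A'=2\text{-}\mathrm{tilt}\,A'$ holds because $A'$ is symmetric, so $\nu_{A'}$ is trivial up to inner automorphism, and Theorem~\ref{prop:sel-inj-silting} applies.

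The main obstacle is centrality. In $A$ with non-integral multiplicity, a full cycle $c_\vv=\alpha\rho(\alpha)\cdots\rho^{val-1}(\alpha)$ need not commute with other elements. The natural fix is to show that the sum $z=\sum_{\vv}\sum_{h\in H_\vv} c_{(\vv,h)}$ generates the required ideal and commutes with arrows. For an arrow $\alpha_h$, both $\alpha_h z$ and $z\alpha_h$ equal the path of length $val+1$ starting with $\alpha_h$ around the same vertex, because all other products vanish by the type (III) relations. When $d(\vv)=val(\vv)$ and $\vv$ is truncated-type, I would check degenerate small cases by hand, using that such paths are zero by the type (II) relations anyway. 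For the silting-discreteness statement, I would transport the known BGA criterion \cite{AAC} through the bijection. The same reduction applies at every two-term silting complex, since $\mathrm{End}(T)$ of a two-term silting complex over $A$ is again of this type. If that last point fails, I would instead use Aihara--Mizuno's criterion that it suffices to check finiteness of $2\text{-}\mathrm{silt}$ of all silting-reachable endomorphism algebras. Finally, when $\Gamma$ has at most one odd cycle and no even cycles, $\Gamma$ has a single cycle which is odd, or none. The condition $m(\vv)\ge1$ combined with the (SI) relation then forces either $\nu$ trivial, giving a BGA, or $\Gamma$ a single vertex, giving a local algebra, which I would verify by a direct orbit count.
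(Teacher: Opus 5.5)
Your bijection argument is the paper's: kill central sums of full cycles around the vertices in both $A$ and $A'$ and apply the reduction theorem of \cite{EJR}. Your centrality check is the right one: $\alpha_h z$ and $z\alpha_h$ both equal the path of length $val+1$ starting with $\alpha_h$.

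One claim there is false as stated: $z$ does not generate the ideal of all full cycles $c_{(\vv,h)}$. For a single loop with $d=3$ you have $A=k\langle x,y\rangle/(x^2,y^2,xyx-yxy)$, $z=xy+yx$ and $\langle z\rangle=\mathrm{span}(xy+yx,\,xyx)$, so $xy\neq0$ in $A/\langle z\rangle$. For a non-loop edge $\bar h$, your global sum only gives $e_{\bar h}ze_{\bar h}=c_{(s(h),h)}+c_{(s(\iota h),\iota h)}$. This is repairable. In $A/\langle z\rangle$ each full cycle lies at one idempotent and is killed by the radical on both sides: $c_{(\vv,h)}\alpha_h=z\alpha_h$, $\alpha_{\rho^{-1}(h)}c_{(\vv,h)}=\alpha_{\rho^{-1}(h)}z$, and all other products vanish by the type (III) relations. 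Hence each full cycle is central there. A second application of \cite{EJR} then brings both $A$ and $A'$ down to $kQ_\Gamma$ modulo the type (III) relations and all full cycles. This is the same algebra for both because $d\geq val$.

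The genuine gap is in the converse: that the cycle condition implies silting-discreteness. Your key step, that $\mathrm{End}(T)$ of a two-term silting complex over $A$ is again a biserial FBGA, fails in general. As soon as $\nu_A$ moves some indecomposable projective $P$, $\mu^-_P(A)$ is silting but not tilting by Theorem~\ref{prop:sel-inj-silting}. So $\mathrm{Hom}(T,T[i])\neq0$ for some $i<0$, and its derived endomorphism algebra is a genuine non-positive dg algebra (compare Example~\ref{exa:silt-dis-but-have-2-cycles}). Your bijection says nothing about such algebras. The Aihara--Mizuno fallback requires finiteness of two-term silting relative to exactly such reachable $T$, so it does not help. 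Under the cycle condition your claim happens to be true, but only because of the dichotomy in your last paragraph, so that dichotomy has to come first and is what finishes the proof.

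Suppose $\nu\neq\mathrm{id}$. By (SI) and Lemma~\ref{lem:n(v)-property}, every $\nu$-orbit of edges is either $n\geq2$ parallel edges between two distinct vertices (an even cycle) or $n$, resp.\ $n/2$, loops at one vertex. So the cycle condition forces $\Gamma$ to be one vertex with one loop, not merely one vertex. Then $A$ is local and is silting-discrete by \cite{AI}. If $\nu=\mathrm{id}$, $A$ is a BGA, hence symmetric, so silting equals tilting and \cite[Theorem~6.7]{AAC} applies. This is the paper's argument. Note that it uses (SI) and the cycle condition, not $m(\vv)\geq1$. That hypothesis is needed only for the forward direction, where your transport of finiteness of $2\text{-}\mathrm{silt}$ through the bijection is fine.
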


\begin{proof}
	By definition, every arrow $\alpha$ defines a cycle
		$$C_\alpha=\alpha\rho(\alpha)\cdots\rho^l(\alpha)$$
		where $l+1$ denotes the cardinality of the $\rho$-orbit of $\alpha$. Since for each vertex $\vv\in V(\Gamma)$ the element $$\sum_{h\in H \text{ with } s(h)=\vv} C_{\alpha_h}$$ is in both the center of $A$ and the center of $A'$, it follows that there exists a $k$-algebra isomorphism
$$
B := A \Big/ \Big\langle \sum_{h\in H \text{ with } s(h)=\vv} C_{\alpha_h} \;\Big|\; \vv\in V \Big\rangle
\cong
A' \Big/ \Big\langle \sum_{h\in H \text{ with } s(h)=\vv} C_{\alpha_h} \;\Big|\; \vv\in V \Big\rangle .
$$
As shown in \cite[Theorem~11]{EJR} (see also \cite[Corollary~7.31]{AY}),
the sets $2\text{-}\mathrm{silt}\,A$ and $2\text{-}\mathrm{silt}\,A'$ both coincide with the set $2\text{-}\mathrm{silt}\,B$ of isomorphism classes of basic two-term silting complexes over $B$. 

Suppose that $A$ is silting-discrete. Then $2\text{-}\mathrm{silt}\,A$ is finite, and hence so is $2\text{-}\mathrm{silt}\,A'$. By \cite[Theorem~6.7]{AAC}, this is equivalent to the graph $\Gamma$ containing at most one odd cycle and no even cycles.
Conversely, assume that $\Gamma$ contains at most one odd cycle and no even cycles. If $m(\vv)$ is not an integer for some vertex $\vv$, then by the (SI) condition in the definition of biserial FBG and Lemma \ref{lem:n(v)-property}, $\Gamma$ is either a single loop or contains at least one $2$-cycle. Since $\Gamma$ contains no even cycles, in this case, $A$ is a local algebra, and hence silting-discrete by \cite[Corollary~3.12]{AI}. If $m(\vv)$ is an integer for every vertex $\vv$, then $A$ is a BGA. Therefore, by \cite[Theorem~6.7]{AAC}, $A$ is silting-discrete.
\end{proof}

The above condition on the multiplicity is necessary, as we can see from the following example.

\begin{example}\label{exa:silt-dis-but-have-2-cycles}
	Consider the biserial FBG $(\Gamma,d)$ where $\Gamma$ is the ribbon graph
	\begin{center}

\tikzset{every picture/.style={line width=0.75pt}} %set default line width to 0.75pt        

\begin{tikzpicture}[x=0.75pt,y=0.75pt,yscale=-1,xscale=1]
%uncomment if require: \path (0,235); %set diagram left start at 0, and has height of 235

%Shape: Circle [id:dp8262589623190342] 
\draw  [fill={rgb, 255:red, 0; green, 0; blue, 0 }  ,fill opacity=1 ] (80,95) .. controls (80,92.24) and (82.24,90) .. (85,90) .. controls (87.76,90) and (90,92.24) .. (90,95) .. controls (90,97.76) and (87.76,100) .. (85,100) .. controls (82.24,100) and (80,97.76) .. (80,95) -- cycle ;
%Shape: Circle [id:dp9090944636447427] 
\draw  [fill={rgb, 255:red, 0; green, 0; blue, 0 }  ,fill opacity=1 ] (180,95) .. controls (180,92.24) and (182.24,90) .. (185,90) .. controls (187.76,90) and (190,92.24) .. (190,95) .. controls (190,97.76) and (187.76,100) .. (185,100) .. controls (182.24,100) and (180,97.76) .. (180,95) -- cycle ;
%Straight Lines [id:da01645424468666179] 
\draw [line width=1.5]    (85,95) -- (185,95) ;
%Curve Lines [id:da7083498694035377] 
\draw [line width=1.5]    (85,95) .. controls (150.25,60.25) and (230.25,61.25) .. (185,95) ;
%Curve Lines [id:da5110135380410727] 
\draw [line width=1.5]    (85,95) .. controls (39.75,60.25) and (119.75,60.25) .. (185,95) ;

% Text Node
\draw (65.25,90) node [anchor=north west][inner sep=0.75pt]   [align=left] {$\vv$};
% Text Node
\draw (194.25,90) node [anchor=north west][inner sep=0.75pt]   [align=left] {$\ww$};
% Text Node
\draw (75.25,55) node [anchor=north west][inner sep=0.75pt]   [align=left] {$1$};
% Text Node
\draw (186.25,55) node [anchor=north west][inner sep=0.75pt]   [align=left] {$2$};
% Text Node
\draw (128.75,99.5) node [anchor=north west][inner sep=0.75pt]   [align=left] {$3$};
\end{tikzpicture}

	\end{center}
	with clockwise cyclic order around each vertex, and the degree function $d$ is defined by $d(\vv)=d(\ww)=1$. Then $m(\vv)=m(\ww)=\frac{1}{3}$. The associated biserial FBGA is isomorphic to the Nakayama algebra (also a gentle algebra) $A=kQ/I$ where $Q$ is the quiver 
	$$\begin{tikzcd}
                       & 1 \arrow[ld, "\alpha"'] &                         \\
2 \arrow[rr, "\beta"'] &                         & 3 \arrow[lu, "\gamma"']
\end{tikzcd}\text{, and }I=\langle \alpha\beta,\beta\gamma,\gamma\alpha\rangle.$$
By \cite{Ada}, $A$ is silting-discrete. However, $\Gamma$ contains even cycles. We remark that, for a gentle algebra $A$ and a silting complex $T$, the dg endomorphism algebra of $T$ is quasi-isomorphic to another graded gentle algebra \cite{APS,CS,JSW}.

Moreover, the above algebra is derived equivalent to the graded algebra $B=kQ'/I'$, where the quiver $Q'$ is
\[\begin{tikzcd}
                                   & 1 \arrow[ld, "\beta", shift left] \arrow[rd, "\gamma"] &   \\
2 \arrow[ru, "\alpha", shift left] &                                                        & 3
\end{tikzcd}
\text{ with $I'=\langle \alpha\beta,\beta\alpha\rangle$, $|\alpha|=|\gamma|=0$, $|\beta|=-1$.}\]
 This dg algebra is the dg endomorphism algebra of the silting mutation $\mu^-(A)$ of $A$ at vertex~$1$. Therefore, unlike in the gentle algebra case, the differential graded algebras arising from silting complexes of biserial FBGAs are not, in general, obtained simply by adding a grading to another biserial FBGA.
\end{example}

		\subsection{Kauer moves for BGAs}
	\

Let $I$ and $I'$ be disjoint sets of indecomposable projective $A$-modules such that 
$I \cup I'$ is a complete set of representatives of the isomorphism classes of indecomposable projective $A$-modules. For a set $\mathcal S$ of indecomposable projective modules, define
\[
P(\mathcal S):=\bigoplus_{P\in \mathcal S} P.
\]
Then
\[
A=P(I)\oplus P(I').
\]
Hence the left mutation $\mu^-_{P(I)}(A)$ is a silting complex by \cite{AI}. By Theorem \ref{prop:sel-inj-silting}, the complex $\mu^-_{P(I)}(A)$ is tilting if and only if $I$ is $\nu_A$-stable.

Now let $A$ be the BGA associated with a ribbon graph $\Gamma$. Since $A$ is symmetric, its Nakayama automorphism is trivial, and hence each indecomposable projective $A$-module forms a $\nu_A$-stable orbit by itself. In particular, for each indecomposable projective module $P$, the endomorphism algebra
\[
B:=\mathrm{End}_{\mathcal{K}^b(\mathrm{proj}\,A)}(\mu^-_{P}(A))
\]
was described by Kauer \cite{K}. He proved that $B$ is again a BGA associated with the ribbon graph
\[
\Gamma'=(\Gamma\setminus s)\cup s',
\]
where $s$ is the edge of $\Gamma$ corresponding to $P$, and $s'$ is obtained from $\Gamma$ by performing one of the local moves illustrated in Figure~\ref{fig:Kauer-moves}. We note that the multiplicity of each vertex is preserved.

	\begin{figure}[ht]
		\centering
		\begin{center}       
			\begin{tikzpicture}[x=0.75pt,y=0.75pt,yscale=-1,xscale=1]
				%uncomment if require: \path (0,479); %set diagram left start at 0, and has height of 479
				
				%Shape: Circle [id:dp22226166249641044] 
				\draw  [fill={rgb, 255:red, 0; green, 0; blue, 0 }  ,fill opacity=1 ] (50,55) .. controls (50,52.24) and (52.24,50) .. (55,50) .. controls (57.76,50) and (60,52.24) .. (60,55) .. controls (60,57.76) and (57.76,60) .. (55,60) .. controls (52.24,60) and (50,57.76) .. (50,55) -- cycle ;
				%Shape: Circle [id:dp6485957611984443] 
				\draw  [fill={rgb, 255:red, 0; green, 0; blue, 0 }  ,fill opacity=1 ] (100,55) .. controls (100,52.24) and (102.24,50) .. (105,50) .. controls (107.76,50) and (110,52.24) .. (110,55) .. controls (110,57.76) and (107.76,60) .. (105,60) .. controls (102.24,60) and (100,57.76) .. (100,55) -- cycle ;
				%Straight Lines [id:da0673474500039486] 
				\draw [line width=1.5]    (55,55) -- (75,90) -- (105,55) ;
				%Shape: Circle [id:dp10432059954586159] 
				\draw  [fill={rgb, 255:red, 0; green, 0; blue, 0 }  ,fill opacity=1 ] (70,90) .. controls (70,87.24) and (72.24,85) .. (75,85) .. controls (77.76,85) and (80,87.24) .. (80,90) .. controls (80,92.76) and (77.76,95) .. (75,95) .. controls (72.24,95) and (70,92.76) .. (70,90) -- cycle ;
				%Shape: Circle [id:dp886807605750549] 
				\draw  [fill={rgb, 255:red, 0; green, 0; blue, 0 }  ,fill opacity=1 ] (150,55) .. controls (150,52.24) and (152.24,50) .. (155,50) .. controls (157.76,50) and (160,52.24) .. (160,55) .. controls (160,57.76) and (157.76,60) .. (155,60) .. controls (152.24,60) and (150,57.76) .. (150,55) -- cycle ;
				%Shape: Circle [id:dp9929471825613261] 
				\draw  [fill={rgb, 255:red, 0; green, 0; blue, 0 }  ,fill opacity=1 ] (200,55) .. controls (200,52.24) and (202.24,50) .. (205,50) .. controls (207.76,50) and (210,52.24) .. (210,55) .. controls (210,57.76) and (207.76,60) .. (205,60) .. controls (202.24,60) and (200,57.76) .. (200,55) -- cycle ;
				%Straight Lines [id:da8944069118075275] 
				\draw [line width=1.5]    (155,55) -- (175,90) -- (205,55) ;
				%Shape: Circle [id:dp39909523582360173] 
				\draw  [fill={rgb, 255:red, 0; green, 0; blue, 0 }  ,fill opacity=1 ] (170,90) .. controls (170,87.24) and (172.24,85) .. (175,85) .. controls (177.76,85) and (180,87.24) .. (180,90) .. controls (180,92.76) and (177.76,95) .. (175,95) .. controls (172.24,95) and (170,92.76) .. (170,90) -- cycle ;
				%Straight Lines [id:da039273848985515114] 
				\draw [line width=1.5]    (75,90) -- (175,90) ;
				%Straight Lines [id:da2634289074516307] 
				\draw [color={rgb, 255:red, 0; green, 0; blue, 255 }  ,draw opacity=1 ][line width=1.5]    (175,90) -- (175,155.5) ;
				%Shape: Circle [id:dp5555539095631987] 
				\draw  [fill={rgb, 255:red, 0; green, 0; blue, 0 }  ,fill opacity=1 ] (300.25,190.08) .. controls (300.26,192.84) and (298.04,195.09) .. (295.28,195.11) .. controls (292.51,195.12) and (290.26,192.89) .. (290.25,190.13) .. controls (290.23,187.37) and (292.46,185.12) .. (295.22,185.11) .. controls (297.98,185.09) and (300.23,187.32) .. (300.25,190.08) -- cycle ;
				%Shape: Circle [id:dp17611778165090142] 
				\draw  [fill={rgb, 255:red, 0; green, 0; blue, 0 }  ,fill opacity=1 ] (250.25,190.34) .. controls (250.26,193.1) and (248.04,195.35) .. (245.28,195.37) .. controls (242.51,195.38) and (240.26,193.16) .. (240.25,190.39) .. controls (240.24,187.63) and (242.46,185.38) .. (245.22,185.37) .. controls (247.98,185.35) and (250.24,187.58) .. (250.25,190.34) -- cycle ;
				%Straight Lines [id:da7982591909607111] 
				\draw [line width=1.5]    (295.25,190.11) -- (275.07,155.21) -- (245.25,190.37) ;
				%Shape: Circle [id:dp45756942985381555] 
				\draw  [fill={rgb, 255:red, 0; green, 0; blue, 0 }  ,fill opacity=1 ] (280.07,155.19) .. controls (280.08,157.95) and (277.85,160.2) .. (275.09,160.21) .. controls (272.33,160.23) and (270.08,158) .. (270.07,155.24) .. controls (270.05,152.48) and (272.28,150.23) .. (275.04,150.21) .. controls (277.8,150.2) and (280.05,152.42) .. (280.07,155.19) -- cycle ;
				%Shape: Circle [id:dp36589728520424525] 
				\draw  [fill={rgb, 255:red, 0; green, 0; blue, 0 }  ,fill opacity=1 ] (200.25,190.6) .. controls (200.26,193.37) and (198.04,195.62) .. (195.28,195.63) .. controls (192.52,195.64) and (190.26,193.42) .. (190.25,190.66) .. controls (190.24,187.9) and (192.46,185.64) .. (195.22,185.63) .. controls (197.99,185.62) and (200.24,187.84) .. (200.25,190.6) -- cycle ;
				%Shape: Circle [id:dp8575087725431652] 
				\draw  [fill={rgb, 255:red, 0; green, 0; blue, 0 }  ,fill opacity=1 ] (150.25,190.87) .. controls (150.27,193.63) and (148.04,195.88) .. (145.28,195.89) .. controls (142.52,195.91) and (140.27,193.68) .. (140.25,190.92) .. controls (140.24,188.16) and (142.46,185.91) .. (145.22,185.89) .. controls (147.99,185.88) and (150.24,188.1) .. (150.25,190.87) -- cycle ;
				%Straight Lines [id:da22470054015043983] 
				\draw [line width=1.5]    (195.25,190.63) -- (175.07,155.74) -- (145.25,190.89) ;
				%Shape: Circle [id:dp7921261977334222] 
				\draw  [fill={rgb, 255:red, 0; green, 0; blue, 0 }  ,fill opacity=1 ] (180.07,155.71) .. controls (180.08,158.47) and (177.85,160.72) .. (175.09,160.74) .. controls (172.33,160.75) and (170.08,158.52) .. (170.07,155.76) .. controls (170.05,153) and (172.28,150.75) .. (175.04,150.74) .. controls (177.8,150.72) and (180.05,152.95) .. (180.07,155.71) -- cycle ;
				%Straight Lines [id:da6675942361657408] 
				\draw [line width=1.5]    (275.07,155.21) -- (175.07,155.74) ;
				%Shape: Circle [id:dp6284420282406071] 
				\draw  [fill={rgb, 255:red, 0; green, 0; blue, 0 }  ,fill opacity=1 ] (418,55) .. controls (418,52.24) and (420.24,50) .. (423,50) .. controls (425.76,50) and (428,52.24) .. (428,55) .. controls (428,57.76) and (425.76,60) .. (423,60) .. controls (420.24,60) and (418,57.76) .. (418,55) -- cycle ;
				%Shape: Circle [id:dp03130012886927225] 
				\draw  [fill={rgb, 255:red, 0; green, 0; blue, 0 }  ,fill opacity=1 ] (468,55) .. controls (468,52.24) and (470.24,50) .. (473,50) .. controls (475.76,50) and (478,52.24) .. (478,55) .. controls (478,57.76) and (475.76,60) .. (473,60) .. controls (470.24,60) and (468,57.76) .. (468,55) -- cycle ;
				%Straight Lines [id:da4962668232216847] 
				\draw [line width=1.5]    (423,55) -- (443,90) -- (473,55) ;
				%Shape: Circle [id:dp683582884243922] 
				\draw  [fill={rgb, 255:red, 0; green, 0; blue, 0 }  ,fill opacity=1 ] (438,90) .. controls (438,87.24) and (440.24,85) .. (443,85) .. controls (445.76,85) and (448,87.24) .. (448,90) .. controls (448,92.76) and (445.76,95) .. (443,95) .. controls (440.24,95) and (438,92.76) .. (438,90) -- cycle ;
				%Shape: Circle [id:dp397672260285046] 
				\draw  [fill={rgb, 255:red, 0; green, 0; blue, 0 }  ,fill opacity=1 ] (518,55) .. controls (518,52.24) and (520.24,50) .. (523,50) .. controls (525.76,50) and (528,52.24) .. (528,55) .. controls (528,57.76) and (525.76,60) .. (523,60) .. controls (520.24,60) and (518,57.76) .. (518,55) -- cycle ;
				%Shape: Circle [id:dp06657193331954336] 
				\draw  [fill={rgb, 255:red, 0; green, 0; blue, 0 }  ,fill opacity=1 ] (568,55) .. controls (568,52.24) and (570.24,50) .. (573,50) .. controls (575.76,50) and (578,52.24) .. (578,55) .. controls (578,57.76) and (575.76,60) .. (573,60) .. controls (570.24,60) and (568,57.76) .. (568,55) -- cycle ;
				%Straight Lines [id:da29987409339029214] 
				\draw [line width=1.5]    (523,55) -- (543,90) -- (573,55) ;
				%Shape: Circle [id:dp12399283165637831] 
				\draw  [fill={rgb, 255:red, 0; green, 0; blue, 0 }  ,fill opacity=1 ] (538,90) .. controls (538,87.24) and (540.24,85) .. (543,85) .. controls (545.76,85) and (548,87.24) .. (548,90) .. controls (548,92.76) and (545.76,95) .. (543,95) .. controls (540.24,95) and (538,92.76) .. (538,90) -- cycle ;
				%Straight Lines [id:da3401186486796579] 
				\draw [line width=1.5]    (443,90) -- (543,90) ;
				%Shape: Circle [id:dp4275305196375967] 
				\draw  [fill={rgb, 255:red, 0; green, 0; blue, 0 }  ,fill opacity=1 ] (668.25,190.08) .. controls (668.26,192.84) and (666.04,195.09) .. (663.28,195.11) .. controls (660.51,195.12) and (658.26,192.89) .. (658.25,190.13) .. controls (658.23,187.37) and (660.46,185.12) .. (663.22,185.11) .. controls (665.98,185.09) and (668.23,187.32) .. (668.25,190.08) -- cycle ;
				%Shape: Circle [id:dp09576828796536874] 
				\draw  [fill={rgb, 255:red, 0; green, 0; blue, 0 }  ,fill opacity=1 ] (618.25,190.34) .. controls (618.26,193.1) and (616.04,195.35) .. (613.28,195.37) .. controls (610.51,195.38) and (608.26,193.16) .. (608.25,190.39) .. controls (608.24,187.63) and (610.46,185.38) .. (613.22,185.37) .. controls (615.98,185.35) and (618.24,187.58) .. (618.25,190.34) -- cycle ;
				%Straight Lines [id:da46689319724200384] 
				\draw [line width=1.5]    (663.25,190.11) -- (643.07,155.21) -- (613.25,190.37) ;
				%Shape: Circle [id:dp7979982659667229] 
				\draw  [fill={rgb, 255:red, 0; green, 0; blue, 0 }  ,fill opacity=1 ] (648.07,155.19) .. controls (648.08,157.95) and (645.85,160.2) .. (643.09,160.21) .. controls (640.33,160.23) and (638.08,158) .. (638.07,155.24) .. controls (638.05,152.48) and (640.28,150.23) .. (643.04,150.21) .. controls (645.8,150.2) and (648.05,152.42) .. (648.07,155.19) -- cycle ;
				%Shape: Circle [id:dp05121931006424041] 
				\draw  [fill={rgb, 255:red, 0; green, 0; blue, 0 }  ,fill opacity=1 ] (568.25,190.6) .. controls (568.26,193.37) and (566.04,195.62) .. (563.28,195.63) .. controls (560.52,195.64) and (558.26,193.42) .. (558.25,190.66) .. controls (558.24,187.9) and (560.46,185.64) .. (563.22,185.63) .. controls (565.99,185.62) and (568.24,187.84) .. (568.25,190.6) -- cycle ;
				%Shape: Circle [id:dp742376625234417] 
				\draw  [fill={rgb, 255:red, 0; green, 0; blue, 0 }  ,fill opacity=1 ] (518.25,190.87) .. controls (518.27,193.63) and (516.04,195.88) .. (513.28,195.89) .. controls (510.52,195.91) and (508.27,193.68) .. (508.25,190.92) .. controls (508.24,188.16) and (510.46,185.91) .. (513.22,185.89) .. controls (515.99,185.88) and (518.24,188.1) .. (518.25,190.87) -- cycle ;
				%Straight Lines [id:da7471466121888295] 
				\draw [line width=1.5]    (563.25,190.63) -- (543.07,155.74) -- (513.25,190.89) ;
				%Shape: Circle [id:dp692218505072326] 
				\draw  [fill={rgb, 255:red, 0; green, 0; blue, 0 }  ,fill opacity=1 ] (548.07,155.71) .. controls (548.08,158.47) and (545.85,160.72) .. (543.09,160.74) .. controls (540.33,160.75) and (538.08,158.52) .. (538.07,155.76) .. controls (538.05,153) and (540.28,150.75) .. (543.04,150.74) .. controls (545.8,150.72) and (548.05,152.95) .. (548.07,155.71) -- cycle ;
				%Straight Lines [id:da6195120294149439] 
				\draw [line width=1.5]    (643.07,155.21) -- (543.07,155.74) ;
				%Curve Lines [id:da947257585904822] 
				\draw [color={rgb, 255:red, 0; green, 0; blue, 255 }  ,draw opacity=1 ][line width=1.5]    (643.04,150.21) .. controls (626,112.5) and (474,130.5) .. (443,90) ;
				
				% Text Node
				\draw (178,115) node [anchor=north west][inner sep=0.75pt]   [align=left] {$s'$};
				% Text Node
				\draw (335,110) node [anchor=north west][inner sep=0.75pt] [font=\LARGE]  [align=left] {$ \mathrel{\substack{
\overset{\mu_{s'}^+}{\longrightarrow}\\[-0.3ex]
\underset{\mu_{s}^-}{\longleftarrow}
}} $};
				% Text Node
				\draw (574,102) node [anchor=north west][inner sep=0.75pt]   [align=left] {$s$};
				% Text Node
				\draw (71,50) node [anchor=north west][inner sep=0.75pt]   [align=left] {$\cdots$};
				% Text Node
				\draw (173,50) node [anchor=north west][inner sep=0.75pt]   [align=left] {$\cdots$};
				% Text Node
				\draw (166,188) node [anchor=north west][inner sep=0.75pt]   [align=left] {$\cdots$};
				% Text Node
				\draw (270,188) node [anchor=north west][inner sep=0.75pt]   [align=left] {$\cdots$};
				% Text Node
				\draw (442,50) node [anchor=north west][inner sep=0.75pt]   [align=left] {$\cdots$};
				% Text Node
				\draw (543,50) node [anchor=north west][inner sep=0.75pt]   [align=left] {$\cdots$};
				% Text Node
				\draw (533,189) node [anchor=north west][inner sep=0.75pt]   [align=left] {$\cdots$};
				% Text Node
				\draw (636,185) node [anchor=north west][inner sep=0.75pt]   [align=left] {$\cdots$};

			\end{tikzpicture}

			\smallskip

				\begin{tikzpicture}[x=0.75pt,y=0.75pt,yscale=-1,xscale=1]
				%uncomment if require: \path (0,479); %set diagram left start at 0, and has height of 479
				
				%Shape: Circle [id:dp39909523582360173] 
				\draw  [fill={rgb, 255:red, 0; green, 0; blue, 0 }  ,fill opacity=1 ] (170,90) .. controls (170,87.24) and (172.24,85) .. (175,85) .. controls (177.76,85) and (180,87.24) .. (180,90) .. controls (180,92.76) and (177.76,95) .. (175,95) .. controls (172.24,95) and (170,92.76) .. (170,90) -- cycle ;
				%Straight Lines [id:da2634289074516307] 
				\draw [color={rgb, 255:red, 0; green, 0; blue, 255 }  ,draw opacity=1 ][line width=1.5]    (175,90) -- (175,155.5) ;
				%Shape: Circle [id:dp5555539095631987] 
				\draw  [fill={rgb, 255:red, 0; green, 0; blue, 0 }  ,fill opacity=1 ] (300.25,190.08) .. controls (300.26,192.84) and (298.04,195.09) .. (295.28,195.11) .. controls (292.51,195.12) and (290.26,192.89) .. (290.25,190.13) .. controls (290.23,187.37) and (292.46,185.12) .. (295.22,185.11) .. controls (297.98,185.09) and (300.23,187.32) .. (300.25,190.08) -- cycle ;
				%Shape: Circle [id:dp17611778165090142] 
				\draw  [fill={rgb, 255:red, 0; green, 0; blue, 0 }  ,fill opacity=1 ] (250.25,190.34) .. controls (250.26,193.1) and (248.04,195.35) .. (245.28,195.37) .. controls (242.51,195.38) and (240.26,193.16) .. (240.25,190.39) .. controls (240.24,187.63) and (242.46,185.38) .. (245.22,185.37) .. controls (247.98,185.35) and (250.24,187.58) .. (250.25,190.34) -- cycle ;
				%Straight Lines [id:da7982591909607111] 
				\draw [line width=1.5]    (295.25,190.11) -- (275.07,155.21) -- (245.25,190.37) ;
				%Shape: Circle [id:dp45756942985381555] 
				\draw  [fill={rgb, 255:red, 0; green, 0; blue, 0 }  ,fill opacity=1 ] (280.07,155.19) .. controls (280.08,157.95) and (277.85,160.2) .. (275.09,160.21) .. controls (272.33,160.23) and (270.08,158) .. (270.07,155.24) .. controls (270.05,152.48) and (272.28,150.23) .. (275.04,150.21) .. controls (277.8,150.2) and (280.05,152.42) .. (280.07,155.19) -- cycle ;
				%Shape: Circle [id:dp36589728520424525] 
				\draw  [fill={rgb, 255:red, 0; green, 0; blue, 0 }  ,fill opacity=1 ] (200.25,190.6) .. controls (200.26,193.37) and (198.04,195.62) .. (195.28,195.63) .. controls (192.52,195.64) and (190.26,193.42) .. (190.25,190.66) .. controls (190.24,187.9) and (192.46,185.64) .. (195.22,185.63) .. controls (197.99,185.62) and (200.24,187.84) .. (200.25,190.6) -- cycle ;
				%Shape: Circle [id:dp8575087725431652] 
				\draw  [fill={rgb, 255:red, 0; green, 0; blue, 0 }  ,fill opacity=1 ] (150.25,190.87) .. controls (150.27,193.63) and (148.04,195.88) .. (145.28,195.89) .. controls (142.52,195.91) and (140.27,193.68) .. (140.25,190.92) .. controls (140.24,188.16) and (142.46,185.91) .. (145.22,185.89) .. controls (147.99,185.88) and (150.24,188.1) .. (150.25,190.87) -- cycle ;
				%Straight Lines [id:da22470054015043983] 
				\draw [line width=1.5]    (195.25,190.63) -- (175.07,155.74) -- (145.25,190.89) ;
				%Shape: Circle [id:dp7921261977334222] 
				\draw  [fill={rgb, 255:red, 0; green, 0; blue, 0 }  ,fill opacity=1 ] (180.07,155.71) .. controls (180.08,158.47) and (177.85,160.72) .. (175.09,160.74) .. controls (172.33,160.75) and (170.08,158.52) .. (170.07,155.76) .. controls (170.05,153) and (172.28,150.75) .. (175.04,150.74) .. controls (177.8,150.72) and (180.05,152.95) .. (180.07,155.71) -- cycle ;
				%Straight Lines [id:da6675942361657408] 
				\draw [line width=1.5]    (275.07,155.21) -- (175.07,155.74) ;
				%Shape: Circle [id:dp4275305196375967] 
				\draw  [fill={rgb, 255:red, 0; green, 0; blue, 0 }  ,fill opacity=1 ] (594.25,194.08) .. controls (594.26,196.84) and (592.04,199.09) .. (589.28,199.11) .. controls (586.51,199.12) and (584.26,196.89) .. (584.25,194.13) .. controls (584.23,191.37) and (586.46,189.12) .. (589.22,189.11) .. controls (591.98,189.09) and (594.23,191.32) .. (594.25,194.08) -- cycle ;
				%Shape: Circle [id:dp09576828796536874] 
				\draw  [fill={rgb, 255:red, 0; green, 0; blue, 0 }  ,fill opacity=1 ] (544.25,194.34) .. controls (544.26,197.1) and (542.04,199.35) .. (539.28,199.37) .. controls (536.51,199.38) and (534.26,197.16) .. (534.25,194.39) .. controls (534.24,191.63) and (536.46,189.38) .. (539.22,189.37) .. controls (541.98,189.35) and (544.24,191.58) .. (544.25,194.34) -- cycle ;
				%Straight Lines [id:da46689319724200384] 
				\draw [line width=1.5]    (589.25,194.11) -- (569.07,159.21) -- (539.25,194.37) ;
				%Shape: Circle [id:dp7979982659667229] 
				\draw  [fill={rgb, 255:red, 0; green, 0; blue, 0 }  ,fill opacity=1 ] (574.07,159.19) .. controls (574.08,161.95) and (571.85,164.2) .. (569.09,164.21) .. controls (566.33,164.23) and (564.08,162) .. (564.07,159.24) .. controls (564.05,156.48) and (566.28,154.23) .. (569.04,154.21) .. controls (571.8,154.2) and (574.05,156.42) .. (574.07,159.19) -- cycle ;
				%Shape: Circle [id:dp05121931006424041] 
				\draw  [fill={rgb, 255:red, 0; green, 0; blue, 0 }  ,fill opacity=1 ] (494.25,194.6) .. controls (494.26,197.37) and (492.04,199.62) .. (489.28,199.63) .. controls (486.52,199.64) and (484.26,197.42) .. (484.25,194.66) .. controls (484.24,191.9) and (486.46,189.64) .. (489.22,189.63) .. controls (491.99,189.62) and (494.24,191.84) .. (494.25,194.6) -- cycle ;
				%Shape: Circle [id:dp742376625234417] 
				\draw  [fill={rgb, 255:red, 0; green, 0; blue, 0 }  ,fill opacity=1 ] (444.25,194.87) .. controls (444.27,197.63) and (442.04,199.88) .. (439.28,199.89) .. controls (436.52,199.91) and (434.27,197.68) .. (434.25,194.92) .. controls (434.24,192.16) and (436.46,189.91) .. (439.22,189.89) .. controls (441.99,189.88) and (444.24,192.1) .. (444.25,194.87) -- cycle ;
				%Straight Lines [id:da7471466121888295] 
				\draw [line width=1.5]    (489.25,194.63) -- (469.07,159.74) -- (439.25,194.89) ;
				%Shape: Circle [id:dp692218505072326] 
				\draw  [fill={rgb, 255:red, 0; green, 0; blue, 0 }  ,fill opacity=1 ] (474.07,159.71) .. controls (474.08,162.47) and (471.85,164.72) .. (469.09,164.74) .. controls (466.33,164.75) and (464.08,162.52) .. (464.07,159.76) .. controls (464.05,157) and (466.28,154.75) .. (469.04,154.74) .. controls (471.8,154.72) and (474.05,156.95) .. (474.07,159.71) -- cycle ;
				%Straight Lines [id:da6195120294149439] 
				\draw [line width=1.5]    (569.07,159.21) -- (469.07,159.74) ;
				%Shape: Circle [id:dp3160252607813181] 
				\draw  [fill={rgb, 255:red, 0; green, 0; blue, 0 }  ,fill opacity=1 ]
(464.07,90) .. controls (464.07,87.24) and (466.31,85) .. (469.07,85)
.. controls (471.83,85) and (474.07,87.24) .. (474.07,90)
.. controls (474.07,92.76) and (471.83,95) .. (469.07,95)
.. controls (466.31,95) and (464.07,92.76) .. (464.07,90) -- cycle ;
				%\draw  [fill={rgb, 255:red, 0; green, 0; blue, 0 }  ,fill opacity=1 ] (564,90) .. controls (564,87.24) and (566.24,85) .. (569,85) .. controls (571.76,85) and (574,87.24) .. (574,90) .. controls (574,92.76) and (571.76,95) .. (569,95) .. controls (566.24,95) and (564,92.76) .. (564,90) -- cycle ;
				%Straight Lines [id:da682935712642375] 
				%\draw [color={rgb, 255:red, 0; green, 0; blue, 255 }  ,draw opacity=1 ][line width=1.5]    (569,90) -- (569,155.5) ;
				\draw [color={rgb,255:red,0;green,0;blue,255}, draw opacity=1, line width=1.5]
(469.07,90)
.. controls (475,120) and (550,125)
.. (569.07,159.21);
				
				%\draw [color={rgb, 255:red, 0; green, 0; blue, 255 }  ,draw opacity=1 ][line width=1.5]    (643.04,150.21) .. controls (626,112.5) and (474,130.5) .. (443,90) ;
				
				% Text Node
				\draw (178,115) node [anchor=north west][inner sep=0.75pt]   [align=left] {$s'$};
				% Text Node
				\draw (342,130) node [anchor=north west][inner sep=0.75pt]  [font=\LARGE] [align=left] {$ \mathrel{\substack{
\overset{\mu_{s'}^+}{\longrightarrow}\\[-0.3ex]
\underset{\mu_{s}^-}{\longleftarrow}
}} $};
				% Text Node
				\draw (166,188) node [anchor=north west][inner sep=0.75pt]   [align=left] {$\cdots$};
				% Text Node
				\draw (270,188) node [anchor=north west][inner sep=0.75pt]   [align=left] {$\cdots$};
				% Text Node
				\draw (459,193) node [anchor=north west][inner sep=0.75pt]   [align=left] {$\cdots$};
				% Text Node
				\draw (562,189) node [anchor=north west][inner sep=0.75pt]   [align=left] {$\cdots$};
				% Text Node
				\draw (520,115) node [anchor=north west][inner sep=0.75pt]   [align=left] {$s$};

			\end{tikzpicture}

					\smallskip   
			
			\begin{tikzpicture}[x=0.75pt,y=0.75pt,yscale=-1,xscale=1]
				%uncomment if require: \path (0,479); %set diagram left start at 0, and has height of 479
				
				%Shape: Circle [id:dp5555539095631987] 
				\draw  [fill={rgb, 255:red, 0; green, 0; blue, 0 }  ,fill opacity=1 ] (300.25,190.08) .. controls (300.26,192.84) and (298.04,195.09) .. (295.28,195.11) .. controls (292.51,195.12) and (290.26,192.89) .. (290.25,190.13) .. controls (290.23,187.37) and (292.46,185.12) .. (295.22,185.11) .. controls (297.98,185.09) and (300.23,187.32) .. (300.25,190.08) -- cycle ;
				%Shape: Circle [id:dp17611778165090142] 
				\draw  [fill={rgb, 255:red, 0; green, 0; blue, 0 }  ,fill opacity=1 ] (250.25,190.34) .. controls (250.26,193.1) and (248.04,195.35) .. (245.28,195.37) .. controls (242.51,195.38) and (240.26,193.16) .. (240.25,190.39) .. controls (240.24,187.63) and (242.46,185.38) .. (245.22,185.37) .. controls (247.98,185.35) and (250.24,187.58) .. (250.25,190.34) -- cycle ;
				%Straight Lines [id:da7982591909607111] 
				\draw [line width=1.5]    (295.25,190.11) -- (275.07,155.21) -- (245.25,190.37) ;
				%Shape: Circle [id:dp45756942985381555] 
				\draw  [fill={rgb, 255:red, 0; green, 0; blue, 0 }  ,fill opacity=1 ] (280.07,155.19) .. controls (280.08,157.95) and (277.85,160.2) .. (275.09,160.21) .. controls (272.33,160.23) and (270.08,158) .. (270.07,155.24) .. controls (270.05,152.48) and (272.28,150.23) .. (275.04,150.21) .. controls (277.8,150.2) and (280.05,152.42) .. (280.07,155.19) -- cycle ;
				%Shape: Circle [id:dp36589728520424525] 
				\draw  [fill={rgb, 255:red, 0; green, 0; blue, 0 }  ,fill opacity=1 ] (200.25,190.6) .. controls (200.26,193.37) and (198.04,195.62) .. (195.28,195.63) .. controls (192.52,195.64) and (190.26,193.42) .. (190.25,190.66) .. controls (190.24,187.9) and (192.46,185.64) .. (195.22,185.63) .. controls (197.99,185.62) and (200.24,187.84) .. (200.25,190.6) -- cycle ;
				%Shape: Circle [id:dp8575087725431652] 
				\draw  [fill={rgb, 255:red, 0; green, 0; blue, 0 }  ,fill opacity=1 ] (150.25,190.87) .. controls (150.27,193.63) and (148.04,195.88) .. (145.28,195.89) .. controls (142.52,195.91) and (140.27,193.68) .. (140.25,190.92) .. controls (140.24,188.16) and (142.46,185.91) .. (145.22,185.89) .. controls (147.99,185.88) and (150.24,188.1) .. (150.25,190.87) -- cycle ;
				%Straight Lines [id:da22470054015043983] 
				\draw [line width=1.5]    (195.25,190.63) -- (175.07,155.74) -- (145.25,190.89) ;
				%Shape: Circle [id:dp7921261977334222] 
				\draw  [fill={rgb, 255:red, 0; green, 0; blue, 0 }  ,fill opacity=1 ] (180.07,155.71) .. controls (180.08,158.47) and (177.85,160.72) .. (175.09,160.74) .. controls (172.33,160.75) and (170.08,158.52) .. (170.07,155.76) .. controls (170.05,153) and (172.28,150.75) .. (175.04,150.74) .. controls (177.8,150.72) and (180.05,152.95) .. (180.07,155.71) -- cycle ;
				%Straight Lines [id:da6675942361657408] 
				\draw [line width=1.5]    (275.07,155.21) -- (175.07,155.74) ;
				%Shape: Circle [id:dp4275305196375967] 
				\draw  [fill={rgb, 255:red, 0; green, 0; blue, 0 }  ,fill opacity=1 ] (594.25,194.08) .. controls (594.26,196.84) and (592.04,199.09) .. (589.28,199.11) .. controls (586.51,199.12) and (584.26,196.89) .. (584.25,194.13) .. controls (584.23,191.37) and (586.46,189.12) .. (589.22,189.11) .. controls (591.98,189.09) and (594.23,191.32) .. (594.25,194.08) -- cycle ;
				%Shape: Circle [id:dp09576828796536874] 
				\draw  [fill={rgb, 255:red, 0; green, 0; blue, 0 }  ,fill opacity=1 ] (544.25,194.34) .. controls (544.26,197.1) and (542.04,199.35) .. (539.28,199.37) .. controls (536.51,199.38) and (534.26,197.16) .. (534.25,194.39) .. controls (534.24,191.63) and (536.46,189.38) .. (539.22,189.37) .. controls (541.98,189.35) and (544.24,191.58) .. (544.25,194.34) -- cycle ;
				%Straight Lines [id:da46689319724200384] 
				\draw [line width=1.5]    (589.25,194.11) -- (569.07,159.21) -- (539.25,194.37) ;
				%Shape: Circle [id:dp7979982659667229] 
				\draw  [fill={rgb, 255:red, 0; green, 0; blue, 0 }  ,fill opacity=1 ] (574.07,159.19) .. controls (574.08,161.95) and (571.85,164.2) .. (569.09,164.21) .. controls (566.33,164.23) and (564.08,162) .. (564.07,159.24) .. controls (564.05,156.48) and (566.28,154.23) .. (569.04,154.21) .. controls (571.8,154.2) and (574.05,156.42) .. (574.07,159.19) -- cycle ;
				%Shape: Circle [id:dp05121931006424041] 
				\draw  [fill={rgb, 255:red, 0; green, 0; blue, 0 }  ,fill opacity=1 ] (494.25,194.6) .. controls (494.26,197.37) and (492.04,199.62) .. (489.28,199.63) .. controls (486.52,199.64) and (484.26,197.42) .. (484.25,194.66) .. controls (484.24,191.9) and (486.46,189.64) .. (489.22,189.63) .. controls (491.99,189.62) and (494.24,191.84) .. (494.25,194.6) -- cycle ;
				%Shape: Circle [id:dp742376625234417] 
				\draw  [fill={rgb, 255:red, 0; green, 0; blue, 0 }  ,fill opacity=1 ] (444.25,194.87) .. controls (444.27,197.63) and (442.04,199.88) .. (439.28,199.89) .. controls (436.52,199.91) and (434.27,197.68) .. (434.25,194.92) .. controls (434.24,192.16) and (436.46,189.91) .. (439.22,189.89) .. controls (441.99,189.88) and (444.24,192.1) .. (444.25,194.87) -- cycle ;
				%Straight Lines [id:da7471466121888295] 
				\draw [line width=1.5]    (489.25,194.63) -- (469.07,159.74) -- (439.25,194.89) ;
				%Shape: Circle [id:dp692218505072326] 
				\draw  [fill={rgb, 255:red, 0; green, 0; blue, 0 }  ,fill opacity=1 ] (474.07,159.71) .. controls (474.08,162.47) and (471.85,164.72) .. (469.09,164.74) .. controls (466.33,164.75) and (464.08,162.52) .. (464.07,159.76) .. controls (464.05,157) and (466.28,154.75) .. (469.04,154.74) .. controls (471.8,154.72) and (474.05,156.95) .. (474.07,159.71) -- cycle ;
				%Straight Lines [id:da6195120294149439] 
				\draw [line width=1.5]    (569.07,159.21) -- (469.07,159.74) ;
				%Curve Lines [id:da32475312783451193] 
				\draw [color={rgb, 255:red, 0; green, 0; blue, 255 }  ,draw opacity=1 ][line width=1.5]    (175.07,155.74) .. controls (271,78.5) and (72,78.5) .. (175.07,155.74) ;
				%Curve Lines [id:da5252910438579086] 
				\draw [color={rgb, 255:red, 0; green, 0; blue, 255 }  ,draw opacity=1 ][line width=1.5]    (569.07,159.21) .. controls (665,81.98) and (466,81.98) .. (569.07,159.21) ;
				
				% Text Node
				\draw (342,130) node [anchor=north west][inner sep=0.75pt]  [font=\LARGE] [align=left] {$ \mathrel{\substack{
\overset{\mu_{s'}^+}{\longrightarrow}\\[-0.3ex]
\underset{\mu_{s}^-}{\longleftarrow}
}} $};
				% Text Node
				\draw (166,188) node [anchor=north west][inner sep=0.75pt]   [align=left] {$\cdots$};
				% Text Node
				\draw (270,188) node [anchor=north west][inner sep=0.75pt]   [align=left] {$\cdots$};
				% Text Node
				\draw (459,193) node [anchor=north west][inner sep=0.75pt]   [align=left] {$\cdots$};
				% Text Node
				\draw (562,189) node [anchor=north west][inner sep=0.75pt]   [align=left] {$\cdots$};
				% Text Node
				\draw (167,82) node [anchor=north west][inner sep=0.75pt]   [align=left] {$s'$};
				% Text Node
				\draw (562,85) node [anchor=north west][inner sep=0.75pt]   [align=left] {$s$};

			\end{tikzpicture}
			\smallskip	
		\end{center}
					\caption{Kauer moves on ribbon graphs with clockwise cyclic orderings at all vertices.}
		\label{fig:Kauer-moves}	
	\end{figure}	
	We call such a local move in Figure~\ref{fig:Kauer-moves} a {\it Kauer move} at $s$, which provides an explicit description of irreducible tilting mutations of a Brauer graph algebra.

More generally, Soto recently gave an explicit description of tilting mutations induced by general Okuyama--Rickard complexes (equivalently, left mutations) \cite{So1,So2}, called {\it generalized Kauer moves}. More precisely, let $H'$ be an $\iota$-stable set of half-edges of $\Gamma$, corresponding to a set $I$ of indecomposable projective $A$-modules. Then the endomorphism algebra
$\mathrm{End}_{\mathcal{K}^b(\mathrm{proj}\,A)}(\mu^-_{P(I)}(A))$
is again a BGA associated with the Brauer graph obtained by applying local moves that simultaneously shift all cyclically consecutive subsets of $H'$, as illustrated in Figure~\ref{fig:gen-Kauer-move}. For an explicit definition of these local moves, see \cite[Definition~1.13]{So1}.

	\subsection{Kauer moves for biserial FBGAs}\label{subsec:KM-FMS-BGA}
	\

We now begin to study irreducible tilting mutations (that is, $\nu_A$-irreducible mutations as defined in Definition~\ref{def:mutation}) of a given biserial FBGA $A$ with associated biserial FBG $(\Gamma,d)$, where $\nu_A$ denotes the Nakayama permutation of $A$. By Proposition~\ref{prop:bfBGA-basic}, $\nu_A$ is induced by the Nakayama automorphism
\[
\nu: H \longrightarrow H, \qquad
h \longmapsto \rho^{d(s(h))}h
\]
of $(\Gamma,d)$. Therefore, studying irreducible tilting mutations is equivalent to considering mutations along each $\langle \nu \rangle$-orbit of edges in $\Gamma$, or equivalently, along the corresponding pair of $\langle \nu \rangle$-orbits of half-edges in $\Gamma$, namely
\[
\{h_1,\ldots,h_n\}
\quad\text{and}\quad
\{\iota(h_1),\ldots,\iota(h_n)\}.
\]
The following lemma gives a description of the $\nu$-orbits in the graph $\Gamma$.

\begin{lemma}\label{lem:nu-orbit}
	Let $(\Gamma,d)$ be a biserial FBG, and let $\{\overline{h_1},\ldots,\overline{h_n}\}$
be a $\nu$-orbit of edges. Assume that $\Gamma$ contains edges other than the $n$ edges in this orbit. Then this $\nu$-orbit can occur in only one of the following three forms:

\begin{enumerate}
\item For each edge $\overline{h_i}$, neither $\overline{\rho(h_i)}$ nor $\overline{\rho(\iota(h_i))}$ belongs to the given $\nu$-orbit.

\item For each edge $\overline{h_i}$, at least one of $\overline{\rho(h_i)}$ and $\overline{\rho(\iota(h_i))}$ belongs to the given $\nu$-orbit, and each $\overline{h_i}$ joins two distinct vertices.

\item For each edge $\overline{h_i}$, at least one of $\overline{\rho(h_i)}$ and $\overline{\rho(\iota(h_i))}$ belongs to the given $\nu$-orbit, and each $\overline{h_i}$ is a loop, that is, its two endpoints coincide.
\end{enumerate}
Therefore, from a graphical point of view, every $\nu$-orbit has one of the forms illustrated in Figure~3.
\end{lemma}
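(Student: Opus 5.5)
The plan is to reduce the trichotomy to one observation: both defining properties, ``some $\rho$-neighbour of $\overline{h_i}$ lies in the orbit'' and ``$\overline{h_i}$ is a loop'', are invariant under $\nu$. Since $\nu$ permutes the edges of the orbit transitively, each property then holds either for all $\overline{h_i}$ or for none. This gives exactly the three cases: no edge has a neighbour in the orbit (form (1)); every edge has a neighbour in the orbit and none is a loop (form (2)); every edge has a neighbour in the orbit and all are loops (form (3)).

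\textbf{Step 1: $\nu$ commutes with $\rho$, $s$ and $\iota$.} On the half-edges at a vertex $\vv$, $\nu$ acts as $\rho^{d(\vv)}$. Since $\rho$ preserves $s$, we get $s(\nu h)=s(h)$ and $\nu\rho(h)=\rho^{d(\vv)+1}(h)=\rho\nu(h)$. Condition (SI) in Definition~\ref{fms-BG} says precisely that $\iota(\nu(h))=\nu(\iota(h))$. Consequently $\nu$ induces a permutation of edges, $\overline{h}\mapsto\overline{\nu h}$, which is well defined because $\nu\iota(h)=\iota\nu(h)$. Moreover $\overline{\rho(\nu h)}=\nu\bigl(\overline{\rho(h)}\bigr)$ and $\overline{\rho(\iota\nu h)}=\nu\bigl(\overline{\rho(\iota h)}\bigr)$.

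\textbf{Step 2: invariance of the loop property.} We have $s(\nu h_i)=s(h_i)$ and $s(\iota\nu h_i)=s(\nu\iota h_i)=s(\iota h_i)$. Hence $\overline{h_i}$ is a loop if and only if $\nu(\overline{h_i})$ is. The same computation shows that all edges in the orbit join the same pair of vertices, consistent with Lemma~\ref{lem:n(v)-property}.

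\textbf{Step 3: invariance of the neighbour property.} Let $\mathcal O$ denote the orbit. If $\overline{\rho(h_i)}\in\mathcal O$, then applying $\nu$ and using Step~1 gives $\overline{\rho(\nu h_i)}\in\nu(\mathcal O)=\mathcal O$; the same holds with $\iota(h_i)$ in place of $h_i$. Every edge of $\mathcal O$ has the form $\nu^j(\overline{h_1})$, and each edge has exactly the two half-edge representatives $h$ and $\iota h$. So the condition ``at least one of $\overline{\rho(h)}$, $\overline{\rho(\iota h)}$ lies in $\mathcal O$'' holds either for every edge of $\mathcal O$ or for none.

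\textbf{Step 4: conclusion and the main obstacle.} Combining Steps~2 and~3 yields the three forms. The hypothesis that $\Gamma$ has edges outside $\mathcal O$ is used only to rule out the degenerate situation where $\mathcal O$ is all of $\Gamma$, in which the pictures of Figure~3 would not apply. The only point needing care is Step~1. One must check that $\nu$ is well defined on edges and that it commutes with $\rho$ at half-edges whose two ends lie at vertices of different degree; (SI) is exactly the condition that handles this mixed case.
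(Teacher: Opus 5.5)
Your proof is correct, and it is organized differently from the paper's, although both rest on the same input: (SI) says $\iota\nu=\nu\iota$.

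The paper argues case by case.
\begin{enumerate}
\item[(i)] It dismisses form (1) with ``otherwise, we are in Case~(1)''. This tacitly uses the all-or-none property you prove in Step~3.
\item[(ii)] It asserts without argument that for distinct endpoints $val(\vv_1)=n$ or $val(\vv_2)=n$.
\item[(iii)] In the loop case it replaces ``loop'' by the condition $\rho(h_1)=\iota(h_1)$. It then propagates this via $\iota\nu(h_1)=\nu\iota(h_1)=\nu\rho(h_1)=\rho\nu(h_1)$.
\end{enumerate}
Your uniform invariance argument treats both defining properties at once. It also proves (3) with the notion of loop actually used in the statement.

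The paper's case split is aimed at the finer shapes drawn in Figure~\ref{fig:1-KM-fms}. Your tools recover the case-(2) refinement immediately. Let $S_1$ be the set of half-edges of the orbit at $\vv_1$; there is one per edge, and $\nu$ permutes them transitively. Then ``$\rho(h)\in S_1$'' is a $\nu$-invariant condition on $S_1$. So either $\rho(S_1)\subseteq S_1$, which forces $S_1=H_{\vv_1}$ and $val(\vv_1)=n$, or the neighbour condition forces $val(\vv_2)=n$.

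The paper's loop refinement, by contrast, is strictly stronger than (3) and can fail. Take one vertex with half-edges $0,\dots,5$ in cyclic order, $\iota$ pairing $0\leftrightarrow 2$, $1\leftrightarrow 4$, $3\leftrightarrow 5$, and $d=3$. Then (SI) holds and $\nu=\rho^3$. The set $\{\{0,2\},\{3,5\}\}$ is a $\nu$-orbit of form (3), yet no half-edge in it satisfies $\rho(h)=\iota(h)$. So your formulation is the safer one.

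One small slip in your Step~4: it is commutation with $\iota$, not with $\rho$, that needs (SI). The identity $\nu\rho=\rho\nu$ holds automatically because $\rho$ never leaves a vertex.
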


\begin{proof}
We first assume that for each edge $\overline{h_i}$, at least one of 
$\overline{\rho(h_i)}$ and $\overline{\rho(\iota(h_i))}$ belongs to the given $\nu$-orbit; otherwise, we are in Case~(1).

Suppose moreover that every edge in the orbit joins two distinct vertices $\vv_1$ and $\vv_2$. Then either
\[
val(\vv_1)=n
\qquad\text{or}\qquad
val(\vv_2)=n,
\]
and hence Case~(2) holds.

Now suppose that some edge in this $\langle \nu \rangle$-orbit is a loop; that is, there exists a half-edge $h$ such that
\[
\rho(h)=\iota(h).
\]
Without loss of generality, we assume that $h=h_1$. Since all $h_i$ lie in the same $\langle \nu \rangle$-orbit, we may assume without loss of generality that
\[
h_2=\rho^{d(\vv)}(h_1),
\qquad \text{where } \vv=s(h_1).
\]
By condition (SI) in the definition of a biserial FBG, we have
\[
\iota(h_2)
=\iota\rho^{d(\vv)}(h_1)
=\rho^{d(\vv)}(\iota(h_1))
=\rho^{d(\vv)+1}(h_1)
=\rho(h_2).
\]
Thus $\rho(h_2)=\iota(h_2)$. Repeating the same argument inductively gives
\[
\rho(h_i)=\iota(h_i)
\qquad
\text{for all }1\le i\le n.
\]
Therefore each edge $\overline{h_i}$ is a loop, and hence we are in Case~(3).
\end{proof}

Now let $A$ be the biserial FBGA associated with a biserial FBG $(\Gamma,d)$. By Lemma~\ref{lem:nu-orbit}, each $\nu$-orbit has one of the forms described there, and accordingly each irreducible tilting mutation can be described by its ribbon graph $\Gamma$ as follows.

Since $A$ is self-injective and its Nakayama automorphism $\nu_A$ is induced by $\nu$ of $(\Gamma,d)$, for each $\nu_A$-orbit $I$ of indecomposable projective $A$-modules, the endomorphism algebra
\[
B:=\mathrm{End}_{\mathcal{K}^b(\mathrm{proj}\,A)}(\mu^-_{P(I)}(A))
\]
is again a biserial FBGA associated with the ribbon graph
\[
\Gamma'=(\Gamma\setminus I)\cup I',
\]
where $I$ denotes the blue edges in the ribbon graph $\Gamma$ (always those on the right) corresponding to the indecomposable projective modules in $I$, and $I'$ is obtained from $\Gamma$ by performing one of the local moves illustrated in Figure~\ref{fig:1-KM-fms}. We note that the (fractional) multiplicity
$m=\frac{d}{\operatorname{val}}$
at each vertex is preserved.

\begin{figure}
	\centering
		\begin{center}
\begin{tikzpicture}[x=0.75pt,y=0.75pt,yscale=-1,xscale=1]
%uncomment if require: \path (0,235); %set diagram left start at 0, and has height of 235

%Shape: Circle [id:dp7748846988676599] 
\draw  [fill={rgb, 255:red, 0; green, 0; blue, 0 }  ,fill opacity=1 ] (21.05,57.25) .. controls (21.05,54.49) and (23.29,52.25) .. (26.05,52.25) .. controls (28.81,52.25) and (31.05,54.49) .. (31.05,57.25) .. controls (31.05,60.01) and (28.81,62.25) .. (26.05,62.25) .. controls (23.29,62.25) and (21.05,60.01) .. (21.05,57.25) -- cycle ;
%Curve Lines [id:da30932062120644077] 
\draw [line width=1.5]    (26.05,57.25) .. controls (1.55,28.25) and (-17.45,14.25) .. (176.05,57.25) ;
%Curve Lines [id:da5952065060106606] 
\draw [line width=1.5]    (26.05,57.25) .. controls (223.55,13.25) and (206.55,31.25) .. (176.05,57.25) ;
%Shape: Circle [id:dp6727692032861552] 
\draw  [fill={rgb, 255:red, 0; green, 0; blue, 0 }  ,fill opacity=1 ] (258.59,169.2) .. controls (258.58,171.96) and (256.34,174.19) .. (253.58,174.19) .. controls (250.82,174.18) and (248.58,171.94) .. (248.59,169.18) .. controls (248.6,166.41) and (250.84,164.18) .. (253.6,164.19) .. controls (256.36,164.19) and (258.6,166.44) .. (258.59,169.2) -- cycle ;
%Curve Lines [id:da5149546755405692] 
\draw [line width=1.5]    (253.59,169.19) .. controls (278.02,198.25) and (296.98,212.29) .. (103.59,168.82) ;
%Curve Lines [id:da6560152633559009] 
\draw [line width=1.5]    (253.59,169.19) .. controls (55.98,212.7) and (73.03,194.75) .. (103.59,168.82) ;
%Curve Lines [id:da1593483656913195] 
\draw [color={rgb, 255:red, 0; green, 0; blue, 255 }  ,draw opacity=1 ][line width=1.5]    (176.05,57.25) .. controls (171.05,23.25) and (148.05,29.25) .. (103.59,168.82) ;
%Curve Lines [id:da12886011351661075] 
\draw [color={rgb, 255:red, 0; green, 0; blue, 255 }  ,draw opacity=1 ][line width=1.5]    (103.59,168.82) .. controls (120.05,213.25) and (130.05,181.25) .. (176.05,57.25) ;
%Shape: Arc [id:dp979658826946201] 
\draw  [draw opacity=0][dash pattern={on 0.84pt off 2.51pt}] (162.1,51.72) .. controls (163.47,48.27) and (166.08,45.45) .. (169.38,43.81) -- (176.05,57.25) -- cycle ; \draw  [dash pattern={on 0.84pt off 2.51pt}] (162.1,51.72) .. controls (163.47,48.27) and (166.08,45.45) .. (169.38,43.81) ;  
%Shape: Arc [id:dp8173257926906683] 
\draw  [draw opacity=0] (173.29,42.5) .. controls (174.18,42.34) and (175.11,42.25) .. (176.05,42.25) .. controls (179.5,42.25) and (182.68,43.42) .. (185.22,45.38) -- (176.05,57.25) -- cycle ; \draw   (173.29,42.5) .. controls (174.18,42.34) and (175.11,42.25) .. (176.05,42.25) .. controls (179.5,42.25) and (182.68,43.42) .. (185.22,45.38) ;  
%Shape: Arc [id:dp2804036214627248] 
\draw  [draw opacity=0][dash pattern={on 0.84pt off 2.51pt}] (189.18,50) .. controls (190.37,52.15) and (191.05,54.62) .. (191.05,57.25) .. controls (191.05,65.53) and (184.33,72.25) .. (176.05,72.25) .. controls (175.18,72.25) and (174.34,72.18) .. (173.51,72.04) -- (176.05,57.25) -- cycle ; \draw  [dash pattern={on 0.84pt off 2.51pt}] (189.18,50) .. controls (190.37,52.15) and (191.05,54.62) .. (191.05,57.25) .. controls (191.05,65.53) and (184.33,72.25) .. (176.05,72.25) .. controls (175.18,72.25) and (174.34,72.18) .. (173.51,72.04) ;  
%Shape: Arc [id:dp6812097760042648] 
\draw  [draw opacity=0] (169.06,70.53) .. controls (164.34,68.03) and (161.1,63.09) .. (161.05,57.39) -- (176.05,57.25) -- cycle ; \draw   (169.06,70.53) .. controls (164.34,68.03) and (161.1,63.09) .. (161.05,57.39) ;  
%Shape: Circle [id:dp2807558750355861] 
\draw  [fill={rgb, 255:red, 0; green, 0; blue, 0 }  ,fill opacity=1 ] (171.05,57.25) .. controls (171.05,54.49) and (173.29,52.25) .. (176.05,52.25) .. controls (178.81,52.25) and (181.05,54.49) .. (181.05,57.25) .. controls (181.05,60.01) and (178.81,62.25) .. (176.05,62.25) .. controls (173.29,62.25) and (171.05,60.01) .. (171.05,57.25) -- cycle ;
\draw   (183.74,42.13) -- (185.89,45.12) -- (182.29,45.9) ;
\draw   (159.55,60.68) -- (161.07,57.33) -- (163.54,60.07) ;
%Shape: Arc [id:dp8200320230828264] 
\draw  [draw opacity=0][dash pattern={on 0.84pt off 2.51pt}] (91.04,176.62) .. controls (89.59,174.32) and (88.76,171.6) .. (88.76,168.68) .. controls (88.76,160.39) and (95.48,153.68) .. (103.76,153.68) .. controls (104.62,153.68) and (105.47,153.75) .. (106.29,153.89) -- (103.76,168.68) -- cycle ; \draw  [dash pattern={on 0.84pt off 2.51pt}] (91.04,176.62) .. controls (89.59,174.32) and (88.76,171.6) .. (88.76,168.68) .. controls (88.76,160.39) and (95.48,153.68) .. (103.76,153.68) .. controls (104.62,153.68) and (105.47,153.75) .. (106.29,153.89) ;  
%Shape: Arc [id:dp16854895416339333] 
\draw  [draw opacity=0] (110.32,155.19) .. controls (115.32,157.62) and (118.76,162.75) .. (118.76,168.68) .. controls (118.76,168.81) and (118.76,168.94) .. (118.76,169.07) -- (103.76,168.68) -- cycle ; \draw   (110.32,155.19) .. controls (115.32,157.62) and (118.76,162.75) .. (118.76,168.68) .. controls (118.76,168.81) and (118.76,168.94) .. (118.76,169.07) ;  
%Shape: Arc [id:dp3031867404174613] 
\draw  [draw opacity=0][dash pattern={on 0.84pt off 2.51pt}] (117.46,174.81) .. controls (116.32,177.33) and (114.51,179.49) .. (112.25,181.05) -- (103.76,168.68) -- cycle ; \draw  [dash pattern={on 0.84pt off 2.51pt}] (117.46,174.81) .. controls (116.32,177.33) and (114.51,179.49) .. (112.25,181.05) ;  
%Shape: Arc [id:dp032393507167176816] 
\draw  [draw opacity=0] (108.51,182.91) .. controls (107.01,183.41) and (105.42,183.68) .. (103.76,183.68) .. controls (100.29,183.68) and (97.1,182.5) .. (94.55,180.52) -- (103.76,168.68) -- cycle ; \draw   (108.51,182.91) .. controls (107.01,183.41) and (105.42,183.68) .. (103.76,183.68) .. controls (100.29,183.68) and (97.1,182.5) .. (94.55,180.52) ;  
\draw   (120.21,166.62) -- (118.15,169.68) -- (116.17,166.57) ;
\draw   (95.57,183.67) -- (93.99,180.33) -- (97.68,180.22) ;
%Shape: Arc [id:dp008212830453646403] 
\draw  [draw opacity=0][dash pattern={on 0.84pt off 2.51pt}] (41.03,56.61) .. controls (41.04,56.82) and (41.05,57.04) .. (41.05,57.25) .. controls (41.05,65.53) and (34.33,72.25) .. (26.05,72.25) .. controls (17.76,72.25) and (11.05,65.53) .. (11.05,57.25) .. controls (11.05,53.41) and (12.49,49.91) .. (14.86,47.26) -- (26.05,57.25) -- cycle ; \draw  [dash pattern={on 0.84pt off 2.51pt}] (41.03,56.61) .. controls (41.04,56.82) and (41.05,57.04) .. (41.05,57.25) .. controls (41.05,65.53) and (34.33,72.25) .. (26.05,72.25) .. controls (17.76,72.25) and (11.05,65.53) .. (11.05,57.25) .. controls (11.05,53.41) and (12.49,49.91) .. (14.86,47.26) ;  
%Shape: Arc [id:dp46315963068092625] 
\draw  [draw opacity=0][dash pattern={on 0.84pt off 2.51pt}] (18.78,44.12) .. controls (20.94,42.93) and (23.41,42.25) .. (26.05,42.25) .. controls (32.33,42.25) and (37.71,46.11) .. (39.94,51.59) -- (26.05,57.25) -- cycle ; \draw  [dash pattern={on 0.84pt off 2.51pt}] (18.78,44.12) .. controls (20.94,42.93) and (23.41,42.25) .. (26.05,42.25) .. controls (32.33,42.25) and (37.71,46.11) .. (39.94,51.59) ;  
%Shape: Arc [id:dp11722155441360593] 
\draw  [draw opacity=0][dash pattern={on 0.84pt off 2.51pt}] (238.74,167.08) .. controls (239.76,159.8) and (246.02,154.19) .. (253.59,154.19) .. controls (261.87,154.19) and (268.59,160.9) .. (268.59,169.19) .. controls (268.59,172.91) and (267.23,176.31) .. (264.99,178.93) -- (253.59,169.19) -- cycle ; \draw  [dash pattern={on 0.84pt off 2.51pt}] (238.74,167.08) .. controls (239.76,159.8) and (246.02,154.19) .. (253.59,154.19) .. controls (261.87,154.19) and (268.59,160.9) .. (268.59,169.19) .. controls (268.59,172.91) and (267.23,176.31) .. (264.99,178.93) ;  
%Shape: Arc [id:dp836381718448677] 
\draw  [draw opacity=0][dash pattern={on 0.84pt off 2.51pt}] (259.72,182.88) .. controls (257.84,183.72) and (255.77,184.19) .. (253.59,184.19) .. controls (247.06,184.19) and (241.5,180.01) .. (239.44,174.19) -- (253.59,169.19) -- cycle ; \draw  [dash pattern={on 0.84pt off 2.51pt}] (259.72,182.88) .. controls (257.84,183.72) and (255.77,184.19) .. (253.59,184.19) .. controls (247.06,184.19) and (241.5,180.01) .. (239.44,174.19) ;  
%Shape: Circle [id:dp32632152500141] 
\draw  [fill={rgb, 255:red, 0; green, 0; blue, 0 }  ,fill opacity=1 ] (108.76,168.69) .. controls (108.76,171.45) and (106.51,173.69) .. (103.75,173.68) .. controls (100.99,173.67) and (98.76,171.43) .. (98.76,168.67) .. controls (98.77,165.91) and (101.01,163.67) .. (103.77,163.68) .. controls (106.54,163.69) and (108.77,165.93) .. (108.76,168.69) -- cycle ;
%Curve Lines [id:da05911205311005707] 
\draw [line width=1.5]    (345.05,57.25) .. controls (320.55,28.25) and (301.55,14.25) .. (495.05,57.25) ;
%Curve Lines [id:da6599646649521269] 
\draw [line width=1.5]    (345.05,57.25) .. controls (542.55,13.25) and (525.55,31.25) .. (495.05,57.25) ;
%Curve Lines [id:da28725270954329285] 
\draw [line width=1.5]    (572.59,169.19) .. controls (597.02,198.25) and (615.98,212.29) .. (422.59,168.82) ;
%Curve Lines [id:da4275981956499484] 
\draw [line width=1.5]    (572.59,169.19) .. controls (374.98,212.7) and (392.03,194.75) .. (422.59,168.82) ;
%Curve Lines [id:da2175930828276178] 
\draw [color={rgb, 255:red, 0; green, 0; blue, 255 }  ,draw opacity=1 ][line width=1.5]    (345.05,57.25) .. controls (336,13.33) and (401.33,48) .. (572.59,169.19) ;
%Curve Lines [id:da7080963773312152] 
\draw [color={rgb, 255:red, 0; green, 0; blue, 255 }  ,draw opacity=1 ][line width=1.5]    (572.59,169.19) .. controls (579.33,225.33) and (430.67,130) .. (345.05,57.25) ;
%Shape: Arc [id:dp50478589710841] 
\draw  [draw opacity=0][dash pattern={on 0.84pt off 2.51pt}] (481.1,51.72) .. controls (483.3,46.18) and (488.72,42.25) .. (495.05,42.25) .. controls (498.69,42.25) and (502.03,43.55) .. (504.62,45.7) -- (495.05,57.25) -- cycle ; \draw  [dash pattern={on 0.84pt off 2.51pt}] (481.1,51.72) .. controls (483.3,46.18) and (488.72,42.25) .. (495.05,42.25) .. controls (498.69,42.25) and (502.03,43.55) .. (504.62,45.7) ;  
%Shape: Arc [id:dp813475476700859] 
\draw  [draw opacity=0] (337.7,44.17) .. controls (339.5,43.16) and (341.52,42.51) .. (343.67,42.31) -- (345.05,57.25) -- cycle ; \draw   (337.7,44.17) .. controls (339.5,43.16) and (341.52,42.51) .. (343.67,42.31) ;  
%Shape: Arc [id:dp39951851556676177] 
\draw  [draw opacity=0][dash pattern={on 0.84pt off 2.51pt}] (508.18,50) .. controls (509.37,52.15) and (510.05,54.62) .. (510.05,57.25) .. controls (510.05,65.53) and (503.33,72.25) .. (495.05,72.25) .. controls (486.79,72.25) and (480.09,65.57) .. (480.05,57.32) -- (495.05,57.25) -- cycle ; \draw  [dash pattern={on 0.84pt off 2.51pt}] (508.18,50) .. controls (509.37,52.15) and (510.05,54.62) .. (510.05,57.25) .. controls (510.05,65.53) and (503.33,72.25) .. (495.05,72.25) .. controls (486.79,72.25) and (480.09,65.57) .. (480.05,57.32) ;  
%Shape: Arc [id:dp5994292716738758] 
\draw  [draw opacity=0] (360.04,56.86) .. controls (360.05,56.99) and (360.05,57.12) .. (360.05,57.25) .. controls (360.05,59.97) and (359.32,62.53) .. (358.05,64.73) -- (345.05,57.25) -- cycle ; \draw   (360.04,56.86) .. controls (360.05,56.99) and (360.05,57.12) .. (360.05,57.25) .. controls (360.05,59.97) and (359.32,62.53) .. (358.05,64.73) ;  
%Shape: Circle [id:dp036569114043248696] 
\draw  [fill={rgb, 255:red, 0; green, 0; blue, 0 }  ,fill opacity=1 ] (490.05,57.25) .. controls (490.05,54.49) and (492.29,52.25) .. (495.05,52.25) .. controls (497.81,52.25) and (500.05,54.49) .. (500.05,57.25) .. controls (500.05,60.01) and (497.81,62.25) .. (495.05,62.25) .. controls (492.29,62.25) and (490.05,60.01) .. (490.05,57.25) -- cycle ;
\draw   (360.74,63.27) -- (357.58,65.15) -- (357.11,61.5) ;
\draw   (341.29,40.49) -- (344.45,42.39) -- (341.45,44.53) ;
%Shape: Arc [id:dp8993070378237973] 
\draw  [draw opacity=0][dash pattern={on 0.84pt off 2.51pt}] (410.04,176.62) .. controls (408.59,174.32) and (407.76,171.6) .. (407.76,168.68) .. controls (407.76,160.39) and (414.48,153.68) .. (422.76,153.68) .. controls (430.87,153.68) and (437.47,160.11) .. (437.75,168.14) -- (422.76,168.68) -- cycle ; \draw  [dash pattern={on 0.84pt off 2.51pt}] (410.04,176.62) .. controls (408.59,174.32) and (407.76,171.6) .. (407.76,168.68) .. controls (407.76,160.39) and (414.48,153.68) .. (422.76,153.68) .. controls (430.87,153.68) and (437.47,160.11) .. (437.75,168.14) ;  
%Shape: Arc [id:dp6080396560150785] 
\draw  [draw opacity=0] (557.6,169.8) .. controls (557.59,169.6) and (557.59,169.39) .. (557.59,169.19) .. controls (557.59,166.55) and (558.27,164.08) .. (559.46,161.93) -- (572.59,169.19) -- cycle ; \draw   (557.6,169.8) .. controls (557.59,169.6) and (557.59,169.39) .. (557.59,169.19) .. controls (557.59,166.55) and (558.27,164.08) .. (559.46,161.93) ;  
%Shape: Arc [id:dp1957779301123901] 
\draw  [draw opacity=0][dash pattern={on 0.84pt off 2.51pt}] (436.46,174.81) .. controls (434.11,180.04) and (428.86,183.68) .. (422.76,183.68) .. controls (418.97,183.68) and (415.51,182.27) .. (412.86,179.95) -- (422.76,168.68) -- cycle ; \draw  [dash pattern={on 0.84pt off 2.51pt}] (436.46,174.81) .. controls (434.11,180.04) and (428.86,183.68) .. (422.76,183.68) .. controls (418.97,183.68) and (415.51,182.27) .. (412.86,179.95) ;  
%Shape: Arc [id:dp845207386608607] 
\draw  [draw opacity=0] (581.11,181.53) .. controls (578.69,183.21) and (575.75,184.19) .. (572.59,184.19) .. controls (572.24,184.19) and (571.89,184.18) .. (571.55,184.15) -- (572.59,169.19) -- cycle ; \draw   (581.11,181.53) .. controls (578.69,183.21) and (575.75,184.19) .. (572.59,184.19) .. controls (572.24,184.19) and (571.89,184.18) .. (571.55,184.15) ;  
\draw   (556.31,164.01) -- (559.35,161.91) -- (560.06,165.53) ;
\draw   (574.08,186.34) -- (570.97,184.36) -- (574.02,182.29) ;
%Shape: Arc [id:dp5279646635487363] 
\draw  [draw opacity=0][dash pattern={on 0.84pt off 2.51pt}] (354.33,69.03) .. controls (351.78,71.05) and (348.55,72.25) .. (345.05,72.25) .. controls (336.76,72.25) and (330.05,65.53) .. (330.05,57.25) .. controls (330.05,53.36) and (331.53,49.81) .. (333.96,47.15) -- (345.05,57.25) -- cycle ; \draw  [dash pattern={on 0.84pt off 2.51pt}] (354.33,69.03) .. controls (351.78,71.05) and (348.55,72.25) .. (345.05,72.25) .. controls (336.76,72.25) and (330.05,65.53) .. (330.05,57.25) .. controls (330.05,53.36) and (331.53,49.81) .. (333.96,47.15) ;  
%Shape: Arc [id:dp7936139351257276] 
\draw  [draw opacity=0][dash pattern={on 0.84pt off 2.51pt}] (351.37,43.64) .. controls (354.79,45.24) and (357.51,48.08) .. (358.94,51.59) -- (345.05,57.25) -- cycle ; \draw  [dash pattern={on 0.84pt off 2.51pt}] (351.37,43.64) .. controls (354.79,45.24) and (357.51,48.08) .. (358.94,51.59) ;  
%Shape: Arc [id:dp6768377876285474] 
\draw  [draw opacity=0][dash pattern={on 0.84pt off 2.51pt}] (563.72,157.09) .. controls (566.2,155.27) and (569.27,154.19) .. (572.59,154.19) .. controls (580.87,154.19) and (587.59,160.9) .. (587.59,169.19) .. controls (587.59,172.46) and (586.54,175.48) .. (584.77,177.95) -- (572.59,169.19) -- cycle ; \draw  [dash pattern={on 0.84pt off 2.51pt}] (563.72,157.09) .. controls (566.2,155.27) and (569.27,154.19) .. (572.59,154.19) .. controls (580.87,154.19) and (587.59,160.9) .. (587.59,169.19) .. controls (587.59,172.46) and (586.54,175.48) .. (584.77,177.95) ;  
%Shape: Arc [id:dp7906308975244649] 
\draw  [draw opacity=0][dash pattern={on 0.84pt off 2.51pt}] (565.97,182.65) .. controls (562.56,180.97) and (559.88,178.03) .. (558.54,174.45) -- (572.59,169.19) -- cycle ; \draw  [dash pattern={on 0.84pt off 2.51pt}] (565.97,182.65) .. controls (562.56,180.97) and (559.88,178.03) .. (558.54,174.45) ;  
%Shape: Circle [id:dp9398878327333056] 
\draw  [fill={rgb, 255:red, 0; green, 0; blue, 0 }  ,fill opacity=1 ] (427.76,168.69) .. controls (427.76,171.45) and (425.51,173.69) .. (422.75,173.68) .. controls (419.99,173.67) and (417.76,171.43) .. (417.76,168.67) .. controls (417.77,165.91) and (420.01,163.67) .. (422.77,163.68) .. controls (425.54,163.69) and (427.77,165.93) .. (427.76,168.69) -- cycle ;
%Shape: Circle [id:dp5587909097110528] 
\draw  [fill={rgb, 255:red, 0; green, 0; blue, 0 }  ,fill opacity=1 ] (340.05,57.25) .. controls (340.05,54.49) and (342.29,52.25) .. (345.05,52.25) .. controls (347.81,52.25) and (350.05,54.49) .. (350.05,57.25) .. controls (350.05,60.01) and (347.81,62.25) .. (345.05,62.25) .. controls (342.29,62.25) and (340.05,60.01) .. (340.05,57.25) -- cycle ;
%Shape: Circle [id:dp6426043587862107] 
\draw  [fill={rgb, 255:red, 0; green, 0; blue, 0 }  ,fill opacity=1 ] (577.59,169.2) .. controls (577.58,171.96) and (575.34,174.19) .. (572.58,174.19) .. controls (569.82,174.18) and (567.58,171.94) .. (567.59,169.18) .. controls (567.6,166.41) and (569.84,164.18) .. (572.6,164.19) .. controls (575.36,164.19) and (577.6,166.44) .. (577.59,169.2) -- cycle ;

% Text Node
\draw (276.33,94.33) node [anchor=north west][inner sep=0.75pt]  [font=\LARGE] [align=left] {$ \mathrel{\substack{
\overset{\mu_P^+}{\longrightarrow}\\[-0.3ex]
\underset{\mu_{P}^-}{\longleftarrow}
}} $};
% Text Node
%\draw (175.51,75.04) node [anchor=north west][inner sep=0.75pt]   [align=left] {$h_1$};
% Text Node
%\draw (122.18,59.04) node [anchor=north west][inner sep=0.75pt]   [align=left] {$h_2$};
% Text Node
%\draw (142.18,156.37) node [anchor=north west][inner sep=0.75pt]   [align=left] {$\iota(h_1)$};
% Text Node
%\draw (80,132.37) node [anchor=north west][inner sep=0.75pt]   [align=left] {$\iota(h_2)$};
% Text Node
\draw (195.33,49) node [anchor=north west][inner sep=0.75pt]   [align=left] {$\vv_3$};
% Text Node
\draw (71.33,161.67) node [anchor=north west][inner sep=0.75pt]   [align=left] {$\vv_4$};
% Text Node
\draw (10,73) node [anchor=north west][inner sep=0.75pt]   [align=left] {$\vv_1$};
% Text Node
\draw (244.67,136.33) node [anchor=north west][inner sep=0.75pt]   [align=left] {$\vv_2$};
% Text Node
\draw (336.33,72.33) node [anchor=north west][inner sep=0.75pt]   [align=left] {$\vv_1$};
% Text Node
\draw (410.33,141) node [anchor=north west][inner sep=0.75pt]   [align=left] {$\vv_4$};
% Text Node
\draw (509.67,59) node [anchor=north west][inner sep=0.75pt]   [align=left] {$\vv_3$};
% Text Node
\draw (577,145) node [anchor=north west][inner sep=0.75pt]   [align=left] {$\vv_2$};

\begin{scope}[xshift=35pt]
	\begin{scope}[yshift=160pt]
%Shape: Arc [id:dp19825269849568516] 
\draw  [draw opacity=0] (31.11,99.18) .. controls (33.35,93.71) and (38.73,89.86) .. (45,89.86) .. controls (47.66,89.86) and (50.16,90.55) .. (52.33,91.77) -- (45,104.86) -- cycle ; \draw   (31.11,99.18) .. controls (33.35,93.71) and (38.73,89.86) .. (45,89.86) .. controls (47.66,89.86) and (50.16,90.55) .. (52.33,91.77) ;  
\draw   (59.26,114.19) -- (55.85,115.59) -- (55.98,111.9) ;
%Curve Lines [id:da10058334342291175] 
\draw [line width=1.5]    (45,105) .. controls (45,60) and (195,149.5) .. (195,105) ;
%Curve Lines [id:da27659345925155066] 
\draw [line width=1.5]    (45,105) .. controls (44.5,150) and (195,60) .. (195,105) ;
%Shape: Circle [id:dp35994590238836466] 
\draw  [color={rgb, 255:red, 0; green, 0; blue, 255 }  ,draw opacity=1 ][line width=1.5]  (5,65) .. controls (5,42.91) and (22.91,25) .. (45,25) .. controls (67.09,25) and (85,42.91) .. (85,65) .. controls (85,87.09) and (67.09,105) .. (45,105) .. controls (22.91,105) and (5,87.09) .. (5,65) -- cycle ;
%Shape: Circle [id:dp10865762141246571] 
\draw  [fill={rgb, 255:red, 0; green, 0; blue, 0 }  ,fill opacity=1 ] (40,105) .. controls (40,102.24) and (42.24,100) .. (45,100) .. controls (47.76,100) and (50,102.24) .. (50,105) .. controls (50,107.76) and (47.76,110) .. (45,110) .. controls (42.24,110) and (40,107.76) .. (40,105) -- cycle ;
%Shape: Circle [id:dp562607268575402] 
\draw  [fill={rgb, 255:red, 0; green, 0; blue, 0 }  ,fill opacity=1 ] (190,105) .. controls (190,102.24) and (192.24,100) .. (195,100) .. controls (197.76,100) and (200,102.24) .. (200,105) .. controls (200,107.76) and (197.76,110) .. (195,110) .. controls (192.24,110) and (190,107.76) .. (190,105) -- cycle ;
%Shape: Circle [id:dp8459331155326753] 
\draw [color={rgb,255:red,0;green,0;blue,255}, draw opacity=1, line width=1.5]
(315,25)
.. controls (340,100) and (410,95)
.. (465,105);
\draw [color={rgb,255:red,0;green,0;blue,255}, draw opacity=1, line width=1.5]
(315,25)
.. controls (350,-5) and (550,95)
.. (465,105);
\draw  [fill={rgb, 255:red, 0; green, 0; blue, 0 }  ,fill opacity=1 ] (40,25) .. controls (40,22.24) and (42.24,20) .. (45,20) .. controls (47.76,20) and (50,22.24) .. (50,25) .. controls (50,27.76) and (47.76,30) .. (45,30) .. controls (42.24,30) and (40,27.76) .. (40,25) -- cycle ;
%Curve Lines [id:da26964258702895405] 
\draw [line width=1.5]    (315,105) .. controls (315,60) and (465,149.5) .. (465,105) ;
%Curve Lines [id:da3040343775850778] 
\draw [line width=1.5]    (315,105) .. controls (314.5,150) and (465,60) .. (465,105) ;
%Shape: Circle [id:dp7847464979701646] 
%\draw  [color={rgb, 255:red, 0; green, 0; blue, 255 }  ,draw opacity=1 ][line width=1.5]  (425,65) .. controls (425,42.91) and (442.91,25) .. (465,25) .. controls (487.09,25) and (505,42.91) .. (505,65) .. controls (505,87.09) and (487.09,105) .. (465,105) .. controls (442.91,105) and (425,87.09) .. (425,65) -- cycle ;
%Shape: Circle [id:dp21914765106767375] 
\draw  [fill={rgb, 255:red, 0; green, 0; blue, 0 }  ,fill opacity=1 ] (310,105) .. controls (310,102.24) and (312.24,100) .. (315,100) .. controls (317.76,100) and (320,102.24) .. (320,105) .. controls (320,107.76) and (317.76,110) .. (315,110) .. controls (312.24,110) and (310,107.76) .. (310,105) -- cycle ;
%Shape: Circle [id:dp07562564482828749] 
\draw  [fill={rgb, 255:red, 0; green, 0; blue, 0 }  ,fill opacity=1 ] (460,105) .. controls (460,102.24) and (462.24,100) .. (465,100) .. controls (467.76,100) and (470,102.24) .. (470,105) .. controls (470,107.76) and (467.76,110) .. (465,110) .. controls (462.24,110) and (460,107.76) .. (460,105) -- cycle ;
%Shape: Circle [id:dp4700023322734219] 
%\draw  [fill={rgb, 255:red, 0; green, 0; blue, 0 }  ,fill opacity=1 ] (460,25) .. controls (460,22.24) and (462.24,20) .. (465,20) .. controls (467.76,20) and (470,22.24) .. (470,25) .. controls (470,27.76) and (467.76,30) .. (465,30) .. controls (462.24,30) and (460,27.76) .. (460,25) -- cycle ;
%Shape: Arc [id:dp4982976252170226] 
\draw  [draw opacity=0][dash pattern={on 0.84pt off 2.51pt}] (56.23,94.91) .. controls (58.07,96.99) and (59.35,99.59) .. (59.81,102.47) -- (45,104.86) -- cycle ; \draw  [dash pattern={on 0.84pt off 2.51pt}] (56.23,94.91) .. controls (58.07,96.99) and (59.35,99.59) .. (59.81,102.47) ;  
\draw  [fill={rgb, 255:red, 0; green, 0; blue, 0 }  ,fill opacity=1 ]
(310,25) .. controls (310,22.24) and (312.24,20) .. (315,20)
.. controls (317.76,20) and (320,22.24) .. (320,25)
.. controls (320,27.76) and (317.76,30) .. (315,30)
.. controls (312.24,30) and (310,27.76) .. (310,25) -- cycle ;
%Shape: Arc [id:dp6662454200820731] 
\draw  [draw opacity=0] (59.97,104) .. controls (59.99,104.33) and (60,104.67) .. (60,105) .. controls (60,108.8) and (58.59,112.27) .. (56.26,114.91) -- (45,105) -- cycle ; \draw   (59.97,104) .. controls (59.99,104.33) and (60,104.67) .. (60,105) .. controls (60,108.8) and (58.59,112.27) .. (56.26,114.91) ;  
%Shape: Arc [id:dp5888097613306145] 
\draw  [draw opacity=0][dash pattern={on 0.84pt off 2.51pt}] (51.71,118.42) .. controls (49.69,119.43) and (47.41,120) .. (45,120) .. controls (36.72,120) and (30,113.28) .. (30,105) .. controls (30,104.24) and (30.06,103.49) .. (30.17,102.76) -- (45,105) -- cycle ; \draw  [dash pattern={on 0.84pt off 2.51pt}] (51.71,118.42) .. controls (49.69,119.43) and (47.41,120) .. (45,120) .. controls (36.72,120) and (30,113.28) .. (30,105) .. controls (30,104.24) and (30.06,103.49) .. (30.17,102.76) ;  
\draw   (50.32,88.53) -- (52.09,91.77) -- (48.42,92.05) ;
%Shape: Arc [id:dp026563124742941424] 
\draw  [draw opacity=0][dash pattern={on 0.84pt off 2.51pt}] (30.19,22.59) .. controls (31.35,15.45) and (37.54,10) .. (45,10) .. controls (53.28,10) and (60,16.72) .. (60,25) .. controls (60,33.28) and (53.28,40) .. (45,40) .. controls (36.72,40) and (30,33.28) .. (30,25) .. controls (30,24.24) and (30.06,23.49) .. (30.17,22.76) -- (45,25) -- cycle ; \draw  [dash pattern={on 0.84pt off 2.51pt}] (30.19,22.59) .. controls (31.35,15.45) and (37.54,10) .. (45,10) .. controls (53.28,10) and (60,16.72) .. (60,25) .. controls (60,33.28) and (53.28,40) .. (45,40) .. controls (36.72,40) and (30,33.28) .. (30,25) .. controls (30,24.24) and (30.06,23.49) .. (30.17,22.76) ;  
%Shape: Circle [id:dp8298855859210115] 
\draw  [dash pattern={on 0.84pt off 2.51pt}][line width=0.75]  (180,105) .. controls (180,96.72) and (186.72,90) .. (195,90) .. controls (203.28,90) and (210,96.72) .. (210,105) .. controls (210,113.28) and (203.28,120) .. (195,120) .. controls (186.72,120) and (180,113.28) .. (180,105) -- cycle ;
%Shape: Circle [id:dp3992844782998235] 
\draw  [dash pattern={on 0.84pt off 2.51pt}][line width=0.75]  (300,105) .. controls (300,96.72) and (306.72,90) .. (315,90) .. controls (323.28,90) and (330,96.72) .. (330,105) .. controls (330,113.28) and (323.28,120) .. (315,120) .. controls (306.72,120) and (300,113.28) .. (300,105) -- cycle ;
%Shape: Circle [id:dp23853417071075445] 
\draw [dash pattern={on 0.84pt off 2.51pt}][line width=0.75]
(300,25)
.. controls (300,16.72) and (306.72,10) .. (315,10)
.. controls (323.28,10) and (330,16.72) .. (330,25)
.. controls (330,33.28) and (323.28,40) .. (315,40)
.. controls (306.72,40) and (300,33.28) .. (300,25)
-- cycle ;
%\draw  [dash pattern={on 0.84pt off 2.51pt}][line width=0.75]  (450,25) .. controls (450,16.72) and (456.72,10) .. (465,10) .. controls (473.28,10) and (480,16.72) .. (480,25) .. controls (480,33.28) and (473.28,40) .. (465,40) .. controls (456.72,40) and (450,33.28) .. (450,25) -- cycle ;
%Shape: Arc [id:dp9315278893266354] 
\draw  [draw opacity=0] (459.01,92.09) .. controls (460.73,91.41) and (462.61,91.03) .. (464.57,91.03) .. controls (470.86,91.03) and (476.25,94.9) .. (478.48,100.39) -- (464.57,106.03) -- cycle ; \draw   (459.01,92.09) .. controls (460.73,91.41) and (462.61,91.03) .. (464.57,91.03) .. controls (470.86,91.03) and (476.25,94.9) .. (478.48,100.39) ;  
%Shape: Arc [id:dp2107638145200541] 
\draw  [draw opacity=0][dash pattern={on 0.84pt off 2.51pt}] (479.44,104.01) .. controls (479.53,104.67) and (479.57,105.34) .. (479.57,106.03) .. controls (479.57,114.31) and (472.86,121.03) .. (464.57,121.03) .. controls (460.97,121.03) and (457.67,119.76) .. (455.08,117.65) -- (464.57,106.03) -- cycle ; \draw  [dash pattern={on 0.84pt off 2.51pt}] (479.44,104.01) .. controls (479.53,104.67) and (479.57,105.34) .. (479.57,106.03) .. controls (479.57,114.31) and (472.86,121.03) .. (464.57,121.03) .. controls (460.97,121.03) and (457.67,119.76) .. (455.08,117.65) ;  
%Shape: Arc [id:dp8225567383332992] 
\draw  [draw opacity=0] (452.72,115.22) .. controls (450.75,112.68) and (449.57,109.49) .. (449.57,106.03) .. controls (449.57,105.5) and (449.6,104.98) .. (449.65,104.46) -- (464.57,106.03) -- cycle ; \draw   (452.72,115.22) .. controls (450.75,112.68) and (449.57,109.49) .. (449.57,106.03) .. controls (449.57,105.5) and (449.6,104.98) .. (449.65,104.46) ;  
%Shape: Arc [id:dp38184517931837947] 
\draw  [draw opacity=0][dash pattern={on 0.84pt off 2.51pt}] (450.45,100.96) .. controls (451.58,97.82) and (453.72,95.16) .. (456.49,93.39) -- (464.57,106.03) -- cycle ; \draw  [dash pattern={on 0.84pt off 2.51pt}] (450.45,100.96) .. controls (451.58,97.82) and (453.72,95.16) .. (456.49,93.39) ;  
\draw   (479.59,97.37) -- (478.38,100.85) -- (475.71,98.31) ;
\draw   (447.86,107.48) -- (449.84,104.37) -- (451.86,107.45) ;

% Text Node
\draw (229.75,47.5) node [anchor=north west][inner sep=0.75pt]  [font=\LARGE] [align=left] {$ \mathrel{\substack{
\overset{\mu_P^+}{\longrightarrow}\\[-0.3ex]
\underset{\mu_{P}^-}{\longleftarrow}
}} $};
% Text Node
\draw (36,124) node [anchor=north west][inner sep=0.75pt]   [align=left] {$\vv_3$};
% Text Node
\draw (63.75,8) node [anchor=north west][inner sep=0.75pt]   [align=left] {$\vv_2$};
% Text Node
\draw (188.75,124) node [anchor=north west][inner sep=0.75pt]   [align=left] {$\vv_1$};
% Text Node
%\draw (5,96) node [anchor=north west][inner sep=0.75pt]   [align=left] {$h_1$};
% Text Node
%\draw (85,80) node [anchor=north west][inner sep=0.75pt]   [align=left] {$h_2$};
% Text Node
\draw (307,124) node [anchor=north west][inner sep=0.75pt]   [align=left] {$\vv_3$};
% Text Node
\draw (287,6) node [anchor=north west][inner sep=0.75pt]   [align=left] {$\vv_2$};
% Text Node
\draw (456,124) node [anchor=north west][inner sep=0.75pt]   [align=left] {$\vv_1$};
\end{scope}
\end{scope}

	\begin{scope}[yshift=260pt]
%uncomment if require: \path (0,235); %set diagram left start at 0, and has height of 235

%Curve Lines [id:da26964258702895405] 
\draw [line width=1.5]    (66.45,63.84) .. controls (66.45,18.84) and (216.45,18.59) .. (216.45,63.84) ;
%Curve Lines [id:da3040343775850778] 
\draw [line width=1.5]    (66.45,63.84) .. controls (66.95,109.59) and (217.95,110.09) .. (216.45,63.84) ;
%Shape: Arc [id:dp8225567383332992] 
\draw  [draw opacity=0] (53.16,70.82) .. controls (52.07,68.74) and (51.45,66.36) .. (51.45,63.84) .. controls (51.45,62.04) and (51.77,60.31) .. (52.35,58.7) -- (66.45,63.84) -- cycle ; \draw   (53.16,70.82) .. controls (52.07,68.74) and (51.45,66.36) .. (51.45,63.84) .. controls (51.45,62.04) and (51.77,60.31) .. (52.35,58.7) ;  
%Shape: Arc [id:dp38184517931837947] 
\draw  [draw opacity=0][dash pattern={on 0.84pt off 2.51pt}] (72.86,50.28) .. controls (75.01,51.3) and (76.88,52.81) .. (78.32,54.67) -- (66.45,63.84) -- cycle ; \draw  [dash pattern={on 0.84pt off 2.51pt}] (72.86,50.28) .. controls (75.01,51.3) and (76.88,52.81) .. (78.32,54.67) ;  
\draw   (49.61,60.96) -- (52.34,58.48) -- (53.47,61.99) ;
%Curve Lines [id:da5634367753821239] 
\draw [color={rgb, 255:red, 0; green, 0; blue, 255 }  ,draw opacity=1 ][line width=1.5]    (66.45,61.84) .. controls (122.38,31.42) and (122.66,97.7) .. (66.45,65.84) ;
%Curve Lines [id:da8872491626724717] 
\draw [color={rgb, 255:red, 0; green, 0; blue, 255 }  ,draw opacity=1 ][line width=1.5]    (66.45,61.84) .. controls (16.23,35.13) and (16.8,96.27) .. (66.45,65.84) ;
%Shape: Circle [id:dp73045980503443] 
\draw  [fill={rgb, 255:red, 0; green, 0; blue, 0 }  ,fill opacity=1 ] (61.45,63.84) .. controls (61.45,61.08) and (63.69,58.84) .. (66.45,58.84) .. controls (69.21,58.84) and (71.45,61.08) .. (71.45,63.84) .. controls (71.45,66.61) and (69.21,68.84) .. (66.45,68.84) .. controls (63.69,68.84) and (61.45,66.61) .. (61.45,63.84) -- cycle ;
%Shape: Circle [id:dp5731178521934357] 
\draw  [fill={rgb, 255:red, 0; green, 0; blue, 0 }  ,fill opacity=1 ] (211.45,63.84) .. controls (211.45,61.08) and (213.69,58.84) .. (216.45,58.84) .. controls (219.21,58.84) and (221.45,61.08) .. (221.45,63.84) .. controls (221.45,66.61) and (219.21,68.84) .. (216.45,68.84) .. controls (213.69,68.84) and (211.45,66.61) .. (211.45,63.84) -- cycle ;
%Shape: Arc [id:dp16423722955129327] 
\draw  [draw opacity=0] (54.8,54.39) .. controls (57.55,51.01) and (61.75,48.84) .. (66.45,48.84) .. controls (67.57,48.84) and (68.67,48.97) .. (69.73,49.2) -- (66.45,63.84) -- cycle ; \draw   (54.8,54.39) .. controls (57.55,51.01) and (61.75,48.84) .. (66.45,48.84) .. controls (67.57,48.84) and (68.67,48.97) .. (69.73,49.2) ;  
%Shape: Arc [id:dp6722095931134748] 
\draw  [draw opacity=0] (80.04,57.5) .. controls (80.94,59.43) and (81.45,61.58) .. (81.45,63.84) .. controls (81.45,65.87) and (81.04,67.8) .. (80.32,69.57) -- (66.45,63.84) -- cycle ; \draw   (80.04,57.5) .. controls (80.94,59.43) and (81.45,61.58) .. (81.45,63.84) .. controls (81.45,65.87) and (81.04,67.8) .. (80.32,69.57) ;  
%Shape: Arc [id:dp9364535099735034] 
\draw  [draw opacity=0][dash pattern={on 0.84pt off 2.51pt}] (70.32,78.34) .. controls (69.09,78.67) and (67.79,78.84) .. (66.45,78.84) .. controls (61.82,78.84) and (57.68,76.75) .. (54.93,73.45) -- (66.45,63.84) -- cycle ; \draw  [dash pattern={on 0.84pt off 2.51pt}] (70.32,78.34) .. controls (69.09,78.67) and (67.79,78.84) .. (66.45,78.84) .. controls (61.82,78.84) and (57.68,76.75) .. (54.93,73.45) ;  
%Shape: Arc [id:dp93925628825885] 
\draw  [draw opacity=0] (78.15,73.23) .. controls (77.01,74.64) and (75.63,75.85) .. (74.06,76.77) -- (66.45,63.84) -- cycle ; \draw   (78.15,73.23) .. controls (77.01,74.64) and (75.63,75.85) .. (74.06,76.77) ;  
\draw   (68.05,46.89) -- (70.42,49.72) -- (66.87,50.71) ;
\draw   (83.51,67.26) -- (80.52,69.42) -- (79.78,65.8) ;
\draw   (77.27,76.96) -- (73.59,76.73) -- (75.32,73.47) ;
%Shape: Circle [id:dp7415223615105135] 
\draw  [dash pattern={on 0.84pt off 2.51pt}] (201.45,63.84) .. controls (201.45,55.56) and (208.16,48.84) .. (216.45,48.84) .. controls (224.73,48.84) and (231.45,55.56) .. (231.45,63.84) .. controls (231.45,72.13) and (224.73,78.84) .. (216.45,78.84) .. controls (208.16,78.84) and (201.45,72.13) .. (201.45,63.84) -- cycle ;
%Curve Lines [id:da9510757695694854] 
\draw [line width=1.5]    (366.45,63.84) .. controls (366.45,18.84) and (516.45,18.59) .. (516.45,63.84) ;
%Curve Lines [id:da21620394803610266] 
\draw [line width=1.5]    (366.45,63.84) .. controls (366.95,109.59) and (517.95,110.09) .. (516.45,63.84) ;
%Shape: Arc [id:dp6492206314936131] 
\draw  [draw opacity=0] (503.16,70.82) .. controls (502.07,68.74) and (501.45,66.36) .. (501.45,63.84) .. controls (501.45,62.04) and (501.77,60.31) .. (502.35,58.7) -- (516.45,63.84) -- cycle ; \draw   (503.16,70.82) .. controls (502.07,68.74) and (501.45,66.36) .. (501.45,63.84) .. controls (501.45,62.04) and (501.77,60.31) .. (502.35,58.7) ;  
%Shape: Arc [id:dp6146136579289707] 
\draw  [draw opacity=0][dash pattern={on 0.84pt off 2.51pt}] (528.19,73.17) .. controls (525.44,76.63) and (521.2,78.84) .. (516.45,78.84) .. controls (515.41,78.84) and (514.4,78.74) .. (513.43,78.54) -- (516.45,63.84) -- cycle ; \draw  [dash pattern={on 0.84pt off 2.51pt}] (528.19,73.17) .. controls (525.44,76.63) and (521.2,78.84) .. (516.45,78.84) .. controls (515.41,78.84) and (514.4,78.74) .. (513.43,78.54) ;  
\draw   (500.28,60.29) -- (503.01,57.81) -- (504.14,61.32) ;
%Curve Lines [id:da7390364694006315] 
\draw [color={rgb, 255:red, 0; green, 0; blue, 255 }  ,draw opacity=1 ][line width=1.5]    (516.78,61.51) .. controls (572.71,31.08) and (572.99,97.37) .. (516.78,65.51) ;
%Curve Lines [id:da6165900518532821] 
\draw [color={rgb, 255:red, 0; green, 0; blue, 255 }  ,draw opacity=1 ][line width=1.5]    (516.78,61.51) .. controls (466.57,34.8) and (467.14,95.94) .. (516.78,65.51) ;
%Shape: Circle [id:dp8911649905109431] 
\draw  [fill={rgb, 255:red, 0; green, 0; blue, 0 }  ,fill opacity=1 ] (361.45,63.84) .. controls (361.45,61.08) and (363.69,58.84) .. (366.45,58.84) .. controls (369.21,58.84) and (371.45,61.08) .. (371.45,63.84) .. controls (371.45,66.61) and (369.21,68.84) .. (366.45,68.84) .. controls (363.69,68.84) and (361.45,66.61) .. (361.45,63.84) -- cycle ;
%Shape: Circle [id:dp8557307286731404] 
\draw  [fill={rgb, 255:red, 0; green, 0; blue, 0 }  ,fill opacity=1 ] (511.45,63.84) .. controls (511.45,61.08) and (513.69,58.84) .. (516.45,58.84) .. controls (519.21,58.84) and (521.45,61.08) .. (521.45,63.84) .. controls (521.45,66.61) and (519.21,68.84) .. (516.45,68.84) .. controls (513.69,68.84) and (511.45,66.61) .. (511.45,63.84) -- cycle ;
%Shape: Arc [id:dp6066944172526132] 
\draw  [draw opacity=0] (509.55,77.17) .. controls (507.62,76.17) and (505.94,74.76) .. (504.61,73.06) -- (516.45,63.84) -- cycle ; \draw   (509.55,77.17) .. controls (507.62,76.17) and (505.94,74.76) .. (504.61,73.06) ;  
%Shape: Arc [id:dp1894259975513315] 
\draw  [draw opacity=0] (530.04,57.5) .. controls (530.94,59.43) and (531.45,61.58) .. (531.45,63.84) .. controls (531.45,65.87) and (531.04,67.8) .. (530.32,69.57) -- (516.45,63.84) -- cycle ; \draw   (530.04,57.5) .. controls (530.94,59.43) and (531.45,61.58) .. (531.45,63.84) .. controls (531.45,65.87) and (531.04,67.8) .. (530.32,69.57) ;  
%Shape: Arc [id:dp1699419620844025] 
\draw  [draw opacity=0][dash pattern={on 0.84pt off 2.51pt}] (504.62,54.62) .. controls (505.89,52.99) and (507.49,51.64) .. (509.31,50.65) -- (516.45,63.84) -- cycle ; \draw  [dash pattern={on 0.84pt off 2.51pt}] (504.62,54.62) .. controls (505.89,52.99) and (507.49,51.64) .. (509.31,50.65) ;  
%Shape: Arc [id:dp38821440591232825] 
\draw  [draw opacity=0] (513.16,49.2) .. controls (514.22,48.97) and (515.32,48.84) .. (516.45,48.84) .. controls (521.35,48.84) and (525.7,51.19) .. (528.43,54.82) -- (516.45,63.84) -- cycle ; \draw   (513.16,49.2) .. controls (514.22,48.97) and (515.32,48.84) .. (516.45,48.84) .. controls (521.35,48.84) and (525.7,51.19) .. (528.43,54.82) ;  
\draw   (505.15,76.48) -- (504.44,72.86) -- (508.03,73.7) ;
\draw   (533.51,67.59) -- (530.52,69.75) -- (529.78,66.14) ;
\draw   (527.86,51.6) -- (528.8,55.16) -- (525.16,54.55) ;
%Shape: Circle [id:dp5128746506297459] 
\draw  [dash pattern={on 0.84pt off 2.51pt}] (351.45,63.84) .. controls (351.45,55.56) and (358.16,48.84) .. (366.45,48.84) .. controls (374.73,48.84) and (381.45,55.56) .. (381.45,63.84) .. controls (381.45,72.13) and (374.73,78.84) .. (366.45,78.84) .. controls (358.16,78.84) and (351.45,72.13) .. (351.45,63.84) -- cycle ;

% Text Node
\draw (277,40) node [anchor=north west][inner sep=0.75pt]  [font=\LARGE] [align=left] {$ \mathrel{\substack{
\overset{\mu_P^+}{\longrightarrow}\\[-0.3ex]
\underset{\mu_{P}^-}{\longleftarrow}
}} $};
% Text Node
%\draw (112,55) node [anchor=north west][inner sep=0.75pt]   [align=left] {$\bar{h_1}$};
% Text Node
%\draw (6,58) node [anchor=north west][inner sep=0.75pt]   [align=left] {$\bar{h_2}$};
% Text Node
\draw (53,83) node [anchor=north west][inner sep=0.75pt]   [align=left] {$\vv_2$};
% Text Node
\draw (213,83) node [anchor=north west][inner sep=0.75pt]   [align=left] {$\vv_1$};
% Text Node
\draw (354.5,83) node [anchor=north west][inner sep=0.75pt]   [align=left] {$\vv_2$};
% Text Node
\draw (512.5,83) node [anchor=north west][inner sep=0.75pt]   [align=left] {$\vv_1$};
\end{scope}

\end{tikzpicture}

\caption{Kauer moves on biserial FBGs with clockwise cyclic orderings at all vertices.}
		\label{fig:1-KM-fms}	
	\end{center}
	\end{figure} 

The proof of these irreducible tilting mutations is analogous to that in the BGA case and consists of lengthy but straightforward verifications. The only extra difficulty lies in the dimension comparison, where additional cases must be treated, since one cannot in general traverse a full cycle around each vertex. For this reason, we defer these computations to the appendix.

We conclude above discussions with the following example.

\begin{example}
	Consider the ribbon graphs shown below, where the left (resp.\ right) graph is denoted by $\Gamma_1$ (resp.\ $\Gamma_2$). Each vertex is oriented clockwise.
	\begin{center}
\tikzset{every picture/.style={line width=0.75pt}} %set default line width to 0.75pt        

\begin{tikzpicture}[x=0.75pt,y=0.75pt,yscale=-1,xscale=1]
%uncomment if require: \path (0,235); %set diagram left start at 0, and has height of 235

%Shape: Circle [id:dp614547405669825] 
\draw  [fill={rgb, 255:red, 0; green, 0; blue, 0 }  ,fill opacity=1 ] (50,155) .. controls (50,152.24) and (52.24,150) .. (55,150) .. controls (57.76,150) and (60,152.24) .. (60,155) .. controls (60,157.76) and (57.76,160) .. (55,160) .. controls (52.24,160) and (50,157.76) .. (50,155) -- cycle ;
%Shape: Circle [id:dp6556135124424562] 
\draw  [fill={rgb, 255:red, 0; green, 0; blue, 0 }  ,fill opacity=1 ] (200,155) .. controls (200,152.24) and (202.24,150) .. (205,150) .. controls (207.76,150) and (210,152.24) .. (210,155) .. controls (210,157.76) and (207.76,160) .. (205,160) .. controls (202.24,160) and (200,157.76) .. (200,155) -- cycle ;
%Straight Lines [id:da1645085954759551] 
\draw [line width=1.5]    (55,155) -- (205,155) ;
%Curve Lines [id:da918065404640215] 
\draw [line width=1.5]    (55,155) .. controls (54.5,111) and (204.5,110.5) .. (205,155) ;
%Curve Lines [id:da4202388045517407] 
\draw [line width=1.5]    (55,155) .. controls (55.5,200) and (205,199.5) .. (205,155) ;
%Shape: Circle [id:dp21267779868081882] 
\draw  [fill={rgb, 255:red, 0; green, 0; blue, 0 }  ,fill opacity=1 ] (125,55) .. controls (125,52.24) and (127.24,50) .. (130,50) .. controls (132.76,50) and (135,52.24) .. (135,55) .. controls (135,57.76) and (132.76,60) .. (130,60) .. controls (127.24,60) and (125,57.76) .. (125,55) -- cycle ;
%Curve Lines [id:da4664347016009811] 
\draw [line width=1.5]    (55,155) .. controls (55.5,100) and (82.5,71.5) .. (130,55) ;
%Curve Lines [id:da19263303202223914] 
\draw [line width=1.5]    (55,155) .. controls (115.5,137) and (130.5,109) .. (130,55) ;
%Curve Lines [id:da9546587307922114] 
\draw [line width=1.5]    (55,155) .. controls (125.5,181.5) and (32.5,209) .. (20.5,179.5) .. controls (8.5,150) and (57.5,51) .. (130,55) ;
%Shape: Circle [id:dp7952016335846566] 
\draw  [fill={rgb, 255:red, 0; green, 0; blue, 0 }  ,fill opacity=1 ] (320,155) .. controls (320,152.24) and (322.24,150) .. (325,150) .. controls (327.76,150) and (330,152.24) .. (330,155) .. controls (330,157.76) and (327.76,160) .. (325,160) .. controls (322.24,160) and (320,157.76) .. (320,155) -- cycle ;
%Shape: Circle [id:dp29473079999107377] 
\draw  [fill={rgb, 255:red, 0; green, 0; blue, 0 }  ,fill opacity=1 ] (470,155) .. controls (470,152.24) and (472.24,150) .. (475,150) .. controls (477.76,150) and (480,152.24) .. (480,155) .. controls (480,157.76) and (477.76,160) .. (475,160) .. controls (472.24,160) and (470,157.76) .. (470,155) -- cycle ;
%Straight Lines [id:da9898298371191689] 
\draw [line width=1.5]    (325,155) -- (475,155) ;
%Curve Lines [id:da7073166384451035] 
\draw [line width=1.5]    (325,155) .. controls (324.5,111) and (474.5,110.5) .. (475,155) ;
%Curve Lines [id:da41939892022189906] 
\draw [line width=1.5]    (325,155) .. controls (325.5,200) and (475,199.5) .. (475,155) ;
%Shape: Circle [id:dp6821210109459024] 
\draw  [fill={rgb, 255:red, 0; green, 0; blue, 0 }  ,fill opacity=1 ] (395,55) .. controls (395,52.24) and (397.24,50) .. (400,50) .. controls (402.76,50) and (405,52.24) .. (405,55) .. controls (405,57.76) and (402.76,60) .. (400,60) .. controls (397.24,60) and (395,57.76) .. (395,55) -- cycle ;
%Curve Lines [id:da7257423097205837] 
\draw [line width=1.5]    (475,155) .. controls (475.5,100) and (444.5,70.75) .. (400,55) ;
%Curve Lines [id:da3023850145915912] 
\draw [line width=1.5]    (475,155) .. controls (411,135.75) and (400.5,109) .. (400,55) ;
%Curve Lines [id:da6604020987627623] 
\draw [line width=1.5]    (400,55) .. controls (440.78,52.67) and (533,123.25) .. (518.5,159.75) .. controls (504,196.25) and (399.5,199.25) .. (475,155) ;

% Text Node
\draw (80.5,87.25) node [anchor=north west][inner sep=0.75pt]   [align=left] {$1$};
% Text Node
\draw (138,105) node [anchor=north west][inner sep=0.75pt]   [align=left] {$2$};
% Text Node
\draw (113,85.75) node [anchor=north west][inner sep=0.75pt]   [align=left] {$3$};
% Text Node
\draw (121.5,138.25) node [anchor=north west][inner sep=0.75pt]   [align=left] {$4$};
% Text Node
\draw (10,182.25) node [anchor=north west][inner sep=0.75pt]   [align=left] {$5$};
% Text Node
\draw (124,192.25) node [anchor=north west][inner sep=0.75pt]   [align=left] {$6$};
% Text Node
\draw (34,150) node [anchor=north west][inner sep=0.75pt]   [align=left] {$\vv_1$};
% Text Node
\draw (140.5,50) node [anchor=north west][inner sep=0.75pt]   [align=left] {$\vv_2$};
% Text Node
\draw (217,150) node [anchor=north west][inner sep=0.75pt]   [align=left] {$\vv_3$};
% Text Node
\draw (436.75,87.75) node [anchor=north west][inner sep=0.75pt]   [align=left] {$1$};
% Text Node
\draw (376,105.75) node [anchor=north west][inner sep=0.75pt]   [align=left] {$2$};
% Text Node
\draw (388,74) node [anchor=north west][inner sep=0.75pt]   [align=left] {$3$};
% Text Node
\draw (388,140.25) node [anchor=north west][inner sep=0.75pt]   [align=left] {$4$};
% Text Node
\draw (500,182.25) node [anchor=north west][inner sep=0.75pt]   [align=left] {$5$};
% Text Node
\draw (394,192.25) node [anchor=north west][inner sep=0.75pt]   [align=left] {$6$};
% Text Node
\draw (301.25,150) node [anchor=north west][inner sep=0.75pt]   [align=left] {$\vv_1$};
% Text Node
\draw (372.75,50) node [anchor=north west][inner sep=0.75pt]   [align=left] {$\vv_2$};
% Text Node
\draw (481.75,150) node [anchor=north west][inner sep=0.75pt]   [align=left] {$\vv_3$};

\end{tikzpicture}
	\end{center}
Define the multiplicity on each vertex by 
\[
m(\vv_1)=\tfrac{2}{3}, \quad m(\vv_2)=m(\vv_3)=\tfrac{1}{3}.
\]
This induces degree functions on $\Gamma_1$ and $\Gamma_2$, making them two biserial FBGAs.

	Moreover, the biserial FBGA $A_1 = kQ_1/I_1$ associated with $\Gamma_1$ is given by the quiver $Q_1 = Q_{\Gamma_1}$:
		$$
		\begin{tikzcd}
			&                         & 1 \arrow[rd, "\alpha_1"] &                         &                         \\
			& 6 \arrow[ru, "\alpha_6"] &                         & 2 \arrow[rd, "\alpha_{2}"] &                         \\
			5 \arrow[ru, "\alpha_5"] &                         & 4 \arrow[ll, "\alpha_4"] &                         & 3 \arrow[ll, "\alpha_3"]
		\end{tikzcd}$$
		Then $I_1$ is generated by
		\begin{itemize}
			\item $\alpha_1\alpha_2\alpha_3\alpha_4\alpha_5$, 
$\alpha_2\alpha_3\alpha_4\alpha_5\alpha_6$, 
$\alpha_3\alpha_4\alpha_5\alpha_6\alpha_1$,  
$\alpha_4\alpha_5\alpha_6\alpha_1\alpha_2$, 
$\alpha_5\alpha_6\alpha_1\alpha_2\alpha_3$, 
$\alpha_6\alpha_1\alpha_2\alpha_3\alpha_4$.
		\end{itemize}
		Thus $A_1$ is a self-injective Nakayama algebra.

		The biserial FBGA $A_2=kQ_2/I_2$ associated with $\Gamma_2$ is given by the quiver $Q_2 = Q_{\Gamma_2}$:
		$$
	\begin{tikzcd}
		&                                              & 1 \arrow[rd, "\alpha_{1}"]                      &                                              &                         \\
		& 2 \arrow[ru, "\alpha_6"] \arrow[rd, "\alpha_{6'}"'] &                                              & 6 \arrow[rd, "\alpha_{2}"] \arrow[ll, "\alpha_{2'}"'] &                         \\
		3 \arrow[ru, "\alpha_5"] &                                              & 4 \arrow[ll, "\alpha_4"] \arrow[ru, "\alpha_{4'}"'] &                                              & 5 \arrow[ll, "\alpha_3"]
	\end{tikzcd}$$
		Then $I_2$ is generated by
		\begin{itemize}
			\item $\alpha_6\alpha_1-\alpha_{6'}\alpha_{4'}$, 
      $\alpha_2\alpha_{3}-\alpha_{2'}\alpha_{6'}$, 
      $\alpha_4\alpha_{5}-\alpha_{4'}\alpha_{2'}$.

\item $\alpha_{1}\alpha_{2}\alpha_{3}$, 
      $\alpha_{2}\alpha_{3}\alpha_{4}$, 
      $\alpha_{3}\alpha_{4}\alpha_{5}$,  
      $\alpha_{4}\alpha_{5}\alpha_{6}$, 
      $\alpha_{5}\alpha_{6}\alpha_{1}$, 
      $\alpha_{6}\alpha_{1}\alpha_{2}$, 
      $\alpha_{4'}\alpha_{2'}\alpha_{6'}$, 
      $\alpha_{6'}\alpha_{4'}\alpha_{2'}$, 
      $\alpha_{2'}\alpha_{6'}\alpha_{4'}$.

\item $\alpha_{5}\alpha_{6'}$, 
      $\alpha_{6'}\alpha_{4}$, 
      $\alpha_{3}\alpha_{4'}$, 
      $\alpha_{4'}\alpha_{2}$, 
      $\alpha_{1}\alpha_{2'}$, 
      $\alpha_{2'}\alpha_{6}$.
		\end{itemize}
		
		These two algebras are derived equivalent, since they are related by a Kauer move on the $\nu_A$-orbit corresponding to the edges $\{1,3,5\}$. Such a derived equivalence is also demonstrated in \cite[Example~3.5]{Miz}.
\end{example}

\subsection{Realization of Kauer moves on $\Gamma / \langle \nu \rangle$}\label{subsec:move-on-red}
\

By the definition of the graph of orbits (see Subsection \ref{subsec:reduced-form}), each (orbifold) edge in 
$\Gamma/\langle \nu\rangle$ corresponds to a $\nu$-orbit of edges in the original graph $\Gamma$. It also naturally corresponds to a $\nu_A$-orbit of indecomposable projective modules of the associated biserial FBGA $A$, where $\nu_A$ is the Nakayama automorphism of $A$. Therefore, the $\nu$-stable mutations considered above naturally correspond to moves along each (orbifold) edge of $\Gamma/\langle \nu\rangle$. More precisely,
\begin{enumerate}
\item if the move does not involve orbifold edges, then it is the same as in the BGA case, as illustrated in Figure~\ref{fig:Kauer-moves}.

\item If the move involves orbifold edges, it likewise corresponds to sliding an (orbifold) edge along the orientation, as illustrated in Figure~\ref{fig:Kauer-moves-orbifold}. This is because each orbifold edge corresponds, in effect, to two glued half-edges; equivalently, a vertex $v$ incident with an orbifold edge lifts in the double cover $\widehat{\Gamma/\langle \nu \rangle}$ to two vertices $v'$ and $v''$. Thus, sliding from $v$ along the orbifold edge passes through the orbifold and returns to $v$, which in $\widehat{\Gamma/\langle \nu \rangle}$ corresponds to sliding from $v'$ to $v''$.
\end{enumerate}

\begin{figure}[ht]
		\centering
		\begin{center}

\tikzset{every picture/.style={line width=0.75pt}} %set default line width to 0.75pt        

\begin{tikzpicture}[x=0.75pt,y=0.75pt,yscale=-1,xscale=1]
%uncomment if require: \path (0,235); %set diagram left start at 0, and has height of 235

%Straight Lines [id:da8545627489039611] 
\draw [line width=1.5]    (100,111) -- (200,111) ;
%Shape: Circle [id:dp26055871143917075] 
\draw  [fill={rgb, 255:red, 0; green, 0; blue, 0 }  ,fill opacity=1 ] (195,111) .. controls (195,108.24) and (197.24,106) .. (200,106) .. controls (202.76,106) and (205,108.24) .. (205,111) .. controls (205,113.76) and (202.76,116) .. (200,116) .. controls (197.24,116) and (195,113.76) .. (195,111) -- cycle ;
%Straight Lines [id:da10749896890831767] 
\draw [color={rgb, 255:red, 0; green, 0; blue, 255 }  ,draw opacity=1 ][line width=1.5]    (100,111) -- (100,41) ;
%Shape: Circle [id:dp684987033191639] 
\draw  [fill={rgb, 255:red, 0; green, 0; blue, 0 }  ,fill opacity=1 ] (95,111) .. controls (95,108.24) and (97.24,106) .. (100,106) .. controls (102.76,106) and (105,108.24) .. (105,111) .. controls (105,113.76) and (102.76,116) .. (100,116) .. controls (97.24,116) and (95,113.76) .. (95,111) -- cycle ;
%Straight Lines [id:da82873827900161] 
\draw [line width=1.5]    (350,111) -- (450,111) ;
%Shape: Circle [id:dp2544617409501262] 
\draw  [fill={rgb, 255:red, 0; green, 0; blue, 0 }  ,fill opacity=1 ] (345,111) .. controls (345,108.24) and (347.24,106) .. (350,106) .. controls (352.76,106) and (355,108.24) .. (355,111) .. controls (355,113.76) and (352.76,116) .. (350,116) .. controls (347.24,116) and (345,113.76) .. (345,111) -- cycle ;
%Shape: Arc [id:dp39725639423197623] 
\draw  [draw opacity=0][dash pattern={on 0.84pt off 2.51pt}] (114.34,115.41) .. controls (112.46,121.54) and (106.75,126) .. (100,126) .. controls (91.72,126) and (85,119.28) .. (85,111) .. controls (85,103.05) and (91.18,96.55) .. (99,96.03) -- (100,111) -- cycle ; \draw  [dash pattern={on 0.84pt off 2.51pt}] (114.34,115.41) .. controls (112.46,121.54) and (106.75,126) .. (100,126) .. controls (91.72,126) and (85,119.28) .. (85,111) .. controls (85,103.05) and (91.18,96.55) .. (99,96.03) ;  
%Shape: Arc [id:dp4638717400522323] 
\draw  [draw opacity=0] (103.21,96.34) .. controls (108.82,97.57) and (113.26,101.93) .. (114.59,107.5) -- (100,111) -- cycle ; \draw   (103.21,96.34) .. controls (108.82,97.57) and (113.26,101.93) .. (114.59,107.5) ;  
\draw   (115.88,104.38) -- (114.76,107.9) -- (112.01,105.43) ;
%Shape: Circle [id:dp6967319191602261] 
\draw  [dash pattern={on 0.84pt off 2.51pt}] (185,111) .. controls (185,102.72) and (191.72,96) .. (200,96) .. controls (208.28,96) and (215,102.72) .. (215,111) .. controls (215,119.28) and (208.28,126) .. (200,126) .. controls (191.72,126) and (185,119.28) .. (185,111) -- cycle ;
%Shape: Circle [id:dp1801702762332369] 
\draw  [dash pattern={on 0.84pt off 2.51pt}] (335,111) .. controls (335,102.72) and (341.72,96) .. (350,96) .. controls (358.28,96) and (365,102.72) .. (365,111) .. controls (365,119.28) and (358.28,126) .. (350,126) .. controls (341.72,126) and (335,119.28) .. (335,111) -- cycle ;
%Curve Lines [id:da9532218780482482] 
\draw [color={rgb, 255:red, 0; green, 0; blue, 255 }  ,draw opacity=1 ][line width=1.5]    (349.6,41) .. controls (350,89) and (407.2,76.6) .. (450,111) ;
%Shape: Circle [id:dp7148480296477373] 
\draw  [fill={rgb, 255:red, 0; green, 0; blue, 0 }  ,fill opacity=1 ] (445,111) .. controls (445,108.24) and (447.24,106) .. (450,106) .. controls (452.76,106) and (455,108.24) .. (455,111) .. controls (455,113.76) and (452.76,116) .. (450,116) .. controls (447.24,116) and (445,113.76) .. (445,111) -- cycle ;
%Shape: Arc [id:dp41486476999926913] 
\draw  [draw opacity=0][dash pattern={on 0.84pt off 2.51pt}] (438.53,101.33) .. controls (441.28,98.07) and (445.4,96) .. (450,96) .. controls (458.28,96) and (465,102.72) .. (465,111) .. controls (465,119.28) and (458.28,126) .. (450,126) .. controls (441.72,126) and (435,119.28) .. (435,111) .. controls (435,111) and (435,111) .. (435,111) -- (450,111) -- cycle ; \draw  [dash pattern={on 0.84pt off 2.51pt}] (438.53,101.33) .. controls (441.28,98.07) and (445.4,96) .. (450,96) .. controls (458.28,96) and (465,102.72) .. (465,111) .. controls (465,119.28) and (458.28,126) .. (450,126) .. controls (441.72,126) and (435,119.28) .. (435,111) .. controls (435,111) and (435,111) .. (435,111) ;  
%Shape: Arc [id:dp9251235147556214] 
\draw  [draw opacity=0] (435.18,108.69) .. controls (435.4,107.23) and (435.84,105.83) .. (436.46,104.54) -- (450,111) -- cycle ; \draw   (435.18,108.69) .. controls (435.4,107.23) and (435.84,105.83) .. (436.46,104.54) ;  
\draw   (433.86,105.65) -- (437.07,103.83) -- (437.41,107.5) ;

% Text Node
\draw (92,33) node [anchor=north west][inner sep=0.75pt] [font=\Large]  [align=left] {$\times$};
% Text Node
\draw (342,33) node [anchor=north west][inner sep=0.75pt]  [font=\Large] [align=left] {$\times$};
% Text Node
\draw (244,58) node [anchor=north west][inner sep=0.75pt]  [font=\LARGE] [align=left] {$ \mathrel{\substack{
\overset{\mu_P^+}{\longrightarrow}\\[-0.3ex]
\underset{\mu_{P}^-}{\longleftarrow}
}} $};

\end{tikzpicture}

			\medskip

\tikzset{every picture/.style={line width=0.75pt}} %set default line width to 0.75pt        

\begin{tikzpicture}[x=0.75pt,y=0.75pt,yscale=-1,xscale=1]
%uncomment if require: \path (0,235); %set diagram left start at 0, and has height of 235

%Straight Lines [id:da8545627489039611] 
\draw [color={rgb, 255:red, 0; green, 0; blue, 255 }  ,draw opacity=1 ][line width=1.5]    (100,111) -- (200,111) ;
%Shape: Circle [id:dp26055871143917075] 
\draw  [fill={rgb, 255:red, 0; green, 0; blue, 0 }  ,fill opacity=1 ] (195,111) .. controls (195,108.24) and (197.24,106) .. (200,106) .. controls (202.76,106) and (205,108.24) .. (205,111) .. controls (205,113.76) and (202.76,116) .. (200,116) .. controls (197.24,116) and (195,113.76) .. (195,111) -- cycle ;
%Straight Lines [id:da10749896890831767] 
\draw [color={rgb, 255:red, 0; green, 0; blue, 0 }  ,draw opacity=1 ][line width=1.5]    (200,111) -- (200,41) ;
%Shape: Circle [id:dp684987033191639] 
\draw  [fill={rgb, 255:red, 0; green, 0; blue, 0 }  ,fill opacity=1 ] (95,111) .. controls (95,108.24) and (97.24,106) .. (100,106) .. controls (102.76,106) and (105,108.24) .. (105,111) .. controls (105,113.76) and (102.76,116) .. (100,116) .. controls (97.24,116) and (95,113.76) .. (95,111) -- cycle ;
%Shape: Circle [id:dp2544617409501262] 
\draw  [fill={rgb, 255:red, 0; green, 0; blue, 0 }  ,fill opacity=1 ] (345,111) .. controls (345,108.24) and (347.24,106) .. (350,106) .. controls (352.76,106) and (355,108.24) .. (355,111) .. controls (355,113.76) and (352.76,116) .. (350,116) .. controls (347.24,116) and (345,113.76) .. (345,111) -- cycle ;
%Shape: Arc [id:dp39725639423197623] 
\draw  [draw opacity=0][dash pattern={on 0.84pt off 2.51pt}] (99.4,125.99) .. controls (91.39,125.67) and (85,119.08) .. (85,111) .. controls (85,102.72) and (91.72,96) .. (100,96) .. controls (107.8,96) and (114.2,101.95) .. (114.93,109.55) -- (100,111) -- cycle ; \draw  [dash pattern={on 0.84pt off 2.51pt}] (99.4,125.99) .. controls (91.39,125.67) and (85,119.08) .. (85,111) .. controls (85,102.72) and (91.72,96) .. (100,96) .. controls (107.8,96) and (114.2,101.95) .. (114.93,109.55) ;  
%Shape: Arc [id:dp4638717400522323] 
\draw  [draw opacity=0] (114.92,112.52) .. controls (114.22,119.5) and (108.74,125.06) .. (101.8,125.89) -- (100,111) -- cycle ; \draw   (114.92,112.52) .. controls (114.22,119.5) and (108.74,125.06) .. (101.8,125.89) ;  
\draw   (105.77,126.81) -- (102.13,126.22) -- (104.16,123.14) ;
%Shape: Circle [id:dp1801702762332369] 
\draw  [dash pattern={on 0.84pt off 2.51pt}] (335,111) .. controls (335,102.72) and (341.72,96) .. (350,96) .. controls (358.28,96) and (365,102.72) .. (365,111) .. controls (365,119.28) and (358.28,126) .. (350,126) .. controls (341.72,126) and (335,119.28) .. (335,111) -- cycle ;
%Shape: Circle [id:dp7148480296477373] 
\draw  [fill={rgb, 255:red, 0; green, 0; blue, 0 }  ,fill opacity=1 ] (445,111) .. controls (445,108.24) and (447.24,106) .. (450,106) .. controls (452.76,106) and (455,108.24) .. (455,111) .. controls (455,113.76) and (452.76,116) .. (450,116) .. controls (447.24,116) and (445,113.76) .. (445,111) -- cycle ;
%Shape: Arc [id:dp41486476999926913] 
\draw  [draw opacity=0][dash pattern={on 0.84pt off 2.51pt}] (459.66,99.52) .. controls (462.92,102.27) and (465,106.39) .. (465,111) .. controls (465,119.28) and (458.28,126) .. (450,126) .. controls (441.72,126) and (435,119.28) .. (435,111) .. controls (435,103.67) and (440.26,97.57) .. (447.21,96.26) -- (450,111) -- cycle ; \draw  [dash pattern={on 0.84pt off 2.51pt}] (459.66,99.52) .. controls (462.92,102.27) and (465,106.39) .. (465,111) .. controls (465,119.28) and (458.28,126) .. (450,126) .. controls (441.72,126) and (435,119.28) .. (435,111) .. controls (435,103.67) and (440.26,97.57) .. (447.21,96.26) ;  
%Shape: Arc [id:dp9251235147556214] 
\draw  [draw opacity=0] (451.8,96.11) .. controls (454.05,96.38) and (456.14,97.14) .. (457.97,98.29) -- (450,111) -- cycle ; \draw   (451.8,96.11) .. controls (454.05,96.38) and (456.14,97.14) .. (457.97,98.29) ;  
\draw   (456.56,95.48) -- (458.04,98.85) -- (454.36,98.82) ;
%Straight Lines [id:da47263035339706927] 
\draw [color={rgb, 255:red, 0; green, 0; blue, 0 }  ,draw opacity=1 ][line width=1.5]    (450,111) -- (450,41) ;
%Straight Lines [id:da8704012366727096] 
\draw [color={rgb, 255:red, 0; green, 0; blue, 0 }  ,draw opacity=1 ][line width=1.5]    (100,181) -- (100,111) ;
%Straight Lines [id:da7107760640710492] 
\draw [color={rgb, 255:red, 0; green, 0; blue, 0 }  ,draw opacity=1 ][line width=1.5]    (350,181) -- (350,111) ;
%Shape: Circle [id:dp2257285636334383] 
\draw  [dash pattern={on 0.84pt off 2.51pt}] (85,181) .. controls (85,172.72) and (91.72,166) .. (100,166) .. controls (108.28,166) and (115,172.72) .. (115,181) .. controls (115,189.28) and (108.28,196) .. (100,196) .. controls (91.72,196) and (85,189.28) .. (85,181) -- cycle ;
%Shape: Circle [id:dp7168381680231402] 
\draw  [fill={rgb, 255:red, 0; green, 0; blue, 0 }  ,fill opacity=1 ] (95,181) .. controls (95,178.24) and (97.24,176) .. (100,176) .. controls (102.76,176) and (105,178.24) .. (105,181) .. controls (105,183.76) and (102.76,186) .. (100,186) .. controls (97.24,186) and (95,183.76) .. (95,181) -- cycle ;
%Shape: Arc [id:dp8624357849233557] 
\draw  [draw opacity=0][dash pattern={on 0.84pt off 2.51pt}] (200.69,96.02) .. controls (208.65,96.38) and (215,102.95) .. (215,111) .. controls (215,119.28) and (208.28,126) .. (200,126) .. controls (191.96,126) and (185.4,119.68) .. (185.02,111.74) -- (200,111) -- cycle ; \draw  [dash pattern={on 0.84pt off 2.51pt}] (200.69,96.02) .. controls (208.65,96.38) and (215,102.95) .. (215,111) .. controls (215,119.28) and (208.28,126) .. (200,126) .. controls (191.96,126) and (185.4,119.68) .. (185.02,111.74) ;  
%Shape: Arc [id:dp6280893643148724] 
\draw  [draw opacity=0] (185.05,109.76) .. controls (185.63,102.67) and (191.14,96.98) .. (198.14,96.11) -- (200,111) -- cycle ; \draw   (185.05,109.76) .. controls (185.63,102.67) and (191.14,96.98) .. (198.14,96.11) ;  
\draw   (194.77,94.63) -- (198.25,95.83) -- (195.72,98.51) ;
%Curve Lines [id:da627070108258077] 
\draw [color={rgb, 255:red, 0; green, 0; blue, 255 }  ,draw opacity=1 ][line width=1.5]    (350,181) .. controls (388.2,123.8) and (400.2,24.2) .. (443.4,24.6) .. controls (486.6,25) and (486.38,66.77) .. (450,111) ;
%Shape: Arc [id:dp7621180013656907] 
\draw  [draw opacity=0][dash pattern={on 0.84pt off 2.51pt}] (358.74,168.81) .. controls (362.53,171.53) and (365,175.98) .. (365,181) .. controls (365,189.28) and (358.28,196) .. (350,196) .. controls (341.72,196) and (335,189.28) .. (335,181) .. controls (335,173.67) and (340.26,167.57) .. (347.21,166.26) -- (350,181) -- cycle ; \draw  [dash pattern={on 0.84pt off 2.51pt}] (358.74,168.81) .. controls (362.53,171.53) and (365,175.98) .. (365,181) .. controls (365,189.28) and (358.28,196) .. (350,196) .. controls (341.72,196) and (335,189.28) .. (335,181) .. controls (335,173.67) and (340.26,167.57) .. (347.21,166.26) ;  
%Shape: Circle [id:dp7748691013133424] 
\draw  [fill={rgb, 255:red, 0; green, 0; blue, 0 }  ,fill opacity=1 ] (345,181) .. controls (345,178.24) and (347.24,176) .. (350,176) .. controls (352.76,176) and (355,178.24) .. (355,181) .. controls (355,183.76) and (352.76,186) .. (350,186) .. controls (347.24,186) and (345,183.76) .. (345,181) -- cycle ;
%Shape: Arc [id:dp4517203629233135] 
\draw  [draw opacity=0] (349.46,166.01) .. controls (349.64,166) and (349.82,166) .. (350,166) .. controls (352.2,166) and (354.3,166.48) .. (356.18,167.33) -- (350,181) -- cycle ; \draw   (349.46,166.01) .. controls (349.64,166) and (349.82,166) .. (350,166) .. controls (352.2,166) and (354.3,166.48) .. (356.18,167.33) ;  
\draw   (354.15,164.4) -- (356.41,167.32) -- (352.82,168.17) ;

% Text Node
\draw (192,34) node [anchor=north west][inner sep=0.75pt]  [font=\Large] [align=left] {$\times$};
% Text Node
\draw (442,34) node [anchor=north west][inner sep=0.75pt]  [font=\Large] [align=left] {$\times$};
% Text Node
\draw (254,78) node [anchor=north west][inner sep=0.75pt]  [font=\LARGE] [align=left] {$ \mathrel{\substack{
\overset{\mu_P^+}{\longrightarrow}\\[-0.3ex]
\underset{\mu_{P}^-}{\longleftarrow}
}} $};

\end{tikzpicture}

		\medskip

\tikzset{every picture/.style={line width=0.75pt}} %set default line width to 0.75pt        

\tikzset{every picture/.style={line width=0.75pt}} %set default line width to 0.75pt        

\begin{tikzpicture}[x=0.75pt,y=0.75pt,yscale=-1,xscale=1]
%uncomment if require: \path (0,235); %set diagram left start at 0, and has height of 235

%Straight Lines [id:da8545627489039611] 
\draw [color={rgb, 255:red, 0; green, 0; blue, 255 }  ,draw opacity=1 ][line width=1.5]    (100,111) -- (200,111) ;
%Shape: Circle [id:dp26055871143917075] 
\draw  [fill={rgb, 255:red, 0; green, 0; blue, 0 }  ,fill opacity=1 ] (195,111) .. controls (195,108.24) and (197.24,106) .. (200,106) .. controls (202.76,106) and (205,108.24) .. (205,111) .. controls (205,113.76) and (202.76,116) .. (200,116) .. controls (197.24,116) and (195,113.76) .. (195,111) -- cycle ;
%Straight Lines [id:da10749896890831767] 
\draw [color={rgb, 255:red, 0; green, 0; blue, 0 }  ,draw opacity=1 ][line width=1.5]    (200,111) -- (200,41) ;
%Shape: Circle [id:dp684987033191639] 
\draw  [fill={rgb, 255:red, 0; green, 0; blue, 0 }  ,fill opacity=1 ] (95,111) .. controls (95,108.24) and (97.24,106) .. (100,106) .. controls (102.76,106) and (105,108.24) .. (105,111) .. controls (105,113.76) and (102.76,116) .. (100,116) .. controls (97.24,116) and (95,113.76) .. (95,111) -- cycle ;
%Shape: Circle [id:dp2544617409501262] 
\draw  [fill={rgb, 255:red, 0; green, 0; blue, 0 }  ,fill opacity=1 ] (345,111) .. controls (345,108.24) and (347.24,106) .. (350,106) .. controls (352.76,106) and (355,108.24) .. (355,111) .. controls (355,113.76) and (352.76,116) .. (350,116) .. controls (347.24,116) and (345,113.76) .. (345,111) -- cycle ;
%Shape: Arc [id:dp4638717400522323] 
\draw  [draw opacity=0] (114.92,112.52) .. controls (114.16,120.09) and (107.77,126) .. (100,126) .. controls (91.72,126) and (85,119.28) .. (85,111) .. controls (85,102.72) and (91.72,96) .. (100,96) .. controls (107.31,96) and (113.39,101.22) .. (114.73,108.14) -- (100,111) -- cycle ; \draw   (114.92,112.52) .. controls (114.16,120.09) and (107.77,126) .. (100,126) .. controls (91.72,126) and (85,119.28) .. (85,111) .. controls (85,102.72) and (91.72,96) .. (100,96) .. controls (107.31,96) and (113.39,101.22) .. (114.73,108.14) ;  
\draw   (115.82,105.14) -- (114.8,108.68) -- (111.99,106.29) ;
%Shape: Circle [id:dp7148480296477373] 
\draw  [fill={rgb, 255:red, 0; green, 0; blue, 0 }  ,fill opacity=1 ] (445,111) .. controls (445,108.24) and (447.24,106) .. (450,106) .. controls (452.76,106) and (455,108.24) .. (455,111) .. controls (455,113.76) and (452.76,116) .. (450,116) .. controls (447.24,116) and (445,113.76) .. (445,111) -- cycle ;
%Shape: Arc [id:dp41486476999926913] 
\draw  [draw opacity=0][dash pattern={on 0.84pt off 2.51pt}] (459.66,99.52) .. controls (462.92,102.27) and (465,106.39) .. (465,111) .. controls (465,119.28) and (458.28,126) .. (450,126) .. controls (441.72,126) and (435,119.28) .. (435,111) .. controls (435,103.67) and (440.26,97.57) .. (447.21,96.26) -- (450,111) -- cycle ; \draw  [dash pattern={on 0.84pt off 2.51pt}] (459.66,99.52) .. controls (462.92,102.27) and (465,106.39) .. (465,111) .. controls (465,119.28) and (458.28,126) .. (450,126) .. controls (441.72,126) and (435,119.28) .. (435,111) .. controls (435,103.67) and (440.26,97.57) .. (447.21,96.26) ;  
%Shape: Arc [id:dp9251235147556214] 
\draw  [draw opacity=0] (451.8,96.11) .. controls (454.05,96.38) and (456.14,97.14) .. (457.97,98.29) -- (450,111) -- cycle ; \draw   (451.8,96.11) .. controls (454.05,96.38) and (456.14,97.14) .. (457.97,98.29) ;  
\draw   (456.56,95.48) -- (458.04,98.85) -- (454.36,98.82) ;
%Straight Lines [id:da47263035339706927] 
\draw [color={rgb, 255:red, 0; green, 0; blue, 0 }  ,draw opacity=1 ][line width=1.5]    (450,111) -- (450,41) ;
%Shape: Arc [id:dp8624357849233557] 
\draw  [draw opacity=0][dash pattern={on 0.84pt off 2.51pt}] (200.69,96.02) .. controls (208.65,96.38) and (215,102.95) .. (215,111) .. controls (215,119.28) and (208.28,126) .. (200,126) .. controls (191.96,126) and (185.4,119.68) .. (185.02,111.74) -- (200,111) -- cycle ; \draw  [dash pattern={on 0.84pt off 2.51pt}] (200.69,96.02) .. controls (208.65,96.38) and (215,102.95) .. (215,111) .. controls (215,119.28) and (208.28,126) .. (200,126) .. controls (191.96,126) and (185.4,119.68) .. (185.02,111.74) ;  
%Shape: Arc [id:dp6280893643148724] 
\draw  [draw opacity=0] (185.05,109.76) .. controls (185.63,102.67) and (191.14,96.98) .. (198.14,96.11) -- (200,111) -- cycle ; \draw   (185.05,109.76) .. controls (185.63,102.67) and (191.14,96.98) .. (198.14,96.11) ;  
\draw   (194.77,94.63) -- (198.25,95.83) -- (195.72,98.51) ;
%Curve Lines [id:da627070108258077] 
\draw [color={rgb, 255:red, 0; green, 0; blue, 255 }  ,draw opacity=1 ][line width=1.5]    (350,111) .. controls (388.2,53.8) and (400.2,24.2) .. (443.4,24.6) .. controls (486.6,25) and (486.38,66.77) .. (450,111) ;
%Shape: Arc [id:dp8680202812886213] 
\draw  [draw opacity=0] (359.37,99.29) .. controls (362.8,102.04) and (365,106.26) .. (365,111) .. controls (365,119.28) and (358.28,126) .. (350,126) .. controls (341.72,126) and (335,119.28) .. (335,111) .. controls (335,102.72) and (341.72,96) .. (350,96) .. controls (352.15,96) and (354.19,96.45) .. (356.04,97.27) -- (350,111) -- cycle ; \draw   (359.37,99.29) .. controls (362.8,102.04) and (365,106.26) .. (365,111) .. controls (365,119.28) and (358.28,126) .. (350,126) .. controls (341.72,126) and (335,119.28) .. (335,111) .. controls (335,102.72) and (341.72,96) .. (350,96) .. controls (352.15,96) and (354.19,96.45) .. (356.04,97.27) ;  
\draw   (355.16,94.68) -- (356.64,98.05) -- (352.95,98.02) ;

% Text Node
\draw (192,34) node [anchor=north west][inner sep=0.75pt]  [font=\Large] [align=left] {$\times$};
% Text Node
\draw (442,34) node [anchor=north west][inner sep=0.75pt]  [font=\Large] [align=left] {$\times$};
% Text Node
\draw (254,58) node [anchor=north west][inner sep=0.75pt]  [font=\LARGE] [align=left] {$ \mathrel{\substack{
\overset{\mu_P^+}{\longrightarrow}\\[-0.3ex]
\underset{\mu_{P}^-}{\longleftarrow}
}} $};

\end{tikzpicture}

			\medskip

\tikzset{every picture/.style={line width=0.75pt}} %set default line width to 0.75pt        

\begin{tikzpicture}[x=0.75pt,y=0.75pt,yscale=-1,xscale=1]
%uncomment if require: \path (0,235); %set diagram left start at 0, and has height of 235

%Straight Lines [id:da8545627489039611] 
\draw [color={rgb, 255:red, 0; green, 0; blue, 255 }  ,draw opacity=1 ][line width=1.5]    (100,111) -- (200,111) ;
%Shape: Circle [id:dp26055871143917075] 
\draw  [fill={rgb, 255:red, 0; green, 0; blue, 0 }  ,fill opacity=1 ] (195,111) .. controls (195,108.24) and (197.24,106) .. (200,106) .. controls (202.76,106) and (205,108.24) .. (205,111) .. controls (205,113.76) and (202.76,116) .. (200,116) .. controls (197.24,116) and (195,113.76) .. (195,111) -- cycle ;
%Straight Lines [id:da10749896890831767] 
\draw [color={rgb, 255:red, 0; green, 0; blue, 0 }  ,draw opacity=1 ][line width=1.5]    (200,111) -- (200,41) ;
%Shape: Circle [id:dp7148480296477373] 
\draw  [fill={rgb, 255:red, 0; green, 0; blue, 0 }  ,fill opacity=1 ] (445,111) .. controls (445,108.24) and (447.24,106) .. (450,106) .. controls (452.76,106) and (455,108.24) .. (455,111) .. controls (455,113.76) and (452.76,116) .. (450,116) .. controls (447.24,116) and (445,113.76) .. (445,111) -- cycle ;
%Shape: Arc [id:dp41486476999926913] 
\draw  [draw opacity=0][dash pattern={on 0.84pt off 2.51pt}] (459.66,99.52) .. controls (462.92,102.27) and (465,106.39) .. (465,111) .. controls (465,119.28) and (458.28,126) .. (450,126) .. controls (441.72,126) and (435,119.28) .. (435,111) .. controls (435,103.67) and (440.26,97.57) .. (447.21,96.26) -- (450,111) -- cycle ; \draw  [dash pattern={on 0.84pt off 2.51pt}] (459.66,99.52) .. controls (462.92,102.27) and (465,106.39) .. (465,111) .. controls (465,119.28) and (458.28,126) .. (450,126) .. controls (441.72,126) and (435,119.28) .. (435,111) .. controls (435,103.67) and (440.26,97.57) .. (447.21,96.26) ;  
%Shape: Arc [id:dp9251235147556214] 
\draw  [draw opacity=0] (451.8,96.11) .. controls (454.05,96.38) and (456.14,97.14) .. (457.97,98.29) -- (450,111) -- cycle ; \draw   (451.8,96.11) .. controls (454.05,96.38) and (456.14,97.14) .. (457.97,98.29) ;  
\draw   (456.56,95.48) -- (458.04,98.85) -- (454.36,98.82) ;
%Straight Lines [id:da47263035339706927] 
\draw [color={rgb, 255:red, 0; green, 0; blue, 0 }  ,draw opacity=1 ][line width=1.5]    (450,111) -- (450,41) ;
%Shape: Arc [id:dp8624357849233557] 
\draw  [draw opacity=0][dash pattern={on 0.84pt off 2.51pt}] (200.69,96.02) .. controls (208.65,96.38) and (215,102.95) .. (215,111) .. controls (215,119.28) and (208.28,126) .. (200,126) .. controls (191.96,126) and (185.4,119.68) .. (185.02,111.74) -- (200,111) -- cycle ; \draw  [dash pattern={on 0.84pt off 2.51pt}] (200.69,96.02) .. controls (208.65,96.38) and (215,102.95) .. (215,111) .. controls (215,119.28) and (208.28,126) .. (200,126) .. controls (191.96,126) and (185.4,119.68) .. (185.02,111.74) ;  
%Shape: Arc [id:dp6280893643148724] 
\draw  [draw opacity=0] (185.05,109.76) .. controls (185.63,102.67) and (191.14,96.98) .. (198.14,96.11) -- (200,111) -- cycle ; \draw   (185.05,109.76) .. controls (185.63,102.67) and (191.14,96.98) .. (198.14,96.11) ;  
\draw   (194.77,94.63) -- (198.25,95.83) -- (195.72,98.51) ;
%Curve Lines [id:da627070108258077] 
\draw [color={rgb, 255:red, 0; green, 0; blue, 255 }  ,draw opacity=1 ][line width=1.5]    (350,111) .. controls (388.2,53.8) and (400.2,24.2) .. (443.4,24.6) .. controls (486.6,25) and (486.38,66.77) .. (450,111) ;

\draw (192,34) node [anchor=north west][inner sep=0.75pt]  [font=\Large] [align=left] {$\times$};
% Text Node
\draw (442,34) node [anchor=north west][inner sep=0.75pt]  [font=\Large] [align=left] {$\times$};
% Text Node
\draw (255,58) node [anchor=north west][inner sep=0.75pt]  [font=\LARGE] [align=left] {$ \mathrel{\substack{
\overset{\mu_P^+}{\longrightarrow}\\[-0.3ex]
\underset{\mu_{P}^-}{\longleftarrow}
}} $};
% Text Node
\draw (92,105) node [anchor=north west][inner sep=0.75pt]  [font=\Large] [align=left] {$\times$};
% Text Node
\draw (343,105) node [anchor=north west][inner sep=0.75pt]  [font=\Large] [align=left] {$\times$};

\end{tikzpicture}

\medskip

\tikzset{every picture/.style={line width=0.75pt}} %set default line width to 0.75pt        

\begin{tikzpicture}[x=0.75pt,y=0.75pt,yscale=-1,xscale=1]
%uncomment if require: \path (0,235); %set diagram left start at 0, and has height of 235

%Straight Lines [id:da10749896890831767] 
\draw [color={rgb, 255:red, 0; green, 0; blue, 0 }  ,draw opacity=1 ][line width=1.5]    (200,111) -- (200,41) ;
%Shape: Arc [id:dp41486476999926913] 
\draw  [draw opacity=0][dash pattern={on 0.84pt off 2.51pt}] (465,110.66) .. controls (465,110.77) and (465,110.89) .. (465,111) .. controls (465,119.28) and (458.28,126) .. (450,126) .. controls (441.72,126) and (435,119.28) .. (435,111) .. controls (435,103.67) and (440.26,97.57) .. (447.21,96.26) -- (450,111) -- cycle ; \draw  [dash pattern={on 0.84pt off 2.51pt}] (465,110.66) .. controls (465,110.77) and (465,110.89) .. (465,111) .. controls (465,119.28) and (458.28,126) .. (450,126) .. controls (441.72,126) and (435,119.28) .. (435,111) .. controls (435,103.67) and (440.26,97.57) .. (447.21,96.26) ;  
%Shape: Arc [id:dp9251235147556214] 
\draw  [draw opacity=0] (451.8,96.11) .. controls (454.05,96.38) and (456.14,97.14) .. (457.97,98.29) -- (450,111) -- cycle ; \draw   (451.8,96.11) .. controls (454.05,96.38) and (456.14,97.14) .. (457.97,98.29) ;  
\draw   (456.56,95.48) -- (458.04,98.85) -- (454.36,98.82) ;
%Straight Lines [id:da47263035339706927] 
\draw [color={rgb, 255:red, 0; green, 0; blue, 0 }  ,draw opacity=1 ][line width=1.5]    (450,111) -- (450,41) ;
%Shape: Arc [id:dp8624357849233557] 
\draw  [draw opacity=0][dash pattern={on 0.84pt off 2.51pt}] (200.69,96.02) .. controls (208.65,96.38) and (215,102.95) .. (215,111) .. controls (215,119.28) and (208.28,126) .. (200,126) .. controls (193.74,126) and (188.38,122.17) .. (186.13,116.72) -- (200,111) -- cycle ; \draw  [dash pattern={on 0.84pt off 2.51pt}] (200.69,96.02) .. controls (208.65,96.38) and (215,102.95) .. (215,111) .. controls (215,119.28) and (208.28,126) .. (200,126) .. controls (193.74,126) and (188.38,122.17) .. (186.13,116.72) ;  
%Shape: Arc [id:dp6280893643148724] 
\draw  [draw opacity=0] (193.65,97.41) .. controls (195.04,96.76) and (196.55,96.31) .. (198.14,96.11) -- (200,111) -- cycle ; \draw   (193.65,97.41) .. controls (195.04,96.76) and (196.55,96.31) .. (198.14,96.11) ;  
\draw   (194.77,94.63) -- (198.25,95.83) -- (195.72,98.51) ;
%Curve Lines [id:da4207984294791134] 
\draw [color={rgb, 255:red, 0; green, 0; blue, 255 }  ,draw opacity=1 ][line width=1.5]    (196.8,109.2) .. controls (192.47,86.53) and (137.33,22.33) .. (108,51) .. controls (78.67,79.67) and (180.07,130.93) .. (200,111) ;
%Shape: Circle [id:dp896839477586505] 
\draw  [fill={rgb, 255:red, 0; green, 0; blue, 0 }  ,fill opacity=1 ] (195,111) .. controls (195,108.24) and (197.24,106) .. (200,106) .. controls (202.76,106) and (205,108.24) .. (205,111) .. controls (205,113.76) and (202.76,116) .. (200,116) .. controls (197.24,116) and (195,113.76) .. (195,111) -- cycle ;
\draw   (187.09,99.41) -- (190.76,99.03) -- (189.59,102.53) ;
%Shape: Arc [id:dp3472457549274752] 
\draw  [draw opacity=0] (185.28,113.9) .. controls (185.1,112.96) and (185,111.99) .. (185,111) .. controls (185,106.36) and (187.11,102.22) .. (190.41,99.46) -- (200,111) -- cycle ; \draw   (185.28,113.9) .. controls (185.1,112.96) and (185,111.99) .. (185,111) .. controls (185,106.36) and (187.11,102.22) .. (190.41,99.46) ;  
%Curve Lines [id:da3179275704468616] 
\draw [color={rgb, 255:red, 0; green, 0; blue, 255 }  ,draw opacity=1 ][line width=1.5]    (451.8,107.8) .. controls (482,84.8) and (500.6,48.2) .. (471.4,33.4) .. controls (442.2,18.6) and (400.6,64.6) .. (390.2,45) .. controls (379.8,25.4) and (424.2,-0.6) .. (484.2,10.6) .. controls (544.2,21.8) and (517.4,66.6) .. (506.6,79) .. controls (495.8,91.4) and (462.52,114.51) .. (450,111) ;
%Shape: Circle [id:dp46217081543424055] 
\draw  [fill={rgb, 255:red, 0; green, 0; blue, 0 }  ,fill opacity=1 ] (445,111) .. controls (445,108.24) and (447.24,106) .. (450,106) .. controls (452.76,106) and (455,108.24) .. (455,111) .. controls (455,113.76) and (452.76,116) .. (450,116) .. controls (447.24,116) and (445,113.76) .. (445,111) -- cycle ;
%Shape: Arc [id:dp9418678546613215] 
\draw  [draw opacity=0] (461.47,101.34) .. controls (462.7,102.79) and (463.65,104.47) .. (464.25,106.31) -- (450,111) -- cycle ; \draw   (461.47,101.34) .. controls (462.7,102.79) and (463.65,104.47) .. (464.25,106.31) ;  
\draw   (464.79,102.59) -- (464.16,106.23) -- (461.1,104.15) ;

\draw (192,34) node [anchor=north west][inner sep=0.75pt]  [font=\Large] [align=left] {$\times$};
% Text Node
\draw (442,34) node [anchor=north west][inner sep=0.75pt]  [font=\Large] [align=left] {$\times$};
% Text Node
\draw (278,58) node [anchor=north west][inner sep=0.75pt]  [font=\LARGE] [align=left] {$ \mathrel{\substack{
\overset{\mu_P^+}{\longrightarrow}\\[-0.3ex]
\underset{\mu_{P}^-}{\longleftarrow}
}} $};

\end{tikzpicture}

		\end{center}
					\caption{Kauer moves on  orbifold ribbon graphs, assuming clockwise cyclic orderings at all vertices.}
		\label{fig:Kauer-moves-orbifold}	
	\end{figure}	

	We note that in general, as illustrated in \cite[Example~1.16]{So1}, a generalized Kauer move (that is, a tilting mutation which need not be irreducible) cannot, in general, be realized simply as a succession of ordinary Kauer moves. However, in the tilting-connected case, one can obtain the following consequence by iterating irreducible mutations.

	\begin{corollary}\label{cor:gen-kauer-move}
		Let $A$ be a tilting-connected biserial FBGA with associated biserial FBG $(\Gamma,d)$. Let $H'$ be a set of half-edges of $\Gamma$ that is stable under both $\iota$ and $\nu$ of $(\Gamma,d)$. Denote by $(\mu_{H'}^-(\Gamma),d)$ the biserial FBG obtained by applying local moves (from left to right) that simultaneously shift all cyclically consecutive subsets of $H'$ (which is also corresponding to local moves on the orbifold ribbon graph $\Gamma/\langle\nu\rangle$), as illustrated in Figure~\ref{fig:gen-Kauer-move}. Let $I$ be the set of indecomposable projective $A$-modules corresponding to the edges induced by $H'$. Then there exists a $k$-algebra isomorphism
$$
\mathrm{End}_{\mathcal{K}^b(\mathrm{proj}\,A)}(\mu^-_{P(I)}(A))
\;\cong\;
kQ_{\mu_{H'}^-(\Gamma)} \big/ I_{\mu_{H'}^-(\Gamma)} .
$$	\end{corollary}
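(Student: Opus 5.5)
The plan is to realize the Okuyama--Rickard complex $\mu^-_{P(I)}(A)$ as the endpoint of a finite sequence of irreducible tilting mutations, i.e.\ $\nu_A$-irreducible mutations at $\nu_A$-orbits. At each step, the description of these mutations from Subsection~\ref{subsec:KM-FMS-BGA} (Figure~\ref{fig:1-KM-fms}, verified in the appendix) identifies the endomorphism algebra with the biserial FBGA of a Kauer-moved graph. We then check that the resulting graph is $\mu^-_{H'}(\Gamma)$.

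First, $H'$ is $\iota$- and $\nu$-stable, so $I$ is a union of $\nu_A$-orbits. By Theorem~\ref{prop:sel-inj-silting}, $T:=\mu^-_{P(I)}(A)$ is therefore a two-term tilting complex with $A\ge T\ge A[1]$. Since $A$ is tilting-connected, $A$ and $T$ are joined by a sequence of irreducible tilting mutations. We want this sequence to be left mutations staying in the interval $[T,A]$. Following the silting-theoretic argument of \cite{AI} (every tilting $U$ with $A\ge U> T$ admits an irreducible left tilting mutation $U'$ with $U> U'\ge T$), I would choose such a chain explicitly. The natural choice is to order the $\nu_A$-orbits contained in $I$ along each cyclically consecutive block of $H'$. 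Then mutate first at the orbit whose half-edges are ``last'' in the cyclic order, and proceed backwards. With this ordering, each successive mutation is at a summand that is still a stalk projective of the current algebra.

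Second, transport along the chain. After mutating $A$ at one orbit $I_1\subseteq I$, the endomorphism algebra $A_1$ is the biserial FBGA of $\Gamma_1=\mu^-_{H'_1}(\Gamma)$ by the Kauer move for biserial FBGAs, with multiplicities preserved. The remaining complex $T$ then corresponds, under the derived equivalence $\mathcal{K}^b(\mathrm{proj}\,A)\simeq\mathcal{K}^b(\mathrm{proj}\,A_1)$, to the Okuyama--Rickard complex of $A_1$ at the image of $I\setminus I_1$. This uses that $P(I_1)$ is in the left part and that the other summands of $I$ map to projectives. Induction on the number of $\nu_A$-orbits in $I$ then reduces the claim to showing that composing single-orbit Kauer moves in this order yields the simultaneous shift $\mu^-_{H'}(\Gamma)$ of Figure~\ref{fig:gen-Kauer-move}.

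Third, the combinatorial identity. Within one cyclically consecutive block $h,\rho(h),\dots,\rho^r(h)$ of $H'$ around a vertex, sliding the last edge past its successor and then the preceding ones has the net effect of shifting the whole block past the first edge not in $H'$; this is exactly the picture in Figure~\ref{fig:gen-Kauer-move}. Blocks at different vertices, and $\nu$-translates of a block, are moved independently because $\nu$ commutes with $\rho$. Passing to $\Gamma/\langle\nu\rangle$ as in Subsection~\ref{subsec:move-on-red} handles orbifold edges.

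I expect the main obstacle to be the second step. One has to verify carefully that, after each irreducible mutation, the remaining part of $I$ corresponds to a set of projectives of the new algebra whose left mutation still yields $T$. This requires the minimal approximations to compose correctly, so that no extra summands appear when two blocks are adjacent across an edge whose both ends lie in $H'$, or when loops occur. If the ordering argument fails there, I would instead compare the two sides via Soto's generalized Kauer move \cite{So1,So2} applied to $A_{\red}$ or to the skew group algebra. Equivalently, I would use the $\phi$-stable correspondence, descending along the Morita equivalences of Propositions~\ref{prop:adm-skew=BGA} and~\ref{prop:nonadm-skew=skew-BGA}.
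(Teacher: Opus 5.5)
Your skeleton is the paper's own: tilting-connectedness gives a chain of $\nu_A$-irreducible mutations, and each step is a single-orbit Kauer move from the appendix. The paper's proof says no more than that. The gap is the specific chain you commit to in your second step, namely that after mutating one orbit $I_1\subseteq I$, the complex $\mu^-_{P(I)}(A)$ is the Okuyama--Rickard complex of $A_1=\mathrm{End}(\mu^-_{P(I_1)}(A))$ at the image of $I\setminus I_1$.

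\textbf{Why the induction step fails.} Mutation keeps the unmutated summands. So your claim forces every summand $P_i\to E_i$ ($i\in I_1$) of $\mu^-_{P(I_1)}(A)$ to be a summand of $\mu^-_{P(I)}(A)$. Both are minimal complexes, so this means the minimal left $\mathrm{add}(A/P(I_1))$-approximation $E_i$ must already lie in $\mathrm{add}\,P(I')$, where $I'$ is the complement of $I$. In graph terms: at both ends of each edge of $I_1$, the neighbouring half-edge in the approximation direction must lie outside $H'$. Each edge of $I$ sits in two blocks, one at each end, and the orders these blocks impose can be cyclic. Then no valid first orbit exists.

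\textbf{A counterexample satisfying the hypotheses.} Take a planar Brauer graph ($\nu=\mathrm{id}$) consisting of a triangle $e_1,e_2,e_3$ with one pendant edge at each corner, pointing outwards. It has exactly one odd cycle and no even cycle, so $A$ is tilting-discrete and hence tilting-connected. Let $H'$ be the six half-edges of the triangle. At each corner the two triangle half-edges are consecutive, and the three corresponding arrows form an oriented $3$-cycle in $Q_\Gamma$. Hence each $e_i$ has its predecessor in $H'$ at one end and its successor in $H'$ at the other, whichever orientation convention you use. So whatever edge you mutate first, its degree-$0$ term $E_i$ contains some $P_{e_j}$ with $e_j\in I$. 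Your induction therefore breaks at the very first step. This is exactly the phenomenon the paper points to, via \cite[Example~1.16]{So1}, just before the corollary. Note also that whenever your ordering does exist, tilting-connectedness is never used. The hypothesis is there precisely for the cyclic configurations your chain cannot handle.

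\textbf{What remains to be proved.} The chain supplied by tilting-connectedness is arbitrary. Its intermediate complexes are not Okuyama--Rickard complexes of $A$ at subsets of $I$, so the intermediate graphs need not be partial simultaneous shifts. Your third step, composing block-wise slides, therefore does not apply. You would need one of the following:
\begin{enumerate}
\item an argument that the composite of the Kauer moves along such a chain equals $\mu^-_{H'}(\Gamma)$; or
\item a direct computation of the quiver, relations and dimensions of $\mathrm{End}(\mu^-_{P(I)}(A))$ in the style of the appendix, which is the route the remark after the corollary alludes to.
\end{enumerate}
Your fallback via Soto's theorem for $A_{\red}$ or for the skew group algebra is only named. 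It would bring in $\operatorname{char}k\neq 2$ in the non-admissible case, which the statement does not assume. You would also still have to recover $\mathrm{End}_A(\mu^-_{P(I)}(A))$ and the lifted graph $\mu^-_{H'}(\Gamma)$ from the reduced side, which you do not address.
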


\begin{proof}
	Since $A$ is tilting-connected, the tilting mutation induced by the Okuyama--Rickard complex associated with $I$ can be obtained by a sequence of mutations at orbits of simple modules that are stable under the Nakayama automorphis of $A$. Therefore, by the discussion in this section, the statement of this proposition follows naturally.
\end{proof}

\begin{remark}
	The above corollary suggests that tilting mutations for biserial FBGAs should follow the same pattern as for (skew-)BGAs in \cite{So2}. These mutations correspond to generalized Kauer moves on $\Gamma / \langle \nu \rangle$, as illustrated in Figure~\ref{fig:gen-Kauer-move}. This correspondence can be verified by arguments analogous to those in the appendix, though the verification involves rather technical computations.

There are also alternative ways to verify this result. For example, \cite{So1} studies generalized Kauer moves for gentle algebras via the geometric models introduced in \cite{OPS}, and transfers these results to BGAs using known results on trivial extensions \cite{Ric2,Sch}.
\end{remark}

	\section{Two-term tilting complexes of biserial FBGAs}\label{sec:2-term-tilt}

\subsection{Walks on biserial FBGs}\label{subsec:zigzag-walk}
	\

	We first define walks, the sign condition, and the non-crossing condition on biserial FBGs. These are slight modifications of the definitions in \cite{AAC}, in order to make them compatible with the Nakayama automorphism on the FBG.

	\begin{definition}[Walk]
Let $(\Gamma,d)$ be a biserial FBG with set of half-edges $H$ and involution $\iota:H\to H$.  
A {\it half-walk} is a non-empty sequence
\[
w=(h_1,\dots,h_\ell)
\]
of half-edges such that
\[
s(h_{i+1})=s(\iota(h_i))
\quad\text{for all }1\le i\le \ell-1.
\]
Define
\[
\iota(w):=(\iota(h_\ell),\dots,\iota(h_1)).
\]
A {\it walk} on $\Gamma$ is the $\iota$-orbit
\[
W=\{w,\iota(w)\}.
\]
For $w=(h_1,\dots,h_\ell)$, the endpoints of $W$ are
\[
s(w):=s(h_1),
\qquad
s(\iota(w)):=s(\iota(h_\ell)).
\]
\end{definition}

\begin{definition}[Signed walk and sign condition] \label{def:signed-walk}
Let $W=\{w,\iota(w)\}$ be a walk with $w=(h_1,\dots,h_\ell)$.  
A {\it signature} on $W$ is a map
\[
\varepsilon_W:\{h_1,\dots,h_\ell\}\to\{+,-\}
\]
such that
\[
\varepsilon_W(h_i)\neq\varepsilon_W(h_{i+1})
\quad\text{for all }1\le i\le \ell-1.
\]
Moreover, for all $1\le i\le \ell-1$, there exists a positive number $k_i\leq o(s(\iota(h_i)))$ (here $o$ means the number of $\langle\nu\rangle$-orbits around $s(\iota(h_i))$, see Subsection \ref{subsec:numbers-of-bFBGA}), such that
\[
\rho^{k_i}(\iota(h_i))=h_{i+1}\ \text{ if \;} \varepsilon_W(h_i)=-,\qquad
\rho^{k_i}(h_{i+1})=\iota(h_{i})\ \text{ if \;} \varepsilon_W(h_i)=+
\]
That is, for each pair $(h_i,h_{i+1})$, there exists a $\nu$-fan in $(\Gamma,d)$ containing both of them.
A walk equipped with a signature is called a {\it signed walk}.

A signed walk $W$ satisfies the {\it sign condition} if
\[
\varepsilon_W(h_1)=\varepsilon_W(h_\ell)
\quad\text{whenever } s(h_1)=s(\iota(h_\ell)).
\]

For two signed walks $W=\{w,\iota(w)\}$ and $W'=\{w',\iota(w')\}$ with
\[
w=(h_1,\dots,h_m),\qquad
w'=(h'_1,\dots,h'_n),
\]
we say that $W$ and $W'$ satisfy the {\it sign condition} if the signs agree at every common endpoint, i.e.
\[
\varepsilon_W(h_1)=\varepsilon_{W'}(h'_1)\ \text{ if } s(h_1)=s(h'_1),\qquad
\varepsilon_W(h_1)=\varepsilon_{W'}(h'_n)\ \text{ if } s(h_1)=s(\iota(h'_n)),
\]
\[
\varepsilon_W(h_m)=\varepsilon_{W'}(h'_1)\ \text{ if } s(\iota(h_m))=s(h'_1),\qquad
\varepsilon_W(h_m)=\varepsilon_{W'}(h'_n)\ \text{ if } s(\iota(h_m))=s(\iota(h'_n)).
\]
\end{definition}

\begin{definition}[Non-crossing condition]
Let $W$ and $W'$ be signed walks.

A {\it subwalk} of $W$ is a walk given by a continuous subsequence of some representative half-walk of $W$.  
A {\it maximal common subwalk} $Z$ of $W$ and $W'$ is a common subwalk which is maximal with respect to inclusion.

We say that $W$ and $W'$ are {\it non-crossing at some $Z$} if
\begin{itemize}
\item[(NC1)] the signatures coincide on $Z$;
\item[(NC2)] the cyclic orderings around the endpoints of $Z$ induced by $W$ and $W'$ are compatible; that is, if $\{h_1,Z,h_2\}$ (resp. $\{h_1',Z,h_2'\}$) is the subwalk of $W$ (resp. $W'$), then the cyclic orderings around $s(Z)$ and $s(\iota(Z))$ admit the following local configurations.
$$% https://tikzcd.yichuanshen.de/#N4Igdg9gJgpgziAXAbVABwnAlgFyxMJZARgBpiBdUkANwEMAbAVxiRDgAoAtAShAF9S6TLnyEUAJnJVajFmwFCQGbHgJEAzNOr1mrRCEXDVYogBZtsvW04AdW-hx1uPPoOOj1KAAylvM3XkDI2URNXFkXwkAuX1Dd1CTL2QAVj8Y62CElU8ItOidWIV+GRgoAHN4IlAAMwAnCABbJF8QHAgkMhAACxg6KDZIMFYE+qbO6nakKR6+gYMh1moGLGG2KDo4XoHCzJAuELHmxBmpxC1Z-sGCEaUjpAs2jsRW3quFm5BdoJB7R2dugB9YhuO4NY5pJ4tahvebgT7fOJ-CBODhA4gAcj4yzoACMYAwAAphUwGOpYcrdHCHcFIC5nABsMLm1zWiLYQIkNPG50mzwA7Mz3vC2VYfuiMV8QAw8QTiUlxCByZTqSV+EA
\begin{tikzcd}
{} \arrow[rd, "\iota(h_1)", no head]   &                         &                                    &                       &                                                                     & {} \\
                                       & s(Z) \arrow[r, no head] & {} \arrow[r, "{\scalebox{1}{Z}}", no head, dashed] & {} \arrow[r, no head] & s(\iota(Z)) \arrow[ru, "h_2", no head] \arrow[rd, "h_1'"', no head] &    \\
{} \arrow[ru, "\iota(h_1')"', no head] &                         &                                    &                       &                                                                     & {}
\end{tikzcd}$$
\end{itemize}

Let $\vv$ be a vertex belonging to both $W$ and $W'$. For $\vv=s(h_i)$ in $W$, define its neighbourhood in $W$ as
\[
\{\iota(h_{i-1}),\,h_i\}.
\]
We call $\vv$ an {\it intersecting vertex} of $W$ and $W'$ if the neighbourhoods of $\vv$ in $W$ and in $W'$ consist of four pairwise distinct half-edges.

We say that $W$ and $W'$ are {\it non-crossing at $\vv$} if the cyclic ordering of these four half-edges around $\vv$, together with their signs (for example, assuming $\vv$ is an intersecting vertex with respect to the neighbourhoods $\{a,b\}$ in $W$ and $\{c,d\}$ in $W'$), is one of the two alternating configurations described below.
$$% https://tikzcd.yichuanshen.de/#N4Igdg9gJgpgziAXAbVABwnAlgFyxMJZARgBpiBdUkANwEMAbAVxiVpAF9T1Nd9CUAFnJVajFmxqduIDNjwEiABlJLR9Zq0QhpPefyIAmVevFadXPX0Upjh05ra7ZvBQOQr71DRO3O51u4AzCbeZk6WLvo2yACsoWKOfpEBbkTxXom+FjKpBighmT7mnKIwUADm8ESgAGYAThAAtkjGIDgQSCogABYwdFBskGCsYUkgdAB6ANTODc1IIe2diN19A0MEo1nmAEaTALQg1Ax0uzAMAAqu+SD1WBU9OHONLavUHUjCvf2D2sPbYpsADGM2OIFO5yuNxsdweTxeC0Q8WWXWo6z+4C24KB2igh3BkIu12iAjhj2ekXmbwAbB8VmQfht-tixtkprMqa8kIzPogAOzo36bEY48LafZHLlI3krAAcQuZWNFbPMoKOJzOxJhZPuFMRbwAnPSeYrMQCxeN8bNNVCSYE2HqERwKBwgA
\begin{tikzcd}
{} \arrow[rd, "a^+", no head]  &                               & {} \arrow[ld, "b^-"', no head] & {} \arrow[rd, "a^+", no head]  &                                                          & {} \\
                               & \vv \arrow[rd, "c^+"', no head] &                                &                                & \vv \arrow[ru, "b^-", no head] \arrow[rd, "c^-"', no head] &    \\
{} \arrow[ru, "d^-"', no head] &                               & {}                             & {} \arrow[ru, "d^+"', no head] &                                                          & {}
\end{tikzcd}$$

Finally, $W$ and $W'$ are {\it non-crossing} if they are non-crossing at every maximal common subwalk and every intersecting vertex.
\end{definition}

\begin{definition}\textnormal{(cf. \cite[Definition 4.5]{AAC})}
	A set $\mathbb{W}$ of signed walks is {\it admissible} if for any pair of (not necessarily distinct) walks $W$ and $W'$ in $\mathbb{W}$, they satisfy the sign condition and are non-crossing. 
An admissible set is called {\it complete} if any admissible set containing $\mathbb{W}$ coincides with $\mathbb{W}$. 
Denote by $\mathrm{AW}(\Gamma,d)$ (resp.\ $\mathrm{CW}(\Gamma,d)$) the set of all admissible (resp.\ complete) sets of signed walks of the biserial FBG $(\Gamma,d)$.
\end{definition}

As shown by \cite[Theorem 4.6]{AAC} and \cite[Corollary 7.31]{AY}, when $(\Gamma,d)$ is a BG, then there exist bijections between the following sets:
$$\mathrm{AW}\,(\Gamma,d)\rightarrow 2\text{-}\mathrm{presilt}\,A_\Gamma,$$
$$\mathrm{CW}\,(\Gamma,d)\rightarrow 2\text{-}\mathrm{tilt}\,A_\Gamma.$$

	Now let $\varphi$ be an automorphism of $\Gamma$, which also induces an automorphism of $A$. For each (signed) walk $$W=\{w=(h_1,\cdots,h_\ell), \iota(w)=(\iota(h_\ell),\cdots,\iota(h_1))\},$$ define $$\varphi(W):=\{\varphi(w)=(\varphi(h_1),\cdots,\varphi(h_\ell)), \varphi(\iota(w))=(\varphi(\iota(h_\ell)),\cdots,\varphi(\iota(h_1)))\}.$$
	Moreover, define 
	$$\langle W\rangle :=\{\varphi^i(W)\mid i\in\mathbb{Z}\}.$$
	Then a signed walk set $\mathbb{W}$ of $(\Gamma,d)$ is {\it $\varphi$-stable} if $\varphi(\mathbb{W})=\mathbb{W}$. Denote the set of $\varphi$-stable admissible signed walk sets of $A$ by $\mathrm{AW}^\varphi\,(\Gamma,d)$. Then we have the following proposition.

	\begin{lemma}\label{lem:bijection-on-zzwalk}
	Let $A$ be a biserial FBGA with associated biserial FBG $(\Gamma,d)$ and Nakayama automorphism $\nu$ on $(\Gamma,d)$. If $(\Gamma,d)$ is admissible, then there exists a bijection
	$$\psi\colon \mathrm{AW}^\nu\, (\Gamma,d) \rightarrow \mathrm{AW}\,(\Gamma_{\red},d).$$
	If $(\Gamma,d)$ is not admissible, then there exists a bijection
	$$\psi\colon \mathrm{AW}^\nu\, (\Gamma,d) \rightarrow \mathrm{AW}^{\phi}\,(\Gamma_{\red},d),$$ where $\phi$ is the natural involution on $(\Gamma_{\red},d)$.
\end{lemma}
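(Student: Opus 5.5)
The plan is to build $\psi$ directly from the covering map $\pi\colon \Gamma\to\Gamma/\langle\nu\rangle$, $h\mapsto[h]$. In the non-admissible case we compose with the lift to the double cover $\widehat{\Gamma/\langle\nu\rangle}$. We then check that admissibility (sign condition plus non-crossing) is preserved in both directions.

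\textbf{Step 1: local structure of $\pi$.} First I will show that $\pi$ is a local isomorphism of the relevant structure. By Lemma~\ref{lem:n(v)-property}, $\nu$ acts freely on $H$ with every orbit of size $n$. Each $\nu$-fan at $\vv$ consists of $o(\vv)$ consecutive half-edges in distinct orbits, and $\rho'$ on $\Gamma/\langle\nu\rangle$ is induced by $\rho$. Hence $\pi$ identifies each $\nu$-fan at $\vv$ bijectively and order-preservingly with the full cyclic ordering at $\vv$ in $\Gamma/\langle\nu\rangle$, which has valency $o(\vv)$. In particular, the fan condition in Definition~\ref{def:signed-walk} ($k_i\le o$) becomes exactly the usual Brauer-graph condition of \cite{AAC} on the quotient.

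\textbf{Step 2: the admissible case.} Here $\Gamma/\langle\nu\rangle=\Gamma_{\red}$ is an honest ribbon graph, and $\pi$ is an unbranched $n$-fold cover compatible with $\iota$ and $\rho$. I define $\psi(\mathbb{W})=\{\pi(W)\mid W\in\mathbb{W}\}$, with the signature transported along $\pi$. Conversely, a signed walk on $\Gamma_{\red}$ has exactly $n$ lifts: choose a lift of the first half-edge, then each successive half-edge lifts uniquely by Step 1 (take the unique half-edge in the $\nu$-fan at $s(\iota(h_i))$ lying over it). These $n$ lifts form one $\langle\nu\rangle$-orbit. The inverse map sends a set of walks on $\Gamma_{\red}$ to the union of all lifts, which is $\nu$-stable by construction. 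Since $\nu$-stable sets are unions of orbits $\langle W\rangle$, the two maps are mutually inverse.

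\textbf{Step 3: $\psi$ preserves admissibility.} It remains to check that $\mathbb{W}$ is admissible if and only if $\psi(\mathbb{W})$ is.
\begin{itemize}
\item[(a)] Sign conditions involve only endpoints and signs. An endpoint coincidence $s(w)=s(\iota(w'))$ on $\Gamma_{\red}$ lifts to a coincidence between $W$ and some $\nu^i(W')$. Since $\nu$-translates of walks in $\mathbb{W}$ lie in $\mathbb{W}$, the conditions match.
\item[(b)] Maximal common subwalks and intersecting vertices of $\pi(W)$ and $\pi(W')$ correspond to those of $W$ and $\nu^i(W')$ over all $i$. The local cyclic configurations in (NC2) and at intersecting vertices are read inside a single $\nu$-fan, hence are preserved by Step 1.
\end{itemize}
A subtlety is the self-pair case $W'=\nu^i(W)$, including $W=W'$ with self-intersections. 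This is handled uniformly, because the definition quantifies over not-necessarily-distinct pairs.

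\textbf{Step 4: the non-admissible case.} I would pass through $\widehat{\Gamma/\langle\nu\rangle}$. The key claim is that $\Gamma\to\Gamma/\langle\nu\rangle$ factors through an unbranched cover $\Gamma\to\Gamma_{\red}$, equivariant for $\langle\nu\rangle\to\langle\phi\rangle$ with kernel $\langle\nu^2\rangle$. Given this factorization, the argument of Steps 2--3 applied to the free $\langle\nu^2\rangle$-action gives a bijection between $\nu$-stable admissible sets on $\Gamma$ and admissible sets on $\Gamma_{\red}$ stable under the residual deck involution $\phi$.

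\textbf{Main obstacle.} The hardest point is constructing this factorization and verifying it is compatible with $\iota$. An orbifold half-edge $[h]$ with $\iota(h)=\nu^m(h)$ (so $g=\nu^m$ and $|G|=2m$) must be sent to the glued pair $h_1\leftrightarrow h_2$ in $\widehat{\Gamma/\langle\nu\rangle}$. I would attempt the following: label each half-edge $h$ by its orbit $[h]$ together with a parity in $\mathbb{Z}/2$ recording the class of $h$ modulo $\langle g^2\rangle$ within its orbit. The delicate part is the consistency check, namely that $\iota$ flips the parity exactly at the loops of Lemma~\ref{lem:nu-orbit}(3) and that $\rho$ preserves it. I would carry out that check using condition (SI). If the parity bookkeeping fails, the fallback is to define $\psi$ directly by lifting walks to $\widehat{\Gamma/\langle\nu\rangle}$: each time a walk on $\Gamma/\langle\nu\rangle$ traverses an orbifold edge it switches sheet, producing a $\phi$-orbit of lifts. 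One then checks admissibility locally exactly as in Step 3.
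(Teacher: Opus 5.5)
Your Steps 1--3 follow essentially the paper's argument in the admissible case. Like the paper, you identify each $\nu$-fan at $\vv$ with the full cyclic order at $\vv$ in $\Gamma_{\red}$, and you transport a walk by lifting its first half-edge and copying the turning numbers $k$ in $h_{i+1}=\rho^{k}(\iota(h_i))$.

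The genuine gap is Step 4: the ``key claim'' is false, not merely delicate.
\begin{itemize}
\item On $H_\vv$ one has $\nu=\rho^{d(\vv)}$, and $\rho$ is transitive on $H_\vv$. So any sheet label $\sigma\colon H\to\mathbb{Z}/2$ preserved by $\rho$ is constant on each $H_\vv$.
\item Such a $\sigma$ is then preserved by $\nu$, so it cannot be $\phi$-equivariant.
\item It is also preserved by $\iota$ on every orbifold loop $\{h,\iota(h)=\nu^m(h)\}$, since both half-edges sit at the same vertex. But $\widehat{\iota}$ must switch sheets there.
\end{itemize}
Hence, whatever the parity of $m$, there is no map $\Gamma\to\Gamma_{\red}$ lying over $\Gamma/\langle\nu\rangle$ and compatible with $\rho$ and $\iota$.

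Concretely, $\Gamma/\langle\nu^2\rangle$ has one vertex for each vertex of $\Gamma$, while $\Gamma_{\red}$ has two. In Example~\ref{exa:preproj-A3} one has $\nu^2=\mathrm{id}$, so your claim would give $\Gamma\cong\Gamma_{\red}$. Yet $\Gamma$ has two vertices and a loop, and $\Gamma_{\red}$ is a path on four vertices (Figure~\ref{fig:red-form-preproj-A3}). As written, $\langle g^2\rangle$ is even trivial, since $g^2=\nu^{2m}=\mathrm{id}$.

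The underlying reason is that $\pi$ is \emph{not} unbranched, contrary to Step 2. Every vertex is fixed by $\nu$, so $\pi$ is branched over every vertex. By contrast, $\Gamma_{\red}\to\Gamma/\langle\nu\rangle$ is branched only over the orbifold points, so $\Gamma$ does not sit over $\Gamma_{\red}$. In Step 2 the misstatement is harmless, because your unique lifting really comes from the fan restriction $k_i\le o$.

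So your fallback is not optional, and it is essentially the paper's proof. Use the bijection between $\langle\nu\rangle$-orbits of half-edges of $\Gamma$ and $\langle\phi\rangle$-orbits of half-edges of $\Gamma_{\red}$, and transport walks by turning numbers, switching sheets at each orbifold edge.

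What you cannot do is check admissibility ``exactly as in Step 3'', because there is no map $\Gamma\to\Gamma_{\red}$ along which to push conditions. A vertex $\vv$ of $\Gamma$ corresponds to two vertices $\vv_1,\vv_2$. Each condition at $\vv$ between $W$ and some $\nu^i(W')$ must be matched with a condition at $\vv_1$ or $\vv_2$ between a lift $\widehat{W}$ and either $\widehat{W'}$ or $\phi(\widehat{W'})$.

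For example, take a walk from $\vv$ back to $\vv$ through an odd number of orbifold loops. It lifts to a walk from $\vv_1$ to $\vv_2$, and its self sign condition becomes the pairwise sign condition between $\widehat{W}$ and $\phi(\widehat{W})$. This matching works precisely because only $\phi$-stable sets are considered.
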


\begin{proof}
	By the definition of the reduced form, if $(\Gamma,d)$ is admissible, then there exists a natural bijection
	$$f\colon \{\langle\nu\rangle\text{-orbits of } H(\Gamma)\}\longrightarrow H(\Gamma_{red}),$$
	and if $(\Gamma,d)$ is not admissible, then there exists a natural bijection
	$$f\colon \{\langle\nu\rangle\text{-orbits of } H(\Gamma)\}\longrightarrow \{\langle \phi\rangle\text{-orbits of } H(\Gamma_{red})\}.$$

	Define $\psi$ to be the map induced by the following bijection between the set of $\nu$-stable signed walks in $(\Gamma,d)$ and the set of ($\phi$-stable, when $(\Gamma,d)$ is non-admissible) signed walks in $(\Gamma_{\red},d)$, given by
$$
\langle \{w=(h_1,\dots,h_\ell),\, w^{-1}\}\rangle
\ \longmapsto\
\langle\{w'=(h_1',\dots,h_\ell'),\, (w')^{-1}\}\rangle,
$$
where $h_1'\in f([h_1])$. For each $1\le i\le \ell-1$, if
\[
h_{i+1}=\rho^k(\iota(h_i))
\]
for some integer $k$, then we define
\[
h_{i+1}'=\rho^k(\iota(h_i')).
\] This map $\psi$ is well-defined and induces the required bijection. Indeed, $\psi$ preserves admissibility: for each walk, any pair of consecutive half-edges $\{\iota(h_1),h_2\}$ at a vertex $\vv$ lies, by definition, in the same $\nu$-fan, and under $\psi$ each such $\nu$-fan in $\Gamma$ coincides locally with its image in $\Gamma_{\mathrm{red}}$. Hence if two walks satisfy the sign condition and the non-crossing condition, then so do their images under $\psi$.
Conversely, any local crossing configuration is preserved (and lifts) under $\psi$; thus if the images failed to be admissible, then the original walks would fail to be admissible as well.
\end{proof}

\begin{remark}
	We note that the above verification can also be understood via the surface model in~\cite{BC}. Indeed, for a biserial FBGA $A$ associated with $(\Gamma,d)$, the quotient $A/\operatorname{soc}(A)$ is a string algebra, and hence can be interpreted using the surface model in~\cite{BC}, given by the ribbon surface of $\Gamma$ with the label at each vertex $\vv$ having length $d(\vv)$.
From this perspective, since, by \cite[Theorem~11]{EJR},
$2\text{-}\mathrm{silt}\,A$ and
$2\text{-}\mathrm{silt}\,A/\operatorname{soc}(A)$ coincide,
the correspondence between walks above is essentially induced by the
correspondence between curves on the associated surfaces. Indeed, the surface
associated with $\Gamma$ may, in a suitable sense (for example, in the
admissible case), be viewed as a branched cover of the surface associated with
$\Gamma_{\mathrm{red}}$.
\end{remark}

\subsection{Admissible cases}
\

We now compare two-term tilting complexes of an admissible biserial FBGA $A$ with those of its reduced form $A_{\red}$.

\begin{proposition}\label{prop:adm-presilting-corres}
Let $A$ be an admissible biserial FBGA with associated biserial FBG $(\Gamma,d)$ and Nakayama automorphism $\nu$ of $(\Gamma,d)$. Denote by $\nu_A$ the Nakayama automorphism of $A$. Then there is a bijection among the following sets:
\begin{enumerate}
    \item the set $2\text{-}\mathrm{presilt}^{\nu_A} A$ of isomorphism classes of basic $\nu_A$-stable two-term presilting complexes of $A$;
    \item the set $\mathrm{AW}^{\nu}(\Gamma,d)$ of $\nu$-stable admissible sets of singed walks on the biserial FBG $(\Gamma,d)$;
    \item the set $2\text{-}\mathrm{presilt}\,A_{\red}$ of isomorphism classes of basic two-term presilting complexes of $A_{\red}$;
    \item the set $\mathrm{AW}\,(\Gamma_{\red},d)$ of admissible sets of singed walks on the BG $(\Gamma_{\red},d)$.
\end{enumerate}
\end{proposition}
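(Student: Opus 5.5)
We would establish the four-way bijection by composing three maps: (3)$\leftrightarrow$(4), (2)$\leftrightarrow$(4), and (1)$\leftrightarrow$(2). Since $(\Gamma,d)$ is admissible, $\Gamma_{\red}=\Gamma/\langle\nu\rangle$ is an honest Brauer graph and $A_{\red}$ is a BGA. The bijection $\mathrm{AW}(\Gamma_{\red},d)\to 2\text{-}\mathrm{presilt}\,A_{\red}$ is then exactly \cite[Theorem 4.6]{AAC} together with \cite[Corollary 7.31]{AY}, which gives (3)$\leftrightarrow$(4). The bijection (2)$\leftrightarrow$(4) is the admissible case of Lemma~\ref{lem:bijection-on-zzwalk}. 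So the real content is (1)$\leftrightarrow$(2).

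For (1)$\leftrightarrow$(2), we would first produce a bijection $\mathrm{AW}(\Gamma,d)\to 2\text{-}\mathrm{presilt}\,A$ for the biserial FBGA itself. This is compatible with automorphisms: if $\varphi$ is induced by $\nu$, then the complex attached to $\varphi(\mathbb W)$ is the twist of the complex attached to $\mathbb W$ by $\nu_A$. Restricting to fixed points then gives $\mathrm{AW}^{\nu}(\Gamma,d)\cong 2\text{-}\mathrm{presilt}^{\nu_A}A$.

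To get the bijection for $A$, we would pass to $B=A/\soc(A)$, a string algebra. By \cite[Theorem 11]{EJR}, $2\text{-}\mathrm{presilt}\,A=2\text{-}\mathrm{presilt}\,B$. Indecomposable two-term presilting complexes of a string algebra are given by string-type complexes $P^{-1}\to P^0$: alternating zigzags of projectives $P_{\bar h_i}$ connected by maps that are paths around a vertex. On the combinatorial side, a signed walk records the projectives $P_{\bar h_i}$ with signs deciding the degree, and the $\nu$-fan condition on consecutive half-edges is exactly what makes the connecting map a nonzero, non-socle path of length less than $d$. It also makes that path minimal in the sense of \cite{AAC,AY}. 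The sign and non-crossing conditions are then translated, as in the proof of \cite[Theorem 4.6]{AAC}, into vanishing of $\mathrm{Hom}(P,P'[1])$, computed via the usual morphism description for string complexes. This is the same local computation as in the BGA case, because it only involves neighbourhoods of vertices within one $\nu$-fan.

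Equivariance under $\nu_A$ is immediate. By Proposition~\ref{prop:bfBGA-basic}, $\nu_A$ acts on idempotents and arrows through $\nu$ on half-edges, so it sends the complex of a signed walk $W$ to that of $\nu(W)$ (or $\nu^{-1}(W)$, which gives the same stability notion). A basic complex is $\nu_A$-stable exactly when its set of indecomposable summands, that is its walk set, is $\nu$-stable.

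The main obstacle is the adaptation of \cite{AAC} to non-integer multiplicities. One cannot wind a full cycle around a vertex, so walks must be restricted to $\nu$-fans, of length at most $o(\vv)\le d(\vv)$. We would have to check that no additional presilting summands arise from paths of length between $o(\vv)$ and $d(\vv)$. We would argue these are not presilting, because such a path factors through a $\nu$-translate and produces a self-extension. The same phenomenon appears in Example~\ref{exa:silt-dis-but-have-2-cycles}, so we would verify it carefully there first.
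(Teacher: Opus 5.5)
Your decomposition matches the paper's: (3)$\leftrightarrow$(4) by \cite[Theorem~4.6]{AAC}, (2)$\leftrightarrow$(4) by Lemma~\ref{lem:bijection-on-zzwalk}, and the real work is (1)$\leftrightarrow$(2). For (1)$\leftrightarrow$(2), however, your route rests on a false intermediate claim. The paper only compares $\nu_A$-stable presilting complexes with $\nu$-stable admissible walk sets. You instead first assert a bijection $\mathrm{AW}(\Gamma,d)\to 2\text{-}\mathrm{presilt}\,A$ and then take fixed points.

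Walks in $\mathrm{AW}(\Gamma,d)$ may only turn inside a $\nu$-fan, i.e.\ $k_i\le o(\vv)=\gcd(d(\vv),val(\vv))$. This is much stronger than ``nonzero, non-socle'', which would only require $k<d(\vv)$. A single two-term complex whose differential turns further can still be presilting. Here is an example:
\begin{itemize}
\item Let $\Gamma$ have vertices $\vv,\ww,\mathsf u$, edges $\bar p,\bar q$ joining $\vv$ and $\ww$, and edges $\bar r,\bar s$ joining $\vv$ and $\mathsf u$. Take cyclic order $p,r,q,s$ at $\vv$ and $d(\vv)=6$, $d(\ww)=d(\mathsf u)=3$.
\item Then (SI) holds and $\nu=\rho^2$ on $H_\vv$, so $o(\vv)=2$. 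Also $\Gamma/\langle\nu\rangle$ is the path $\ww$--$\vv$--$\mathsf u$, so $(\Gamma,d)$ is admissible.
\item The only nonzero path from $\bar p$ to $\bar s$ is $\alpha_p\alpha_r\alpha_q$. Hence $T=(P_{\bar s}\xrightarrow{\alpha_p\alpha_r\alpha_q\cdot}P_{\bar p})$ satisfies $\mathrm{Hom}_{\mathcal{K}^b(\mathrm{proj}\,A)}(T,T[1])=0$, so $T$ is presilting.
\item Its walk turns from $p$ to $s=\rho^3(p)$ at $\vv$, i.e.\ $k=3>o(\vv)$. So $T$ is not in the image of $\mathrm{AW}(\Gamma,d)$.
\end{itemize}

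This also shows that your proposed mechanism (``such a path factors through a $\nu$-translate and produces a self-extension'') is wrong, since $T$ has no self-extension. What fails is compatibility with its translate. We have $\nu_AT=(P_{\bar r}\xrightarrow{\alpha_q\alpha_s\alpha_p\cdot}P_{\bar q})$. In $\mathrm{Hom}_{\mathcal{K}^b(\mathrm{proj}\,A)}(T,\nu_AT[1])$ the class of $\alpha_q\cdot\colon P_{\bar s}\to P_{\bar q}$ is nonzero, because the homotopies only reach the length-$5$ path $\alpha_q\alpha_s\alpha_p\alpha_r\alpha_q$. So the restriction to $\nu$-fans comes from $\nu$-stability, not from being presilting.

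To make your strategy work you would need three steps:
\begin{enumerate}
\item[(i)] classify $2\text{-}\mathrm{presilt}\,A$ by a larger walk set in which turns are not confined to $\nu$-fans;
\item[(ii)] check $\nu_A$-equivariance of that classification;
\item[(iii)] prove the missing lemma that in a $\nu$-stable admissible set every turn lies in a $\nu$-fan.
\end{enumerate}
For (iii), a turn of $k>o(\vv)$ half-edges at $\vv$ overlaps the rotated turn of $\nu^j(W)$, where $\nu^j$ acts as $\rho^{o(\vv)}$ on $H_\vv$. One has to show that this always yields a nonzero morphism $T\to\nu_A^jT[1]$, as in the example. Alternatively, argue directly on $\nu_A$-stable complexes, which is what the paper's inclusion (1)$\subseteq$(2) via \cite[Lemma~4.4]{AAC} amounts to.
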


\begin{proof}
By Lemma~\ref{lem:bijection-on-zzwalk}, the sets in~(2) and~(4) coincide. By~\cite[Theorem~4.6]{AAC}, the sets in~(3) and~(4) coincide. Hence, it suffices to show that the sets in~(1) and~(2) coincide. As in the proof of~\cite[Lemma~4.4]{AAC}, we obtain that~(1) is contained in~(2). Conversely, by the correspondence given in~\cite[Lemma~4.3]{AAC}, each admissible set of signed walks on the biserial FBG $(\Gamma,d)$ induces a string complex $T$ of $A$. It is naturally $\nu_A$-stable since by Proposition \ref{prop:bfBGA-basic} and Theorem \ref{prop:sel-inj-silting}, $\nu_A$ can be induced by $\nu$. Such a complex $T$ is presilting, since it satisfies the non-crossing conditions.
\end{proof}

We are now ready to prove the main result of this subsection.

\begin{theorem}\label{thm:adm-case}
Let $A$ be a biserial FBGA with associated biserial FBG $(\Gamma,d)$ that is admissible,, and let $A_{\red}$ be the BGA associated with the reduced form $(\Gamma_{\red},d)$. Then there is a poset isomorphism between
\begin{enumerate}[(a)]
  \item the set $2\text{-}\mathrm{tilt}\,A$ of isomorphism classes of basic two-term tilting complexes of $A$, and
  \item the set $2\text{-}\mathrm{tilt}\,A_{\red}$ of isomorphism classes of basic two-term tilting complexes of $A_{\red}$.
\end{enumerate}
Moreover, the following statements are equivalent:
\begin{enumerate}
  \item $A$ is tilting-discrete;
  \item $2\text{-}\mathrm{tilt}\,A$ is finite;
  \item $A_{\red}$ is tilting-discrete;
  \item $2\text{-}\mathrm{tilt}\,A_{\red}$ is finite;
  \item $\Gamma_{\red}$ contains at most one odd cycle and no even cycles.
\end{enumerate}
\end{theorem}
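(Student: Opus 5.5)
The plan is to obtain the poset isomorphism by restricting the presilting bijection of Proposition~\ref{prop:adm-presilting-corres} to maximal objects, and then to deduce the finiteness equivalences from known results on tilting-discreteness.

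\textbf{Step 1: the bijection.} By Theorem~\ref{prop:sel-inj-silting}, a two-term silting complex of $A$ is tilting exactly when it is $\nu_A$-stable. Hence $2\text{-}\mathrm{tilt}\,A$ is the set of $\nu_A$-stable two-term silting complexes. I would then show that these are precisely the maximal elements, with respect to adding direct summands, of $2\text{-}\mathrm{presilt}^{\nu_A}A$. One direction is clear. For the other, suppose $T$ is $\nu_A$-stable presilting but not silting. Its Bongartz completion is canonical, so it is $\nu_A$-stable, and it is tilting by Theorem~\ref{prop:sel-inj-silting}. It strictly contains $T$, so $T$ is not maximal.

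Under the bijections (1)$\leftrightarrow$(2)$\leftrightarrow$(4)$\leftrightarrow$(3) of Proposition~\ref{prop:adm-presilting-corres}, each map preserves inclusion of summand sets, since they are all induced by the walk correspondence $\psi$ of Lemma~\ref{lem:bijection-on-zzwalk}, which acts walk by walk. Maximal elements therefore go to maximal elements: $\nu$-stable complete walk sets correspond to $\mathrm{CW}(\Gamma_{\red},d)$, which is $2\text{-}\mathrm{tilt}\,A_{\red}=2\text{-}\mathrm{silt}\,A_{\red}$ since $A_{\red}$ is symmetric.

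\textbf{Step 2: order preservation.} The order on two-term silting complexes is determined by the Hasse quiver, and its arrows are the irreducible mutations. For $A$, an irreducible mutation inside $2\text{-}\mathrm{tilt}\,A$ is a $\nu_A$-irreducible mutation, i.e.\ a mutation at one $\nu$-orbit of summands. Under $\psi$, such an orbit corresponds to a single walk on $\Gamma_{\red}$. Exchanging one $\nu$-orbit of walks for another therefore corresponds to exchanging a single walk, which is an irreducible mutation of $A_{\red}$. The direction (left versus right) is read off from the signs of the walks and is preserved by $\psi$. This gives an isomorphism of Hasse quivers, and hence of posets.

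I expect this to be the main obstacle. It needs the fact that $2\text{-}\mathrm{tilt}\,A$ with its induced order has Hasse arrows exactly the $\nu$-orbit mutations. This is the ``$\nu$-stable'' analogue of \cite[Corollary 3.8]{AIR}, and I would invoke the fixed-point results on $\tau$-tilting theory (e.g.\ \cite{AK}) for it.

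\textbf{Step 3: the equivalences.} We have (2)$\Leftrightarrow$(4) from the bijection, and (4)$\Leftrightarrow$(5) by \cite[Theorem 6.7]{AAC}. For (1)$\Leftrightarrow$(2) (and likewise (3)$\Leftrightarrow$(4)) I would use the tilting analogue of the Aihara--Mizuno criterion \cite{AM} for self-injective algebras, as in \cite{AK}: $A$ is tilting-discrete iff $2\text{-}\mathrm{tilt}\,B$ is finite for every $B$ tilting-mutation-equivalent to $A$.

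First I would check that the class of admissible biserial FBGAs whose reduced form satisfies (5) is closed under irreducible tilting mutation. By Subsection~\ref{subsec:move-on-red}, such a mutation is a Kauer move on $\Gamma/\langle\nu\rangle$ with no orbifold edges, hence an ordinary Kauer move on $\Gamma_{\red}$. Kauer moves preserve the condition ``at most one odd cycle and no even cycles'', since $A_{\red}$ and its mutation are both silting-discrete or both not, by \cite{AAC}.

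This gives (5)$\Rightarrow$(1): every iterated mutation $B$ of $A$ again has $2\text{-}\mathrm{tilt}\,B$ finite. The converse (1)$\Rightarrow$(2) is immediate, since the two-term tilting complexes are exactly those lying in the interval between $A$ and $A[1]$.
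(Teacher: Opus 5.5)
Your Steps~1 and~3 are sound and follow the paper's route. The bijection comes from the summand-preserving correspondence of Proposition~\ref{prop:adm-presilting-corres}. The equivalences come from \cite[Theorem~6.7]{AAC}, invariance of the cycle condition on $\Gamma_{\red}$ under Kauer moves, and the Aihara--Mizuno-type criterion. Your Bongartz-completion argument, which identifies $2\text{-}\mathrm{tilt}\,A$ with the maximal elements of $2\text{-}\mathrm{presilt}^{\nu_A}A$, makes explicit something the paper leaves implicit. The problem is Step~2.

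\textbf{The gap in Step~2.} The inference ``isomorphism of Hasse quivers, hence of posets'' needs the order to be the transitive closure of the covering relation. That holds for finite posets but fails for two-term silting posets in general.
\begin{itemize}
\item Over the Kronecker algebra, the two-term silting complexes attached to preprojective $\tau$-tilting modules are all strictly greater than those attached to preinjective ones.
\item Yet no finite chain of mutations leads from the former to the latter: from any non-trivial preprojective one, the only outgoing Hasse arrow moves further down the preprojective chain.
\item The same happens for the Brauer graph algebra of a $2$-cycle, whose two-term silting poset has Kronecker-type chains in both mixed quadrants of the $g$-vector fan.
\end{itemize}
The theorem asserts a poset isomorphism also when $2\text{-}\mathrm{tilt}\,A$ is infinite, so your argument proves it only under condition~(5). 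The point you flag as the main obstacle, that the Hasse arrows of the induced order on $2\text{-}\mathrm{tilt}\,A$ are the $\nu$-orbit mutations, would not close this gap even if established.

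\textbf{How to repair it.} You must compare the order itself, using that for two-term silting complexes $T\ge U \iff \mathrm{Hom}(T,U[1])=0$.
\begin{itemize}
\item The paper does this summand by summand. It transports a nonzero $\mathrm{Hom}(P_{\red},P'_{\red}[i])$ back to corresponding summands of $T$ and $T'$ via the correspondence.
\item A cleaner route in the admissible case: $G=\langle\nu_A\rangle$ acts freely on the vertices of $Q_\Gamma$ with orbit algebra $A_{\red}$. The push-down functor $F$ sends the string complex of a walk to that of its image under $\psi$, and satisfies $\mathrm{Hom}_{A_{\red}}(FX,FY[1])\cong\bigoplus_{g\in G}\mathrm{Hom}_A(X,gY[1])$. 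For $\nu_A$-stable $Y$ this vanishes if and only if $\mathrm{Hom}_A(X,Y[1])=0$.
\item Alternatively, use \cite[Theorem~1.1]{KKKMM} through Proposition~\ref{prop:adm-skew=BGA}, which is how the paper first obtains the correspondence.
\end{itemize}
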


\begin{proof}
By Proposition~\ref{prop:adm-skew=BGA}, $A_{\mathrm{red}}$ can be regarded as a skew group algebra of $A$. Hence the bijection between $(2\text{-})\mathrm{tilt}\,A$ and $(2\text{-})\mathrm{tilt}\,A_{\mathrm{red}}$ follows from~\cite[Theorem~1.1]{KKKMM}. More concretely, the bijection between (a) and (b) is naturally induced by Proposition~\ref{prop:adm-presilting-corres}, since it preserves indecomposable direct summands. Moreover, this correspondence is an isomorphism of posets.
 Indeed, let $T$ and $T'$ be tilting complexes. If
\[
\mathrm{Hom}_{\mathcal{K}^b(\mathrm{proj}\,A)}(T,T'[i])=0
\quad \text{for all } i>0,
\]
then the same vanishing holds for any pair of indecomposable direct summands of $T$ and $T'$ (with the second one shifted).
Let $T_{\red}$ and $T'_{\red}$ be the tilting complexes over $A_{\red}$ corresponding to $T$ and $T'$ under the above bijection. Suppose that $T_{\red}\not\ge T'_{\red}$. Then there exist indecomposable direct summands $P_{\red}$ of $T_{\red}$ and $P'_{\red}$ of $T'_{\red}$ and some positive integer $i$ such that
\[
\mathrm{Hom}_{\mathcal{K}^b(\mathrm{proj}\,A_{\red})}(P_{\red},P'_{\red}[i])\neq 0.
\]
By the correspondence in Proposition~\ref{prop:adm-presilting-corres}, there exist indecomposable complexes $P$ and $P'$ corresponding to $P_{\red}$ and $P'_{\red}$, which are direct summands of $T$ and $T'$ respectively, such that
\[
\mathrm{Hom}_{\mathcal{K}^b(\mathrm{proj}\,A)}(P,P'[i])\neq 0.
\]
This contradicts the assumption that $T\ge T'$.

We now prove the second equivalences. Conditions~(3), (4), and~(5) are equivalent by~\cite[Theorem~6.7]{AAC}, and conditions~(2) and~(4) are equivalent by the first bijection of this theorem. It is clear that~(1) implies~(2). Now assume that~(2) holds. Let $T$ be a tilting complex lying in the connected component of the tilting mutation quiver $Q_{\mathrm{tilt}}(A)$ containing $A$. By Subsection \ref{subsec:KM-FMS-BGA}, the algebra $
B := \mathrm{End}_{\mathcal{K}^b(\mathrm{proj}\,A)}(T)$
is again a biserial FBGA associated with a biserial FBG $(\Gamma',d)$. As in the proof of~\cite[Lemma~6.12]{AAC}, the reduced graph $\Gamma'_{\red}$ contains at most one odd cycle and no even cycles. Consequently, the set $2\text{-}\mathrm{tilt}\,B$ is finite. By~\cite[Theorem~1.2]{AM}, it follows that $A$ is tilting-discrete.
\end{proof}

\subsection{Non-admissible cases}
\

We now consider the non-admissible cases. 

\begin{proposition}\label{prop:non-adm-presilting-corres}
Let $A$ be a non-admissible biserial FBGA with associated biserial FBG $(\Gamma,d)$ and Nakayama automorphism $\nu$ of $(\Gamma,d)$. Denote by $\nu_A$ the Nakayama automorphism of $A$ and by $\phi$ the natural involution on the reduced form $A_\red$. Then there is a bijection among the following sets:
\begin{enumerate}
    \item the set $2\text{-}\mathrm{presilt}^{\nu_A} A$ of isomorphism classes of basic $\nu_A$-stable two-term presilting complexes of $A$;
    \item the set $\mathrm{AW}^{\nu}(\Gamma,d)$ of $\nu$-stable admissible sets of singed walks on the biserial FBG $(\Gamma,d)$;
    \item the set $2\text{-}\mathrm{presilt}^{\phi}\,A_{\red}$ of isomorphism classes of basic $\phi$-stable two-term presilting complexes of $A_{\red}$;
    \item the set $\mathrm{AW}^{\phi}\,(\Gamma_{\red},d)$ of $\phi$-stable admissible sets of singed walks on the BG $(\Gamma_{\red},d)$.
\end{enumerate}
\end{proposition}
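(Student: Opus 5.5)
The plan mirrors the proof of Proposition~\ref{prop:adm-presilting-corres}, with the extra $\phi$-stability on the reduced side, and proves the chain $(1)\leftrightarrow(2)\leftrightarrow(4)\leftrightarrow(3)$.

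The step $(2)\leftrightarrow(4)$ is exactly the non-admissible half of Lemma~\ref{lem:bijection-on-zzwalk}, so nothing new is needed there.

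For $(1)\leftrightarrow(2)$ the argument is independent of admissibility and can be copied from the admissible case. Following \cite[Lemma~4.4]{AAC}, each indecomposable two-term presilting complex of $A$ is a string complex and hence determines a signed walk on $(\Gamma,d)$. The presilting conditions force the sign and non-crossing conditions, and a $\nu_A$-stable complex gives a $\nu$-stable set, because $\nu_A$ is induced by $\nu$ (Proposition~\ref{prop:bfBGA-basic}). Conversely, \cite[Lemma~4.3]{AAC} attaches a string complex to each signed walk in a $\nu$-stable admissible set. The resulting sum is $\nu_A$-stable, and non-crossing gives the vanishing of $\mathrm{Hom}(T,T[1])$.

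For $(3)\leftrightarrow(4)$ I would restrict the bijection $\mathrm{AW}(\Gamma_{\red},d)\to 2\text{-}\mathrm{presilt}\,A_{\red}$ of \cite[Theorem~4.6]{AAC} to fixed points. The point to check is that this bijection is equivariant for the automorphism $\phi$ of the ribbon graph, which also acts on $A_{\red}$. Indeed, $\phi$ sends the string complex of a signed walk $W$ to the string complex of $\phi(W)$, since the construction uses only local data: half-edges, $\rho$, $\iota$ and signs, all of which $\phi$ preserves. Equivariance then gives that $\phi$-stable admissible sets correspond to $\phi$-stable presilting complexes. Here I would use that $\phi$ is orientation-preserving, so that the twist of $A_{\red}$ by $\phi$ is again a string-complex-preserving operation.

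The step I expect to be the main obstacle is the orbifold edges, where $\nu$ acts non-freely: the $\nu$-orbit of a loop $\bar h$ with $\nu^m(h)=\iota(h)$ has stabilizer of order $2$. One must make sure that a $\nu$-orbit of walks passing through such loops corresponds to a single $\phi$-orbit on $\Gamma_{\red}$, and not to two or to a half. Walks that are themselves $\phi$-stable, namely those crossing the doubled edge $h_1\leftrightarrow h_2$, must be counted consistently on both sides, so that basicness and indecomposable summands match.

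I would handle this by checking directly on the double-cover construction $\widehat{\iota}(h_i)=h_{3-i}$ that the lifting rule $h'_{i+1}=\rho^k(\iota(h'_i))$ from Lemma~\ref{lem:bijection-on-zzwalk} sends $\nu$-orbits of walks bijectively onto $\phi$-orbits of walks. This is where the lemma's bijection $f$ on orbits of half-edges is used. Having done so, the composition gives the required bijections between all four sets.
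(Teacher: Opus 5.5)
Your overall route is the paper's. Its proof of this proposition only says that the admissible verification carries over: Lemma~\ref{lem:bijection-on-zzwalk} for (2)$\leftrightarrow$(4), \cite[Theorem~4.6]{AAC} for (3)$\leftrightarrow$(4), and \cite[Lemmas~4.3--4.4]{AAC} for (1)$\leftrightarrow$(2). Your $\phi$-equivariance argument makes explicit the restriction to fixed points that the paper leaves implicit.

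The step that does not go through as you wrote it is (1)$\Rightarrow$(2). A signed walk on $(\Gamma,d)$ must turn inside a $\nu$-fan at each vertex, i.e.\ $k_i\le o(\vv)$. Your claim that every indecomposable two-term presilting complex of $A$ determines such a walk is false once $n(\vv)\ge 2$. Here is an example.
\begin{itemize}
\item Let $\vv$ carry half-edges $p_1,q_1,r,p_2,q_2,r'$ in this cyclic order, with $\{r,r'\}$ a loop and $d(\vv)=9$.
\item Join $p_1,p_2$ (resp.\ $q_1,q_2$) to a vertex of valency $2$ and degree $3$; condition (SI) holds.
\item Then $\nu$ swaps $p_1\leftrightarrow p_2$, $q_1\leftrightarrow q_2$ and $r\leftrightarrow r'$, so the FBG is non-admissible, and $o(\vv)=3$.
\item Let $X\colon P_{\bar q_2}\to P_{\bar p_1}$ be given by the path of length $4$ around $\vv$. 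It is presilting, because $\mathrm{Hom}_A(P_{\bar q_2},P_{\bar p_1})$ is spanned by the differential.
\item Its walk turns by $4>o(\vv)$, so it is not a signed walk on $(\Gamma,d)$.
\end{itemize}

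What excludes such an $X$ is $\nu_A$-stability, but your write-up uses stability only to make the set of walks $\nu$-stable. If $X$ is a summand of a $\nu_A$-stable presilting complex, then so is $\nu_AX$, hence $\mathrm{Hom}_{\mathcal{K}^b(\mathrm{proj}\,A)}(X,\nu_AX[1])=0$. You must show that a turn longer than a $\nu$-fan always produces a nonzero map here. In the example, the arrow $\bar p_2\to\bar q_2$ gives a chain map $X\to\nu_AX[1]$ in degree $-1$. It is not null-homotopic, since every null-homotopic map lies in $\operatorname{rad}^2$.

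\cite[Lemma~4.4]{AAC} cannot supply this step, because for BGAs a $\nu$-fan is a full turn. The paper's one-line proof is equally silent here, so this is a gap in the shared argument rather than a departure from it. Still, your proof needs this extra step to land in $\mathrm{AW}^{\nu}(\Gamma,d)$.
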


\begin{proof}
	This follows from the same verification as in the proof of Proposition~\ref{prop:adm-presilting-corres}.
\end{proof}

Then we have the following result.

\begin{theorem}\label{thm:non-adm-case}
Let $A$ be a biserial FBGA with associated biserial FBG $(\Gamma,d)$ that is non-admissible, and let $A_{\red}$ be the BGA associated with the reduced form $(\Gamma_{\red},d)$. Then there is a poset isomorphism between
\begin{enumerate}[(a)]
  \item the set $2\text{-}\mathrm{tilt}\,A$ of isomorphism classes of basic two-term tilting complexes of $A$, and
  \item the set $2\text{-}\mathrm{tilt}^{\phi}\,A_{\red}$ of isomorphism classes of basic $\phi$-stable two-term tilting complexes of $A_{\red}$.
\end{enumerate}
Moreover, the following statements are equivalent:
\begin{enumerate}
  \item $A$ is tilting-discrete;
  \item $2\text{-}\mathrm{tilt}\,A$ is finite;
  \item $A_{\red}$ is tilting-discrete;
  \item $2\text{-}\mathrm{tilt}\,A_{\red}$ is finite;
  \item $2\text{-}\mathrm{tilt}^{\phi}\,A_{\red}$ is finite;
  \item $\Gamma_{\red}$ contains at most one odd cycle and no even cycles;
  \item The underlying graph of $\Gamma_{\red}$ is a tree.
\end{enumerate}
\end{theorem}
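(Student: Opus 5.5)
The plan follows the proof of Theorem~\ref{thm:adm-case}, with Proposition~\ref{prop:non-adm-presilting-corres} in place of Proposition~\ref{prop:adm-presilting-corres}. First I build the bijection (a)$\leftrightarrow$(b). By Theorem~\ref{prop:sel-inj-silting}, $2\text{-}\mathrm{tilt}\,A=2\text{-}\mathrm{silt}\,A\cap 2\text{-}\mathrm{presilt}^{\nu_A}A$. Proposition~\ref{prop:non-adm-presilting-corres} identifies $2\text{-}\mathrm{presilt}^{\nu_A}A$ with $2\text{-}\mathrm{presilt}^{\phi}A_{\red}$ through $\mathrm{AW}^{\nu}(\Gamma,d)$ and $\mathrm{AW}^{\phi}(\Gamma_{\red},d)$. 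Since $A_{\red}$ is symmetric, every two-term silting complex of $A_{\red}$ is tilting.

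It remains to show that completeness matches. A $\nu_A$-stable presilting $T$ is silting exactly when $|T|=|A|$. The map $\psi$ of Lemma~\ref{lem:bijection-on-zzwalk} sends $\nu$-orbits of walks to $\phi$-orbits of walks. I count summands orbit by orbit: orbits of edges of $\Gamma$ correspond to edges of $\Gamma/\langle\nu\rangle$, and an orbifold edge lifts to one $\phi$-orbit of size one in $\Gamma_{\red}$. The plan is to check that the count $|T|=|A|$ is equivalent to $|T_{\red}|=|A_{\red}|$. Alternatively, I would argue that maximal $\nu$-stable admissible sets correspond to maximal $\phi$-stable ones. A maximal $\phi$-stable presilting complex is silting, because the Bongartz completion of a $\phi$-stable presilting complex is $\phi$-stable. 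The poset statement then follows verbatim from the argument in Theorem~\ref{thm:adm-case}, comparing Hom-vanishing summand by summand via the walk correspondence.

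For the equivalences, (1)$\Rightarrow$(2) is clear, (2)$\Leftrightarrow$(5) is the bijection just built, and (3)$\Leftrightarrow$(4)$\Leftrightarrow$(6) is \cite[Theorem~6.7]{AAC}. For (2)$\Rightarrow$(1) I copy the admissible argument. By Subsection~\ref{subsec:KM-FMS-BGA}, the endomorphism algebras along the tilting mutation component of $A$ are again non-admissible biserial FBGAs. Their reduced forms are obtained by the orbifold Kauer moves of Subsection~\ref{subsec:move-on-red}, and these preserve finiteness as in \cite[Lemma~6.12]{AAC}. Then \cite[Theorem~1.2]{AM} gives tilting-discreteness.

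The new content is (5)$\Leftrightarrow$(6)$\Leftrightarrow$(7), which uses the $\phi$-symmetry of $\Gamma_{\red}$. For (7)$\Rightarrow$(6), a tree has no cycles. For (6)$\Rightarrow$(7), suppose $\Gamma_{\red}$ has a cycle $C$. If $C\neq\phi(C)$, then $C$ and $\phi(C)$ are two cycles, or they share edges, in which case their symmetric difference contains further cycles. If $C=\phi(C)$, then $C$ must pass through a doubled orbifold edge, whose endpoints $v_1,v_2$ are swapped by $\phi$. In that case $C$ is even, since $\phi$ acts on $C$ as a fixed-point-free reflection or rotation: the vertices come in swapped pairs and $\phi$ fixes no vertex. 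Both cases contradict (6), so the graph is a tree.

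For (5)$\Rightarrow$(6), suppose $\Gamma_{\red}$ contains an even cycle or two odd cycles. I would build infinitely many $\phi$-stable complete admissible walk sets, by taking $W\cup\phi(W)$ for the infinite families of \cite{AAC} winding around the cycle pair $C\cup\phi(C)$. Then $2\text{-}\mathrm{tilt}^{\phi}A_{\red}$ is infinite. This is the step I expect to be the main obstacle. The pair $W,\phi(W)$ must be non-crossing with each other, which requires a careful choice of windings relative to the involution, and the union must then be completed $\phi$-equivariantly.
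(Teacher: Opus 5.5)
Your outline matches the paper's. The bijection (a)$\leftrightarrow$(b) goes through Proposition~\ref{prop:non-adm-presilting-corres}, the poset comparison is copied from Theorem~\ref{thm:adm-case}, \cite[Theorem~6.7]{AAC} gives (3)$\Leftrightarrow$(4)$\Leftrightarrow$(6), and Kauer moves plus \cite[Theorem~1.2]{AM} handle tilting-discreteness.

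Two of your steps are tidier than the paper's:
\begin{itemize}
\item Identifying two-term tilting complexes on each side with maximal $\nu_A$-stable, resp.\ $\phi$-stable, presilting complexes actually justifies the completeness matching. The Bongartz completion is canonical, hence stable. The paper only says the bijection ``preserves indecomposable direct summands''.
\item Your observation that $\phi$ acts freely on vertices, so every $\phi$-stable cycle is even, is a cleaner form of the paper's lifting argument for (6)$\Rightarrow$(7).
\end{itemize}
You should also record the trivial implication (4)$\Rightarrow$(5), since (6)$\Rightarrow$(5) goes through it.

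The genuine gap is (5)$\Rightarrow$(6), the only implication with new content. You say you ``would build'' infinitely many $\phi$-stable complete admissible sets from $W\cup\phi(W)$. But you leave open exactly what must be shown, namely that $W$ and $\phi(W)$ are compatible. Your (2)$\Rightarrow$(1) also needs $\Gamma_{\red}$ to be a tree, so what you actually establish is only
\[
(3)\Leftrightarrow(4)\Leftrightarrow(6)\Leftrightarrow(7)\Rightarrow(1)\Rightarrow(2)\Leftrightarrow(5).
\]
Here (7)$\Rightarrow$(1) does follow from the Kauer-move argument. The converse direction is missing: that finiteness of $2\text{-}\mathrm{tilt}\,A$ forces $\Gamma_{\red}$ to be a tree.

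Your phrasing via a ``cycle pair $C\cup\phi(C)$'' also passes over the case $C=\phi(C)$, which is exactly where the involution interferes. Such a cycle runs through the two $\phi$-fixed edges induced by orbifold edges of $\Gamma/\langle\nu\rangle$, and $\phi$ acts on it as a reflection.

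This is the case the paper writes out (Figure~\ref{fig:even-red-cycle}). It takes the string complexes $T_{2l}$ that wind once around the cycle and includes all of them at once. It adds the projectives $P_{2j-1}$ on the cycle and projective or shifted-projective summands for the remaining edges. This makes $\phi$-stability hold by construction, instead of depending on a good choice of a single $W$. Lengthening the strings then gives infinitely many $\phi$-stable two-term tilting complexes. Beyond this case the paper appeals to the construction of \cite[Proposition~2.12]{AAC}.

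To close your proof, you need such an explicit $\phi$-invariant family, together with a check that it is presilting, at least for the $\phi$-stable even cycle.
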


\begin{proof}
	The first bijection between (a) and (b) is naturally induced by Proposition~\ref{prop:non-adm-presilting-corres}, since this bijection preserves indecomposable direct summands. Such a correspondence is an isomorphism between posets by the same verification in Theorem \ref{thm:adm-case}.

We now prove the second equivalences. Conditions~(3), (4), and~(6) are equivalent by~\cite[Theorem~6.7]{AAC}, conditions~(2) and~(5) are equivalent by the first bijection of this theorem. It is clear that~(1) implies~(2), and (4) implies~(5).

We first show that Conditions~(6) and~(7) are equivalent. It is clear that Condition~(7) implies Condition~(6).
Conversely, assume that Condition~(6) holds. Suppose, for contradiction, that the underlying graph of $\Gamma_{\mathrm{red}}$ is not a tree. Since Condition~(6) excludes even cycles, $\Gamma_{\mathrm{red}}$ must then contain an odd cycle.
Using that $\Gamma_{\mathrm{red}}$ is a double cover of $\Gamma/\langle \nu\rangle$, such an odd cycle cannot arise as a cycle in $\Gamma/\langle \nu\rangle$, since any odd cycle in $\Gamma/\langle \nu\rangle$ would lift to two odd cycles in $\Gamma_{\mathrm{red}}$, contradicting Condition~(6). Nor can such an odd cycle be formed by concatenating two lifted paths corresponding to a path in $\Gamma/\langle \nu\rangle$, since any such construction would necessarily involve an even number of edges in $\Gamma_{\mathrm{red}}$, again a contradiction.
Therefore $\Gamma_{\mathrm{red}}$ contains no cycles, and hence its underlying graph is a tree.

We then show that~(5) implies~(7). Indeed, if~(7) does not hold, then, as in the construction of~\cite[Proposition~2.12]{AAC}, one can construct infinitely many basic ${\phi}$-stable two-term tilting complexes of $A_{\red}$. For example, suppose that $\Gamma_{\red}$ contains an even cycle as shown in Figure~\ref{fig:even-red-cycle}, where the edges labeled by $1$ and $k$ are induced by the orbifold edges in $\Gamma/\langle \nu \rangle$. Then the involution on this cycle of $A_{\red}$ is given by
$$
1 \longleftrightarrow 1,\quad
2 \longleftrightarrow 2k,\quad
3 \longleftrightarrow 2k-1,\quad \cdots,\quad
k \longleftrightarrow k.
$$

\begin{figure}
	\centering
		\begin{center}

\tikzset{every picture/.style={line width=0.75pt}} %set default line width to 0.75pt        

\begin{tikzpicture}[x=0.75pt,y=0.75pt,yscale=-1,xscale=1]
%uncomment if require: \path (0,235); %set diagram left start at 0, and has height of 235

%Straight Lines [id:da7130523259089323] 
\draw [line width=1.5]    (99.5,31.5) -- (199.5,31.5) ;
%Straight Lines [id:da20595582346696473] 
\draw [line width=1.5]    (99.5,31.5) -- (60,101.5) ;
%Straight Lines [id:da46933656794843803] 
\draw [line width=1.5]    (99.5,171.5) -- (199.5,171.5) ;
%Straight Lines [id:da32172828403671305] 
\draw [line width=1.5]  [dash pattern={on 1.69pt off 2.76pt}]  (60,101.5) -- (99.5,171.5) ;
%Straight Lines [id:da3106735683391819] 
\draw [line width=1.5]    (199.5,31.5) -- (239,101.5) ;
%Straight Lines [id:da7273283120911587] 
\draw [line width=1.5]  [dash pattern={on 1.69pt off 2.76pt}]  (199.5,171.5) -- (239,101.5) ;
%Shape: Circle [id:dp14102509584697875] 
\draw  [fill={rgb, 255:red, 0; green, 0; blue, 0 }  ,fill opacity=1 ] (94.5,31.5) .. controls (94.5,28.74) and (96.74,26.5) .. (99.5,26.5) .. controls (102.26,26.5) and (104.5,28.74) .. (104.5,31.5) .. controls (104.5,34.26) and (102.26,36.5) .. (99.5,36.5) .. controls (96.74,36.5) and (94.5,34.26) .. (94.5,31.5) -- cycle ;
%Shape: Circle [id:dp9197376204116503] 
\draw  [fill={rgb, 255:red, 0; green, 0; blue, 0 }  ,fill opacity=1 ] (55,101.5) .. controls (55,98.74) and (57.24,96.5) .. (60,96.5) .. controls (62.76,96.5) and (65,98.74) .. (65,101.5) .. controls (65,104.26) and (62.76,106.5) .. (60,106.5) .. controls (57.24,106.5) and (55,104.26) .. (55,101.5) -- cycle ;
%Shape: Circle [id:dp6317359860103804] 
\draw  [fill={rgb, 255:red, 0; green, 0; blue, 0 }  ,fill opacity=1 ] (94.5,171.5) .. controls (94.5,168.74) and (96.74,166.5) .. (99.5,166.5) .. controls (102.26,166.5) and (104.5,168.74) .. (104.5,171.5) .. controls (104.5,174.26) and (102.26,176.5) .. (99.5,176.5) .. controls (96.74,176.5) and (94.5,174.26) .. (94.5,171.5) -- cycle ;
%Shape: Circle [id:dp9428714432468424] 
\draw  [fill={rgb, 255:red, 0; green, 0; blue, 0 }  ,fill opacity=1 ] (194.5,171.5) .. controls (194.5,168.74) and (196.74,166.5) .. (199.5,166.5) .. controls (202.26,166.5) and (204.5,168.74) .. (204.5,171.5) .. controls (204.5,174.26) and (202.26,176.5) .. (199.5,176.5) .. controls (196.74,176.5) and (194.5,174.26) .. (194.5,171.5) -- cycle ;
%Shape: Circle [id:dp9469278463299717] 
\draw  [fill={rgb, 255:red, 0; green, 0; blue, 0 }  ,fill opacity=1 ] (234,101.5) .. controls (234,98.74) and (236.24,96.5) .. (239,96.5) .. controls (241.76,96.5) and (244,98.74) .. (244,101.5) .. controls (244,104.26) and (241.76,106.5) .. (239,106.5) .. controls (236.24,106.5) and (234,104.26) .. (234,101.5) -- cycle ;
%Shape: Circle [id:dp333916196838276] 
\draw  [fill={rgb, 255:red, 0; green, 0; blue, 0 }  ,fill opacity=1 ] (194.5,31.5) .. controls (194.5,28.74) and (196.74,26.5) .. (199.5,26.5) .. controls (202.26,26.5) and (204.5,28.74) .. (204.5,31.5) .. controls (204.5,34.26) and (202.26,36.5) .. (199.5,36.5) .. controls (196.74,36.5) and (194.5,34.26) .. (194.5,31.5) -- cycle ;
%Shape: Arc [id:dp13426700610350872] 
\draw  [draw opacity=0] (114.22,34.41) .. controls (112.86,41.3) and (106.79,46.5) .. (99.5,46.5) .. controls (97.44,46.5) and (95.48,46.09) .. (93.7,45.34) -- (99.5,31.5) -- cycle ; \draw   (114.22,34.41) .. controls (112.86,41.3) and (106.79,46.5) .. (99.5,46.5) .. controls (97.44,46.5) and (95.48,46.09) .. (93.7,45.34) ;  
%Shape: Arc [id:dp1591989563465761] 
\draw  [draw opacity=0][dash pattern={on 0.84pt off 2.51pt}] (90.7,43.65) .. controls (86.94,40.92) and (84.5,36.5) .. (84.5,31.5) .. controls (84.5,23.22) and (91.22,16.5) .. (99.5,16.5) .. controls (107.58,16.5) and (114.17,22.89) .. (114.49,30.89) -- (99.5,31.5) -- cycle ; \draw  [dash pattern={on 0.84pt off 2.51pt}] (90.7,43.65) .. controls (86.94,40.92) and (84.5,36.5) .. (84.5,31.5) .. controls (84.5,23.22) and (91.22,16.5) .. (99.5,16.5) .. controls (107.58,16.5) and (114.17,22.89) .. (114.49,30.89) ;  
%Shape: Circle [id:dp2929325135599836] 
\draw  [dash pattern={on 0.84pt off 2.51pt}] (84.5,171.5) .. controls (84.5,163.22) and (91.22,156.5) .. (99.5,156.5) .. controls (107.78,156.5) and (114.5,163.22) .. (114.5,171.5) .. controls (114.5,179.78) and (107.78,186.5) .. (99.5,186.5) .. controls (91.22,186.5) and (84.5,179.78) .. (84.5,171.5) -- cycle ;
%Shape: Circle [id:dp2610550552222246] 
\draw  [dash pattern={on 0.84pt off 2.51pt}] (184.5,171.5) .. controls (184.5,163.22) and (191.22,156.5) .. (199.5,156.5) .. controls (207.78,156.5) and (214.5,163.22) .. (214.5,171.5) .. controls (214.5,179.78) and (207.78,186.5) .. (199.5,186.5) .. controls (191.22,186.5) and (184.5,179.78) .. (184.5,171.5) -- cycle ;
%Shape: Arc [id:dp5711770861821905] 
\draw  [draw opacity=0] (65.61,115.41) .. controls (63.88,116.11) and (61.98,116.5) .. (60,116.5) .. controls (51.72,116.5) and (45,109.78) .. (45,101.5) .. controls (45,93.22) and (51.72,86.5) .. (60,86.5) .. controls (62,86.5) and (63.91,86.89) .. (65.65,87.6) -- (60,101.5) -- cycle ; \draw   (65.61,115.41) .. controls (63.88,116.11) and (61.98,116.5) .. (60,116.5) .. controls (51.72,116.5) and (45,109.78) .. (45,101.5) .. controls (45,93.22) and (51.72,86.5) .. (60,86.5) .. controls (62,86.5) and (63.91,86.89) .. (65.65,87.6) ;  
%Shape: Arc [id:dp6448112369677204] 
\draw  [draw opacity=0][dash pattern={on 0.84pt off 2.51pt}] (69.43,89.84) .. controls (72.83,92.59) and (75,96.79) .. (75,101.5) .. controls (75,107.42) and (71.58,112.53) .. (66.6,114.97) -- (60,101.5) -- cycle ; \draw  [dash pattern={on 0.84pt off 2.51pt}] (69.43,89.84) .. controls (72.83,92.59) and (75,96.79) .. (75,101.5) .. controls (75,107.42) and (71.58,112.53) .. (66.6,114.97) ;  
%Shape: Arc [id:dp5457681819837605] 
\draw  [draw opacity=0][dash pattern={on 0.84pt off 2.51pt}] (206.13,44.96) .. controls (204.13,45.95) and (201.88,46.5) .. (199.5,46.5) .. controls (191.39,46.5) and (184.78,40.06) .. (184.51,32.02) -- (199.5,31.5) -- cycle ; \draw  [dash pattern={on 0.84pt off 2.51pt}] (206.13,44.96) .. controls (204.13,45.95) and (201.88,46.5) .. (199.5,46.5) .. controls (191.39,46.5) and (184.78,40.06) .. (184.51,32.02) ;  
%Shape: Arc [id:dp6353994127147982] 
\draw  [draw opacity=0][dash pattern={on 0.84pt off 2.51pt}] (233.2,87.66) .. controls (234.98,86.91) and (236.94,86.5) .. (239,86.5) .. controls (247.28,86.5) and (254,93.22) .. (254,101.5) .. controls (254,109.78) and (247.28,116.5) .. (239,116.5) .. controls (237.64,116.5) and (236.32,116.32) .. (235.07,115.98) -- (239,101.5) -- cycle ; \draw  [dash pattern={on 0.84pt off 2.51pt}] (233.2,87.66) .. controls (234.98,86.91) and (236.94,86.5) .. (239,86.5) .. controls (247.28,86.5) and (254,93.22) .. (254,101.5) .. controls (254,109.78) and (247.28,116.5) .. (239,116.5) .. controls (237.64,116.5) and (236.32,116.32) .. (235.07,115.98) ;  
%Shape: Arc [id:dp6738895352845895] 
\draw  [draw opacity=0] (230.6,113.93) .. controls (226.62,111.24) and (224,106.67) .. (224,101.5) .. controls (224,96.4) and (226.54,91.9) .. (230.43,89.19) -- (239,101.5) -- cycle ; \draw   (230.6,113.93) .. controls (226.62,111.24) and (224,106.67) .. (224,101.5) .. controls (224,96.4) and (226.54,91.9) .. (230.43,89.19) ;  
%Shape: Arc [id:dp35269715447011896] 
\draw  [draw opacity=0] (184.6,29.79) .. controls (185.44,22.31) and (191.79,16.5) .. (199.5,16.5) .. controls (207.78,16.5) and (214.5,23.22) .. (214.5,31.5) .. controls (214.5,36.21) and (212.33,40.41) .. (208.93,43.16) -- (199.5,31.5) -- cycle ; \draw   (184.6,29.79) .. controls (185.44,22.31) and (191.79,16.5) .. (199.5,16.5) .. controls (207.78,16.5) and (214.5,23.22) .. (214.5,31.5) .. controls (214.5,36.21) and (212.33,40.41) .. (208.93,43.16) ;  
\draw   (64.48,84.93) -- (66.46,88.05) -- (62.81,88.57) ;
\draw   (96.13,48.09) -- (93.88,45.17) -- (97.47,44.32) ;
\draw   (212.66,42.46) -- (209.05,43.22) -- (209.85,39.62) ;
\draw   (226.61,89.63) -- (230.28,89.27) -- (229.1,92.76) ;

% Text Node
\draw (144,13) node [anchor=north west][inner sep=0.75pt]   [align=left] {$1$};
\draw (142,25) node [anchor=north west][inner sep=0.75pt]  [font=\Large] [align=left] {$\times$};
\draw (142,165) node [anchor=north west][inner sep=0.75pt]  [font=\Large] [align=left] {$\times$};
% Text Node
\draw (65,55) node [anchor=north west][inner sep=0.75pt]   [align=left] {$2$};
% Text Node
\draw (144,177) node [anchor=north west][inner sep=0.75pt]   [align=left] {$k$};
% Text Node
\draw (224,55) node [anchor=north west][inner sep=0.75pt]   [align=left] {$2k$};
% Text Node
\draw (106,45.5) node [anchor=north west][inner sep=0.75pt]   [align=left] {$p_1$};
% Text Node
\draw (26.5,95) node [anchor=north west][inner sep=0.75pt]   [align=left] {$p_2$};
% Text Node
\draw (190,94) node [anchor=north west][inner sep=0.75pt]   [align=left] {$p_{2k-1}$};
% Text Node
\draw (210.5,5.5) node [anchor=north west][inner sep=0.75pt]   [align=left] {$p_{2k}$};
\end{tikzpicture}
\caption{An even cycle in $\Gamma_\red$}
		\label{fig:even-red-cycle}	
	\end{center}
	\end{figure}

Define $T_{2l}$ to be the two-term complex given by
$$
\begin{tikzcd}
        & P_{2l} \arrow[ld, "p_{2l-1}"'] \arrow[rd, "p_{2l}"] 
        &          & \cdots 
        \arrow[ld, "p_{2l+1}"'] \arrow[rd, "p_{2l-4}"] 
        &          & P_{2l-2} \arrow[ld, "p_{2l-3}"'] \arrow[rd, "p_{2l-2}"] &          \\
P_{2l-1} &                                                     & P_{2l+1} 
&                                                       & P_{2l-3} 
&                                                         & P_{2l-1}
\end{tikzcd}
$$
where $l \in \mathbb{Z}/2k\mathbb{Z}$ and $P_i$ denotes the indecomposable projective $A_{\red}$-module corresponding to the edge $i$ in $\Gamma_{\red}$.  We then construct a tilting complex
\[
T = \left( \bigoplus_{i=1}^{k} T_{2i} \right)
\oplus \left( \bigoplus_{j=1}^{k} P_{2j-1} \right)
\oplus \left( \bigoplus_{i \in I_1} P_i \right)
\oplus \left( \bigoplus_{j \in I_2} P_j[1] \right),
\]
where $I_1$ consists of those edges in $\Gamma_{\red}$ that are not contained in any path $p_i$, and $I_2$ consists of those edges in $\Gamma_{\red}$ that are contained in a path of the form $p_i$. The tilting complex $T$ constructed above is naturally $\mathrm{\phi}$-stable, and this construction can be generalized to string complexes of arbitrary length. Consequently, in this case the set $2\text{-}\mathrm{tilt}^{\mathrm{\phi}}\,A_{\red}$ is infinite.

Now assume that~(2) holds. Let $T$ be a tilting complex lying in the connected component of the tilting mutation quiver $Q_{\mathrm{tilt}}(A)$ containing $A$. By Subsection \ref{subsec:KM-FMS-BGA}, the algebra $
B := \mathrm{End}_{\mathcal{K}^b(\mathrm{proj}\,A)}(T)$
is again a biserial FBGA associated with a biserial FBG $(\Gamma',d)$. As in the proof of~\cite[Lemma~6.12]{AAC}, the reduced graph $\Gamma'_{\red}$ is also a tree. Consequently, the set $2\text{-}\mathrm{tilt}\,B$ is finite. By~\cite[Theorem~1.2]{AM}, it follows that $A$ is tilting-discrete.
\end{proof}

\begin{remark}\label{rmk:non-adm}
	Assume that \(\operatorname{char} k \neq 2\). By Propositions~\ref{prop:nonadm-skew=skew-BGA} and~\ref{prop:red-skew-BGA}, both $A$ and $A_{\mathrm{red}}$ are related via their skew group algebras, which coincide with the skew-BGA corresponding to the skew-BG $(\Gamma/\langle \nu \rangle,d)$. Consequently, the bijection between $(2\text{-})\mathrm{tilt}\,A$ and $(2\text{-})\mathrm{tilt}^{\phi}\,A_{\mathrm{red}}$ also follows from~\cite[Theorem~1.1]{KKKMM}.
\end{remark}

\subsection{Applications}
\

Combining the results of Theorems~\ref{thm:adm-case} and~\ref{thm:non-adm-case}, we obtain the following corollary.

\begin{corollary}\label{cor:der-closed-tilt-dis}
	Let $A$ be a tilting-discrete biserial FBGA with associated biserial FBG $(\Gamma,d)$. Then any basic algebra derived equivalent to $A$ is also a (tilting-discrete) biserial FBGA.
\end{corollary}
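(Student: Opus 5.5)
The plan is to reduce derived equivalence to iterated tilting mutation and then apply the Kauer-move description of Subsection~\ref{subsec:KM-FMS-BGA}. Let $B$ be a basic algebra derived equivalent to $A$. By Rickard's theorem there is a tilting complex $T\in\mathcal{K}^b(\mathrm{proj}\,A)$ with $B\cong\mathrm{End}_{\mathcal{K}^b(\mathrm{proj}\,A)}(T)$, and we may take $T$ basic because $B$ is basic. Since $A$ is tilting-discrete, the results of Aihara--Mizuno \cite{AM} (in the self-injective form recalled before Theorem~\ref{prop:sel-inj-silting}) imply that $A$ is tilting-connected. Concretely, every basic tilting complex lies in the connected component of $A$ in the tilting mutation quiver, and so, up to shift, it is reached from $A$ by finitely many irreducible tilting mutations, that is, $\nu_A$-irreducible mutations at $\nu_A$-orbits of indecomposable summands. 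Shifting does not change the endomorphism algebra, so we may assume $A\ge T$.

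Write $A=T_0\to T_1\to\cdots\to T_r=T$ for such a sequence of irreducible tilting mutations, and set $B_i:=\mathrm{End}(T_i)$. I will argue by induction on $i$ that each $B_i$ is a biserial FBGA and that $B_i$ is tilting-discrete. The key observation is that a mutation of $T_i$ at a $\nu$-stable summand corresponds, under the derived equivalence $\mathcal{K}^b(\mathrm{proj}\,A)\simeq\mathcal{K}^b(\mathrm{proj}\,B_i)$ sending $T_i$ to $B_i$, to the Okuyama--Rickard mutation $\mu^{\pm}_{P(I)}(B_i)$ at a $\nu_{B_i}$-orbit $I$ of indecomposable projectives. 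This uses that the Nakayama functor commutes with derived equivalences of self-injective algebras, so that $\nu$-stable summands of $T_i$ correspond to $\nu_{B_i}$-stable projectives. By Subsection~\ref{subsec:KM-FMS-BGA}, in particular Figure~\ref{fig:1-KM-fms} and Lemma~\ref{lem:nu-orbit} for the case split, $B_{i+1}\cong\mathrm{End}(\mu^{-}_{P(I)}(B_i))$ is again a biserial FBGA, obtained by a Kauer move. Its graph is $\mu_I(\Gamma_i)$, and its multiplicities are preserved. The $\mu^+$ direction is handled dually, since $\mu^+$ is inverse to $\mu^-$. It follows that $B=B_r$ is a biserial FBGA.

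For tilting-discreteness, derived equivalence preserves it: the intervals $[U,U[1]]$ of tilting complexes are transported by the triangle equivalence, which preserves the tilting order. Alternatively, one can apply Theorems~\ref{thm:adm-case} and~\ref{thm:non-adm-case}. By the argument in their proofs, following \cite[Lemma~6.12]{AAC}, a Kauer move preserves the condition that $\Gamma_{\red}$ has at most one odd cycle and no even cycles, or that it is a tree in the non-admissible case. Hence $2\text{-}\mathrm{tilt}\,B$ is finite, and $B$ is tilting-discrete by those theorems.

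The step I expect to be the main obstacle is the passage from ``$T$ is a tilting complex'' to ``$T$ is reached by irreducible tilting mutations''. This requires that tilting mutations of $A$ stay inside the class of $\nu_A$-stable silting complexes and that tilting-discreteness forces tilting-connectedness. I would verify this via \cite[Theorem~1.2]{AM} together with Theorem~\ref{prop:sel-inj-silting}. A secondary point is the identification of $\nu$-stable summands of $T_i$ with $\nu_{B_i}$-orbits of projectives of $B_i$, which is needed to apply the Kauer-move description at each step.
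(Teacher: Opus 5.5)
Your proposal is correct and follows essentially the paper's route: tilting-discreteness gives tilting-connectedness (the paper cites \cite[Lemma~5.13]{CKL}), and iterating the Kauer-move description of irreducible tilting mutations then shows that $\mathrm{End}(T)$ is a biserial FBGA. The differences are minor: you make explicit how each mutation is transported along the intermediate derived equivalences, which the paper leaves implicit, and you get tilting-discreteness of $B$ directly from its derived invariance rather than, as the paper does, from the cycle condition on $\Gamma_{\red}$ being preserved under Kauer moves; your argument for this step is the cleaner one.
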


\begin{proof}
	Denote by $A_{\red}$ the BGA associated with the reduced form $(\Gamma_{\red}, d)$. By Theorems~\ref{thm:adm-case} and~\ref{thm:non-adm-case}, the graph $\Gamma_{\red}$ contains at most one odd cycle and no even cycles. As shown in~\cite[Lemma~5.13]{CKL}, for a self-injective algebra $A$, tilting-discrete implies tilting-connected. Hence any tilting complex $T$ of $A$ can be obtained from $A$ by a finite sequence of irreducible tilting mutations. By Propositions~\ref{prop:1-fms-Kauer-move}--\ref{prop:3-fms-Kauer-move}, the algebra $\mathrm{End}_{\mathcal{K}^b(\mathrm{proj}\, A)}(T)$ is again a biserial FBGA associated with some biserial FBG $(\Gamma', d)$, where $\Gamma'_{\red}$ has the same numbers of odd and even cycles as $\Gamma_{\red}$. Consequently, $\mathrm{End}_{\mathcal{K}^b(\mathrm{proj}\, A)}(T)$ is also a tilting-discrete biserial FBGA.
\end{proof}

As noted in \cite{AD,AK}, it is interesting to find algebras that are tilting-discrete but not silting-discrete. As an application of our results, we obtain some examples of biserial FBGAs which are tilting-discrete but not silting-discrete.

\begin{example}\label{exa:fms-til-dis-not-sil-dis}
	Let $A$ be a biserial FBGA with associated biserial FBG $(\Gamma,d)$ such that $m(v)\ge 1$ for every vertex $v$ of $\Gamma$. Let $A_{\red}$ be the BGA associated with the reduced form $(\Gamma_{\red},d)$. Assume that $\Gamma_{\red}$ contains at most one odd cycle and no even cycles, whereas $\Gamma$ does not satisfy this condition. Then, by Proposition~\ref{prop:2-silt-fms} and Theorems~\ref{thm:adm-case} and~\ref{thm:non-adm-case}, the self-injective algebra $A$ is tilting-discrete but not silting-discrete.
\end{example}

\appendix
\section{Proof of Kauer moves for biserial FBGAs}

By Lemma~\ref{lem:nu-orbit}, irreducible tilting mutations of biserial FBGAs fall into three classes, whose verifications correspond to the three cases treated in the BGA case in~\cite{K}. Without loss of generality, we provide a complete verification only for the first case. We note that the main difference for the reader is that the dimension comparison requires treating more cases in the present setting.

\begin{proposition}\label{prop:1-fms-Kauer-move}
	Let $\Gamma$ be a ribbon graph, and let $H' := \{h_1,\ldots,h_n\}$ be a $\langle \nu \rangle$-orbit in $\Gamma$.
Set $\vv_1 := s(h_1)$ and $\vv_2 := s(\iota(h_1))$. Assume that $val(\vv_1) \neq n$, $val(\vv_2) \neq n$, and that for each $i \in \{1,\ldots,n\}$ we have
$$
\rho(h_i) \neq \iota(h_i)
\quad\text{and}\quad
\rho^{-1}(h_i) \neq \iota(h_i).
$$
Then the tilting mutation along the $\langle \nu_A \rangle$-orbit $\{\bar{h}_1,\ldots,\bar{h}_n\}$ is described by the local move illustrated in the first graph of Figure~\ref{fig:1-KM-fms}, where the corresponding vertices in the two
biserial FBGs have the same multiplicity.

More explicitly, let the ribbon graph on the right-hand side of the first graph of Figure~\ref{fig:1-KM-fms} be denoted by $(\Gamma,d)$, and denote the ribbon graph
on the left-hand side by $(\mu_{H'}^-(\Gamma),d)$. Let $I$ be the set of indecomposable projective $A$-modules corresponding to $\{\bar{h}_1,\ldots,\bar{h}_n\}$. Then there exists a $k$-algebra isomorphism
$$
\mathrm{End}_{\mathcal{K}^b(\mathrm{proj}\,A)}(\mu^-_{P(I)}(A))
\;\cong\;
kQ_{\mu_{H'}^-(\Gamma)} \big/ I_{\mu_{H'}^-(\Gamma)} .
$$
\end{proposition}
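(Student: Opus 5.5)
We follow Kauer's method for Brauer graph algebras \cite{K}, adapted to a $\nu_A$-orbit of projectives. Write $P_i := e_{\bar h_i}A$ and $P' := \bigoplus_{\bar h\notin\{\bar h_1,\dots,\bar h_n\}} e_{\bar h}A$.

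\emph{Step 1: the Okuyama--Rickard complex.} For each $i$, the minimal left $\mathrm{add}(P')$-approximation of $P_i$ is computed locally at $h_i$. By the hypotheses $\rho(h_i)\neq\iota(h_i)$, $\rho^{-1}(h_i)\neq\iota(h_i)$ and $val(\vv_1),val(\vv_2)\neq n$, we may let $g_i$ be the first half-edge after $h_i$ in the $\rho$-orbit at $\vv_1$ whose edge is not in the orbit, and $g'_i$ the analogous half-edge after $\iota(h_i)$ at $\vv_2$. Let $\pi_i$ and $\pi'_i$ be the paths along these fans. Then
\[
T_i:\quad P_i \xrightarrow{(\pi_i,\ \pi'_i)} e_{\overline{g_i}}A\oplus e_{\overline{g'_i}}A
\]
(or the single summand version, when one endpoint is truncated or the fan reaches only orbit edges) is the approximation. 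Set $T=\bigoplus_i T_i\oplus P'$. By Theorem~\ref{prop:sel-inj-silting} and the $\nu_A$-stability of the orbit, $T$ is tilting. Because $\nu_A$ is induced by $\nu$ (Proposition~\ref{prop:bfBGA-basic}), we have $\nu_A(T_i)\cong T_{i+1}$.

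\emph{Step 2: generators of $B=\mathrm{End}(T)$.} We write down homotopy classes of chain maps corresponding to the arrows of $Q_{\mu^-_{H'}(\Gamma)}$. Arrows between summands of $P'$ not touching the moved edges stay unchanged. The arrows that were composed through $\bar h_i$ in $Q_\Gamma$ become the direct compositions (for instance $\alpha_{\rho^{-1}(h_i)}\alpha_{h_i}$ and its analogue at $\vv_2$). The new arrows into and out of $T_i$ are given by the inclusion $e_{\overline{g_i}}A\to T_i^0$ together with the maps induced by the arrow entering $g_i$ at the new position. We then check that these maps satisfy relations (I)--(III) of Remark~\ref{remark:truncated} for the graph obtained by sliding each $h_i$ along $g_i$ and $\iota(h_i)$ along $g'_i$. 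The degree at each vertex is unchanged, so $m=d/val$ is preserved. Condition (SI) holds for the new graph because the move is $\nu$-equivariant. This yields a surjection $kQ_{\mu^-_{H'}(\Gamma)}\to B$ that factors through $I_{\mu^-_{H'}(\Gamma)}$. Surjectivity follows by showing that these maps generate $\mathrm{rad}B/\mathrm{rad}^2B$, using the usual ``every map is a composite of these'' argument for string complexes.

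\emph{Step 3: dimension comparison.} We compute $\dim_k\mathrm{Hom}_{\mathcal K^b}(T_i,T_j)$, $\dim_k\mathrm{Hom}(T_i,e_{\bar h}A)$ and $\dim_k\mathrm{Hom}(e_{\bar h}A,T_i)$ via the Cartan matrix of $A$, which counts the paths around each vertex of length at most $d(\vv)$. We then compare these with the number of nonzero paths in the new biserial FBGA. I expect this to be the main obstacle. In the BGA case, a full turn around $\vv$ occurs $m(\vv)$ times, so each edge appears uniformly. Here a path of length $d(\vv)$ need not close up, so the number of times an edge $\bar h$ appears in $e_{\bar h'}A$ depends on the relative position of $h,h'$ inside the $\nu$-fans. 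The approach is to use Lemma~\ref{lem:n(v)-property} and $F(\vv)=d(\vv)/o(\vv)$ to reduce the count to $F(\vv)$ traversals of a $\nu$-fan, together with a partial fan. The move shifts $h_i$ within a single $\nu$-fan, since $g_i$ lies in it, so the counts match case by case according to whether $g_i$ and $h_i$ lie in the same partial segment. Once the dimensions agree, the surjection from Step 2 is an isomorphism.
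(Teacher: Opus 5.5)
Your strategy is the paper's own (Kauer's method). You form the Okuyama--Rickard complex $T_i\colon P_i\to\rho^{-1}_{h_i}(P_i)\oplus\rho^{-1}_{\iota(h_i)}(P_i)$ and identify the arrows of $\mathrm{End}(T)$. These are the composite $[\alpha_1\alpha_2]$ at $\vv_1$, and new arrows $\beta_1,\beta_2$ at the far endpoint $\vv_3$ of $\overline{\rho^{-1}(h_i)}$, through which the old arrow $\beta$ now factors. You then check the relations to get an epimorphism from the new biserial FBGA, and finish with a case-by-case dimension count, which the paper also singles out as the only new difficulty. There is, however, one genuine error in how you describe the target algebra.

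In Step 2 you assert that ``the degree at each vertex is unchanged, so $m=d/val$ is preserved.'' This is false, and the two halves of the sentence are incompatible. The move removes $n$ half-edges from each of $\vv_1,\vv_2$ and adds $n$ at each of the far endpoints $\vv_3,\vv_4$ of $\overline{\rho^{-1}(h_i)}$ and $\overline{\rho^{-1}(\iota(h_i))}$. So in general, keeping $d$ changes $m$ at these vertices.

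What is preserved is the multiplicity, and the mutated graph carries the degree $d'(\vv)=m(\vv)\,val'(\vv)$. The paper verifies this through $m=F/n$ (Proposition~\ref{prop:multiplicity-eqv-def}). The orbit size $n(\vv)$ and the number $F(\vv)$ of $\nu$-fans crossed by $\nu$ are unchanged, while $o(\vv)$ drops by one at $\vv_1,\vv_2$ and rises by one at $\vv_3,\vv_4$, so $d'=F\,o'$. For instance, in the paper's example relating $\Gamma_1$ and $\Gamma_2$, the vertex named $\vv_1$ there has degree $4$ before the move and $2$ after.

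With the old $d$ on $\mu^-_{H'}(\Gamma)$, three things go wrong:
\begin{enumerate}
\item relations (I) and (II) have the wrong lengths;
\item (SI) fails in general;
\item your dimension count cannot close.
\end{enumerate}
Your equivariance argument for (SI) is valid only for $d'$, because only then does the new Nakayama automorphism move each half-edge across the same $F(\vv)$ fans as before.

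Relatedly, the Step 3 phrase ``the move shifts $h_i$ within a single $\nu$-fan'' misdescribes the move. In fact $h_i$ is relocated to $\vv_3$, inserted just before $\iota(\rho^{-1}(h_i))$. The bookkeeping that makes the counts match is this: every $\nu$-fan at $\vv_1,\vv_2$ loses exactly one half-edge, and every $\nu$-fan at $\vv_3,\vv_4$ gains exactly one, because the half-edges $\iota(\rho^{-1}(h_i))$ form a single $\nu$-orbit. Hence paths of length $d'$ still cross $F$ fans.

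Two minor points. First, with the paper's conventions ($\alpha_h\colon\bar h\to\overline{h^+}$, right modules, $\mathrm{Hom}_A(e_xA,e_yA)=e_yAe_x$), the left approximation of $P_i$ goes to the predecessors $\rho^{-1}(h_i)$ and $\rho^{-1}(\iota(h_i))$, not to half-edges after $h_i$. Second, your truncated single-summand case is vacuous here: $val(\vv_j)\neq n$ forces $o(\vv_j)\ge 2$, hence $d(\vv_j)\ge 2$.
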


\begin{proof}
	Let $A$ (resp.\ $B$) be the biserial FBGA associated with the right (resp.\ left) biserial FBG in Figure~\ref{fig:1-KM-fms}. Denote by $P_i$ the indecomposable projective $A$-module corresponding to the edge $\bar{h_i}$. For an indecomposable projective module $P$ corresponding to an edge $\bar{h}$, we denote by $\rho_{h}^i(P)$
the indecomposable projective module corresponding to the edge $\overline{\rho^i(h)}$, and by $\rho_{\iota(h)}^i(P)$
the indecomposable projective module corresponding to the edge $\overline{\rho^i(\iota(h))}$.

	\begin{center}

\tikzset{every picture/.style={line width=0.75pt}} %set default line width to 0.75pt        

\begin{tikzpicture}[x=0.75pt,y=0.75pt,yscale=-1,xscale=1]
%uncomment if require: \path (0,235); %set diagram left start at 0, and has height of 235

%Shape: Circle [id:dp9707066477236279] 
\draw  [fill={rgb, 255:red, 0; green, 0; blue, 0 }  ,fill opacity=1 ] (60,95) .. controls (60,92.24) and (62.24,90) .. (65,90) .. controls (67.76,90) and (70,92.24) .. (70,95) .. controls (70,97.76) and (67.76,100) .. (65,100) .. controls (62.24,100) and (60,97.76) .. (60,95) -- cycle ;
%Shape: Circle [id:dp3190307514009847] 
\draw  [fill={rgb, 255:red, 0; green, 0; blue, 0 }  ,fill opacity=1 ] (210,95) .. controls (210,92.24) and (212.24,90) .. (215,90) .. controls (217.76,90) and (220,92.24) .. (220,95) .. controls (220,97.76) and (217.76,100) .. (215,100) .. controls (212.24,100) and (210,97.76) .. (210,95) -- cycle ;
%Straight Lines [id:da13612657057932842] 
\draw [line width=1.5]    (65,95) -- (215,95) ;
%Straight Lines [id:da6665669522172712] 
\draw [line width=1.5]    (65,95) -- (65,145) ;
%Straight Lines [id:da44137964942136465] 
\draw [line width=1.5]    (215,95) -- (215,145) ;
%Straight Lines [id:da6931373066710853] 
\draw [line width=1.5]    (65,45) -- (65,95) ;
%Straight Lines [id:da7579622474804975] 
\draw [line width=1.5]    (215,45) -- (215,95) ;
%Shape: Arc [id:dp5102379576178959] 
\draw  [draw opacity=0][dash pattern={on 0.84pt off 2.51pt}] (61.29,109.54) .. controls (54.8,107.89) and (50,102.01) .. (50,95) .. controls (50,88.63) and (53.97,83.18) .. (59.58,81.01) -- (65,95) -- cycle ; \draw  [dash pattern={on 0.84pt off 2.51pt}] (61.29,109.54) .. controls (54.8,107.89) and (50,102.01) .. (50,95) .. controls (50,88.63) and (53.97,83.18) .. (59.58,81.01) ;  
%Shape: Arc [id:dp012657155951965704] 
\draw  [draw opacity=0] (67.61,80.23) .. controls (73.07,81.18) and (77.52,85.1) .. (79.23,90.26) -- (65,95) -- cycle ; \draw   (67.61,80.23) .. controls (73.07,81.18) and (77.52,85.1) .. (79.23,90.26) ;  
%Shape: Arc [id:dp33776080572955425] 
\draw  [draw opacity=0] (79.77,97.63) .. controls (78.73,103.53) and (74.23,108.24) .. (68.45,109.6) -- (65,95) -- cycle ; \draw   (79.77,97.63) .. controls (78.73,103.53) and (74.23,108.24) .. (68.45,109.6) ;  
%Shape: Arc [id:dp5098355367056471] 
\draw  [draw opacity=0] (212.3,109.76) .. controls (206.38,108.68) and (201.66,104.12) .. (200.36,98.28) -- (215,95) -- cycle ; \draw   (212.3,109.76) .. controls (206.38,108.68) and (201.66,104.12) .. (200.36,98.28) ;  
%Shape: Arc [id:dp35280706874096057] 
\draw  [draw opacity=0] (200.32,91.9) .. controls (201.61,85.78) and (206.62,81.03) .. (212.88,80.15) -- (215,95) -- cycle ; \draw   (200.32,91.9) .. controls (201.61,85.78) and (206.62,81.03) .. (212.88,80.15) ;  
%Shape: Arc [id:dp6771522483583994] 
\draw  [draw opacity=0][dash pattern={on 0.84pt off 2.51pt}] (217.61,80.23) .. controls (224.65,81.46) and (230,87.61) .. (230,95) .. controls (230,102.18) and (224.96,108.17) .. (218.23,109.65) -- (215,95) -- cycle ; \draw  [dash pattern={on 0.84pt off 2.51pt}] (217.61,80.23) .. controls (224.65,81.46) and (230,87.61) .. (230,95) .. controls (230,102.18) and (224.96,108.17) .. (218.23,109.65) ;  
\draw   (80.2,87.32) -- (78.71,90.7) -- (76.25,87.95) ;
\draw   (71.85,110.28) -- (68.26,109.44) -- (70.5,106.51) ;
\draw   (199.96,102.11) -- (200.63,98.48) -- (203.66,100.59) ;
\draw   (209.61,78.97) -- (213.01,80.39) -- (210.31,82.91) ;

% Text Node
\draw (82,98.6) node [anchor=north west][inner sep=0.75pt]   [align=left] {$h$};
% Text Node
\draw (177,100.6) node [anchor=north west][inner sep=0.75pt]   [align=left] {$\iota(h)$};
% Text Node
\draw (132,76) node [anchor=north west][inner sep=0.75pt]   [align=left] {$P$};
% Text Node
%\draw (29.6,87) node [anchor=north west][inner sep=0.75pt]   [align=left] {$w_1$};
% Text Node
%\draw (232.8,87) node [anchor=north west][inner sep=0.75pt]   [align=left] {$w_2$};
% Text Node
\draw (55,27) node [anchor=north west][inner sep=0.75pt]   [align=left] {$\rho^{-1}_{h}(P)$};
% Text Node
\draw (55,149) node [anchor=north west][inner sep=0.75pt]   [align=left] {$\rho_{h}(P)$};
% Text Node
\draw (206,149) node [anchor=north west][inner sep=0.75pt]   [align=left] {$\rho^{-1}_{\iota(h)}(P)$};
% Text Node
\draw (206,27) node [anchor=north west][inner sep=0.75pt]   [align=left] {$\rho_{\iota(h)}(P)$};

\end{tikzpicture}

	\end{center}
Denote by $T_i$ the complex
\[
 P_i \longrightarrow \rho^{-1}_{h_i}(P_i) \oplus \rho^{-1}_{\iota(h_i)}(P_i).
\]
Then the  Okuyama--Rickard complex associated with $\{\bar{h_1}, \ldots, \bar{h_n}\}$ is given by 
\[
T = \left( \bigoplus_{i=1}^{n} T_i \right) \oplus \left( \bigoplus_{j=1}^{m} Q_j \right),
\]
where each $Q_j$ is an indecomposable projective $A$-module not isomorphic to any $P_i$ ($1 \leq i \leq n$).

Denote by 
\[
B' = \mathrm{End}_{\mathcal{K}^b(\mathrm{proj}\,A)}(T).
\]
We will show that the quiver of $B$ coincides with the quiver $Q_{B'}$ of $B'$.

Consider an angle from $h'$ to $h''$ (with $\rho(h')=h''$) around a vertex $\ww$ as illustrated below, where the projective modules $Q'$ and $Q''$ are not isomorphic to any $P_i$.
\begin{center}
\begin{tikzpicture}[x=0.75pt,y=0.75pt,yscale=-1,xscale=1]
% Shape: Circle 
\draw  [fill={rgb, 255:red, 0; green, 0; blue, 0 },fill opacity=1 ] (60,95) .. controls (60,92.24) and (62.24,90) .. (65,90) .. controls (67.76,90) and (70,92.24) .. (70,95) .. controls (70,97.76) and (67.76,100) .. (65,100) .. controls (62.24,100) and (60,97.76) .. (60,95) -- cycle ;
% Straight Lines 
\draw [line width=1.5] (65,95) -- (97.6,133.6);
\draw [line width=1.5] (65,45) -- (65,95);
% Dashed Arcs 
\draw [dash pattern={on 0.84pt off 2.51pt}] (71.21,108.66) .. controls (69.32,109.52) and (67.21,110) .. (65,110) .. controls (56.72,110) and (50,103.28) .. (50,95) .. controls (50,88.63) and (53.97,83.18) .. (59.58,81.01);
\draw (67.61,80.23) .. controls (74.65,81.46) and (80,87.61) .. (80,95) .. controls (80,98.53) and (78.78,101.78) .. (76.73,104.35);
\draw (80.39,103.16) -- (76.83,104.12) -- (77.42,100.48);
% Labels
\node at (39,93) {$\ww$};
\node at (64,36) {$Q'$};
\node at (105,138) {$Q''$};
\node at (86,80) {$\alpha$};
\node at (55,67) {$h'$};
\node at (67,122) {$h''$};
\end{tikzpicture}
\end{center}
Then there is an arrow in $Q_{B'}$ from the vertex corresponding to $Q'$ to the vertex corresponding to $Q''$, given by
$$
\begin{tikzcd}
Q'' \arrow[rr, "\alpha\cdot"] &  & Q'
\end{tikzcd}$$
This morphism does not factor through any object in $\mathrm{add}(T / (Q' \oplus Q''))$.

Now consider the angle from $\rho^{-1}(h_i)$ to $\rho(h_i)$, which passes through the half-edge $h_i$.
\begin{center}

\tikzset{every picture/.style={line width=0.75pt}} %set default line width to 0.75pt        

\begin{tikzpicture}[x=0.75pt,y=0.75pt,yscale=-1,xscale=1]
%uncomment if require: \path (0,235); %set diagram left start at 0, and has height of 235

%Shape: Circle [id:dp9707066477236279] 
\draw  [fill={rgb, 255:red, 0; green, 0; blue, 0 }  ,fill opacity=1 ] (59.4,94.6) .. controls (59.4,91.84) and (61.64,89.6) .. (64.4,89.6) .. controls (67.16,89.6) and (69.4,91.84) .. (69.4,94.6) .. controls (69.4,97.36) and (67.16,99.6) .. (64.4,99.6) .. controls (61.64,99.6) and (59.4,97.36) .. (59.4,94.6) -- cycle ;
%Straight Lines [id:da6665669522172712] 
\draw [line width=1.5]    (64.4,94.6) -- (97,133.2) ;
%Straight Lines [id:da6931373066710853] 
\draw [line width=1.5]    (64.4,44.6) -- (64.4,94.6) ;
%Shape: Arc [id:dp5102379576178959] 
\draw  [draw opacity=0][dash pattern={on 0.84pt off 2.51pt}] (70.61,108.26) .. controls (68.72,109.12) and (66.61,109.6) .. (64.4,109.6) .. controls (56.12,109.6) and (49.4,102.88) .. (49.4,94.6) .. controls (49.4,88.23) and (53.37,82.78) .. (58.98,80.61) -- (64.4,94.6) -- cycle ; \draw  [dash pattern={on 0.84pt off 2.51pt}] (70.61,108.26) .. controls (68.72,109.12) and (66.61,109.6) .. (64.4,109.6) .. controls (56.12,109.6) and (49.4,102.88) .. (49.4,94.6) .. controls (49.4,88.23) and (53.37,82.78) .. (58.98,80.61) ;  
%Shape: Arc [id:dp012657155951965704] 
\draw  [draw opacity=0] (66.17,79.7) .. controls (70.72,80.24) and (74.65,82.81) .. (77.01,86.48) -- (64.4,94.6) -- cycle ; \draw   (66.17,79.7) .. controls (70.72,80.24) and (74.65,82.81) .. (77.01,86.48) ;  
\draw   (79.04,103.76) -- (75.48,104.72) -- (76.07,101.08) ;
%Shape: Circle [id:dp7499140287470254] 
\draw  [fill={rgb, 255:red, 0; green, 0; blue, 0 }  ,fill opacity=1 ] (259.4,94.6) .. controls (259.4,91.84) and (261.64,89.6) .. (264.4,89.6) .. controls (267.16,89.6) and (269.4,91.84) .. (269.4,94.6) .. controls (269.4,97.36) and (267.16,99.6) .. (264.4,99.6) .. controls (261.64,99.6) and (259.4,97.36) .. (259.4,94.6) -- cycle ;
%Straight Lines [id:da11086111616299932] 
\draw [line width=1.5]    (264.4,94.6) -- (297,133.2) ;
%Straight Lines [id:da3408169693515175] 
\draw [line width=1.5]    (264.4,44.6) -- (264.4,94.6) ;
%Shape: Arc [id:dp288322688344143] 
\draw  [draw opacity=0][dash pattern={on 0.84pt off 2.51pt}] (270.61,108.26) .. controls (268.72,109.12) and (266.61,109.6) .. (264.4,109.6) .. controls (256.12,109.6) and (249.4,102.88) .. (249.4,94.6) .. controls (249.4,88.23) and (253.37,82.78) .. (258.98,80.61) -- (264.4,94.6) -- cycle ; \draw  [dash pattern={on 0.84pt off 2.51pt}] (270.61,108.26) .. controls (268.72,109.12) and (266.61,109.6) .. (264.4,109.6) .. controls (256.12,109.6) and (249.4,102.88) .. (249.4,94.6) .. controls (249.4,88.23) and (253.37,82.78) .. (258.98,80.61) ;  
%Shape: Arc [id:dp19825269849568516] 
\draw  [draw opacity=0] (267.01,79.83) .. controls (274.05,81.06) and (279.4,87.21) .. (279.4,94.6) .. controls (279.4,98.13) and (278.18,101.38) .. (276.13,103.95) -- (264.4,94.6) -- cycle ; \draw   (267.01,79.83) .. controls (274.05,81.06) and (279.4,87.21) .. (279.4,94.6) .. controls (279.4,98.13) and (278.18,101.38) .. (276.13,103.95) ;  
\draw   (279.79,102.76) -- (276.23,103.72) -- (276.82,100.08) ;
%Straight Lines [id:da7792790188484628] 
\draw [line width=1.5]    (64.4,94.6) -- (111.4,73.2) ;
%Shape: Arc [id:dp36244427817917924] 
\draw  [draw opacity=0] (78.87,90.65) .. controls (79.22,91.91) and (79.4,93.23) .. (79.4,94.6) .. controls (79.4,98.26) and (78.09,101.62) .. (75.91,104.22) -- (64.4,94.6) -- cycle ; \draw   (78.87,90.65) .. controls (79.22,91.91) and (79.4,93.23) .. (79.4,94.6) .. controls (79.4,98.26) and (78.09,101.62) .. (75.91,104.22) ;  
\draw   (77.4,83.37) -- (77.72,87.04) -- (74.24,85.82) ;

% Text Node
\draw (35,91.8) node [anchor=north west][inner sep=0.75pt]   [align=left] {$\vv_1$};
% Text Node
\draw (56.2,26) node [anchor=north west][inner sep=0.75pt]   [align=left] {$\rho^{-1}_{h_i}(P_i)$};
% Text Node
\draw (97.8,132.8) node [anchor=north west][inner sep=0.75pt]   [align=left] {$\rho_{h_i}(P_i)$};
% Text Node
\draw (74.8,65) node [anchor=north west][inner sep=0.75pt]   [align=left] {$\alpha_1$};
% Text Node
\draw (115.27,61.67) node [anchor=north west][inner sep=0.75pt]   [align=left] {$P_i$};
% Text Node
\draw (235,91.8) node [anchor=north west][inner sep=0.75pt]   [align=left] {$\vv_1$};
% Text Node
\draw (256.2,26) node [anchor=north west][inner sep=0.75pt]   [align=left] {$\rho^{-1}_{h_i}(P_i)$};
% Text Node
\draw (297.8,132.8) node [anchor=north west][inner sep=0.75pt]   [align=left] {$\rho_{h_i}(P_i)$};
% Text Node
\draw (281.6,76.4) node [anchor=north west][inner sep=0.75pt]   [align=left] {$[\alpha_1\alpha_2]$};
% Text Node
\draw (165,74.8) node [anchor=north west][inner sep=0.75pt]  [font=\LARGE] [align=left] {$\Longrightarrow$};
% Text Node
\draw (87.13,93.8) node [anchor=north west][inner sep=0.75pt]   [align=left] {$\alpha_2$};
\end{tikzpicture}
\end{center}
Then the following observations hold around the vertex $v_1$.
\begin{enumerate}
    \item There is no arrow in $Q_{B'}$ from the vertex corresponding to $\rho^{-1}_{h_i}(P_i)$ to the vertex corresponding to $T_i$, since the following chain map
	$$
    \begin{tikzcd}
   P_i \arrow[rr, "0"] \arrow[d, "{\begin{pmatrix}\alpha_1\cdot\\\text{*}\end{pmatrix}}"'] &  & 0 \arrow[d, "0"] \\
    \rho^{-1}_{h_i}(P_i) \oplus \rho^{-1}_{\iota(h_i)}(P_i) \arrow[rr, "{(1,0)}"]                                                &  & \rho^{-1}_{h_i}(P_i)                         
    \end{tikzcd}
    $$
    is not a chain map in general, where $*$ denotes an arrow around $\vv_2$ ending at $\bar{h_i}$.
    
    \item There is no arrow in $Q_{B'}$ from the vertex corresponding to $T_i$ to the vertex corresponding to $\rho_{h_i}(P_i)$, since the following morphism
	 $$
    \begin{tikzcd}
     0 \arrow[rr, "0"] \arrow[d, "0"'] &  & P_i \arrow[d, "{\begin{pmatrix}\alpha_1\cdot\\\text{*}\end{pmatrix}}"] \\  \rho_{h_i}(P_i)
    \arrow[rr, "\begin{pmatrix}\alpha_1\alpha_2\cdot\\0\end{pmatrix}"'] \arrow[rru, "\alpha_2\cdot"]                          &  & \rho^{-1}_{h_i}(P_i) \oplus \rho^{-1}_{\iota(h_i)}(P_i)            
    \end{tikzcd}
    $$
   is null-homotopic in $\mathcal{K}^b(A)$.
    
    \item There exists a new arrow $[\alpha_1\alpha_2]$ from the vertex corresponding to $\rho^{-1}_{h_i}(P_i)$ to the vertex corresponding to $\rho_{h_i}(P_i)$, given by the morphism 
    $$
    \begin{tikzcd}
    \rho_{h_i}(P_i) \arrow[rr, "\alpha_1\alpha_2\cdot"] &  & \rho^{-1}_{h_i}(P_i)
    \end{tikzcd}
    $$
    and such a morphism does not factor through any object in $\mathrm{add}(T / (\rho^{-1}_{h_i}(P_i) \oplus \rho_{h_i}(P_i)))$.
\end{enumerate}

	Consider the angle from $\rho^{-1}_{\iota(\rho(h_i))}\rho^{-1}_{h_i}(P_i)$ to  $ \rho^{-1}_{h_i}(P_i)$ around the vertex $\vv_3$.
	\begin{center}

\tikzset{every picture/.style={line width=0.75pt}} %set default line width to 0.75pt        

\begin{tikzpicture}[x=0.75pt,y=0.75pt,yscale=-1,xscale=1]
%uncomment if require: \path (0,235); %set diagram left start at 0, and has height of 235

%Shape: Circle [id:dp9707066477236279] 
\draw  [fill={rgb, 255:red, 0; green, 0; blue, 0 }  ,fill opacity=1 ] (59.4,94.6) .. controls (59.4,91.84) and (61.64,89.6) .. (64.4,89.6) .. controls (67.16,89.6) and (69.4,91.84) .. (69.4,94.6) .. controls (69.4,97.36) and (67.16,99.6) .. (64.4,99.6) .. controls (61.64,99.6) and (59.4,97.36) .. (59.4,94.6) -- cycle ;
%Straight Lines [id:da6665669522172712] 
\draw [line width=1.5]    (64.4,94.6) -- (97,133.2) ;
%Straight Lines [id:da6931373066710853] 
\draw [line width=1.5]    (64.4,44.6) -- (64.4,94.6) ;
%Shape: Arc [id:dp5102379576178959] 
\draw  [draw opacity=0][dash pattern={on 0.84pt off 2.51pt}] (70.61,108.26) .. controls (68.72,109.12) and (66.61,109.6) .. (64.4,109.6) .. controls (56.12,109.6) and (49.4,102.88) .. (49.4,94.6) .. controls (49.4,88.23) and (53.37,82.78) .. (58.98,80.61) -- (64.4,94.6) -- cycle ; \draw  [dash pattern={on 0.84pt off 2.51pt}] (70.61,108.26) .. controls (68.72,109.12) and (66.61,109.6) .. (64.4,109.6) .. controls (56.12,109.6) and (49.4,102.88) .. (49.4,94.6) .. controls (49.4,88.23) and (53.37,82.78) .. (58.98,80.61) ;  
%Shape: Arc [id:dp012657155951965704] 
\draw  [draw opacity=0] (266.67,80.2) .. controls (271.22,80.74) and (275.15,83.31) .. (277.51,86.98) -- (264.9,95.1) -- cycle ; \draw   (266.67,80.2) .. controls (271.22,80.74) and (275.15,83.31) .. (277.51,86.98) ;  
\draw   (279.54,104.26) -- (275.98,105.22) -- (276.57,101.58) ;
%Shape: Circle [id:dp7499140287470254] 
\draw  [fill={rgb, 255:red, 0; green, 0; blue, 0 }  ,fill opacity=1 ] (259.4,94.6) .. controls (259.4,91.84) and (261.64,89.6) .. (264.4,89.6) .. controls (267.16,89.6) and (269.4,91.84) .. (269.4,94.6) .. controls (269.4,97.36) and (267.16,99.6) .. (264.4,99.6) .. controls (261.64,99.6) and (259.4,97.36) .. (259.4,94.6) -- cycle ;
%Straight Lines [id:da11086111616299932] 
\draw [line width=1.5]    (264.4,94.6) -- (297,133.2) ;
%Straight Lines [id:da3408169693515175] 
\draw [line width=1.5]    (264.4,44.6) -- (264.4,94.6) ;
%Shape: Arc [id:dp288322688344143] 
\draw  [draw opacity=0][dash pattern={on 0.84pt off 2.51pt}] (270.61,108.26) .. controls (268.72,109.12) and (266.61,109.6) .. (264.4,109.6) .. controls (256.12,109.6) and (249.4,102.88) .. (249.4,94.6) .. controls (249.4,88.23) and (253.37,82.78) .. (258.98,80.61) -- (264.4,94.6) -- cycle ; \draw  [dash pattern={on 0.84pt off 2.51pt}] (270.61,108.26) .. controls (268.72,109.12) and (266.61,109.6) .. (264.4,109.6) .. controls (256.12,109.6) and (249.4,102.88) .. (249.4,94.6) .. controls (249.4,88.23) and (253.37,82.78) .. (258.98,80.61) ;  
%Shape: Arc [id:dp19825269849568516] 
\draw  [draw opacity=0] (67.01,79.83) .. controls (74.05,81.06) and (79.4,87.21) .. (79.4,94.6) .. controls (79.4,98.13) and (78.18,101.38) .. (76.13,103.95) -- (64.4,94.6) -- cycle ; \draw   (67.01,79.83) .. controls (74.05,81.06) and (79.4,87.21) .. (79.4,94.6) .. controls (79.4,98.13) and (78.18,101.38) .. (76.13,103.95) ;  
\draw   (79.76,102.62) -- (76.55,104.44) -- (76.21,100.77) ;
%Straight Lines [id:da7792790188484628] 
\draw [line width=1.5]    (264.9,95.1) -- (311.9,73.7) ;
%Shape: Arc [id:dp36244427817917924] 
\draw  [draw opacity=0] (279.37,91.15) .. controls (279.72,92.41) and (279.9,93.73) .. (279.9,95.1) .. controls (279.9,98.76) and (278.59,102.12) .. (276.41,104.72) -- (264.9,95.1) -- cycle ; \draw   (279.37,91.15) .. controls (279.72,92.41) and (279.9,93.73) .. (279.9,95.1) .. controls (279.9,98.76) and (278.59,102.12) .. (276.41,104.72) ;  
\draw   (277.9,83.87) -- (278.22,87.54) -- (274.74,86.32) ;

% Text Node
\draw (35,91.8) node [anchor=north west][inner sep=0.75pt]   [align=left] {$\vv_3$};
% Text Node
\draw (48,22) node [anchor=north west][inner sep=0.75pt]   [align=left] {$\rho^{-1}_{\iota(\rho(h_i))}\rho^{-1}_{h_i}(P_i)$};
% Text Node
\draw (97.8,132.8) node [anchor=north west][inner sep=0.75pt]   [align=left] {$\rho^{-1}_{h_i}(P_i)$};
% Text Node
\draw (275.3,61.3) node [anchor=north west][inner sep=0.75pt]   [align=left] {$\beta_1$};
% Text Node
\draw (315.77,62.17) node [anchor=north west][inner sep=0.75pt]   [align=left] {$T_i$};
% Text Node
\draw (235,91.8) node [anchor=north west][inner sep=0.75pt]   [align=left] {$\vv_3$};
% Text Node
\draw (248,22) node [anchor=north west][inner sep=0.75pt]   [align=left] {$\rho^{-1}_{\iota(\rho(h_i))}\rho^{-1}_{h_i}(P_i)$};
% Text Node
\draw (297.8,132.8) node [anchor=north west][inner sep=0.75pt]   [align=left] {$\rho^{-1}_{h_i}(P_i)$};
% Text Node
\draw (82.1,75.9) node [anchor=north west][inner sep=0.75pt]   [align=left] {$\beta$};
% Text Node
\draw (154,74.8) node [anchor=north west][inner sep=0.75pt]  [font=\LARGE] [align=left] {$\Longrightarrow$};
% Text Node
\draw (287.63,94.3) node [anchor=north west][inner sep=0.75pt]   [align=left] {$\beta_2$};

\end{tikzpicture}
	\end{center}
Then the following observations hold around the vertex $\vv_3$.
\begin{enumerate}
    \item There exists a new arrow $\beta_1$ from the vertex corresponding to $\rho^{-1}_{\iota(\rho(h_i))}\rho^{-1}_{h_i}(P_i)$ to the vertex corresponding to $T_i$, given by the morphism 
    $$
\begin{tikzcd}
P_i \arrow[rr, "0"] \arrow[d, "{\begin{pmatrix}\alpha_1\cdot\\\text{*}\end{pmatrix}}"'] &  & 0 \arrow[d, "0"] \\
 \rho^{-1}_{h_i}(P_i) \oplus \rho^{-1}_{\iota(h_i)}(P_i)  \arrow[rr, "{(\beta\cdot,0)}"]               &  & \rho^{-1}_{\iota(\rho(h_i))}\rho^{-1}_{h_i}(P_i)            
\end{tikzcd}
    $$
and such a morphism does not factor through any object in $\mathrm{add}(T / (\rho^{-1}_{\iota(\rho(h_i))}\rho^{-1}_{h_i}(P_i) \oplus T_i))$.
\item There exists a new arrow $\beta_2$ from the vertex corresponding to $T_i$ to the vertex corresponding to $ \rho^{-1}_{h_i}(P_i)$, given by the morphism 
$$
\begin{tikzcd}
0 \arrow[rr, "0"] \arrow[d, "0"'] &  & P_i \arrow[d, "{\begin{pmatrix}\alpha_1\cdot\\\text{*}\end{pmatrix}}"] \\
\rho^{-1}_{h_i}(P_i) \arrow[rr, "{\begin{pmatrix}1\\0\end{pmatrix}}"]                                &  & \rho^{-1}_{h_i}(P_i) \oplus \rho^{-1}_{\iota(h_i)}(P_i)                          
\end{tikzcd}$$
and such a morphism does not factor through any object in $\mathrm{add}(T / (\rho^{-1}_{h_i}(P_i) \oplus T_i))$.
    
    \item There is no arrow in $Q_{B'}$ from the vertex corresponding to $\rho^{-1}_{\iota(\rho(h_i))}\rho^{-1}_{h_i}(P_i)$ to the vertex corresponding to $\rho^{-1}_{h_i}(P_i)$, since the original morphism $\beta$ factors through the summand $T_i$ as follows.
    $$
\begin{tikzcd}
\rho^{-1}_{h_i}(P_i) \arrow[rr, "\beta\cdot"] \arrow[rd, "\begin{pmatrix}0\\\begin{pmatrix}1\\0\end{pmatrix}\end{pmatrix}"'] &                                                           & \rho^{-1}_{\iota(\rho(h_i))}\rho^{-1}_{h_i}(P_i) \\
                                                                                                                   & T_i \arrow[ru, "{\begin{pmatrix}0\\(\beta\cdot,0)\end{pmatrix}}"'] &  
\end{tikzcd}
    $$
\end{enumerate}

Therefore, by the above discussion, we conclude that the quiver of $B$ coincides with the quiver of $B'$. In fact, the above construction also shows that the generating relations of the ideal $I$ in $B = kQ / I$ (as defined in Subsection~\ref{subsec:def-fms-BGA}) appear as relations of the corresponding ideal in $B'$ as well. For example, consider a commutativity relation $\alpha p = q$ in $I$, where $\alpha$ is an arrow starting from the vertex corresponding to $\rho^{-1}_{h_i}(P_i)$, and $p, q$ are paths in the quiver $Q$. According to the construction of the arrows in $B'$ described above, the commutativity relation $\alpha_1\alpha_2p = q$ in $A$ naturally induces the relation $[\alpha_1\alpha_2]p = q$ in $B'$. Hence, considering the canonical surjection from $kQ$ onto $B'$, the ideal $I$ is contained in the kernel of this map. Therefore, there exists a natural algebra epimorphism from $B$ to $B'$.

We now show that $\mathrm{dim}_k B = \mathrm{dim}_k B'$. We first verify that the multiplicities of the corresponding vertices in the two graphs are equal. 
By Proposition~\ref{prop:multiplicity-eqv-def}, we have $m(\vv) = \dfrac{F(\vv)}{n(\vv)}$. Moreover, the quantity $n(\ww)$ appearing in this formula remains unchanged, since the number of elements in each $\langle\nu\rangle$-orbit is preserved under the mutation. Likewise, $\alpha(w)$ is invariant, because the number of $\nu$-fans traversed by the Nakayama permutation at each vertex does not vary. Consequently, the multiplicity at every vertex remains the same.

For each indecomposable summand $P$ of the tilting complex $T$, denote by $e_P$ the corresponding idempotent in the quiver algebra $B$, and by $\bar{h}_P = \{h_P, \iota(h_P)\}$ the corresponding edge in the biserial FBG associated with $B$. Then, for any summands $P, P'$ of $T$, we verify that 
\[
\mathrm{dim}_k(e_P B e_{P'}) = \mathrm{dim}_k \mathrm{Hom}_{\mathcal{K}^b(\mathrm{proj}\,A)}(P', P).
\]

For the biserial FBGA, it can be verified that:
\begin{itemize}
  \item if $s(h_P) = s(h_{P'})$ and $s(\iota(h_P)) = s(\iota(h_{P'}))$, and the angle from $h_P$ to $\rho^{d(s(h_P))}(h_P)$ passes through $h_{P'}$, then
  \[
            \mathrm{dim}_k(e_P B e_{P'}) =
            \begin{cases}
               \lceil m(s(h_P)) \rceil + \lceil m(s(\iota(h_P))) \rceil, & \text{if } \mathrm{top}(P')\not\cong \soc(P), \\[2mm]
                \lceil m(s(h_P)) \rceil + \lceil m(s(\iota(h_P))) \rceil-1, & \text{if } \mathrm{top}(P')\cong \soc(P),
            \end{cases}
            \]
  where we denote by $\lceil m(s(h_P)) \rceil$ the smallest integer greater than or equal to $m(s(h_P))$.
  \item if $s(h_P) = s(\iota(h_{P'}))$ and $s(\iota(h_P)) = s(h_{P'})$, and the angle from $h_P$ to $\rho^{d(s(h_P))}(h_P)$ passes through $\iota(h_{P'})$, then
  \[
            \mathrm{dim}_k(e_P B e_{P'}) =
            \begin{cases}
               \lceil m(s(h_P)) \rceil + \lceil m(s(\iota(h_P))) \rceil, & \text{if } \mathrm{top}(P')\not\cong \soc(P), \\[2mm]
                \lceil m(s(h_P)) \rceil + \lceil m(s(\iota(h_P))) \rceil-1, & \text{if } \mathrm{top}(P')\cong \soc(P),
            \end{cases}
            \]
  \item if $s(h_P) = s(h_{P'})$ but $s(\iota(h_P)) \neq s(\iota(h_{P'}))$, and the angle from $h_P$ to $\rho^{d(s(h_P))}(h_P)$ passes through $h_{P'}$, then $ \mathrm{dim}_k(e_P B e_{P'}) = \lceil m(s(h_P)) \rceil$;
  \item if $s(h_P) = s(\iota(h_{P'}))$ but $s(\iota(h_P)) \neq s(h_{P'})$, and the angle from $h_P$ to $\rho^{d(s(h_P))}(h_P)$ passes through $\iota(h_{P'})$, then $\mathrm{dim}_k(e_P B e_{P'}) = \lceil m(s(h_P)) \rceil;$
  \item otherwise, $ \mathrm{dim}_k(e_P B e_{P'}) = 0.$
\end{itemize}

We next compute $\mathrm{dim}_k \mathrm{Hom}_{\mathcal{K}^b(\mathrm{proj}\,A)}(P', P)$.
\begin{enumerate}
	\item Let $P = T_i$, which is given by the complex
	\[
 P_i \longrightarrow \rho^{-1}_{h_i}(P_i) \oplus \rho^{-1}_{\iota(h_i)}(P_i).
\]
	and let $P' = Q_j$, where $Q_j$ denotes an indecomposable projective module not isomorphic to any $P_i$. If the corresponding edges in the biserial FBG are not incident to the same vertex, or if the angle around $\vv_1$ (resp. $\vv_2$) from the half-edge $h$ corresponding to $T_i$ to the half-edge $\rho^{d(\vv_1)}h$ (resp. $\rho^{d(\vv_2)}h$) does not pass through any half-edge corresponding to $Q_j$, then $\mathrm{dim}_k \mathrm{Hom}_{\mathcal{K}^b(\mathrm{proj}\,A)}(P', P) = 0.$
	Otherwise, without loss of generality, assume that the corresponding edges in the biserial FBG are incident to the same vertex $\vv_1$. 
	If no half-edge corresponding to $Q_j$ is incident to $\vv_2$, then
	\begin{align*}
	\mathrm{dim}_k \mathrm{Hom}_{\mathcal{K}^b(\mathrm{proj}\,A)}(P', P) 
	&= \mathrm{dim}_k \mathrm{Hom}_A(Q_j,\rho^{-1}_{h_i}(P_i) \oplus \rho^{-1}_{\iota(h_i)}(P_i))
	- \mathrm{dim}_k \mathrm{Hom}_A( Q_j,P_i) \\
	&= \mathrm{dim}_k \mathrm{Hom}_A( Q_j,\rho_{h_i}(P_i))
	- \mathrm{dim}_k \mathrm{Hom}_A( Q_j,P_i)=0.
	\end{align*}
	If, on the other hand, there also exists a half-edge corresponding to $Q_j$ incident to $\vv_2$, then
	$$
	\mathrm{dim}_k \mathrm{Hom}_{\mathcal{K}^b(\mathrm{proj}\,A)}(P', P) 
	= \mathrm{dim}_k \mathrm{Hom}_A( Q_j,\rho^{-1}_{\iota(h_i)}(P_i)) = \lceil m(\vv_2) \rceil.
$$
	
	\item Let $P = Q_i$ and $P' = T_j$, where $T_j$ is given by the complex
	\[
 P_j \longrightarrow \rho^{-1}_{h_j}(P_j) \oplus \rho^{-1}_{\iota(h_j)}(P_j).
\]
	The same reasoning applies symmetrically. If the corresponding edges in the biserial FBG are not incident to the same vertex, or if the angle (induced by Nakayama permutation) around $\vv_1$ (resp. $\vv_2$) from the half-edge $\rho^{-d(\vv_1)}h$ (resp. $\rho^{-d(\vv_2)}h$) to the half-edge $h$ corresponding to $Q_i$ does not pass through any half-edge corresponding to $P_j$, then $	\mathrm{dim}_k \mathrm{Hom}_{\mathcal{K}^b(\mathrm{proj}\,A)}(P', P) = 0.$
	Otherwise, without loss of generality, assume that the corresponding edges in the biserial FBG are incident to the same vertex $\vv_1$. 
	If no half-edge corresponding to $Q_i$ is incident to $\vv_2$, then $	\mathrm{dim}_k \mathrm{Hom}_{\mathcal{K}^b(\mathrm{proj}\,A)}(P', P) = 0$.
	If, on the other hand, there also exists a half-edge corresponding to $Q_j$ incident to $\vv_2$, then $
	\mathrm{dim}_k \mathrm{Hom}_{\mathcal{K}^b(\mathrm{proj}\,A)}(P', P) 
	= \lceil m(\vv_2) \rceil.$
	
	\item Now suppose both $P$ and $P'$ are of the form $T_j$ and $T_i$, respectively. Then the corresponding edges share common vertices $\vv_1$ and $\vv_2$. Hence
	\begin{align*}
	\mathrm{dim}_k \mathrm{Hom}_{\mathcal{K}^b(\mathrm{proj}\,A)}(P', P) 
	&= \mathrm{dim}_k \mathrm{Hom}_A(\rho^{-1}_{h_i}(P_i) \oplus \rho^{-1}_{\iota(h_i)}(P_i), \rho^{-1}_{h_j}(P_j) \oplus \rho^{-1}_{\iota(h_j)}(P_j)) + \mathrm{dim}_k \mathrm{Hom}_A(P_i, P_j) 
	\\& - \mathrm{dim}_k \mathrm{Hom}_A(\rho^{-1}_{h_i}(P_i) \oplus \rho^{-1}_{\iota(h_i)}(P_i), P_j)-\mathrm{dim}_k \mathrm{Hom}_A(P_i, \rho^{-1}_{h_j}(P_j) \oplus \rho^{-1}_{\iota(h_j)}(P_j)) \\
	&= \mathrm{dim}_k \mathrm{Hom}_A(P_i, P_j) \\
	&= \begin{cases}
               \lceil m(s(h_P)) \rceil + \lceil m(s(\iota(h_P))) \rceil, & \text{if } \mathrm{top}(P')\not\cong \soc(P), \\[2mm]
                \lceil m(s(h_P)) \rceil + \lceil m(s(\iota(h_P))) \rceil-1, & \text{if } \mathrm{top}(P')\cong \soc(P).
            \end{cases}
	\end{align*}
	
	\item Finally, if both $P$ and $P'$ are of the form $Q_i$ and $Q_j$, that is, indecomposable projective $A$-modules not isomorphic to any $P_i$, then $
\mathrm{dim}_k(e_{Q_i} B e_{Q_j}) 
= \mathrm{dim}_k \mathrm{Hom}_A(Q_j, Q_i)$,
since the edges corresponding to these modules remain unchanged under the graph mutation.
\end{enumerate}

Combining all the above cases, we conclude that for every pair of indecomposable summands $P, P'$ of the tilting complex $T$, $\mathrm{dim}_k(e_P B e_{P'}) 
= \mathrm{dim}_k \mathrm{Hom}_{\mathcal{K}^b(\mathrm{proj}\,A)}(P', P)$.
Hence, $\mathrm{dim}_k B = \mathrm{dim}_k B'$. Moreover, since there exists a natural algebra epimorphism from $B$ to $B'$, we deduce that $B \cong B'$ as algebras. Therefore, the tilting mutation on the $\langle \nu_A \rangle$-orbit $\{\bar{h}_1, \ldots, \bar{h}_n\}$ is described by the first graph in Figure~\ref{fig:1-KM-fms}.
\end{proof}


\begin{thebibliography}{88}

		\bibitem{Ada} T. Adachi, The classification of $\tau$-tilting modules over Nakayama algebras. J. Algebra \textbf{452} (2016), 227--262.

\bibitem{AAC} T. Adachi, T. Aihara, A. Chan, Classification of two-term tilting complexes over Brauer graph algebras. Math. Z. \textbf{290} (2018), 1--36.

\bibitem{AIR} T. Adachi, O. Iyama, I. Reiten, $\tau$-tilting theory. Compos. Math. \textbf{150} (2014), 415--452.

\bibitem{AK} T. Adachi, R. Kase, Examples of tilting-discrete self-injective algebras which are not silting-discrete. Publ. Res. Inst. Math. Sci. \textbf{60} (2024), 373--411.

\bibitem{Ai} T. Aihara, Tilting-connected symmetric algebras. Algebr. Represent. Theory \textbf{16} (2013), 873--894.

\bibitem{AI} T. Aihara, O. Iyama, Silting mutation in triangulated categories. J. Lond. Math. Soc. \textbf{85} (2012), 633--668.

\bibitem{AM} T. Aihara, Y. Mizuno, Classifying tilting complexes over preprojective algebras of Dynkin type. Algebra Number Theory \textbf{11} (2017), 1287--1315.

\bibitem{AR} S. Al-Nofayee, J. Rickard, Rigidity of tilting complexes and derived equivalence for self-injective algebras. arXiv:1311.0504.

\bibitem{AP} C. Amiot, P. G. Plamondon, The cluster category of a surface with punctures via group actions. Adv. Math. \textbf{389} (2021), 107884.

\bibitem{APS} C. Amiot, P.G. Plamondon, S. Schroll, A complete derived invariant for gentle algebras via winding numbers and Arf invariants. Selecta Math. \textbf{29} (2023), no. 2, Paper No. 30.

\bibitem{AP2} C. Amiot, P. G. Plamondon, skew group $A_{\infty}$-categories as Fukaya categories of orbifolds. arXiv:2405.15466.

\bibitem{AZ} M. Antipov, A. Zvonareva, Brauer graph algebras are closed under derived equivalence. Math. Z. \textbf{301} (2022), 1963--1981.

\bibitem{AY} T. Aoki, T. Yurikusa, Complete gentle and special biserial algebras are $g$-tame. J. Algebr. Comb. \textbf{57} (2023), 1103--1137.

\bibitem{Asa1} H. Asashiba, The derived equivalence classification of representation-finite selfinjective algebras. J. Algebra \textbf{214} (1999), 182--221.

\bibitem{Asa2} H. Asashiba, On a lift of an individual stable equivalence to a standard derived equivalence for representation-finite self-injective algebras. Algebr. Represent. Theory \textbf{6} (2003), 427--447.

\bibitem{Asa3} H. Asashiba, A generalization of Gabriel's Galois covering functors and derived equivalences. J. Algebra \textbf{334} (2011), 109--149.

\bibitem{AD} J. August, A. Dugas, Silting and tilting for weakly symmetric algebras. Algebr. Represent. Theory \textbf{26} (2023), 169--179.

\bibitem{BSW} S. Barmeier, S. Schroll, Z. Wang, Partially wrapped Fukaya categories of orbifold surfaces. arXiv:2407.16358.

\bibitem{BC} K. Baur, R. Coelho Sim\~oes, A geometric model for the module category of a string algebra. arXiv:2403.07810.

\bibitem{BG} K. Bongartz, P. Gabriel, Covering spaces in representation theory. Invent. Math. \textbf{65} (1982), 331--378.

\bibitem{CKL} A. Chan, S. Koenig, Y. Liu, Simple-minded systems, configurations and mutations for representation-finite self-injective algebras. J. Pure Appl. Algebra \textbf{219} (2015), 1940--1961.

\bibitem{CS} W. Chang, S. Schroll, A geometric realization of silting theory for gentle algebras. Math. Z. \textbf{303} (2023), 67.

\bibitem{D} E. C. Dade, Blocks with cyclic defect groups. Ann. of Math. \textbf{84} (1966), 20--48.

\bibitem{DF} P. W. Donovan, M. R. Freislich, The indecomposable modular representations of certain groups with dihedral Sylow subgroup. Math. Ann. \textbf{238} (1978), 207--216.

\bibitem{EJR} F. Eisele, G. Janssens, T. Raedschelders, A reduction theorem for $\tau$-rigid modules. Math. Z. \textbf{290} (2018), 1377--1413.

\bibitem{EGV} A. Elsener, V. Guazzelli, Y. Valdivieso, Skew-Brauer graph algebras. arXiv:2410.01942.

\bibitem{GS1} E. L. Green, S. Schroll, Multiserial and special multiserial algebras and their representations. Adv. Math. \textbf{302} (2016), 1111--1136.

\bibitem{J} G. J. Janusz, Indecomposable modules for finite groups. Ann. of Math. \textbf{89} (1969), 209--241.

\bibitem{JSW} H. Jin, S. Schroll, Z. Wang, A complete derived invariant and silting theory for graded gentle algebras. arXiv:2303.17474.

\bibitem{K} M. Kauer, Derived equivalences of graph algebras. Contemp. Math. \textbf{229} (1998), 201--213.

\bibitem{KV} B. Keller, D. Vossieck, Aisles in derived categories. Bull. Soc. Math. Belg. Sér. A \textbf{40} (1988), 239--253.

\bibitem{KKKMM} Y. Kimura, R. Koshio, Y. Kozakai, H. Minamoto, Y. Mizuno, $\tau$-tilting theory and silting theory of skew group algebra extensions. Ann. Represent. Theory \textbf{2} (2025), 599--637.

\bibitem{LL} N. Li, Y. Liu, Fractional Brauer configuration algebras I: definitions and examples. J. Algebra \textbf{692} (2026), 336--378.

\bibitem{LL2} N. Li, Y. Liu, Fractional Brauer configuration algebras II: covering theory. arXiv:2412.13445.

\bibitem{LL3} N. Li, Y. Liu, Fractional Brauer configuration algebras III: fractional Brauer graph algebras in type MS. arXiv:2412.13449.

\bibitem{Miz} Y. Mizuno, On mutations of selfinjective quivers with potential. J. Pure Appl. Algebra \textbf{219} (2015), 1742--1760.

\bibitem{Oku} T. Okuyama, Some examples of derived equivalent blocks of finite groups. Unpublished manuscript.

\bibitem{OPS} S. Opper, P. G. Plamondon, S. Schroll, A geometric model for the derived category of gentle algebras. arXiv:1801.09659.

\bibitem{OZ} S. Opper, A. Zvonareva, Derived equivalence classification of Brauer graph algebras. Adv. Math. \textbf{402} (2022), 108341.

\bibitem{RR} I. Reiten, C. Riedtmann, Skew group algebras in the representation theory of Artin algebras. J. Algebra \textbf{92} (1985), 224--282.

\bibitem{Ric} J. Rickard, Morita theory for derived categories. J. Lond. Math. Soc. \textbf{39} (1989), 436--456.

\bibitem{Ric2} J. Rickard, Derived categories and stable equivalence. J. Pure Appl. Algebra \textbf{61} (1989), 303--317.

\bibitem{Ried} C. Riedtmann, Representation-finite selfinjective algebras of class $A_n$. In: Representation Theory II (Ottawa, 1979), 449--520.

\bibitem{Sch} S. Schroll, Trivial extensions of gentle algebras and Brauer graph algebras. J. Algebra \textbf{444} (2015), 183--200.

\bibitem{SS} S. Schroll, Brauer graph algebras. In: Homological Methods, Representation Theory, and Cluster Algebras, Springer, 2018, pp. 177--223.

\bibitem{So1} V. Soto, Generalized Kauer moves and derived equivalences of Brauer graph algebras. J. Algebra \textbf{657} (2024), 514--548.

\bibitem{So2} V. Soto, Tilting mutations as generalized Kauer moves for (skew) Brauer graph algebras with multiplicity. arXiv:2406.10634.

\bibitem{W} J. Wei, Semi-tilting complexes. Israel J. Math. \textbf{194} (2013), 871--893.

\bibitem{X} B. Xing, Quasi-biserial algebras, special quasi-biserial algebras and symmetric fractional Brauer graph algebras. arXiv:2408.03778.

\bibitem{Z} A. Zimmermann, \textit{Representation theory: a homological algebra point of view}. Springer, 2014.
	
\end{thebibliography}
\end{document}